\numberwithin{equation}{section}
\newif\ifimportant
\newtheorem{definition}{Definition}[section]
\newtheorem{remark}[definition]{Remark}
\newtheorem{example}[definition]{Example}
\newtheorem{theorem}[definition]{Theorem}
\newtheorem{proposition}[definition]{Proposition}
\newtheorem{lemma}[definition]{Lemma}
\newtheorem{corollary}[definition]{Corollary}
\theoremstyle{remark}
\newcommand{\mbZ}{\mathbb{Z}}
\newcommand{\mbE}{\mathbb{E}}
\newcommand{\mcA}{\mathcal{A}}
\newcommand{\mcB}{\mathcal{B}}
\newcommand{\mcC}{\mathcal{C}}
\newcommand{\mcD}{\mathcal{D}}
\newcommand{\mcI}{\mathcal{I}}
\newcommand{\mcP}{\mathcal{P}}
\newcommand{\mcS}{\mathcal{S}}
\newcommand{\mcX}{\mathcal{X}}
\newcommand{\mcY}{\mathcal{Y}}
\newcommand{\mcZ}{\mathcal{Z}}
\newcommand{\modu}{\mathrm{mod}}
\newcommand{\add}{\mathrm{add}}
\newcommand{\free}{\mathrm{free}}
\newcommand{\iso}{\mathrm{iso}}
\newcommand{\smd}{\mathrm{smd}}
\newcommand{\Proj}{\mathrm{Proj}}
\newcommand{\Inj}{\mathrm{Inj}}
\newcommand{\fp}{\mathrm{fp}}
\newcommand{\ind}{\mathrm{ind}}
\newcommand{\rad}{\mathrm{rad}}
\newcommand{\Mod}{\mathsf{Mod}}
\newcommand{\fgMod}{\mathsf{mod}}
\newcommand{\resdim}{\mathrm{resdim}}
\newcommand{\coresdim}{\mathrm{coresdim}}
\newcommand{\Hom}{\mathrm{Hom}}
\newcommand{\End}{\mathrm{End}}
\newcommand{\Ext}{\mathrm{Ext}}
\newcommand{\Ker}{\mathrm{Ker}}
\newcommand{\Ima}{\mathrm{Im}}
\def\@seccntformat#1{%
  \protect\textup{\protect\@secnumfont
    \ifnum\pdfstrcmp{section}{#1}=0 \scshape\bfseries\fi
    \ifnum\pdfstrcmp{subsection}{#1}=0 \bfseries\fi
    \csname the#1\endcsname
    \protect\@secnumpunct
  }%
}
\begin{document}

\title{Quotient categories with exact structure from $(n+2)$-rigid subcategories in extriangulated categories}
\thanks{2020 MSC: 18G25 (18G10; 18G20; 18G35)}
\thanks{Key Words: $n$-cotorsion pairs, $n$-cluster tilting categories}

\author{Mindy Y. Huerta}
\address[M. Y. Huerta]{Facultad de Ciencias. Universidad Nacional Aut\'onoma de M\'exico. Circuito Exterior, Ciudad Universitaria. CP04510. Mexico City, MEXICO}
\email{mindy@matem.unam.mx}

\author{Octavio Mendoza}
\address[O. Mendoza]{Instituto de Matem\'aticas. Universidad Nacional Aut\'onoma de M\'exico. Circuito Exterior, Ciudad Universitaria. CP04510. Mexico City, MEXICO}
\email{omendoza@matem.unam.mx}

\author{Corina S\'aenz}
\address[C. S\'aenz]{Facultad de Ciencias. Universidad Nacional Aut\'onoma de M\'exico. Circuito Exterior, Ciudad Universitaria. CP04510. Mexico City, MEXICO}
\email{ecsv@ciencias.unam.mx}

\author{Valente Santiago}
\address[V. Santiago]{Facultad de Ciencias. Universidad Nacional Aut\'onoma de M\'exico. Circuito Exterior, Ciudad Universitaria. CP04510. Mexico City, MEXICO}
\email{valente.santiago.v@gmail.com}

\maketitle

\begin{abstract}
In this work we introduce the notion of higher $\mbE$-extension groups for an extriangulated 
category $\mcC$ and study the quotients $\mcX_{n+1}^{\vee}/[\mcX]$ and $\mcX_{n+1}^{\wedge}/[\mcX]$
when $\mcX$ is an $(n+2)$-rigid subcategory of $\mcC$. We also prove (under mild conditions) that each one is  
equivalent to a suitable subcategory of the category of functors of the stable category of
$\mcX_{n}^{\vee}$ and the co-stable category of $\mcX_{n}^{\wedge}$, respectively. Moreover, it 
can be induced an exact structure through these equivalences and we analyze when such quotients are weakly idempotent complete, Krull-Schmidt or abelian. The above discussion is also considered in the particular case of an $(n+2)$-cluster tilting subcategory of $\mcC$ since in this case we know that $\mcX_{n+1}^{\vee}=\mcC=\mcX_{n+1}^{\wedge}.$
Finally, by considering the category
of conflations of a exact category, we show that it is possible to get an abelian category from these
quotients. 
\end{abstract}


\pagestyle{myheadings}
\markboth{\rightline {\scriptsize M. Y. Huerta, O. Mendoza, C. S\'{a}enz and V. Santiago}}
         {\leftline{\scriptsize Exact quotients from $n$-cluster tilting categories}}

\section*{\textbf{Introduction}}

Many works in several contexts tackle with the quotient category $\mcC/[\mcX]$ when $\mcX$ is a cluster tilting 
subcategory of $\mcC$. For instance, for triangulated or exact categories, the authors in 
\cite{Iyamaclusterhigher, DLexact} show that it is possible to get an abelian category by factoring out
any cluster tilting subcategory.

As a generalization of the previous categories, H. Nakaoka and Y. Palu in \cite{Nakaoka1} introduced extriangulated categories, and as it was expected,
the previous fact was extended to them.
One clear example is the approach given by P. Zhou and B. Zhu in \cite{zhou2019cluster} where the authors work with a more general class of subcategories, namely, rigid subcategories. They show that some 
quotient categories are equivalent to module categories by certain restriction of functors, and in 
some cases, they are abelian \cite[Thm. 3.4]{zhou2019cluster}.

All the previous contexts have 
in common that a cluster-tilting subcategory allows us to construct an abelian one. However, it may
not be the case in other contexts.
Recently, in 
\cite[Thm. 3.7]{LZnexantonabel}, Y. Liu and P. Zhou proved that the quotient $\mcC/[\mcX]$ is an $n$-abelian
category where $\mcC$ is an $n$-exangulated category with split idempotents. That result is a generalization of \cite[Thm. 3.1]{HZabelianquotients} stated by J. He and P. Zhou for extriangulated categories where the quotient is abelian (for the case $n=1$).

On the other hand, as a generalization of cluster-tilting subcategories we have the $(n+2)$-cluster
tilting ones \cite[Def. 1.1]{Iyamaclusterhigher} for module categories, and later on for extriangulated categories (with enough projectives and injectives) in  \cite[Def. 5.3]{LNheartsoftwin} . It is quite natural to think about what kind of results can be proven for the quotient $\mcC/[\mcX]$ when $\mcX$ is an $(n+2)$-cluster tilting subcategory of $\mcC.$ Notice that $2$-cluster tilting subcategories are usually called cluster tilting.  Clearly, the study of the quotient $\mcC/[\mcX],$ for 
the case $n=0,$ has been covered from the works mentioned above. Therefore, the main aim of this article is to extend the results mentioned previously to $(n+2)$-rigid subcategories in the context of 
extriangulated categories and for indexes $n\geq 0$. 

\subsection*{Organization of the paper} 

In Section~\ref{sec:preliminaries} we recall results and definitions in \cite{Nakaoka1} from 
extriangulated categories and we introduce and discuss  the notion of an extriagulated category with 
\emph{higher $\mbE$-extension groups} (Definition ~\ref{def: higher E-extensions}). Moreover, it is recalled the notions of approximations, cotorsion pairs, (co)stable categories and weakly idempotent complete additive categories.

Section~\ref{sec:n-cot pairs} is devoted to study $(n+2)$-rigid subcategories and we do so for the $(n+2)$-cluster tilting subcategories
through the point of view of $(n+1)$-cotorsion pairs \cite{HMP}. We describe some
properties of $(n+2)$-rigid subcategories in Proposition~\ref{pro: n+1 rigid->1-cot}, Corollary \ref{cor: n+1 rigid->1-cot} and Lemma \ref{lem: Xvee cerrada por ext}; 
and we characterize $(n+2)$-cluster tilting subcategories in Theorem~\ref{teo: caract n-cluster}. On the other hand,  we study when a quotient category is weakly idempotent complete or Krull-Schmidt (see  Propositions \ref{pro:wicac} and \ref{pro:KS}). 
The development of this section will play a significant role for the rest of this work.

In Section~\ref{sec:coresolutions} we study quotients in a small extriangulated category $\mcC.$ Indeed, for a $(n+2)$-rigid subcategory $\mcX\subseteq\mcC,$ we construct (see Theorems. \ref{thm: equivalence full and kernel} and \ref{thm: equivalence full and kernel2}) fully faithful functors $\mathbb{F}:\mcX_{n+1}^{\vee}/[\mcX]\to \fp\underline{\mcX_{n}^{\vee}}$ and $\mathbb{K}:\mcX_{n+1}^{\wedge}/[\mcX]\to \overline{\mcX_{n}^{\wedge}}\fp.$ It is also shown that, under mild conditions (see Theorems \ref{thm: equivalence dense} and \ref{thm2: equivalence dense}),  the functors $\mathbb{F}$ and $\mathbb{K}$ induce, respectively, and equivalence and a duality with certain  subcategories (see Definition \ref{Def:RL}) of functors $\mathcal{R}_{\mcX}^{n}\subseteq \mathrm{fp}\underline{\mcX_{n}^{\vee}}$ and
$\mathcal{L}_{\mcX}^{n}\subseteq \overline{\mcX_{n}^{\wedge}}\mathrm{fp}$ which are exact categories. In particular, we get two exact categories induced by these functors: $(\mcX_{n+1}^{\vee}/[\mcX],\varepsilon_{\mathbb{F}})$ and $(\mcX_{n+1}^{\wedge}/[\mcX],\varepsilon_{\mathbb{K}}).$ It is also studied when these quotient categories are weakly idempotent complete, Krull-Schmidt or abelian. In the particular case of an $(n+2)$-cluster tilting category, we know that $\mcX_{n+1}^{\vee}=\mcC=\mcX_{n+1}^{\wedge},$ and thus the quotient category $\mcC/[\mcX]$ is weakly idempotent complete and has at least two exact structures: $\varepsilon_{\mathbb{F}}$ and $\varepsilon_{\mathbb{K}},$ see  Corollaries \ref{teo: n-cluster tilting} and \ref{coro: n-cluster tilting}. Some particular examples are given in \ref{ex1} and~\ref{ex2}.

Finally, in Section~\ref{sec: cat conflations}, we use the previous sections to define the category of conflations of the weakly idempotent complete exact categories $(\mcX_{n+1}^{\vee}/[\mcX],\varepsilon_{\mathbb{F}})$ and $(\mcX_{n+1}^{\wedge}/[\mcX],\varepsilon_{\mathbb{K}})$.  For every quotient, we see that is possible to get a pseudo-cluster tilting subcategory in the category of conflations (Theorems ~\ref{thm:pseudo1} and~\ref{thm:pseudo2}) such that the quotient category 
under this pseudo-cluster tilting subcategory is an abelian one. Moreover, we get a nice description of this pseudo-cluster tilting 
subcategories by using the exact structures $\varepsilon_{\mathbb{F}}$ and $\varepsilon_{\mathbb{K}}.$


\subsection*{Conventions}

Throughout the paper, $\mcC$ denotes an additive category. Among the main examples considered in this article are the following ones:
\begin{itemize}
\item $\Mod(R)$ which is the category of left $R$-modules over an associative ring $R$ with identity. 

\item $\fgMod(\Lambda)$ which is the category of finitely generated left $\Lambda$-modules over an Artin algebra $\Lambda$.  
\end{itemize}

We write $\mcS \subseteq \mcC$ to say that $\mcS$ is a full subcategory of $\mcC$ and we denote by $\mathrm{Mor}(\mcS)$ the class of morphisms between
objects in $\mcS$. All the classes of objects in $\mcC$ are assumed to be full subcategories. Given $X, Y \in \mcC$, we denote by $\mcC(X,Y)$ the group of morphisms from $X$ to  $Y$. In case $X$ and $Y$ are isomorphic, we write $X \simeq Y$. The notation $F \cong G$, on the other hand, is reserved to denote the existence of a natural isomorphism between functors $F$ and $G$.
Given a class $\mcA \subseteq \mcC$, we denote by ${\rm free}(\mathcal{A})$ the class of all the finite coproducts of objects in $\mcA,$ and $\smd(\mcA)$  the class of all the direct summands of objects in $\mcA.$ Finally, we set $\add(\mcA):=\smd(\free(\mcA)).$
\

For any category $\mcZ,$ we denote by $\Proj(\mcZ)$ the class of all the projective objects in $\mcZ.$ That is, $P\in\Proj(\mcZ)$ if for any epimorphism $f:X\to Y$  in $\mcZ,$ we have that any morphism $g:P\to Y$ factors through $f.$

Let $\mcZ$ be a small $\mbZ$-category. The category of left $\mcZ$-modules, denoted by $\mcZ\Mod,$ has as objects the additive (covariant) functors from $\mcZ$ to $Ab,$ and the morphisms in $\mcZ\Mod$ are the natural transformations between additive functors. Dually, $\Mod\mcZ$ is the category of right $\mcZ$-modules whose objects are the additive contravariant functors from $\mcZ$ to $Ab.$ Notice that $\Mod\mcZ=\mcZ^{op}\Mod.$ Moreover, by Yoneda's lemma, it can be shown that these categories have enough projectives, which are given by direct summands of coproducts of representable functors $\mcZ(Z,-):=\Hom_\mcZ(Z,-)\in \mcZ\Mod$ (resp. $\mcZ(-,Z):=\Hom_\mcZ(-,Z)\in\Mod\mcZ).$ A $\mcZ$-module is finitely presented if it is a cokernel of a morphism of a finite coproduct of representable functors. The class of finitely presented right (resp. left) $\mcZ$-modules is denoted by $\fp\mcZ$ (resp. $\mcZ\fp$).

For a class $\mcX\subseteq\mcC,$ we consider the class $\iso(\mcX)$ 
of the objects in $\mcC$ which are isomorphic to objects in $\mcX.$

\section{\textbf{Preliminaries}}\label{sec:preliminaries}

\subsection*{Weakly idempotent complete additive categories}

We begin recalling a notion known as ``weak idempotent completeness'' that will be very useful in the paper. In order to do that, we will follow the survey \cite{Bu}.
\

Let $\mcC$ be any category. We recall that $f:A\to B$ in $\mcC$ is an {\bf split-mono} (respectively, {\bf split-epi}) if there is a morphism $g:B\to A$ in $\mcC$ such that 
 $gf=1_A$ (respectively, $fg=1_B$).

\begin{lemma}\label{wicac} For an additive category $\mcC,$ the following statements are equivalent.
\begin{itemize}
\item[(a)] Every split-epi in $\mcC$ has a kernel.
\item[(b)] For every $A\xrightarrow{f} B\xrightarrow{g} A$ in $\mcC$ with $fg=1_B,$ there exists a coproduct  decomposition $A=B\oplus B'$ in $\mcC$ with inclusions 
$\mu_1=g:B\to A,$ $\mu_2:B'\to A$ and projections $\pi_1=f:A\to B,$ $\pi_2:A\to B'.$
\item[(c)] Every split-mono in $\mcC$ has a cokernel.
\item[(d)] For every $A\xrightarrow{f} B\xrightarrow{g} A$ in $\mcC$ with $gf=1_A,$ there exists a coproduct  decomposition $B=A\oplus A'$ in $\mcC$ with inclusions 
$\mu_1=f:A\to B,$ $\mu_2:A'\to B$ and projections $\pi_1=g:B\to A,$ $\pi_2:B\to A'.$
\end{itemize}
\end{lemma}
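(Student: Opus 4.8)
The plan is to exploit the evident self-duality of the four conditions and reduce everything to a single implication together with its converse. First I would observe that conditions (b) and (d) are \emph{literally the same statement} read in opposite directions: a composable pair $A\xrightarrow{f}B\xrightarrow{g}A$ with $fg=1_B$ is, after renaming $B\rightsquigarrow A$, $A\rightsquigarrow B$, $g\rightsquigarrow f$ and $f\rightsquigarrow g$, exactly a pair with the relation $gf=1_A$, and the asserted direct-sum decomposition is identical under this relabelling. Hence $(b)\Leftrightarrow(d)$ needs no argument. Likewise, passing to the opposite category $\mcC^{op}$ turns split-epis into split-monos and kernels into cokernels, so condition (a) for $\mcC^{op}$ is precisely condition (c) for $\mcC$. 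Therefore it suffices to prove $(a)\Leftrightarrow(b)$; then $(c)\Leftrightarrow(d)$ follows by dualizing, and $(b)\Leftrightarrow(d)$ glues the two halves together.

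For $(a)\Rightarrow(b)$ I would start from a pair $A\xrightarrow{f}B\xrightarrow{g}A$ with $fg=1_B$, so $f$ is a split-epi and by (a) admits a kernel $k\colon B'\to A$. The key computation is $f(1_A-gf)=f-(fg)f=f-f=0$, so the endomorphism $1_A-gf$ factors through $k$, say $1_A-gf=kh$ for a unique $h\colon A\to B'$. I would then verify the four biproduct identities certifying $A=B\oplus B'$ with inclusions $\mu_1=g$, $\mu_2=k$ and projections $\pi_1=f$, $\pi_2=h$: the relation $gf+kh=1_A$ is built in, $fg=1_B$ is given, $fk=0$ holds because $k=\ker f$, and the remaining identities $hk=1_{B'}$ and $hg=0$ follow by precomposing with the monomorphism $k$ and cancelling, e.g.\ $k(hk)=(kh)k=(1_A-gf)k=k$ yields $hk=1_{B'}$. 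This is the step I expect to be the main obstacle, since it is where the kernel's universal property and its monicity are used in an essential way; the rest is formal.

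For $(b)\Rightarrow(a)$, given any split-epi $f\colon A\to B$ with section $g$, I would apply (b) to obtain $A=B\oplus B'$ with the prescribed structure maps and then check that the inclusion $\mu_2\colon B'\to A$ of the complementary summand is a kernel of $f=\pi_1$. This is the standard biproduct fact: $\pi_1\mu_2=0$, any $x$ with $fx=0$ satisfies $x=\mu_1\pi_1 x+\mu_2\pi_2 x=\mu_2(\pi_2 x)$, and uniqueness of the factorization comes from $\pi_2\mu_2=1_{B'}$.

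Finally, assembling the pieces: once $(a)\Leftrightarrow(b)$ is established, replacing $\mcC$ by $\mcC^{op}$ in this equivalence gives $(c)\Leftrightarrow(d)$, and since (b) and (d) coincide the four conditions are all equivalent. The only genuine content is the single direct-sum construction inside $(a)\Rightarrow(b)$; everything else is either formal duality or elementary biproduct bookkeeping.
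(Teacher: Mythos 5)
Your proposal is correct. Note that the paper itself gives no argument here: it simply defers to \cite[Lem.~7.1, Rk.~7.4]{Bu}, and your proof is essentially the standard one found in that reference --- the kernel $k$ of the retraction $f$ serves as the complementary inclusion, the map $h$ with $kh=1_A-gf$ as the complementary projection, and the biproduct identities are checked by cancelling the monomorphism $k$. Your organizational reductions are also sound: (b) and (d) are indeed the same statement up to relabelling (both assert that the ambient object of a section--retraction pair is a biproduct of the retract and a complement, with the section as inclusion and the retraction as projection), this common statement is self-dual, and (a) in $\mcC^{op}$ is (c) in $\mcC$, so the single equivalence (a)$\Leftrightarrow$(b) does yield all four.
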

\begin{proof} We let the details to the readers, see \cite[Lem. 7.1, Rk. 7.4]{Bu} and their proofs.
\end{proof}

By following \cite[Def. 7.2]{Bu}, we recall that an additive category $\mcC$ is {\bf weakly idempotent complete} if one of the equivalent conditions in Lemma \ref{wicac} holds true. For a more detailed treatise on this matter, we recommend the reader to see \cite[Sect. 7]{Bu}.

\subsection*{Extriangulated categories and terminology}

Now we recall some definitions and results related to extriangulated categories. 
For a detailed treatise on this matter, we recommend  the reader to see in
\cite{Nakaoka1, LNheartsoftwin, MDZtheoryAB}.

Let $\mathbb{E}:\mcC^{op}\times \mcC\to \mathrm{Ab}$ be an additive bifunctor. An $\mathbb{E}$-extension \cite[Def. 2.1 and Rmk. 2.2]{Nakaoka1} is a triplet $(A, \delta, C),$ where $A, C\in \mcC$ and $\delta\in \mathbb{E}(C, A).$ 
For any $a\in \mcC(A, A')$
and $c\in \mcC(C', C)$, we have $\mathbb{E}$-extensions $a\cdot \delta:=\mathbb{E}(C, a)(\delta)\in \mathbb{E}(C, A')$ and
$\delta \cdot c:=\mathbb{E}(c^{op}, A)(\delta)\in \mathbb{E}(C', A)$. In this terminology, we have
$(a\cdot \delta)\cdot c=a\cdot(\delta\cdot c)$ in $\mathbb{E}(C', A')$. Let $(A, \delta, C)$ and  $(A', \delta', C')$ be $\mathbb{E}$-extensions. A morphism $(a, c): 
(A, \delta, C)\to (A', \delta', C')$ of $\mathbb{E}$-extensions \cite[Def. 2.3]{Nakaoka1} is a pair of morphisms $a\in \mcC(A, A')$ and
$c\in \mcC(C, C')$ in $\mcC$, satisfying the equality $a\cdot \delta=\delta'\cdot c.$
We simply denote it as $(a, c): \delta\to \delta'$.
We obtain the category $\mathbb{E}$-$\Ext(\mcC)$ of $\mathbb{E}$-extensions, with composition and identities
naturally induced by those in $\mcC$. For any $A, C\in \mcC$, the zero element $0\in \mathbb{E}(C, A)$ is called the split $\mathbb{E}$-extension \cite[Def. 2.5]{Nakaoka1}.
\

Let $\delta=(A, \delta, C)$ and $\delta'=(A', \delta', C')$ be any $\mathbb{E}$-extensions, and let 
$C\mathop{\to}\limits^{\iota_{C}} C\oplus C\mathop{\leftarrow}\limits^{\iota_{C'}} C'$ and 
$A\mathop{\leftarrow}\limits^{p_{A}} A\oplus A'\mathop{\to}\limits^{p_{A'}}A'$ be coproduct and product in $\mcC$,
respectively. By the biadditivity of $\mathbb{E}$, we have a natural isomorphism
$$\mathbb{E}(C\oplus C', A\oplus A')\cong \mathbb{E}(C, A)\oplus \mathbb{E}(C, A')\oplus 
\mathbb{E}(C', A)\oplus \mathbb{E}(C', A').$$
Following \cite[Def. 2.6]{Nakaoka1}, let $\delta\oplus \delta'\in \mathbb{E}(C\oplus C', A\oplus A')$ be the element corresponding to $(\delta, 0, 0, 
\delta')$ through this isomorphism.
If $A=A'$ and $C=C'$, then the sum $\delta+\delta'\in \mathbb{E}(C, A)$ of $\delta, \delta'\in 
\mathbb{E}(C, A)$ is obtained by 
$$\delta+\delta'=\nabla_{A}\cdot (\delta\oplus \delta')\cdot\Delta_{C}$$
where $\Delta_{C}=\tiny{\left( 
\begin{array}{c}
1_C\\1_C
\end{array}
\right)} : C\to C\oplus C$ and $\nabla_{A}=(1_A\, 1_A): A\oplus A\to A$.
\

 Two sequences of morphisms $A\mathop{\to}\limits^{x} B
\mathop{\to}\limits^{y} C$ and $A\mathop{\to}\limits^{x'} B'\mathop{\to}\limits^{y'} C$ in $\mcC$ are said to be
equivalent \cite[Def. 2.7]{Nakaoka1} if there exists an isomorphism $b\in \mcC(B, B')$ which makes the following diagram commutative
\[
\xymatrix@R=4mm{
& B\ar[dr]^{y}\ar[dd]^{b}_{\wr} &\\
A\ar[ur]^{x}\ar[dr]_{x'} & & C.\\
& B'\ar[ur]_{y'} & 
}
\]
We denote the equivalence class of $A\mathop{\to}\limits^{x} B\mathop{\to}\limits^{y} C$ by 
$[A\mathop{\to}\limits^{x} B\mathop{\to}\limits^{y} C]$. Moreover, 
For any two classes $[A\mathop{\to}\limits^{x} B\mathop{\to}\limits^{y} C]$ and
$[A'\mathop{\to}\limits^{x'} B'\mathop{\to}\limits^{y'} C']$, we set \cite[Def. 2.8]{Nakaoka1}
\begin{center}
$[A\mathop{\to}\limits^{x} B\mathop{\to}\limits^{y} C]\oplus [A'\mathop{\to}\limits^{x'} B'\mathop{\to}\limits^{y'} C']:=[A\oplus A'\mathop{\to}\limits^{x\oplus x'} B\oplus B'\mathop{\to}\limits^{y\oplus
y'} C\oplus C'].$
\end{center}

\begin{definition}\cite[Def. 2.9]{Nakaoka1}\label{def 2.9}
Let $\mathfrak{s}$ be a correspondence which associates to each $\mathbb{E}$-extension $\delta\in \mathbb{E}(C, A)$ an equivalence class $\mathfrak{s}(\delta)=
[A\mathop{\to}\limits^{x} B\mathop{\to}\limits^{y} C]$. This $\mathfrak{s}$ is called a realization of $\mathbb{E}$ if it satisfies the following condition:

$(*)$ Let $\delta\in \mathbb{E}(C, A)$ and $\delta'\in \mathbb{E}(C', A')$ be any pair of $\mathbb{E}$-extensions, with $\mathfrak{s}(\delta)=[A\mathop{\to}\limits^{x} B\mathop{\to}\limits^{y} C]$ and
$\mathfrak{s}(\delta')=[A'\mathop{\to}\limits^{x'} B'\mathop{\to}\limits^{y'} C']$. Then, for any morphism 
$(a, c)\in \mathbb{E}\mbox{-}\Ext(\mcC)(\delta, \delta')$, there exists $b\in \mcC(B, B')$ which makes the 
following diagram commutative
\begin{equation}\label{eq: diag1}
\xymatrix{
A\ar[r]^{x}\ar[d]_{a} & B\ar[r]^{y}\ar[d]^{b} & C\ar[d]^{c}\\
A'\ar[r]_{x'} & B'\ar[r]_{y'} & C'.
}
\end{equation}
It is said that the sequence $A\mathop{\to}\limits^{x} B\mathop{\to}\limits^{y} C$ realizes $\delta$ if $\mathfrak{s}(\delta)=[A\mathop{\to}\limits^{x} B\mathop{\to}\limits^{y} C]$. We point out that this condition does not depend on the choices of the representatives of the equivalence classes. 
In the above situation, we say that \eqref{eq: diag1} (or the triplet $(a, b, c)$) realizes $(a, c)$.
\end{definition}

A realization $\mathfrak{s}$ of $\mathbb{E}$ is additive \cite[Defs. 2.8 and 2.10]{Nakaoka1} if it satisfies the following two conditions:
\begin{enumerate}
\item $\mathfrak{s}(0)=\left[A\mathop{\to}\limits^{\tiny{\left(
\begin{array}{c}
1\\ 0
\end{array}
\right)}} A\oplus C\mathop{\to}\limits^{(0\, 1)} C
\right]$ for any $A, C\in \mcC;$
\item $\mathfrak{s}(\delta\oplus \delta')=\mathfrak{s}(\delta)\oplus \mathfrak{s}(\delta')$, for
any $\mathbb{E}$-extensions $\delta$ and $\delta'$.
\end{enumerate}

\begin{definition}\cite[Def. 2.12]{Nakaoka1}
The pair $(\mathbb{E}, \mathfrak{s})$ is an external triangulation of $\mcC$ if it satisfies the following
conditions.
\begin{enumerate}
\item[(ET1)] $\mathbb{E}: \mcC^{op}\times \mcC\to \mathrm{Ab}$ is an additive bifunctor.
\item[(ET2)] $\mathfrak{s}$ is an additive realization of $\mathbb{E}$.
\item[(ET3)] Let $\delta\in \mathbb{E}(C, A)$ and $\delta'\in \mathbb{E}(C', A')$ be any pair of $\mathbb{E}$-extensions, realized as $\mathfrak{s}(\delta)=[A\mathop{\to}\limits^{x} B\mathop{\to}\limits^{y} C]$ and
$\mathfrak{s}(\delta')=[A'\mathop{\to}\limits^{x'} B'\mathop{\to}\limits^{y'} C']$. For any commutative square
in $\mcC$
\[
\xymatrix{
A\ar[r]^{x}\ar[d]_{a} & B\ar[r]^{y}\ar[d]^{b} & C\\
A'\ar[r]_{x'} & B'\ar[r]_{y'} & C',
}
\]
there exists a morphism $(a, c): \delta\to \delta'$ which is realized by $(a, b, c)$.
\item[(ET3)$^{op}$] Let $\delta\in \mathbb{E}(C, A)$ and $\delta'\in \mathbb{E}(C', A')$ be any pair of 
$\mathbb{E}$-extensions, realized by $A\mathop{\to}\limits^{x} B\mathop{\to}\limits^{y} C$ and 
$A'\mathop{\to}\limits^{x'} B'\mathop{\to}\limits^{y'} C'$, respectively. For any commutative square in $\mcC$
\[
\xymatrix{
A\ar[r]^{x} & B\ar[r]^{y}\ar[d]_{b} & C\ar[d]^{c}\\
A'\ar[r]_{x'} & B'\ar[r]_{y'} & C'
}
\]
there exists a morphism $(a, c): \delta\to \delta'$ which is realized by $(a, b, c)$.

\item[(ET4)] Let $(A, \delta, D)$ and $(B, \delta', F)$ be $\mathbb{E}$-extensions realized, respectively, by
$A\mathop{\to}\limits^{f} B\mathop{\to}\limits^{f'} D$ and $B\mathop{\to}\limits^{g} C\mathop{\to}\limits^{g'}
F$. Then there exist an object $E\in \mcC$, a commutative diagram
\[
\xymatrix{
A\ar@{=}[d]\ar[r]^{f} & B\ar[r]^{f'}\ar[d]_{g} & D\ar[d]^{d}\\
A\ar[r]_{h} & C\ar[r]_{h'}\ar[d]_{g'} & E\ar[d]^{e}\\
& F\ar@{=}[r] & F
}
\]
in $\mcC$, and an $\mathbb{E}$-extension $\delta''\in \mathbb{E}(E, A)$ realized by $A\mathop{\to}\limits^{h} C
\mathop{\to}\limits^{h'} E$, which satisfy $\mathfrak{s}(f'\cdot \delta ')=[D\mathop{\to}\limits^{d} E\mathop{\to}\limits^{e} F]$, $\delta''\cdot d=\delta$ and $f\cdot \delta''=\delta'\cdot e$.

\item[(ET4)$^{op}$] Let $(D, \delta, B)$ and $(F, \delta', C)$ be $\mathbb{E}$-extensions realized, respectively, by $D\mathop{\to}\limits^{f'} A\mathop{\to}\limits^{f} B$ and
$F\mathop{\to}\limits^{g'} B\mathop{\to}\limits^{g} C$, respectively. Then there
exist an object $E\in \mcC$, a commutative diagram
\[
\xymatrix{
D\ar@{=}[d]\ar[r]^{d} & E\ar[r]^{e}\ar[d]_{h'} & F\ar[d]^{g'}\\
D\ar[r]_{f'} & A\ar[r]_{f}\ar[d]_{h} & B\ar[d]^{g}\\
& C\ar@{=}[r] & C
}
\]
in $\mcC$ and an $\mathbb{E}$-extension $\delta''\in \mathbb{E}(C, E)$ realized by
$E\mathop{\to}\limits^{h'} A\mathop{\to}\limits^{h} C$ which satisfy $\mathfrak{s}(\delta\cdot  g')=[D\mathop{\to}\limits^{d} E\mathop{\to}\limits^{e} F]$, $\delta'=e\cdot \delta''$ and 
$d\cdot \delta=\delta''\cdot g$.
\end{enumerate}
If the above conditions hold true, we call $\mathfrak{s}$ an $\mathbb{E}$-triangulation of $\mcC$, and call the triplet
$(\mcC, \mathbb{E}, \mathfrak{s})$ an externally triangulated category, or for short, extriangulated 
category. Sometimes, for the sake of
simplicity, we only write $\mcC$ instead of $(\mcC, \mathbb{E}, \mathfrak{s}).$ 
\end{definition}

For a triplet $(\mcC, \mathbb{E}, \mathfrak{s})$ satisfying (ET1) and (ET2), we recall that \cite[Def. 2.15]{Nakaoka1}:
\begin{enumerate}
\item A sequence $A\mathop{\to}\limits^{x} B\mathop{\to}\limits^{y} C$ is called a conflation if it
realizes some $\mathbb{E}$-extension $\delta\in \mathbb{E}(C, A)$.
\item A morphism $f\in\mcC(A, B)$ is called an inflation if it admits some conflation 
$A\mathop{\to}\limits^{f} B\to C$.
\item A morphism $f\in\mcC(A,B)$ is called a deflation if it admits some conflation $K\to A\mathop{\to}\limits^{f} B$.
\end{enumerate}
Furthermore in case $\mcC$ is an extriangulated category, we recall from \cite[Def. 2.17]{Nakaoka1}, that  a subcategory $\mcD\subseteq \mcC$ is \emph{closed under extensions} if, for any conflation $A\to B\to C$ with $A, C\in \mcD$, we have $B\in \mcD$.
\

Let $(\mcC, \mathbb{E}, \mathfrak{s})$ be a triplet satisfying (ET1) and (ET2). Then, by following \cite[Def. 2.19]{Nakaoka1}, we have that: 
\begin{enumerate}
\item If $A\mathop{\to}\limits^{x} B\mathop{\to}\limits^{y} C$ realizes $\delta\in \mathbb{E}(C, A)$, we call the pair $(A\mathop{\to}\limits^{x} B\mathop{\to}\limits^{y} C, \delta)$ an $\mathbb{E}$-triangle,
and we write it as
$\xymatrix{A\ar[r]^{x} & B\ar[r]^{y} & C\ar@{-->}[r]^{\delta} & }.$ Let us consider another $\mathbb{E}$-triangle $(A'\mathop{\to}\limits^{x'} B'\mathop{\to}\limits^{y'} C', \delta').$ Then, the fact that $\mathfrak{s}$ is an additive realization of $\mbE$ give us the $\mbE$-triangle
\[\xymatrix{A\oplus A'\ar[r]^{x\oplus x'} & B\oplus B'\ar[r]^{y\oplus y'} & C\ar@{-->}[r]^{\delta\oplus\delta'} & }.\]

\item Let $A\mathop{\to}\limits^{x} B\mathop{\to}\limits^{y} C\mathop{\dashrightarrow}\limits^{\delta}$ and
$A'\mathop{\to}\limits^{x'} B'\mathop{\to}\limits^{y'} C'\mathop{\dashrightarrow}\limits^{\delta'}$ be  $\mathbb{E}$-triangles. If a triplet $(a, b, c)$ realizes $(a, c): \delta\to \delta'$ as in Definition \ref{def 2.9}, then we write
it as
\[
\xymatrix{
A\ar[r]^{x}\ar[d]_{a} & B\ar[r]^{y}\ar[d]^{b} & C\ar@{-->}[r]^{\delta}\ar[d]^{c} &\\
A'\ar[r]_{x'} & B'\ar[r]_{y'} & C'\ar@{-->}[r]_{\delta'} &
}
\]
and we call $(a, b, c)$ morphism of $\mathbb{E}$-triangles.
\end{enumerate}

Let $\mathbb{E}:\mcC^{op}\times\mcC\to Ab$ be an additive bifunctor.
By Yoneda's lemma and \cite[Def. 3.1]{Nakaoka1}, any $\mathbb{E}$-extension
$\delta\in \mathbb{E}(C, A)$ induces natural
transformations $\delta_{\#}: \mcC(-,C)\rightarrow \mathbb{E}(-,A)$ and 
$\delta^{\#}: \mcC(A, -)\rightarrow \mathbb{E}(C,-)$. For any $X\in \mcC$, these $(\delta_{\#})_{X}$ and $\delta^{\#}_{X}$ are given as follows
\begin{enumerate}
\item $(\delta_{\#})_{X}: \mcC(X, C)\to 
\mathbb{E}(X, A); f\mapsto \delta\cdot f;$
\item $\delta^{\#}_{X}: \mcC(A, X)\to \mathbb{E}(C, X); g\mapsto g\cdot \delta$.
\end{enumerate}
We abbreviately denote $(\delta_{\#})_{X}(f)$
and $\delta^{\#}_{X}(g)$ by $\delta_{\#}f$ and
$\delta^{\#}g$, respectively.

\begin{corollary}\cite[Cor. 3.12]{Nakaoka1}\label{suc exact ext1}
Let $\mcC$ be an extriangulated category. For any 
$\mathbb{E}$-triangle $A\mathop{\to}\limits^{x} B\mathop{\to}\limits^{y} C \mathop{\dashrightarrow}\limits^{\delta}$, we have the 
following exact sequences of additive functors
$$\mcC(C,-)\mathop{\longrightarrow}\limits^{\mcC(y,-)} \mcC(B,-)
\mathop{\longrightarrow}\limits^{\mcC(x,-)} \mcC(A,-)\mathop{\longrightarrow}\limits^{\delta^{\#}} \mathbb{E}(C,-)
\mathop{\longrightarrow}\limits^{\mathbb{E}(y,-)} \mathbb{E}(B,-)
\mathop{\longrightarrow}\limits^{\mathbb{E}(x,-)} \mathbb{E}(A,-),$$
$$\mcC(-,A)\mathop{\longrightarrow}\limits^{\mcC(-,x)} \mcC(-,B)
\mathop{\longrightarrow}\limits^{\mcC(-,y)} \mcC(-,C)\mathop{\longrightarrow}\limits^{\delta_{\#}} \mathbb{E}(-,A)
\mathop{\longrightarrow}\limits^{\mathbb{E}(-,x)} \mathbb{E}(-,B)
\mathop{\longrightarrow}\limits^{\mathbb{E}(-,y)} \mathbb{E}(-,C).$$
\end{corollary}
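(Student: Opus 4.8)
The plan is to prove the first sequence for an arbitrary evaluation object $X\in\mcC$ and then to deduce the second by duality: since $(\mcC^{op},\mathbb{E}^{op},\mathfrak{s}^{op})$ is again extriangulated with $\mathbb{E}^{op}(Y,Z)=\mathbb{E}(Z,Y)$, the $\mathbb{E}$-triangle $A\xrightarrow{x}B\xrightarrow{y}C$ becomes one in $\mcC^{op}$ whose associated first sequence is exactly the second sequence of the statement (the functors $\delta^{\#}$ and $\delta_{\#}$ exchanging roles). So it suffices to check exactness of the first sequence at its four inner terms $\mcC(B,-)$, $\mcC(A,-)$, $\mathbb{E}(C,-)$ and $\mathbb{E}(B,-)$. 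I would first record the three vanishing identities behind the ``it is a complex'' inclusions, namely $yx=0$, $x\cdot\delta=0$ and $\delta\cdot y=0$: applying (ET3) to the commutative square built from $x\colon A\to B$ and the split $\mathbb{E}$-triangle $B\xrightarrow{1_B}B\to 0$ produces a morphism of extensions $(x,c)\colon\delta\to 0$, so $x\cdot\delta=0\cdot c=0$; dually (ET3)$^{op}$ gives $\delta\cdot y=0$; and realizing $(x,1_C)\colon\delta\to x\cdot\delta$ via the realization axiom $(*)$, with middle term forced to be $B\oplus C$ since $x\cdot\delta=0$, lets me read off $yx=0$.

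For the Hom-level exactness I would reproduce the four-term exact sequence $\mcC(C,-)\to\mcC(B,-)\to\mcC(A,-)\xrightarrow{\delta^{\#}}\mathbb{E}(C,-)$ (cf.\ \cite[Cor.~3.5]{Nakaoka1}), valid for every $\mathbb{E}$-triangle. Exactness at $\mcC(B,X)$: given $b\colon B\to X$ with $bx=0$, applying (ET3) to the square with left column $(0,b)$ mapping into the split triangle $0\to X\xrightarrow{1_X}X$ yields the desired $c\colon C\to X$ with $cy=b$. Exactness at $\mcC(A,X)$: given $a\colon A\to X$ with $a\cdot\delta=0$, realizing $(a,1_C)\colon\delta\to a\cdot\delta$ by $(*)$ gives a middle term $X\oplus C$, and the first component of the realized middle map factors $a$ through $x$. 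I would then use these four-term sequences freely for auxiliary triangles.

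Exactness at $\mathbb{E}(C,X)$ is the connecting step. Given $\rho\in\mathbb{E}(C,X)$ with $\rho\cdot y=0$, I would realize $\rho$ as $\mathfrak{s}(\rho)=[X\xrightarrow{u}V\xrightarrow{v}C]$; the four-term exactness of this triangle together with $\rho\cdot y=0$ produces $t\colon B\to V$ with $vt=y$, so the right-hand square $(t,1_C)$ commutes and (ET3)$^{op}$ supplies $a\colon A\to X$ with $(a,1_C)\colon\delta\to\rho$ a morphism of extensions, whence $a\cdot\delta=\rho\cdot 1_C=\rho$.

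The main obstacle is exactness at $\mathbb{E}(B,X)$, which genuinely needs the octahedral axiom. Given $\sigma\in\mathbb{E}(B,X)$ with $\sigma\cdot x=0$, I would realize $\sigma$ as $\mathfrak{s}(\sigma)=[X\xrightarrow{u}V\xrightarrow{v}B]$ and feed the composable deflations $V\xrightarrow{v}B$ and $B\xrightarrow{y}C$ into (ET4)$^{op}$. This outputs an object $E$, an extension $\delta''\in\mathbb{E}(C,E)$, and the relations $\mathfrak{s}(\sigma\cdot x)=[X\xrightarrow{d}E\xrightarrow{e}A]$, $\delta=e\cdot\delta''$ and $d\cdot\sigma=\delta''\cdot y$. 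Since $\sigma\cdot x=0$, the sequence $X\xrightarrow{d}E\xrightarrow{e}A$ splits, so $d$ admits a retraction $q\colon E\to X$; setting $\rho:=q\cdot\delta''\in\mathbb{E}(C,X)$ and using the last relation gives $\rho\cdot y=q\cdot(\delta''\cdot y)=q\cdot(d\cdot\sigma)=(qd)\cdot\sigma=\sigma$, as required. The delicate points are selecting the correct pair of conflations for (ET4)$^{op}$ and transporting $\delta''$ along $q$, the careful handling of the three extension identities being where the argument must be treated with care. The second sequence then follows by rereading everything in $\mcC^{op}$.
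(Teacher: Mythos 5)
Your proposal is correct, but it is worth noting that the paper itself offers no proof of this statement: it is recalled verbatim from Nakaoka--Palu and justified purely by the citation \cite[Cor.~3.12]{Nakaoka1}. What you have done is reconstruct, from the axioms, essentially the argument of the cited source, and every step checks out. The three vanishing identities are obtained correctly: (ET3) applied to the square formed by $x$ and the split triangle $B\xrightarrow{1_B}B\to 0$ gives $x\cdot\delta=0\cdot c=0$; the dual argument gives $\delta\cdot y=0$; and realizing $(x,1_C)\colon\delta\to x\cdot\delta$ via the realization axiom, with target the split sequence $B\to B\oplus C\to C$, forces the second component of the induced middle map to be $y$ and to kill $x$, whence $yx=0$. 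The Hom-level steps are the standard four-term exactness arguments, and your connecting step at $\mathbb{E}(C,X)$ (lift $y$ through the realization of $\rho$ using $\rho\cdot y=0$, then apply (ET3)$^{op}$ to get $(a,1_C)\colon\delta\to\rho$, so $a\cdot\delta=\rho$) is exactly right. The one genuinely nontrivial point, exactness at $\mathbb{E}(B,X)$, is handled correctly by your (ET4)$^{op}$ application: with the realizations $X\xrightarrow{u}V\xrightarrow{v}B$ of $\sigma$ and $A\xrightarrow{x}B\xrightarrow{y}C$ of $\delta$, the axiom (in the paper's formulation, with $(D,\delta,B)=(X,\sigma,B)$ and $(F,\delta',C)=(A,\delta,C)$) yields precisely the three relations you list; the splitting of $\mathfrak{s}(\sigma\cdot x)$ produces the retraction $q$ of $d$, and the computation $(q\cdot\delta'')\cdot y=q\cdot(d\cdot\sigma)=(qd)\cdot\sigma=\sigma$ uses only the compatibility $(a\cdot\delta)\cdot c=a\cdot(\delta\cdot c)$ and bifunctoriality of $\mathbb{E}$, both available from the preliminaries. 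Your use of the contravariant four-term sequence for the auxiliary triangle inside the proof of the first (covariant) sequence creates no circularity, since both Hom-level fragments are established independently of the $\mathbb{E}$-level exactness; and the final passage to the second sequence via the self-duality of the axioms ($(\mcC^{op},\mathbb{E}^{op},\mathfrak{s}^{op})$ is again extriangulated) is legitimate. In short: the paper buys this result by citation, while your route makes the dependence on (ET3), (ET3)$^{op}$, the realization axiom, and (ET4)$^{op}$ explicit, which is exactly what a self-contained proof requires.
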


\begin{proposition}\cite[Prop. 3.15]{Nakaoka1}\label{Nakaoka 3.15}
For an extriangulated category $\mcC,$  the following statement (and its dual)
holds true.

Let $C\in \mcC$ and $A_1\mathop{\to}\limits^{x_1} B_{1}\mathop{\to}\limits^{y_1} C\mathop{\dashrightarrow}\limits^{\delta_1}$, 
$A_2\mathop{\to}\limits^{x_2} B_{2}\mathop{\to}\limits^{y_2} C\mathop{\dashrightarrow}\limits^{\delta_2}$ be any pair of
$\mathbb{E}$-triangles. Then there exist an object $M\in \mcC$ and a commutative diagram in $\mcC$
\[ 
\xymatrix{
& A_2\ar@{=}[r]\ar[d]_{m_2} & A_2\ar[d]^{x_2}\\
A_1\ar[r]^{m_1}\ar@{=}[d] & M\ar[r]^{e_1}\ar[d]_{e_2} & B_2\ar[d]^{y_2}\\
A_1\ar[r]_{x_1} & B_1\ar[r]_{y_1} & C
}
\]
 which satisfy $\mathfrak{s}(\delta_1 \cdot y_2)=[A_1\mathop{\to}\limits^{m_1} M\mathop{\to}\limits^{e_1} B_2]$, $\mathfrak{s}(\delta_2 \cdot y_1)=[A_2\mathop{\to}\limits^{m_2} M\mathop{\to}\limits^{e_2} B_1]$ and $m_1\cdot \delta_1+m_2\cdot \delta_2=0$. 
\end{proposition}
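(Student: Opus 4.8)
The plan is to realize $M$ as a homotopy pullback of the two deflations $y_1$ and $y_2$ over their common target $C$, and then to extract the three asserted conditions from the structural relations furnished by $(\mathrm{ET4})^{\mathrm{op}}$. The immediate difficulty is that the two given $\mathbb{E}$-triangles share only the object $C$, so they are not composable and $(\mathrm{ET4})^{\mathrm{op}}$ does not apply to them directly. To fix this, I would first realize the pulled-back extension $\delta_1\cdot y_2\in\mathbb{E}(B_2,A_1)$, say $\mathfrak{s}(\delta_1\cdot y_2)=[A_1\xrightarrow{m_1}M\xrightarrow{e_1}B_2]$; this defines $M$ together with the middle row. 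Since $(1_{A_1},y_2)\colon\delta_1\cdot y_2\to\delta_1$ is a morphism of $\mathbb{E}$-extensions, the defining property $(*)$ of a realization (Definition~\ref{def 2.9}) yields a morphism $e_2\colon M\to B_1$ realizing it, whence $e_2m_1=x_1$ and $y_1e_2=y_2e_1$. This already produces the identity left column and the lower right commutative square.

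Next I would apply $(\mathrm{ET4})^{\mathrm{op}}$ to the two now-composable conflations $A_1\xrightarrow{m_1}M\xrightarrow{e_1}B_2$ (realizing $\delta_1\cdot y_2$) and $A_2\xrightarrow{x_2}B_2\xrightarrow{y_2}C$ (realizing $\delta_2$), which agree on $B_2$. This provides an object $E$, morphisms $d\colon A_1\to E$, $e\colon E\to A_2$, $h'\colon E\to M$ and an $\mathbb{E}$-extension $\delta''\in\mathbb{E}(C,E)$ realized by $E\xrightarrow{h'}M\xrightarrow{h}C$ with $h=y_2e_1=y_1e_2$, subject to $h'd=m_1$, $e_1h'=x_2e$ and the three compatibility relations $\mathfrak{s}((\delta_1\cdot y_2)\cdot x_2)=[A_1\xrightarrow{d}E\xrightarrow{e}A_2]$, $\delta_2=e\cdot\delta''$ and $d\cdot(\delta_1\cdot y_2)=\delta''\cdot y_2$. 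The crucial simplification is $(\delta_1\cdot y_2)\cdot x_2=\delta_1\cdot(y_2x_2)=0$, so the top row $A_1\xrightarrow{d}E\xrightarrow{e}A_2$ is a split conflation; by additivity of $\mathfrak{s}$ we get $E\simeq A_1\oplus A_2$ with a section $s$ of $e$ for which $d,s$ are the coproduct inclusions. Putting $m_2:=h's$ yields $e_1m_2=x_2es=x_2$, which is exactly the upper right square and the top identity row.

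It then remains to obtain the extension identities. Writing $\delta''=d\cdot\alpha+s\cdot\delta_2$ with $\alpha\in\mathbb{E}(C,A_1)$ the $A_1$-component, and using that the inflation of an $\mathbb{E}$-triangle kills its own extension (so $h'\cdot\delta''=0$), one gets $m_1\cdot\alpha+m_2\cdot\delta_2=0$. The relation $d\cdot(\delta_1\cdot y_2)=\delta''\cdot y_2$ together with $\delta_2\cdot y_2=0$ (applying a retraction of $d$) forces $\alpha\cdot y_2=\delta_1\cdot y_2$, hence $(\alpha-\delta_1)\cdot y_2=0$; the exact sequence of Corollary~\ref{suc exact ext1} attached to $\delta_2$ then gives $\alpha-\delta_1=\phi\cdot\delta_2$ for some $\phi\colon A_2\to A_1$. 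Replacing $s$ by $s+d\phi$ (equivalently $m_2$ by $m_2+m_1\phi$) leaves $e_1m_2=x_2$ intact and converts $m_1\cdot\alpha+m_2\cdot\delta_2=0$ into exactly $m_1\cdot\delta_1+m_2\cdot\delta_2=0$. Finally the middle column $A_2\xrightarrow{m_2}M\xrightarrow{e_2}B_1$ is shown to realize $\delta_2\cdot y_1$, by comparing it through the realization property (and the relation $\delta_2=e\cdot\delta''$) with a chosen realization of $\delta_2\cdot y_1$ and transporting the resulting equivalence.

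The main obstacle is precisely this last bundle of verifications: arranging all three stated equalities simultaneously on the single object $M$. One must check that the cocycle adjustment securing $m_1\cdot\delta_1+m_2\cdot\delta_2=0$ is compatible with the middle column realizing $\delta_2\cdot y_1$ (in particular with $e_2m_2=0$), and this is the delicate bookkeeping of the argument. It is carried out by systematically invoking the three compatibility relations of $(\mathrm{ET4})^{\mathrm{op}}$, the vanishing identities $x\cdot\delta=0=\delta\cdot y$ valid for any $\mathbb{E}$-triangle, the six-term exact sequences of Corollary~\ref{suc exact ext1}, and the biadditivity of $\mathbb{E}$. The dual statement then follows immediately by applying the part just proved in the opposite extriangulated category $\mcC^{\mathrm{op}}$.
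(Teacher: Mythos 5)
The paper itself offers no argument for this statement: it is quoted directly from \cite[Prop.~3.15]{Nakaoka1}, so your proposal can only be measured against the original source, whose proof your outline largely parallels. The first two thirds of your plan are correct and well executed: realizing $\delta_1\cdot y_2$ to define $M$, $m_1$, $e_1$; applying $(\mathrm{ET4})^{\mathrm{op}}$ to the now-composable pair $A_1\xrightarrow{m_1}M\xrightarrow{e_1}B_2$ and $A_2\xrightarrow{x_2}B_2\xrightarrow{y_2}C$; splitting $E\simeq A_1\oplus A_2$ because $(\delta_1\cdot y_2)\cdot x_2=\delta_1\cdot(y_2x_2)=0$; setting $m_2:=h's$; and the correction $s\mapsto s+d\phi$, with $\phi$ supplied by the exact sequence of Corollary~\ref{suc exact ext1} for $\delta_2$, which turns $m_1\cdot\alpha+m_2\cdot\delta_2=0$ into $m_1\cdot\delta_1+m_2\cdot\delta_2=0$. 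All of these computations check out.

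The genuine gap is the final step, and it is created by your early fixing of $e_2$. Axiom $(*)$ applied to $(1_{A_1},y_2)\colon \delta_1\cdot y_2\to\delta_1$ produces only \emph{some} filler $e_2$ satisfying $e_2m_1=x_1$ and $y_1e_2=y_2e_1$, with no further control; your concluding claim, that this very $e_2$ satisfies $\mathfrak{s}(\delta_2\cdot y_1)=[A_2\xrightarrow{m_2}M\xrightarrow{e_2}B_1]$, cannot be proved, because it is false for some fillers. Indeed, for any $\kappa\in\mcC(B_2,A_1)$ the morphism $e_2+x_1\kappa e_1$ is again a filler (both squares still commute, since consecutive morphisms in a conflation compose to zero), yet $(e_2+x_1\kappa e_1)m_2=e_2m_2+x_1\kappa x_2$; taking, say, both $\delta_i$ split and $\kappa$ a retraction of $x_2$ gives a filler whose composite with $m_2$ is nonzero, so the middle column is not even a conflation. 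Thus $e_2$ must be \emph{constructed} at the end, not chosen at the start, and the "delicate bookkeeping" you defer is exactly this construction. The repair: discard your second step; after the correction write $s$ for the new section of $e$ and $p$ for the projection with $pd=1_{A_1}$, $ps=0$, so that $\delta''=d\cdot\delta_1+s\cdot\delta_2$; then apply $(\mathrm{ET4})$ to the split conflation $A_2\xrightarrow{s}E\xrightarrow{p}A_1$ followed by $E\xrightarrow{h'}M\xrightarrow{h}C\dashrightarrow^{\delta''}$. This yields conflations $A_2\xrightarrow{m_2}M\xrightarrow{w}W$ realizing some $\rho\in\mbE(W,A_2)$ and $A_1\xrightarrow{d_W}W\xrightarrow{e_W}C$ realizing $p\cdot\delta''=\delta_1$, together with $wh'=d_Wp$, $e_Ww=h$ and $s\cdot\rho=\delta''\cdot e_W$. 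Since also $\mathfrak{s}(\delta_1)=[A_1\xrightarrow{x_1}B_1\xrightarrow{y_1}C]$, there is an isomorphism $\omega\colon W\to B_1$ with $\omega d_W=x_1$ and $y_1\omega=e_W$; put $e_2:=\omega w$. Then $e_2m_1=\omega wh'd=\omega d_Wpd=x_1$ and $y_1e_2=e_Ww=h=y_2e_1$, while applying $e\cdot(-)$ to $s\cdot\rho=\delta''\cdot e_W$ gives $\rho=\delta_2\cdot e_W=(\delta_2\cdot y_1)\cdot\omega$, and transporting along the isomorphism $\omega$ (realize $(1_{A_2},\omega)\colon\rho\to\delta_2\cdot y_1$; the middle morphism is an isomorphism since the outer two are) yields $\mathfrak{s}(\delta_2\cdot y_1)=[A_2\xrightarrow{m_2}M\xrightarrow{e_2}B_1]$. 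With this replacement your argument becomes a complete proof, and the reduction of the dual statement to $\mcC^{\mathrm{op}}$ is fine.
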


\subsection*{(Co)Resolution dimensions in extriangulated categories}
Auslander-Buchweitz theory originally appears in \cite{ABtheory} and since then it has been carried
to others contexts \cite{MendozaSaenzVargasSouto2, MDZtheoryAB}, 
for instance. In the following lines, we recall some definitions and results adapted to 
extriangulated categories. 

Let $\mcC$ be an extriangulated category. By following \cite{ZZtilting}, an {\rm  $\mathbb{E}$-triangle sequence}  is a pair $(C_{\bullet}, Z_\bullet(C_{\bullet})),$ where $C_{\bullet}$ is a sequence 
$\cdots \to C_{n+1}\mathop{\to}\limits^{d_{n+1}} C_{n}\mathop{\to}\limits^{d_{n}} C_{n-1}\to \cdots$
in $\mcC$ and $Z_\bullet(C_{\bullet})$ is a family of $\mathbb{E}$-triangles 
$\{Z_{n+1}(C_{\bullet})\mathop{\to}\limits^{f_{n}} C_{n}
\mathop{\to}\limits^{g_{n}} Z_{n}(C_{\bullet})\dashrightarrow\}_{n\in\mathbb{Z}}$
satisfying that  $f_{n}g_{n+1}=d_{n+1}$ $\forall\,n\in\mathbb{Z}.$ Notice that $C_{\bullet}$ is a chain complex and each $Z_{n}(C_{\bullet})$  is called an {\rm $n$-th $\mathbb{E}$-cycle} of $C_\bullet$ in $\mcC.$ For the sake of simplicity, an  $\mathbb{E}$-triangle sequence $(C_{\bullet}, Z_\bullet(C_{\bullet}))$ will be denoted by $C_{\bullet}.$

Let $\mcX\subseteq \mcC$ and $C\in \mcC$. A \emph{finite $\mcX$-resolution} of $C$, of length $n\geq 0,$ is an  $\mathbb{E}$-triangle sequence of the form $X_\bullet,$ where:  
$X_{-1}=C,$  $X_j=0$ for $j\leq -2$ or $j>n,$  $X_k\in\mcX$ for $k\in[0,n]$ and $Z_j(C_{\bullet})=0$ for $j\geq n+1$ or $j\leq -1.$ For simplicity, we write $X_\bullet$ as 
$0\to X_n\to X_{n-1}\to\cdots \to X_1 \to X_0 \to C\to 0$ if $X_\bullet$ is a finite $\mcX$-resolution of $C$ of length $n\geq 0.$ Notice that $0\to X_1\to X_0\to C\to 0,$ with $X_0,X_1\in\mcX,$ is an $\mcX$-resolution of $C$ if, and only if, $X_1\to X_0\to C$ is a conflation. The \emph{resolution dimension of $C$ with respect to $\mcX$} (or the \emph{$\mcX$-resolution dimension of $C$}, for short), denoted $\resdim_{\mcX}(C)$, is the smallest $n \geq 0$ such that $C$ admits a finite $\mcX$-resolution of length $n$. If such $n$ does not exist, we set $\resdim_{\mcX}(C) := \infty$. Dually, an \emph{$\mcX$-coresolution of $C$} is an $\mathbb{E}$-triangle sequence of the form 
$Y_\bullet,$ where: 
$Y_{1}=C,$  $Y_j=0$ for $j\geq 2$ or $j\leq -n-1,$ $Y_k\in\mcX$ for $k\in[-n,0]$ and $Z_j(C_{\bullet})=0$ for $j\geq 2$ or $j\leq -n.$  For simplicity, we write $Y_\bullet$ as $0\to C\to Y_0\to Y_{-1}\to\cdots\to Y_{-n+1}\to Y_{-n}\to 0$ if $Y_\bullet$ is a finite $\mcX$-coresolution of $C$ of length $n\geq 0.$ The smallest $n \geq 0$ such that $C$ admits a finite $\mcX$-coresolution of length $n$ is denoted by $\coresdim_{\mcX}(C)$ and called the \emph{$\mcX$-coresolution dimension} of $C.$ We set $\coresdim_\mcX(C):=\infty$ in case that such $n$ does not exist.
Related with the two above homological dimensions, we have the following classes of objects in $\mcC$:
\begin{align*}
\mcX^\wedge_n & := \{ C \in \mcC \text{ : } \resdim_{\mcX}(C) \leq n \}, & & \text{and} & \mcX^\wedge & := \bigcup_{n \geq 0} \mcX^\wedge_n, \\
\mcX^\vee_n & := \{ C \in \mcC \text{ : } \coresdim_{\mcX}(C) \leq n \}, & & \text{and} & \mcX^\vee & := \bigcup_{n \geq 0} \mcX^\vee_n.
\end{align*}
Given a class $\mcY$ of objects in $\mcC$,
the \emph{$\mcX$-resolution dimension of 
$\mcY$} is defined as
$$\resdim_{\mcX}(\mcY):=\sup\{\resdim_{\mcX}(Y) : Y\in \mcY\}.$$
The \emph{$\mcX$-coresolution dimension of 
$\mcY$} is defined similarly and denoted by 
$\coresdim_{\mcX}(\mcY)$.\\

\noindent Given $\mcX, \mcY\subseteq \mcC$ classes of objects in an extriangulated category $\mcC$, we recall 
the following from \cite[Def. 4.2]{Nakaoka1}:
\begin{enumerate}
\item[$\bullet$] $C\in \mcC$ belongs to $\mathrm{Cone}(\mcX, \mcY)$ if $C$ admits a 
conflation $X\to Y\to C$ with $X\in \mcX, Y\in \mcY$.

\item[$\bullet$] $C\in \mcC$ belongs to $\mathrm{CoCone}(\mcX, \mcY)$ if $C$ admits a 
conflation $C\to X\to Y$ with $X\in \mcX, Y\in \mcY$.

\item[$\bullet$] $\mcX$ is \emph{closed under cones} if
$\mathrm{Cone}(\mcX, \mcX)\subseteq \mcX$.
Dually, $\mcX$ is \emph{closed
under cocones} if $\mathrm{CoCone}(\mcX, \mcX)\subseteq \mcX$.
\end{enumerate}

\noindent In this case, $\mathrm{Cone}(\mcX^{\wedge}_n, \mcX)=\mcX^{\wedge}_{n+1}$ and 
$\mathrm{CoCone}(\mcX, \mcX^{\vee}_n)=\mcX_{n+1}^{\vee}$ for all $n\geq 0$. Notice that $\mathrm{Cone}(0, \mcX)=\mcX^{\wedge}_0=\iso(\mcX)=\mcX^{\vee}_0=\mathrm{CoCone}(\mcX, 0).$ Moreover, $\mcX^{\wedge}_n\subseteq \mcX_{n+1}^{\wedge}$ and 
$\mcX^{\vee}_n\subseteq \mcX_{n+1}^{\vee}$ whenever $0\in \mcX$.


\subsection*{Higher extensions} Let $\mcC$ be an extriangulated category. Following \cite{Nakaoka1}, we recall that an object $P\in \mcC$ is
$\mbE$-projective if for any $\mathbb{E}$-triangle $A\mathop{\to}\limits^{x} B
\mathop{\to}\limits^{y} C\mathop{\dashrightarrow}\limits^{\delta}$ the map
$$\mcC(P,y): \mcC(P,B)\longrightarrow \mcC(P,C)$$
is surjective. We denote by $\mathcal{P}_{\mbE}(\mcC)$ the class of $\mbE$-projective objects in $\mcC$. Dually, the class of $\mbE$-injective objects in $\mcC$ is denoted by $\mcI_{\mbE}(\mcC)$. We say that $\mcC$ has \emph{enough $\mbE$-projectives} if for any object $C\in \mcC$, there exists an 
$\mathbb{E}$-triangle $A\to P\to C\dashrightarrow$ with $P\in \mcP_{\mbE}(\mcC)$. Dually, we can
define that $\mcC$ has \emph{enough $\mbE$-injectives}.
\

Following \cite{GNP, INP}, we recall the notion of $\mbE$-projective (injective) morphism and some basic properties in an extriangulated category $\mcC.$ A morphism $f:X\to Y$ in $\mcC$ is 
$\mbE$-projective if $\mbE(f,-)=0.$ The class of all the $\mbE$-projective morfisms 
in $\mcC$ is denoted by $\mcP_{1,\mbE}(\mcC).$ It is said that $\mcC$ has enough $\mbE$-projective morphisms if any $C\in\mcC$ admits an $\mbE$-triangle $X\to Y\xrightarrow{f} C\dashrightarrow$ with $f\in \mcP_{1,\mbE}(\mcC).$
Dually, a morphism $g:X\to Y$ in $\mcC$ is $\mbE$-injective if $\mbE(-,g)=0,$ and the class of all the $\mbE$-injective morfisms in $\mcC$ is denoted by $\mcI_{1,\mbE}(\mcC).$ Moreover, $\mcC$ has enough $\mbE$-injective morphisms if any $C\in\mcC$ admits an $\mbE$-triangle $C\xrightarrow{g} X\to Y\dashrightarrow$ with $g\in \mcI_{1,\mbE}(\mcC).$
\

In what follows, we enumerate some basic (and easy to verify) properties of the $\mbE$-projective morphisms, see for example in \cite{GNP}.

\begin{remark}\label{proy-morf} For an extriangulated category $\mcC,$ the following statements hold true. Dually, for injectives.
\begin{itemize}
\item[(1)] $\forall\, X\in\mcC,$ $X\in \mcP_{\mbE}(\mcC)\;\Leftrightarrow\;1_X\in\mcP_{1,\mbE}(\mcC).$
\item[(2)] If either $X\in \mcP_{\mbE}(\mcC)$ or $Y\in\mcP_{\mbE}(\mcC),$ then $\Hom_\mcC(X,Y)\subseteq\mcP_{1,\mbE}(\mcC).$
\item[(3)] Let $\mcC$ has enough $\mbE$-projectives. Then, for $f:X\to Y,$ we have that $f\in \mcP_{1,\mbE}(\mcC)$ if, and only if, $f$ factors through some object in $\mcP_{\mbE}(\mcC).$
\item[(4)] Let $f:X\to Y$ in $\mcC.$ Then $f\in \mcP_{1,\mbE}(\mcC)$ if, and only if, for any $\mbE$-triangle of the form $Z\to E\xrightarrow{h} Y\dashrightarrow$ we have that $f$ factors through $h.$
\item[(5)] If $\mcC$ has enough $\mbE$-projectives, then $\mcC$ also has enough $\mbE$-projective morphisms.
\item[(6)] The reverse implication in  (5) does not hold in general. In \cite[Ex. 2.14]{GNP} it is given an example of an extriangulated category $\mcC$ which has enough $\mbE$-projective morphisms but does not have enough $\mbE$-projectives.
\end{itemize}
\end{remark}

 We set $\Omega \mcX:=\mathrm{CoCone}(\mcP_{\mbE}(\mcC), \mcX)$, that is, $\Omega \mcX$ is
the subclass of $\mcC$ consisting of the objects $\Omega X$ admitting an $\mathbb{E}$-triangle
$\Omega X\to P\to X\dashrightarrow$
with $P\in\mcP_{\mbE}(\mcC)$ and $X\in \mcX$. We call $\Omega \mcX$ the syzygy class of $\mcX$. Dually, 
the cosyzygy class of $\mcX$ is $\Sigma\mcX:=\mathrm{Cone}(\mcX, \mcI_{\mbE}(\mcC))$. We set $\Omega^{0}\mcX:=\mcX$, and define $\Omega^{k}\mcX$ for $k>0$
inductively by
$\Omega^{k}\mcX:=\Omega(\Omega^{k-1}\mcX)$ which is the $k$-th syzygy of $\mcX$. Dually, $\Sigma^{k}\mcX$ is the $k$-th cosyzygy class of $\mcX,$ for $k\geq 0$  
(see \cite[Def. 4.2 and Prop. 4.3]{LNheartsoftwin},
for more details).\\

\noindent Let $\mcC$ be an extriangulated category with enough $\mbE$-projectives and $\mbE$-injectives.
In \cite{LNheartsoftwin}, it is shown that $\mathbb{E}(X, \Sigma^{k}Y)\cong \mathbb{E}(\Omega^{k}X, Y)$ 
 for $k\geq 0.$ Thus, the higher extensions groups are defined as 
 $\mathbb{E}^{k+1}(X, Y):=\mathbb{E}(X, \Sigma^{k}Y)\cong \mathbb{E}(\Omega^{k}X, Y),$ for $k\geq 0$. 
 Moreover, the following result is also proven.

\begin{lemma}\cite[Prop. 5.2]{LNheartsoftwin} \label{longExSeq}
Let $\mcC$ be an extriangulated category with enough $\mbE$-projectives and $\mbE$-injectives and $A\to B\to C\dashrightarrow$ be an $\mathbb{E}$-triangle in $\mcC$. Then, for any object $X\in \mcC$ and $k\geq 1$, we have the following
exact sequences 
\[(1)\;
\cdots \to \mathbb{E}^{k}(X, A)\to \mathbb{E}^{k}(X, B)\to \mathbb{E}^{k}(X, C)\to \mathbb{E}^{k+1}(X, A)\to \mathbb{E}^{k+1}(X, B)\to \cdots,
\]
\[(2)\;
\cdots \to \mathbb{E}^{k}(C, X)\to \mathbb{E}^{k}(B, X)\to \mathbb{E}^{k}(A, X)\to \mathbb{E}^{k+1}(C, X)\to \mathbb{E}^{k+1}(B, X)\to \cdots
\]
of abelian groups.
\end{lemma}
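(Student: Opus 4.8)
The plan is to deduce both long exact sequences from the six-term exact sequences of Corollary~\ref{suc exact ext1} by performing a single dimension shift, arranged so that the given $\mathbb{E}$-triangle $A\xrightarrow{f}B\xrightarrow{g}C\dashrightarrow\delta$ is \emph{never} shifted. The decisive choice is to shift only the fixed object $X$: for sequence~(1) I would work throughout with the model $\mathbb{E}^{k}(X,Y)=\mathbb{E}(\Omega^{k-1}X,Y)$ provided by the isomorphism $\mathbb{E}(X,\Sigma^{k}Y)\cong\mathbb{E}(\Omega^{k}X,Y)$ recalled just before the statement, and dually, for sequence~(2), with $\mathbb{E}^{k}(U,X)=\mathbb{E}(U,\Sigma^{k-1}X)$. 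In this model the maps induced by $f$ and $g$ are literally $\mathbb{E}(\Omega^{k-1}X,f)$ and $\mathbb{E}(\Omega^{k-1}X,g)$, so there is nothing to check about transporting them across the natural isomorphism, and the middle exactness at each $\mathbb{E}^{k}(X,B)$ is immediate from the second exact sequence of Corollary~\ref{suc exact ext1} applied to $A\to B\to C$ and evaluated at $\Omega^{k-1}X$.

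First I would fix, for each $k\geq 1$, a syzygy $\mathbb{E}$-triangle $\Omega^{k}X\xrightarrow{u_k}P_{k-1}\xrightarrow{v_k}\Omega^{k-1}X\dashrightarrow\rho_{k-1}$ with $P_{k-1}\in\mcP_{\mbE}(\mcC)$. Since $P_{k-1}$ is $\mathbb{E}$-projective we have $\mathbb{E}(P_{k-1},-)=0$ (any $\mathbb{E}$-triangle ending in $P_{k-1}$ splits), so the first exact sequence of Corollary~\ref{suc exact ext1} for this triangle shows that $\rho_{k-1}^{\#}\colon\mcC(\Omega^{k}X,W)\to\mathbb{E}(\Omega^{k-1}X,W)$ is surjective with kernel $\mathrm{Im}\,\mcC(u_k,W)$, for every $W$. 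I would then define the connecting map $\partial_k\colon\mathbb{E}^{k}(X,C)\to\mathbb{E}^{k+1}(X,A)$ by lifting a class $\varepsilon$ along $\rho_{k-1}^{\#}$ to some $\psi\in\mcC(\Omega^{k}X,C)$ and setting $\partial_k(\varepsilon):=\delta\cdot\psi\in\mathbb{E}(\Omega^{k}X,A)=\mathbb{E}^{k+1}(X,A)$. The first point needing care is well-definedness: two lifts differ by $\chi\circ u_k$ with $\chi\in\mcC(P_{k-1},C)$, and $\delta\cdot(\chi\circ u_k)=(\delta\cdot\chi)\cdot u_k$ vanishes because $\delta\cdot\chi\in\mathbb{E}(P_{k-1},A)=0$; thus the $\mathbb{E}$-projectivity of $P_{k-1}$ is precisely what makes $\partial_k$ independent of the chosen lift.

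With the maps in place, exactness is verified spot by spot using only the relations $\delta\cdot g=0$, $f\cdot\delta=0$ (both read off from Corollary~\ref{suc exact ext1}) and the bifunctoriality of the action $(a\cdot\delta)\cdot c=a\cdot(\delta\cdot c)$. At $\mathbb{E}^{k+1}(X,A)$ I would observe that $\mathrm{Im}\,\partial_k=\delta_{\#}\bigl(\mcC(\Omega^{k}X,C)\bigr)$ (because $\rho_{k-1}^{\#}$ is onto and $\delta_{\#}$ kills $\mathrm{Im}\,\mcC(u_k,C)$), which equals $\ker\mathbb{E}(\Omega^{k}X,f)$ by Corollary~\ref{suc exact ext1} for $A\to B\to C$ at $\Omega^{k}X$. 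The one genuine diagram chase is exactness at $\mathbb{E}^{k}(X,C)$: for the inclusion $\mathrm{Im}\,g_*\subseteq\ker\partial_k$ I would write a class of $\mathbb{E}^{k}(X,B)$ as $\phi\cdot\rho_{k-1}$, note that $g\phi$ then lifts $g_*(\phi\cdot\rho_{k-1})$, and compute $\partial_k\bigl(g_*(\phi\cdot\rho_{k-1})\bigr)=(\delta\cdot g)\cdot\phi=0$; for $\ker\partial_k\subseteq\mathrm{Im}\,g_*$ I would lift $\varepsilon$ to $\psi$, use $\delta\cdot\psi=0$ and the exactness $\ker\delta_{\#}=\mathrm{Im}\,\mcC(\Omega^{k}X,g)$ to factor $\psi=g\circ\phi$, and recognize $\varepsilon=g_*(\phi\cdot\rho_{k-1})$. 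Splicing these blocks with the six-term sequence of Corollary~\ref{suc exact ext1} at the bottom (whose connecting map $\mcC(X,C)\to\mathbb{E}(X,A)$ feeds into $\mathbb{E}^{1}(X,A)$) yields one long exact sequence, and sequence~(2) follows by the dual argument, replacing syzygies of $X$ by cosyzygies $\Sigma^{k-1}X$, $\mathbb{E}$-projectives by $\mathbb{E}$-injectives, and $\mathbb{E}(P,-)=0$ by $\mathbb{E}(-,I)=0$.

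The main obstacle is conceptual rather than computational. The naive approach would shift the triangle $A\to B\to C$ itself to $\Sigma A\to\Sigma B\to\Sigma C$, which forces a horseshoe/$3\times3$ construction of cosyzygies of a conflation together with a comparison of the resulting extension classes; resisting this and shifting only $X$ collapses the whole argument onto Corollary~\ref{suc exact ext1} and the single vanishing $\mathbb{E}(P,-)=0$ for $P\in\mcP_{\mbE}(\mcC)$. After that choice, the only delicate verifications are the independence of $\partial_k$ from the chosen lift and the two inclusions giving exactness at the $C$-terms.
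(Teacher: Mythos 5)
Your proposal cannot be checked against an internal argument, because the paper offers none: Lemma~\ref{longExSeq} is imported verbatim from \cite[Prop.~5.2]{LNheartsoftwin} and used as a black box, so what you have written is a self-contained replacement for an outsourced proof. Having checked it, your argument is correct, and it is essentially the standard dimension-shifting mechanism that underlies the cited result. The key choice is sound: shifting only $X$ keeps the given triangle $A\to B\to C\dashrightarrow$ untouched, so middle exactness at $\mbE^{k}(X,B)$ and the identification $\Ima\,\partial_k=\Ima\,(\delta_{\#})_{\Omega^{k}X}=\Ker\,\mbE(\Omega^{k}X,f)$ both come straight from Corollary~\ref{suc exact ext1} evaluated at $\Omega^{k-1}X$ and $\Omega^{k}X$; the well-definedness of $\partial_k$ is exactly the vanishing $\mbE(P_{k-1},A)=0$; and the two inclusions at the $C$-term are handled correctly via the relations $\delta\cdot g=0$ and $f\cdot\delta=0$, which do follow from Corollary~\ref{suc exact ext1} because $\delta=\delta^{\#}(1_A)=\delta_{\#}(1_C)$ lies in the images that the corollary identifies with the relevant kernels. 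Three points you use tacitly and should record: (i) $\mbE(P,-)=0$ for $P\in\mcP_{\mbE}(\mcC)$ -- this is standard and follows from the second sequence of Corollary~\ref{suc exact ext1}, since $1_P$ lifts along the deflation of any $\mbE$-triangle ending at $P$; (ii) $\partial_k$ is a group homomorphism (immediate, since if $\psi,\psi'$ lift $\varepsilon,\varepsilon'$ then $\psi+\psi'$ lifts $\varepsilon+\varepsilon'$, and the action of $\delta$ is additive); (iii) to conclude the statement for the paper's official bifunctors $\mbE^{k}(X,-)=\mbE(X,\Sigma^{k-1}(-))$ rather than for your chosen syzygy model, you need the isomorphism $\mbE(\Omega^{k-1}X,Y)\cong\mbE(X,\Sigma^{k-1}Y)$ recalled in the preliminaries to be natural in $Y$, so that the maps induced by $f$ and $g$ correspond and exactness transfers; this naturality is part of what \cite{LNheartsoftwin} establishes. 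With these caveats noted, your proof is complete; what it buys over the paper's treatment is independence from the external reference, at the cost of reproving a known proposition.
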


\noindent Inspired by Lemma~\ref{longExSeq} and \cite{GNP}, we propose the following definition.

\begin{definition}\label{def: higher E-extensions} Let $(\mcC, \mbE, \mathfrak{s})$ be an extriangulated category. We say that $\mcC$ has {\bf higher $\mbE$-extension groups}, with respect to a family $\{\mbE^{i}\}_{i\geq 1}$
of additive bifunctors $\mbE^{i}:\mcC^{op}\times \mcC\to \rm{Ab},$ if the following conditions hold true.
\begin{enumerate}
\item[($\mbE1$)] $\mbE^{1}=\mbE$.
\item[($\mbE2$)] For any $\mbE$-triangle $A\mathop{\to}\limits^{x} B\mathop{\to}\limits^{y} C
\mathop{\dashrightarrow}\limits^{\delta}$ in $\mcC$ and for all $i\geq 1$, there are natural transformations, $\mathbb{E}^{i}(A,-)\mathop{\longrightarrow}\limits^{\partial_\delta^{i}} \mbE^{i+1}(C,-)$ and $\mathbb{E}^{i}(-, C)\mathop{\longrightarrow}\limits^{\Delta_\delta^{i}} \mbE^{i+1}(-, A)$ such that we have the following exact sequence of additive functors
\begin{enumerate}
\item $\mbE^{i}(C,-)\mathop{\longrightarrow}\limits^{\mbE^{i}(y,-)} \mbE^{i}(B,-)
\mathop{\longrightarrow}\limits^{\mbE^{i}(x,-)} \mbE^{i}(A,-)\mathop{\longrightarrow}
\limits^{\partial_\delta^{i}} \mbE^{i+1}(C,-)$ and
\item $\mbE^{i}(-,A)\mathop{\longrightarrow}\limits^{\mbE^{i}(-,x)} \mbE^{i}(-,B)
\mathop{\longrightarrow}\limits^{\mbE^{i}(-,y)} \mbE^{i}(-,C)\mathop{\longrightarrow}
\limits^{\Delta_\delta^{i}} \mbE^{i+1}(-,A)$.
\end{enumerate}
\item[($\mbE3$)] For any $\mbE$-triangles 
$A\mathop{\to}\limits^{x} B\mathop{\to}\limits^{y} C
\mathop{\dashrightarrow}\limits^{\delta}$ and 
$A'\mathop{\to}\limits^{x'} B'\mathop{\to}\limits^{y'} C'
\mathop{\dashrightarrow}\limits^{\delta'}$ in $\mcC$
and any morphism of $\mbE$-triangles
\[
\xymatrix{
A\ar[r]^{x}\ar[d]_{a} & B\ar[r]^{y}\ar[d]^{b} & C\ar@{-->}[r]^{\delta}\ar[d]^{c} &  \\
A'\ar[r]_{x'} & B'\ar[r]_{y'} & C'\ar@{-->}[r]_{\delta'} &  
}
\]
the following squares commute
\[
\xymatrix{
\mbE^{i}(A,-)\ar[r]^{\partial_{\delta}^{i}} & \mbE^{i+1}(C,-) & & \mbE^{i}(-,C)
\ar[r]^{\Delta_{\delta}^{i}}\ar[d]_{\mbE^{i}(-,c)} & \mbE^{i+1}(-,A)\ar[d]^{\mbE^{i+1}(-,a)}\\
\mbE^{i}(A',-)\ar[r]_{\partial_{\delta'}^{i}}\ar[u]^{\mbE^{i}(a^{op},-)} & \mbE^{i+1}(C',-)
\ar[u]_{\mbE^{i+1}(c^{op},-)} & & \mbE^{i}(-,C')
\ar[r]_{\Delta_{\delta'}^{i}} & \mbE^{i+1}(-,A').\\
}
\]
\item[($\mbE4$)] $\mbE^n(f,-)=0$ and $\mbE^n(-,g)=0\;$ $\forall\,n\geq 1,$ $\forall\,f\in \mcP_{1,\mbE}(\mcC),$ $\forall\,g\in \mcI_{1,\mbE}(\mcC).$
\end{enumerate}
\end{definition}

\noindent An interesting question arises from
Definition~\ref{def: higher E-extensions}: Does any extriangulated category $(\mcC, \mbE, \mathfrak{s})$ have
higher $\mbE$-extension groups?. In case $\mcC$ admits higher $\mbE$-extension groups, are they isomorphic? These two  questions can be partially answered as follows.

\begin{theorem}\label{stc-heg}  \cite[Thm. 3.5, Prop. 3.20]{GNP} For a small extriangulated category $(\mcC, \mbE, \mathfrak{s}),$ the following statements hold true.
\begin{itemize}
\item[(a)] $\mcC$ has higher $\mbE$-extension groups.
\item[(b)] Let $\mcC$ be with enough $\mbE$-projective (respectively, $\mbE$-injective) morphisms. If $\mcC$ has higher $\mbE$-extension groups, with respeto to the families $\{\mbE^{i}\}_{i\geq 1}$ and $\{\overline{\mbE}^{i}\}_{i\geq 1},$ then 
$\mbE^{i}\cong \overline{\mbE}^{i}$ as bifunctors $\forall\,i\geq 2.$
\end{itemize} 
\end{theorem}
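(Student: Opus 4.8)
The statement splits into a construction, part (a), and a uniqueness assertion, part (b), which I would handle by different means. For (a), the plan is to build the family $\{\mbE^i\}_{i\ge 1}$ intrinsically as Yoneda-type $n$-fold extensions: let $\mbE^n(C,A)$ be the class of $\mbE$-triangle sequences $A\to E_{n-1}\to\cdots\to E_0\to C$ (that is, $n$ conflations spliced along their $\mbE$-cycles), modulo the equivalence relation generated by morphisms of such sequences fixing the endpoints $A$ and $C$; smallness of $\mcC$ guarantees this is a set. The Baer sum makes it an abelian group, with functoriality in $A$ coming from pushout (realized by (ET3)) and in $C$ from pullback (realized by (ET3)$^{op}$). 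Axiom ($\mbE1$) holds since a $1$-fold extension is an $\mbE$-triangle, matching Corollary \ref{suc exact ext1}. The connecting transformations $\partial^i_\delta,\Delta^i_\delta$ of ($\mbE2$) are defined by splicing the $\mbE$-triangle $\delta$ onto an $i$-fold extension, which makes ($\mbE3$) hold by construction. The main obstacle in (a) is proving the exactness required by ($\mbE2$), namely that the connecting transformations assemble the fragments into genuine long exact sequences (as in Lemma \ref{longExSeq}); this is the extriangulated analogue of the classical long exact sequence for Yoneda $\Ext$, and each exactness check must be carried out through (ET4), (ET4)$^{op}$ and the pullback diagram of Proposition \ref{Nakaoka 3.15} rather than through ordinary kernels and cokernels.

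Axiom ($\mbE4$) is the cleanest step and I would dispatch it directly. Let $f\colon X\to C$ lie in $\mcP_{1,\mbE}(\mcC)$ and let $\xi\in\mbE^n(C,A)$ be represented by $A\to E_{n-1}\to\cdots\to E_0\xrightarrow{p} C$ whose final $\mbE$-triangle is $Z_0\to E_0\xrightarrow{p} C\dashrightarrow\epsilon$. By Remark \ref{proy-morf}(4), $f$ factors through the deflation $p$, say $f=ps$. Evaluating the first exact sequence of Corollary \ref{suc exact ext1} at $Z_0$ and using $\epsilon=\epsilon^{\#}(1_{Z_0})$ gives $\epsilon\cdot p=0$, whence $\epsilon\cdot f=(\epsilon\cdot p)\cdot s=0$. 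Thus the pullback of the last $\mbE$-triangle along $f$ splits, so $\mbE^n(f,-)(\xi)=f^{*}\xi$ is the trivial $n$-extension; hence $\mbE^n(f,-)=0$ for all $n\ge 1$, and the $\mbE$-injective case is dual.

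For (b), fix $C$ and use that $\mcC$ has enough $\mbE$-projective morphisms to choose an $\mbE$-triangle $X\xrightarrow{u} Y\xrightarrow{f} C\dashrightarrow\delta$ with $f\in\mcP_{1,\mbE}(\mcC)$; crucially the same triangle serves both families. By ($\mbE4$) we have $\mbE^i(f,-)=0=\overline{\mbE}^i(f,-)$ for every $i\ge 1$, so feeding $\delta$ into the long exact sequences of ($\mbE2$)(a) and shifting dimension yields, for $i\ge 1$,
\[
\mbE^{i+1}(C,-)\cong\Coker\bigl(\mbE^i(u,-)\bigr),\qquad \overline{\mbE}^{i+1}(C,-)\cong\Coker\bigl(\overline{\mbE}^i(u,-)\bigr),
\]
the surjectivity of the connecting map being exactly $\ker(\mbE^{i+1}(f,-))=\mbE^{i+1}(C,-)$. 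I would then induct on $i$, the base being the tautology $\mbE^1=\mbE=\overline{\mbE}^1$ from ($\mbE1$): a natural isomorphism $\mbE^i\cong\overline{\mbE}^i$ is compatible with $\mbE^i(u,-)$ and $\overline{\mbE}^i(u,-)$, hence descends to the cokernels to give $\mbE^{i+1}\cong\overline{\mbE}^{i+1}$. Since the shift raises the index by one from the base $i=1$, this yields the claim for all $i\ge 2$; the ``respectively $\mbE$-injective'' statement follows dually from ($\mbE2$)(b).

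The delicate point in (b), and the second obstacle, is upgrading these pointwise isomorphisms to natural isomorphisms of \emph{bifunctors}. Given $g\colon C\to C'$, one first notes $gf\in\mcP_{1,\mbE}(\mcC)$ (since $\mbE(gf,-)=\mbE(f,-)\circ\mbE(g,-)=0$), so by Remark \ref{proy-morf}(4) the morphism $gf$ factors through the deflation $Y'\to C'$; completing this to a morphism of the two chosen $\mbE$-triangles via the realization axioms makes the connecting transformations compatible by ($\mbE3$), so the cokernel identifications are natural in $C$. One then checks independence of the chosen presentation in the standard way, so that the comparison is genuinely a natural isomorphism of bifunctors and the induction proceeds at the level of bifunctors.
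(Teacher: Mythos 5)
First, a point of reference: the paper itself contains no proof of this statement --- Theorem \ref{stc-heg} is imported verbatim from \cite[Thm. 3.5, Prop. 3.20]{GNP} --- so your proposal has to be judged on its own merits, against that source and against the definition the statement refers to, namely Definition \ref{def: higher E-extensions}. For part (a), your construction (spliced $\mbE$-triangle sequences modulo the equivalence generated by morphisms fixing the end terms, Baer sum, functoriality via pushout and pullback) is in substance the construction of \cite{GNP}, where it is packaged as a coend, and your verification of ($\mbE 4$) is essentially right: $\epsilon\cdot p=0$ by Corollary \ref{suc exact ext1}, hence $\epsilon\cdot f=0$ by the factorization of Remark \ref{proy-morf}(4), and splicing with a split conflation gives the zero class under the defining relations. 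But the verification of ($\mbE 2$) --- that connecting transformations exist and make the displayed sequences exact --- is precisely the content of \cite[Thm. 3.5]{GNP}, and you defer it entirely, saying only that each check must go through (ET4), (ET4)$^{op}$ and Proposition \ref{Nakaoka 3.15}. Identifying where the work lies is not doing the work; as written, (a) is a programme rather than a proof.

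Part (b) has a genuine gap. Your dimension shift needs $\partial^i_\delta$ to be surjective, and you justify this by $\Ker(\mbE^{i+1}(f,-))=\mbE^{i+1}(C,-)$, which is indeed ($\mbE 4$). But to convert that into surjectivity you need $\Ima(\partial^i_\delta)=\Ker(\mbE^{i+1}(f,-))$, i.e.\ exactness of the concatenated sequence at the \emph{junction} $\mbE^{i+1}(C,-)$ between the four-term window for $i$ and the window for $i+1$. Axiom ($\mbE 2$) as stated asserts exactness only at the two interior terms of each window and nothing at the junctions, and this missing exactness is not a consequence of ($\mbE 1$)--($\mbE 4$). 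In fact, relative to the literal axioms the uniqueness claim fails: let $\mcC$ be a skeleton of $\fgMod(\Lambda)$ for $\Lambda$ the path algebra of the quiver $1\to 2$ (hereditary, so the Yoneda family has $\mbE^i=0$ for $i\geq 2$), and define $\overline{\mbE}^1:=\mbE$ and, for $i\geq 2$, $\overline{\mbE}^i(X,W):=\underline{\Hom}(X,S_1)\otimes_k \overline{\Hom}(S_2,W)$ (morphisms modulo those factoring through a projective, respectively through an injective), with all connecting transformations equal to zero. Every conflation in $\fgMod(\Lambda)$ is a finite direct sum of split ones and copies of the Auslander--Reiten sequence $S_2\rightarrowtail P_1\twoheadrightarrow S_1$, whose middle term $P_1$ is projective-injective; from this one checks that every window of ($\mbE 2$) is exact at its interior terms, ($\mbE 3$) is trivial, and ($\mbE 4$) holds by construction, yet $\overline{\mbE}^2(S_1,S_2)=k\neq 0=\mbE^2(S_1,S_2)$. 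So (b) cannot be proved from Definition \ref{def: higher E-extensions} alone; your induction (which is the standard argument, and correct, including your treatment of naturality, once the stronger property is granted) must explicitly invoke the definition actually used in \cite{GNP}, in which the windows assemble into genuine long exact sequences as in Lemma \ref{longExSeq}, exact at the junctions as well.
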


\begin{remark}
In the following cases, the extriangulated category $(\mcC, \mbE, \mathfrak{s})$ has higher $\mbE$-extension groups:
\begin{enumerate}
\item Let $\mcC$ be with enough $\mbE$-projectives and  $\mbE$-injectives. In this case, see \cite{LNheartsoftwin}, we can take  $\mbE^{i}(X,Y):=\mbE(X,\Sigma^{i-1}Y)\simeq \mbE(\Omega^{i-1}X,Y).$

\item In case that $\mcC$ is triangulated, we have that
the classes $\mathcal{P}_{\mbE}(\mcC)$ and $\mathcal{I}_{\mbE}(\mcC)$ coincide with the class of
zero objects in $\mcC$. So, by considering the trivial distinguished triangles 
$X[-1]\to 0\to X\mathop{\to}\limits^{1} X$ and $X\mathop{\to}\limits^{1} X\to 0\to X[1]$, for every
$X\in \mcC$, we get
that $\mcC$ has enough $\mbE$-projectives and $\mbE$-injectives. Moreover, 
$\Omega X=X[-1]$ and $\Sigma X=X[1]$. Then, we can take $\mbE^{i}(X,Y):=\Hom_{\mcC}(X,Y[i]),$ where $[1]:\mcC\to \mcC$ is the shift functor.

\item In the case $\mcC$ is abelian, we do not need enough $\mbE$-projectives and 
$\mbE$-injectives to have higher $\mbE$-extension groups. Indeed, in this case, one can take the Yoneda's bifunctor $\Ext^i_{\mcC}(-,-)$ (see \cite{Sieg}).
\end{enumerate}
\end{remark}

 Let $\mcC$ be an extriangulated category with higher $\mbE$-extension groups $\{\mbE^i\}_{i\geq 1}.$ We fix the following notation for $\mcX, \mcY\subseteq \mcC$ and $k \geq 1$.
\begin{itemize}
\item $\mathbb{E}^{k}(\mathcal{X, Y})=0$ if 
$\mathbb{E}^{k}(X, Y)=0$ for every $X\in \mcX$ and $Y\in \mcY$. When $\mcX=\{M\}$ or $\mcY=\{N\}$, 
we shall write $\mathbb{E}^{k}(M, \mcY)=0$ and
$\mathbb{E}^{k}(\mcX, N)=0$, respectively.

\item $\mathbb{E}^{\leq k}(\mcX, \mcY)=0$ if
$\mathbb{E}^{j}(\mcX, \mcY)=0$ for every $1\leq j\leq k$.

\item $\mathbb{E}^{\geq k}(\mcX, \mcY)=0$ if 
$\mathbb{E}^j(\mcX, \mcY)=0$ for every $j\geq k$.
\end{itemize}

 Recall that the \emph{right $k$-th orthogonal complement} and the \emph{right orthogonal complement of $\mcX$} are defined, respectively, by 
\begin{align*}
\mcX^{\perp_k} := \{ N \in \mcC \mbox{ : } \mathbb{E}^k(\mcX,N) = 0 \}\;\text{ and }\; \mcX^\perp := \bigcap_{k \geq 1} \mcX^{\perp_k}=\{N\in \mcC : \mathbb{E}^{\geq 1}(\mcX, N)=0\}.
\end{align*}
 Dually, we have the \emph{left $k$-th} and the \emph{left orthogonal complements ${}^{\perp_k}\mcX$ and ${}^{\perp}\mcX$ of $\mcX$}, respectively.

\subsection*{Approximations}

We recall from \noindent \cite{ARcontravariantly} that a subcategory $\mcX$ of an additive category $\mcC$ is a \emph{precovering class} (or contravariantly finite) in $\mcC$ if for every $M\in \mcC$, there exists some $X\in \mcX$ and a 
morphism $f:X\to M$ such $\mcC(X', X)\mathop{\longrightarrow}\limits^{\mcC(X', f)} \mcC(X', M)$
is surjective, for every $X'\in\mcX$. In this case, $f$ is called a \emph{$\mcX$-precover of $M$} 
(or a right $\mcX$-approximation of $M$). Dually, we define the notions of 
\emph{preenveloping class} (or covariantly finite) and $\mcX$-preenvelope
(left $\mcX$-approximations) for an object $M\in \mcC$. Finally, we say that 
$\mcX\subseteq \mcC$ is \emph{functorially finite} in $\mcC$
if it is both precovering and preenveloping class
in $\mcC$. By following \cite[Def. 3.19]{zhou2018triangulated}, a subcategory $\mcX$ of $\mcC$ is said to be \emph{strongly precovering} 
 if, for any $C\in \mcC$, there
exists an $\mbE$-triangle $K\to X\mathop{\to}\limits^{g} C
\mathop{\dashrightarrow}\limits^{\delta}$, where $g$ is a $\mcX$-precover of $C$. Dually, a subcategory
$\mcX$ of $\mcC$ is called \emph{strongly preenveloping} if, for any $C\in \mcC$, there exists an $\mbE$-triangle $C\mathop{\to}\limits^{f} X\to L\mathop{\dashrightarrow}\limits^{\delta'}$, where $f$ is a $\mcX$-preenvelope of $C.$ 

\begin{remark}\cite[Rmk. 3.20]{zhou2018triangulated} \label{str-prec-equiv}
Let $\mcC$ be an extriangulated category which has enough $\mbE$-projectives  and let $\add(\mcX)=\mcX\subseteq \mcC$. Then $\mcX$ is strongly precovering if, and only if, $\mcX$ is  precovering  and $\mathcal{P}_{\mbE}(\mcC)\subseteq\mcX.$ 
\end{remark}

\subsection*{Cotorsion pairs}

Let $\mcC$ be an extriangulated category and $\mathcal{U, V}\subseteq \mcC$. By following \cite[Def. 4.1]{Nakaoka1}, we say that the pair $(\mathcal{U, V})$ is a
\emph{complete cotorsion pair in $\mcC$} if it satisfies the
following conditions.
\begin{itemize}
\item[(CP0)] $\mathcal{U}$ and $\mathcal{V}$ are closed under direct summands in $\mcC$;
\item[(CP1)] $\mathbb{E}(\mathcal{U, V})=0$;
\item[(CP2)] For any $C\in \mcC$, there exist $\mbE$-triangles
$$V\to U\to C \dashrightarrow \quad \mbox{ and } \quad C\to V'\to U'\dashrightarrow,$$ 
where $U, U' \in \mathcal{U}$ and 
$V, V' \in \mathcal{V}$.
\end{itemize}
If $(\mathcal{U, V})$ is a cotorsion pair in $\mcC,$ then $\mathcal{U}={}^{\perp_1}\mathcal{V}$ and $\mathcal{V}=\mathcal{U}^{\perp_1}.$ In particular $\mathcal{U}$ and $\mathcal{V}$ are additive subcategories of $\mcC.$

\subsection*{(Co)stable categories}

We recall that, for  a $\mbZ$-category  $\mcC$ and  $I\unlhd\mcC$ (an ideal in $\mcC$), we have the quotient category $\mcC/I$ whose objects are the same as in $\mcC,$  the morphism are defined as the quotients $\Hom_\mcC(X,Y)/I(X,Y)$ and the composition in $\mcC/I$ is the induced one from $\mcC.$ Thus, $\mcC/I$ is also a $\mbZ$-category and the canonical functor 
$\pi=\pi_I:\mcC\to \mcC/I,\:(X\xrightarrow{f}Y)\mapsto(X\xrightarrow{f+I(X,Y)}Y),$
 is additive and full. Notice that $\mcC/I$ is additive if $\mcC$ is additive.
  For a class $\mcX\subseteq\mcC,$ we define the class $[\mcX]$ of all the morphisms in $\mcC$ which factors through objects in $\mcX.$ Notice que $[\mcX]\unlhd\mcC$ if $\mcX$ is closed under finite coproducts in $\mcC.$
\

Let $\mcC$ be an extriangulated category. In this case, we have that $\mcP_{1,\mbE}(\mcC)$ and $[\mcP_{\mbE}(\mcC)]$ are ideals in the category $\mcC$ and  $[\mcP_{\mbE}(\mcC)]\subseteq \mcP_{1,\mbE}(\mcC)$ (see Remark \ref{proy-morf} (2)). Furthermore, by Remark \ref{proy-morf} (3) , we have that $\mcP_{1,\mbE}(\mcC)=[\mcP_{\mbE}(\mcC)]$ if $\mcC$ has enough $\mbE$-projectives. 
\noindent The \emph{stable category of} $\mcC$ is the
quotient category $\underline{\mcC}:=\mcC/\mathcal{P},$ for $\mcP:=\mcP_{1,\mbE}(\mcC).$  We denote by $\underline{f}$ the residue class
in $\underline{\mcC}(A, B)$, for any morphism $f\in \mcC(A, B)$. Dually, the \emph{co-stable category of} $\mcC$ is the
quotient category $\overline{\mcC}:=\mcC/\mcI$, for $\mcI:=\mcI_{1,\mbE}(\mcC).$ We denote by $\overline{f}$ the residue class
in $\overline{\mcC}(A, B)$, for any morphism $f\in \mcC(A, B)$.


\section{\textbf{Some results on \boldmath{$(n+2)$}-rigid subcategories}}\label{sec:n-cot pairs}

We begin this section by recalling the notion of \emph{cut $(n+1)$-cotorsion pair}
\cite{HMSS} for $n\geq 0$ and extriangulated categories $\mcC$ with higher $\mbE$-extension groups $\{\mbE^i\}_{i\geq 1}.$ Moreover, we will give useful outcomes which we use for the rest of this work.

\begin{definition}\cite[Def. 4.1]{HMSS}\label{def: CnCP ext} Let $\mcC$ be an extriangulated category with higher $\mbE$-extension groups $\{\mbE^i\}_{i\geq 1},$ $\mcS, \mcA, \mcB\subseteq \mcC$ and $n\geq 0.$ We say that $(\mcA, \mcB)$ is a \textbf{left $(n+1)$-cotorsion pair cut on $\mcS$} if 
the following conditions are satisfied.
\begin{enumerate}
\item[(1)] $\mcA=\add(\mcA).$
\item[(2)] $\mathbb{E}^{i}(\mcA\cap \mcS, \mcB)=0$ for every $1\leq i\leq n+1.$
\item[(3)] (Left completeness) For each $S\in \mcS$, there exists an $\mathbb{E}$-triangle
$K\to A\to S\dashrightarrow$ 
with $A\in \mcA$ and $K\in \mcB^{\wedge}_{n}$.
\end{enumerate}

\noindent Dually, we say that $(\mcA, \mcB)$ is a \textbf{right $(n+1)$-cotorsion pair cut on $\mcS$} if $\mcB=\add(\mcB)$, $\mathbb{E}^{i}(\mcA, \mcB\cap \mcS)=0$ for every $1\leq i\leq n+1$ and each $S\in \mcS$ admits an $\mathbb{E}$-triangle $S\to B\to C\dashrightarrow$ with $B\in\mcB$ and $C\in \mcA^{\vee}_{n}$. Finally, $(\mcA, \mcB)$ is said to be an \textbf{$(n+1)$-cotorsion pair
cut on $\mcS$} if it is both a left and right $(n+1)$-cotorsion pair cut on $\mcS$. In case there is no need to mention $\mcS$, we shall simply say that $(\mcA, \mcB)$ is a \textbf{(left and/or right) $(n+1)$-cotorsion cut}.
\end{definition}

\begin{definition}
Let $\mcC$ be an extriangulated category with higher $\mbE$-extension groups, $n\geq 0$ and 
let $\mcA, \mcB\subseteq \mcC$. We denote by $\mathbb{S}_{(\mathcal{A, B})}^{-,n}$ the subcategory of $\mcC$ whose objects are $C\in \mcC$ for which there exists an $\mathbb{E}$-triangle $B'\to A\to C\dashrightarrow$ with $B'\in \mcB^{\wedge}_n$ and $A\in \mcA$. Dually, we have
the class $\mathbb{S}_{(\mathcal{A, B})}^{+,n}$ of objects in $\mcC$ whose elements are $C\in \mcC$ which admit an $\mathbb{E}$-triangle $C\to B\to A'\dashrightarrow$ with $B\in \mcB$ and $A'\in \mcA^{\vee}_{n}$.
\end{definition}

\begin{proposition}\label{pro: n+1 rigid->1-cot}
Let $\mcC$ be an extriangulated category with higher $\mbE$-extension groups $\{\mbE^i\}_{i\geq 1},$ and let $\mcA, \mcB\subseteq \mcC$ be classes of objects in $\mcC$ satisfying
$\mathbb{E}^{\leq n+1}(\mathcal{A, B})=0$ for $n\geq 0.$ Then, the following statements hold true.

\begin{enumerate}
\item $\mathbb{E}^{\leq k}(\mcA^{\vee}_j, \mcB^{\wedge}_{i})=0$ for any $0\leq i, j\leq n$ and
$1\leq k\leq n+1$ satisfying $k+i+j=n+1$.
\item If $\mcA=\add(\mcA)$ then $(\mathcal{A, B})$ is a left 
$(n+1)$-cotorsion pair cut on $\mathbb{S}_{(\mathcal{A, B})}^{-,n}$. 
\item If $\mcB=\add(\mcB)$ then $(\mathcal{A, B})$ is a right
$(n+1)$-cotorsion pair cut on $\mathbb{S}_{(\mathcal{A, B})}^{+,n}$.
\end{enumerate}
\end{proposition}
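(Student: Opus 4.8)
The plan is to prove statement (1) first, since statements (2) and (3) will largely follow from (1) together with the definitions of $\mathbb{S}_{(\mathcal{A,B})}^{-,n}$ and $\mathbb{S}_{(\mathcal{A,B})}^{+,n}$ and the completeness requirements of Definition~\ref{def: CnCP ext}. For (1), I would argue by induction on $i+j$. The base case $i=j=0$ says $\mathbb{E}^{\leq n+1}(\mathcal{A}_0^\vee, \mathcal{B}_0^\wedge)=0$; since $\mathcal{A}_0^\vee=\iso(\mathcal{A})$ and $\mathcal{B}_0^\wedge=\iso(\mathcal{B})$, this is exactly the hypothesis $\mathbb{E}^{\leq n+1}(\mathcal{A,B})=0$. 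For the inductive step I would separate the two directions: first raise the coresolution index $j$ on the left argument while keeping $i$ fixed, and then raise the resolution index $i$ on the right argument. To raise $j$, take $A'\in\mathcal{A}_j^\vee$ with $j\geq 1$; by definition of $\coresdim$ there is a conflation $A_0\to A'' \to A'''$ realizing $A'\in\mathrm{CoCone}(\mathcal{A},\mathcal{A}_{j-1}^\vee)$, i.e.\ an $\mathbb{E}$-triangle $A'\to A_0\to \widetilde{A}\dashrightarrow$ with $A_0\in\mathcal{A}$ and $\widetilde{A}\in\mathcal{A}_{j-1}^\vee$. Apply the long exact sequence in the first variable (Lemma~\ref{longExSeq}(2)) against a fixed $B'\in\mathcal{B}_i^\wedge$ to get
\[
\mathbb{E}^{k}(A_0, B')\to \mathbb{E}^{k}(A', B')\to \mathbb{E}^{k+1}(\widetilde{A}, B')\to\cdots.
\]
The left term vanishes because $A_0\in\mathcal{A}$ and $B'\in\mathcal{B}_i^\wedge$ with $k+i\leq n+1$ (this is the case $j=0$, already handled for the appropriate indices), and the right term $\mathbb{E}^{k+1}(\widetilde{A},B')$ vanishes by the inductive hypothesis applied to $(j-1,i)$ at shifted homological degree $k+1$, since $(k+1)+i+(j-1)=k+i+j=n+1$. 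Hence the middle term vanishes, as desired. Raising $i$ is completely dual, using Lemma~\ref{longExSeq}(1) and an $\mathbb{E}$-triangle $\widehat{B}\to B_0\to B'\dashrightarrow$ coming from $\resdim_{\mathcal{B}}$.

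The bookkeeping of the three indices $k,i,j$ under the constraint $k+i+j=n+1$ is the part that needs care: I must make sure that every term appearing in the long exact sequences has its triple of indices summing to at most $n+1$, so that I may legitimately invoke either the hypothesis or a smaller instance of the inductive hypothesis. The cleanest formulation is a single induction on $i+j$ with the statement quantified over all admissible $k$ (namely $1\leq k\leq n+1-i-j+1$, matching the equation $k+i+j=n+1$), so that the degree shift $k\mapsto k+1$ in Lemma~\ref{longExSeq} stays inside the quantified range.

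For statement (2), assume $\mathcal{A}=\add(\mathcal{A})$, which gives condition~(1) of Definition~\ref{def: CnCP ext} directly. Condition~(2) there, namely $\mathbb{E}^i(\mathcal{A}\cap \mathbb{S}_{(\mathcal{A,B})}^{-,n}, \mathcal{B})=0$ for $1\leq i\leq n+1$, follows from the hypothesis $\mathbb{E}^{\leq n+1}(\mathcal{A,B})=0$ since $\mathcal{A}\cap\mathbb{S}_{(\mathcal{A,B})}^{-,n}\subseteq\mathcal{A}$. For the left completeness condition~(3), take $C\in \mathbb{S}_{(\mathcal{A,B})}^{-,n}$; by definition it admits an $\mathbb{E}$-triangle $B'\to A\to C\dashrightarrow$ with $B'\in\mathcal{B}_n^\wedge$ and $A\in\mathcal{A}$, which is precisely the required $\mathbb{E}$-triangle with kernel in $\mathcal{B}_n^\wedge$. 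Thus $(\mathcal{A,B})$ is a left $(n+1)$-cotorsion pair cut on $\mathbb{S}_{(\mathcal{A,B})}^{-,n}$. Statement (3) is proved dually, with the roles of resolutions and coresolutions interchanged and using the $\mathbb{E}$-triangle $C\to B\to A'\dashrightarrow$ that defines membership in $\mathbb{S}_{(\mathcal{A,B})}^{+,n}$. I expect (2) and (3) to be essentially immediate once (1) is in hand; the genuine content, and the main obstacle, lies entirely in getting the index arithmetic of part~(1) correct throughout the double induction.
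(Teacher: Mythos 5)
Your proposal is correct and takes essentially the same approach as the paper, which disposes of part (1) by citing the dimension-shifting induction of \cite[Prop. 2.6]{HMP} --- precisely the double induction on $i+j$ with long exact sequences that you spell out --- and, like you, observes that (2) and (3) are immediate from Definition~\ref{def: CnCP ext} together with the defining $\mbE$-triangles of $\mathbb{S}_{(\mcA,\mcB)}^{-,n}$ and $\mathbb{S}_{(\mcA,\mcB)}^{+,n}$. Two cosmetic repairs: since the proposition does not assume enough $\mbE$-projectives or $\mbE$-injectives, the four-term exact sequences you invoke should be quoted from axiom ($\mbE2$) of Definition~\ref{def: higher E-extensions} rather than from Lemma~\ref{longExSeq} (they are formally identical, so nothing in the argument changes), and the range ``$1\leq k\leq n+1-i-j+1$'' in your closing paragraph is an off-by-one slip for $1\leq k\leq n+1-i-j$, which is the range your main argument actually uses and which is self-propagating under the shift $k\mapsto k+1$, $j\mapsto j-1$.
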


\begin{proof}
For (1), the proof in \cite[Prop. 2.6]{HMP} can be easily adapted. On the other hand, (2) and (3) follow by definition.
\end{proof}

\noindent Given $n\geq 0$, recall that a
subcategory $\mcX\subseteq \mcC$ is \emph{$(n+2)$-rigid} whenever $\mathbb{E}^{\leq n+1}(\mcX, \mcX)=0$. The below result shows that there is a relation between
$(n+2)$-rigid subcategories and cut $1$-cotorsion pairs.

\begin{corollary}\label{cor: n+1 rigid->1-cot}
For $n\geq 0,$  an extriangulated category $\mcC$ with higher $\mbE$-extension groups $\{\mbE^i\}_{i\geq 1}$ and an $(n+2)$-rigid subcategory $\mcX$  of 
$\mcC,$ the following statements hold true.

\begin{enumerate}
\item $\mathbb{E}^{\leq k}(\mcX^{\vee}_j, \mcX^{\wedge}_{i})=0$ for any $0\leq i, j\leq n$ and
$1\leq k\leq n+1$ satisfying $k+i+j=n+1$.
\item If $\mcX=\add(\mcX)$ then $(\mcX, \mcX_n^{\wedge})$ is a left $1$-cotorsion
pair cut on $\mcX_{n+1}^{\wedge}$ and $(\mcX_{n}^{\vee}, \mcX)$ is a right $1$-cotorsion pair cut on
$\mcX_{n+1}^{\vee}$.
\end{enumerate}
\end{corollary}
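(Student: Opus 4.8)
The plan is to derive both parts directly from Proposition~\ref{pro: n+1 rigid->1-cot} together with the structural identities $\mathrm{Cone}(\mcX_n^\wedge,\mcX)=\mcX_{n+1}^\wedge$ and $\mathrm{CoCone}(\mcX,\mcX_n^\vee)=\mcX_{n+1}^\vee$ recalled just before the statement. Since $\mcX$ is $(n+2)$-rigid we have $\mathbb{E}^{\leq n+1}(\mcX,\mcX)=0$, so $\mcX$ can play the role of both $\mcA$ and $\mcB$ in Proposition~\ref{pro: n+1 rigid->1-cot}. Part (1) of the corollary is then literally Proposition~\ref{pro: n+1 rigid->1-cot}(1) specialized to $\mcA=\mcB=\mcX$.

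For Part (2) I would verify the defining conditions of a \emph{left} $1$-cotorsion pair cut, namely Definition~\ref{def: CnCP ext} with its internal index equal to $1$ (the case where the ``$n$'' of that definition is $0$), for the pair $(\mcX,\mcX_n^\wedge)$ cut on $\mcX_{n+1}^\wedge$. Condition (1) is immediate from the hypothesis $\mcX=\add(\mcX)$; in particular $0\in\mcX$, so $\mcX=\mcX_0^\wedge\subseteq\mcX_{n+1}^\wedge$ and hence $\mcX\cap\mcX_{n+1}^\wedge=\mcX$. For the orthogonality Condition (2) I must show $\mathbb{E}^{1}(\mcX,\mcX_n^\wedge)=0$; this is Part (1) of the corollary read with $j=0$, $i=n$, $k=1$ (so that $k+i+j=n+1$), using $\mcX\subseteq\mcX_0^\vee=\iso(\mcX)$ to pass the vanishing down to $\mcX$ itself. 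For the left completeness Condition (3), each $S\in\mcX_{n+1}^\wedge=\mathrm{Cone}(\mcX_n^\wedge,\mcX)$ admits by definition an $\mathbb{E}$-triangle $K\to X\to S\dashrightarrow$ with $X\in\mcX$ and $K\in\mcX_n^\wedge=(\mcX_n^\wedge)_0^\wedge$, which is precisely the triangle required.

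The \emph{right} pair $(\mcX_n^\vee,\mcX)$ cut on $\mcX_{n+1}^\vee$ is handled dually: the condition $\mcB=\add(\mcB)$ is the hypothesis on $\mcX$, and since $\mcX\subseteq\mcX_{n+1}^\vee$ we have $\mcX\cap\mcX_{n+1}^\vee=\mcX$; the orthogonality $\mathbb{E}^{1}(\mcX_n^\vee,\mcX)=0$ is Part (1) taken with $i=0$, $j=n$, $k=1$, again using $\mcX\subseteq\mcX_0^\wedge$; and the right completeness follows by writing each $S\in\mcX_{n+1}^\vee=\mathrm{CoCone}(\mcX,\mcX_n^\vee)$ through an $\mathbb{E}$-triangle $S\to X\to C\dashrightarrow$ with $X\in\mcX$ and $C\in\mcX_n^\vee=(\mcX_n^\vee)_0^\vee$.

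There is no serious obstacle here; the argument is essentially bookkeeping of indices. The only point that demands care is matching the single-degree vanishing required by a $1$-cotorsion pair with the correct specialization of the multi-degree statement in Part (1): one must select the extreme triples $(i,j,k)=(n,0,1)$ and $(0,n,1)$ so that the surviving orthogonalities are exactly $\mathbb{E}^{1}(\mcX,\mcX_n^\wedge)=0$ and $\mathbb{E}^{1}(\mcX_n^\vee,\mcX)=0$, and to recall that $\iso(\mcX)=\mcX_0^\wedge=\mcX_0^\vee$ contains $\mcX$, so that these orthogonalities indeed restrict to $\mcX$ as the conditions of Definition~\ref{def: CnCP ext} demand.
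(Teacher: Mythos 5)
Your proposal is correct and follows the same route as the paper: part (1) is Proposition~\ref{pro: n+1 rigid->1-cot}(1) with $\mcA=\mcB=\mcX$, and part (2) is deduced from part (1) together with the identities $\mathrm{Cone}(\mcX_n^\wedge,\mcX)=\mcX_{n+1}^\wedge$ and $\mathrm{CoCone}(\mcX,\mcX_n^\vee)=\mcX_{n+1}^\vee$. The paper compresses this into the single line ``(2) follows from (1), and (1) follows from Proposition~\ref{pro: n+1 rigid->1-cot}(1)''; your index bookkeeping with $(i,j,k)=(n,0,1)$ and $(0,n,1)$ is exactly the omitted verification.
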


\begin{proof}
The item (2) follows from (1), and the item (1) follows from Proposition \ref{pro: n+1 rigid->1-cot} (1). 
\end{proof}

 The following lemma shows nice properties of $\mcX^{\vee}_{n}$ when $\mcX$ is an $(n+2)$-rigid subcategory of $\mcC$.

\begin{lemma}\label{lem: Xvee cerrada por ext}
Let $\mcC$ be an extriangulated category with higher $\mbE$-extension groups, $n\geq 0$ and let
$\mcX=\free(\mcX)$ be an
$(n+2)$-rigid subcategory of $\mcC.$ Then:
\begin{enumerate}
\item $\mcX^{\vee}_n$ is closed under 
extensions.
\item If
$C\to Y_0\to Y_1\dashrightarrow$ is an $\mathbb{E}$-triangle with $Y_0, Y_1\in \mcX^{\vee}_n$, then $C\in \mcX_{n+1}^{\vee}$. 
\item $\mcX^{\vee}_{n+1}=\smd(\mcX^{\vee}_{n+1})$ if $\mcX=\smd(\mcX).$
\end{enumerate}
\end{lemma}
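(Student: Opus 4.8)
The plan is to prove all three items simultaneously by induction on $n$, exploiting that an $(n+2)$-rigid subcategory is also $(m+2)$-rigid for every $0\le m\le n$, so the orthogonality of Corollary~\ref{cor: n+1 rigid->1-cot}(1), namely $\mathbb{E}^{\le k}(\mcX^\vee_j,\mcX^\wedge_i)=0$ whenever $k+i+j=n+1$, is available at each stage; I also use freely that $\free(\mcX)=\mcX$ gives $0\in\mcX$ and $X\oplus X'\in\mcX$, and that $\mathrm{CoCone}(\mcX,\mcX^\vee_k)=\mcX^\vee_{k+1}$. For (1) I induct on $n$. In the base case $A,C\in\iso(\mcX)$, the defining class $\delta\in\mathbb{E}(C,A)$ of $A\to B\to C$ lies in $\mathbb{E}^1(\mcX,\mcX)=0$, so the conflation splits and $B\simeq A\oplus C\in\free(\mcX)=\mcX$. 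For the step, fix $\mathbb{E}$-triangles $A\xrightarrow{a}X_A\to A'$ and $C\xrightarrow{c}X_C\to C'$ realizing the coresolutions of $A,C\in\mcX^\vee_n$, with $X_A,X_C\in\mcX$ and $A',C'\in\mcX^\vee_{n-1}$. Since $\mathbb{E}(C,X_A)=0$, applying $\mcC(-,X_A)$ to $A\xrightarrow{x}B\xrightarrow{y}C$ (Corollary~\ref{suc exact ext1}) lets me lift $a$ to $b_1\colon B\to X_A$ with $b_1x=a$; setting $f_B=\binom{b_1}{cy}\colon B\to X_A\oplus X_C\in\mcX$, a direct check (using $\mathbb{E}(C,X_A)=0$) shows $(a,f_B,c)$ is a morphism of $\mathbb{E}$-triangles from $A\to B\to C$ to the split triangle $X_A\to X_A\oplus X_C\to X_C$. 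The $3\times 3$-lemma of \cite{Nakaoka1} then yields an $\mathbb{E}$-triangle $B\to X_A\oplus X_C\to B'$ together with $A'\to B'\to C'$; by the inductive hypothesis $\mcX^\vee_{n-1}$ is closed under extensions, so $B'\in\mcX^\vee_{n-1}$ and hence $B\in\mathrm{CoCone}(\mcX,\mcX^\vee_{n-1})=\mcX^\vee_n$.

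I would deduce (2) from (1) using (ET4). Given $C\to Y_0\to Y_1\dashrightarrow$ with $Y_0,Y_1\in\mcX^\vee_n$, choose a coresolution step $Y_0\to X\to Y_0'$ with $X\in\mcX$, $Y_0'\in\mcX^\vee_{n-1}$, and apply (ET4) to the composable inflations $C\to Y_0\to X$. This produces an $\mathbb{E}$-triangle $C\to X\to E$ with $X\in\mcX$ and an $\mathbb{E}$-triangle $Y_1\to E\to Y_0'$. As $Y_1\in\mcX^\vee_n$ and $Y_0'\in\mcX^\vee_{n-1}\subseteq\mcX^\vee_n$, item (1) gives $E\in\mcX^\vee_n$, whence $C\in\mathrm{CoCone}(\mcX,\mcX^\vee_n)=\mcX^\vee_{n+1}$.

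For (3) I would prove, by induction on $m$, that $\mcX^\vee_m$ is closed under summands for all $0\le m\le n+1$; the case $m=0$ is immediate from $\smd(\mcX)=\mcX$. For the step, write $C=C_1\oplus C_2\in\mcX^\vee_m$ with coresolution step $C\xrightarrow{w}X^0\xrightarrow{p}K$, $X^0\in\mcX$, $K\in\mcX^\vee_{m-1}$, and let $e=\iota_1\pi_1$ be the idempotent cutting out $C_1$. The main obstacle is that the cone $E$ of the inflation $w\iota_1\colon C_1\to X^0$ need \emph{not} lie in $\mcX^\vee_{m-1}$ by itself, because $E$ sits in a triangle $C_2\to E\to K$ whose outer terms I cannot yet control; the naive reduction to $C_1$ alone is genuinely circular. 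I circumvent this by treating both summands at once: using $\mathbb{E}(K,X^0)=0$ (Corollary~\ref{cor: n+1 rigid->1-cot}(1), valid since $1+0+(m-1)=m\le n+1$) I lift $e$ to $\phi\colon X^0\to X^0$ with $\phi w=we$, so that $\sigma=\binom{\phi}{\,1-\phi\,}\colon X^0\to X^0\oplus X^0$ is a split inflation satisfying $\sigma w=(w\iota_1)\oplus(w\iota_2)$. Applying (ET4) to the composition $C\xrightarrow{w}X^0\xrightarrow{\sigma}X^0\oplus X^0$ identifies the cone of $(w\iota_1)\oplus(w\iota_2)$, which by additivity of $\mathfrak{s}$ is $E\oplus E'$, and simultaneously delivers an $\mathbb{E}$-triangle $K\to E\oplus E'\to X^0$. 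Since $K,X^0\in\mcX^\vee_{m-1}$, item (1) gives $E\oplus E'\in\mcX^\vee_{m-1}$, and the inductive hypothesis (summand-closedness at level $m-1$) yields $E\in\mcX^\vee_{m-1}$; the $\mathbb{E}$-triangle $C_1\to X^0\to E$ then shows $C_1\in\mcX^\vee_m$. The one delicate point is precisely this passage through $E\oplus E'$: the section $\sigma$ built from the lifted idempotent is what lets the induction on $m$ absorb the complementary summand that a direct cone argument cannot handle.
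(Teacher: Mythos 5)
Your proposal is sound in substance, and its three parts relate to the paper's proof in different ways. Item (2) is essentially word-for-word the paper's argument: one application of (ET4) to the composable inflations $C\to Y_0\to X$, followed by item (1). Item (1) has the same skeleton as the paper's proof (the paper inducts on $\coresdim_{\mcX}(C)$, you on $n$, which is legitimate since $(n+2)$-rigidity implies $(m+2)$-rigidity for $m\le n$), but you outsource the diagram chase to ``the $3\times3$ lemma of \cite{Nakaoka1}'', and that citation is the one step needing repair: \cite{Nakaoka1} does not state a nine lemma in this generality; what it offers are the ingredients, namely Proposition~\ref{Nakaoka 3.15} and its dual, (ET4), and the splitting criterion of \cite[Rk.~2.11]{Nakaoka1}. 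In your situation the completion must be assembled, and the assembly is exactly the paper's proof: the dual of Proposition~\ref{Nakaoka 3.15} applied to $A\to B\to C$ and $A\to X_A\to A'$ yields an object $M$ with $\mbE$-triangles $X_A\to M\to C$ and $B\to M\to A'$; the first realizes $a\cdot\delta\in\mbE(C,X_A)=0$, so $M\simeq X_A\oplus C$; then (ET4) applied to the composable inflations $B\to X_A\oplus C\xrightarrow{1\oplus c} X_A\oplus X_C$ gives the $\mbE$-triangles $B\to X_A\oplus X_C\to B'$ and $A'\to B'\to C'$ you want. Note that this produces an $\mbE$-triangle on \emph{some} inflation $B\to X_A\oplus X_C$, not necessarily on your particular $f_B$; that is harmless, since any such triangle gives $B\in\mathrm{CoCone}(\mcX,\mcX^\vee_{n-1})=\mcX^\vee_n$, but it means your explicit lifting construction of $f_B$ is dispensable rather than load-bearing.

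Item (3) is where you genuinely depart from the paper, and your route is correct. You lift the idempotent $e$ through $\mbE(K,X^0)=0$, and the resulting $\sigma=\begin{pmatrix}\phi\\1-\phi\end{pmatrix}$ is indeed an inflation with cone $X^0$, because $\sigma=\theta\circ\begin{pmatrix}1\\0\end{pmatrix}$ for the automorphism $\theta=\begin{pmatrix}\phi&-1\\1-\phi&1\end{pmatrix}$ with inverse $\begin{pmatrix}1&1\\ \phi-1&\phi\end{pmatrix}$ --- a verification worth recording. Then a single (ET4) application, together with additivity of $\mathfrak{s}$ and uniqueness of cones, gives the $\mbE$-triangle $K\to E\oplus E'\to X^0$, and you conclude with item (1) and the inductive hypothesis. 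The paper reaches the analogous configuration differently: it forms $M_1\to X_0\to E_0$ and $M_2\to X_0\to F_0$ by (ET4), sums them, compares the sum with $M\to X_0\to X_1$ through the dual of \cite[Prop.~3.17]{Nakaoka1}, obtains the $\mbE$-triangle $X_0\to E_0\oplus F_0\to X_1$, and splits it using $\mbE(X_1,X_0)=0$. Both proofs rest on the same insight --- absorb the complementary summand into a direct sum of cones and induct on the coresolution dimension --- but yours trades the appeal to \cite[Prop.~3.17]{Nakaoka1} and the splitting argument for the idempotent-lifting trick and closure under extensions, making it marginally more self-contained, while the paper's version avoids the idempotent manipulation and gets the identification $E_0\oplus F_0\simeq X_0\oplus X_1$ for free.
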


\begin{proof}
Notice first that $\mcX$ is closed under isomorphisms and
the containment 
$\mcX_{n-1}^{\wedge}\subseteq \mcX_{n}^{\wedge}$ holds true since $\mcX=\mathrm{free}(\mcX).$ Moreover $\mcX^{\vee}_n$ is closed under finite coproducts in $\mcC$ since $\mcX=\mathrm{free}(\mcX)$ and the coproduct of two $\mbE$-triangles is an $\mbE$-triangle.

(1) Suppose we are given an $\mathbb{E}$-triangle $A \to B \to C\dashrightarrow$ with $A, C \in \mcX^\vee_n$. In case that $\coresdim_{\mcX}(A)=0$, since $\mathbb{E}(\mcX^{\vee}_n, \mcX)=0$ and $\mcX$ is closed under isomorphisms we get that the previous $\mathbb{E}$-triangle 
splits and it is clear that $B\simeq A\oplus C\in \mcX_n^{\vee}$. So, we may assume that 
$\coresdim_{\mcX}(A)\geq 1$. We use induction on $m := \coresdim_{\mcX}(C)\leq n$ to show that 
$$\coresdim_{\mcX}(B)\leq \max\{\coresdim_{\mcX}(A), \coresdim_{\mcX}(C)\}.$$
\begin{itemize}
\item \underline{Initial step}: If $\coresdim_{\mcX}(C) = 0$, we have that $C\simeq X\in \mcX$ and then
$C\in \mcX$ since $\mcX$ is closed under isomorphisms. Let $A\to X_0\to A'\dashrightarrow$ be an $\mathbb{E}$-triangle with $X_0\in \mcX$ and $\coresdim_{\mcX}(A')=\coresdim_{\mcX}(A)-1$. From dual of Proposition~\ref{Nakaoka 3.15}, we get a commutative 
diagram of the form
\[
\begin{tikzpicture}[description/.style={fill=white,inner sep=2pt}] 
\matrix (m) [matrix of math nodes, row sep=2.5em, column sep=2.5em, text height=1.25ex, text depth=0.25ex] 
{ 
A & B & C \\
X_0 & E & C  \\
A' & A', & {}  \\
};  
\path[->] 
(m-2-1) edge (m-2-2) (m-2-2) edge (m-2-3) 
(m-1-1) edge (m-1-2) (m-1-2) edge (m-1-3) 
(m-1-2) edge (m-2-2) (m-2-2) edge (m-3-2)
(m-2-1) edge (m-3-1) (m-1-1) edge (m-2-1)
; 
\path[-,font=\scriptsize]
(m-3-1) edge [double, thick, double distance=2pt] (m-3-2)
(m-1-3) edge [double, thick, double distance=2pt] (m-2-3)
;
\end{tikzpicture} 
\]
where $X_0\to E\to C \dashrightarrow$ is an $\mbE$-triangle.
Thus, $E\in \mcX$ as $\mcX$ is closed under extensions and therefore the result is valid for this case.

\item \underline{Induction step}: We may assume that $\coresdim_{\mcX}(C) \geq 1$. Suppose that for any 
$\mathbb{E}$-triangle $A' \to B' \to C'\dashrightarrow$ with $A' \in \mcX^\vee_n$ and $\coresdim_{\mcX}(C') \leq m-1\leq n$, one has that $B' \in \mcX^\vee_n$. 

On the one hand, there is an $\mathbb{E}$-triangle $A \to X_0 \to A'
\dashrightarrow$ with $X_0 \in \mcX$ and $A' \in \mcX^\vee_{n-1}$. Thus, by the dual of Proposition~\ref{Nakaoka 3.15} 
we can form the following commutative diagram
\[
\begin{tikzpicture}[description/.style={fill=white,inner sep=2pt}] 
\matrix (m) [matrix of math nodes, row sep=2.5em, column sep=2.5em, text height=1.25ex, text depth=0.25ex] 
{ 
A & B & C \\
X_0 & E & C  \\
A' & A', & {}  \\
};  
\path[->] 
(m-2-1) edge (m-2-2) (m-2-2) edge (m-2-3) 
(m-1-1) edge (m-1-2) (m-1-2) edge (m-1-3) 
(m-1-2) edge (m-2-2) (m-2-2) edge (m-3-2)
(m-2-1) edge (m-3-1) (m-1-1) edge (m-2-1)
; 
\path[-,font=\scriptsize]
(m-3-1) edge [double, thick, double distance=2pt] (m-3-2)
(m-1-3) edge [double, thick, double distance=2pt] (m-2-3)
;
\end{tikzpicture} 
\]
where $X_0\to E\to C\dashrightarrow$ is an
$\mathbb{E}$-triangle.
Since $\mathbb{E}(\mcX_n^{\vee}, \mcX) = 0$, the conflation $X_0\to E\to C$ realizes the
split $\mathbb{E}$-extension, and then $E = C \oplus X_0$ \cite[Rmk. 2.11(1)]{Nakaoka1}. 

On the other hand, there is an $\mathbb{E}$-triangle $C \to X_1 \to C'\dashrightarrow$ with $X_1 \in \mcX$ and $\coresdim_{\mcX}(C') = \coresdim_{\mcX}(C) - 1$. Thus, 
by considering the coproduct of this $\mathbb{E}$-triangle and $X_0 \mathop{\to}\limits^{1} X_0\to 0\dashrightarrow,$ we get an $\mathbb{E}$-triangle
$C\oplus X_0 \to X_1\oplus X_0 \to C'\dashrightarrow$ with $X_0 \oplus X_1 \in\free(\mcX)= \mcX.$ 

Now, applying (ET4) to the $\mathbb{E}$-triangles $C\oplus X_0 \to X_1\oplus X_0 \to C'\dashrightarrow$  and
$B\to C\oplus X_0 \to A'\dashrightarrow,$ we have the following 
commutative diagram in $\mcC$
\[
\begin{tikzpicture}[description/.style={fill=white,inner sep=2pt}] 
\matrix (m) [matrix of math nodes, row sep=2.5em, column sep=2.5em, text height=1.25ex, text depth=0.25ex] 
{ 
B & C\oplus X_0 & A' \\
B & X_1\oplus X_0 & E'  \\
{} & C' & C',  \\
};  
\path[->] 
(m-1-1) edge (m-1-2) (m-1-2) edge (m-1-3)
(m-2-1) edge (m-2-2) (m-2-2) edge (m-2-3)
(m-1-2) edge (m-2-2) (m-2-2) edge (m-3-2)
(m-1-3) edge (m-2-3) (m-2-3) edge (m-3-3)
; 
\path[-,font=\scriptsize]
(m-3-2) edge [double, thick, double distance=2pt] (m-3-3)
(m-1-1) edge [double, thick, double distance=2pt] (m-2-1)
;
\end{tikzpicture} 
\]
where $A'\to E'\to C'\dashrightarrow$ and $B\to X_0\oplus X_1\to E'\dashrightarrow$ are $\mathbb{E}$-triangles.
By using induction hypothesis in the second row of the previous diagram
the result follows. 
\end{itemize}

(2) Suppose that all the hypotheses are satisfied. In the case, $\coresdim_{\mcX}(Y_0)=0$ the result follows. So,
we can assume that $\coresdim_{\mcX}(Y_0)\geq 1$.
Now, let $C\to Y_0\to Y_1\dashrightarrow$ be an $\mathbb{E}$-triangle with $Y_0, Y_1\in \mcX^{\vee}_n$. As $Y_0\in \mcX^{\vee}_n$
there is an $\mathbb{E}$-triangle $Y_0\to X_0\to L\dashrightarrow$ with $X_0\in \mcX$ and 
$L\in \mcX_{n-1}^{\vee}$. Then, we can form the following commutative diagram
from (ET4)
\[
\begin{tikzpicture}[description/.style={fill=white,inner sep=2pt}] 
\matrix (m) [matrix of math nodes, row sep=2.5em, column sep=2.5em, text height=1.25ex, text depth=0.25ex] 
{ 
C & Y_0 & Y_1 \\
C & X_0 & X_1  \\
{} & L & L  \\
};  
\path[->] 
(m-1-1) edge (m-1-2) (m-1-2) edge (m-1-3)
(m-2-1) edge (m-2-2) (m-2-2) edge (m-2-3)
(m-1-2) edge (m-2-2) (m-2-2) edge (m-3-2)
(m-1-3) edge (m-2-3) (m-2-3) edge (m-3-3)
; 
\path[-,font=\scriptsize]
(m-3-2) edge [double, thick, double distance=2pt] (m-3-3)
(m-1-1) edge [double, thick, double distance=2pt] (m-2-1)
;
\end{tikzpicture} 
\]
Since $\mcX_{n}^{\vee}$ is closed under extensions by (1), we get $X_1\in \mcX_{n}^{\vee}$. Hence, $C\in \mcX_{n+1}^{\vee}$. 

(3) Let $\mcX=\smd(\mcX).$ Consider $M\in\mcX_{n+1}^{\vee}$ such that  $M=M_1\oplus M_2$ in $\mcC.$ We assert that $M_1,M_2\in\mcX_{n+1}^{\vee}.$ In order to prove that,  we proceed by induction on $m:=\coresdim_\mcX(M).$ Notice that, for $m=0,$ our assertion is true since $\mcX=\smd(\mcX).$
\

Suppose that for all $N=N_1\oplus N_2\in\mcX_{n+1}^{\vee},$ with  $1\leq\coresdim_\mcX(N)\leq m<n+1,$ we have that $N_1,N_2\in \mcX_{m}^{\vee};$ and let us show it is true for $m+1.$ Assume that 
$\coresdim_\mcX(M)=m+1.$ Then, there is a conflation $M\to X_0\to X_1$ with $X_0\in\mcX$ and $X_1\in \mcX_{m}^{\vee}.$ Thus, by (ET4), we have the commutative diagram
$$\xymatrix{M_1\ar[r] \ar@{=}[d] & M_1\oplus M_2 \ar[r] \ar[d] & M_2 \ar[d]\\
M_1 \ar[r] & X_0\ar[r] \ar[d] & E_0 \ar[d]\\
& X_1 \ar@{=}[r] & X_1
}$$
\noindent
whose rows and columns are conflations. In particular, we have the conflation 
$M_1\to X_0\to E_0.$ Similarly, by using the splitting conflation $M_2\to M_1\oplus M_2\to M_1,$ we get the conflation $M_2\to X_0\to F_0.$ Thus 
$M_1\oplus M_2\to X_0\oplus X_0\to E_0\oplus F_0$ is a conflation, and then, by the dual of \cite[Prop. 3.17]{Nakaoka1} and \cite[Rk. 2.11(2)]{Nakaoka1}, we get the commutative diagram
$$\xymatrix{ & X_0 \ar@{=}[r] \ar[d]^{\begin{pmatrix}1\\ -1 \end{pmatrix}} & X_0 \ar[d]\\
M_1\oplus M_2 \ar@{=}[d] \ar[r] & X_0\oplus X_0 \ar[d]^{\begin{pmatrix}1 & 1 \end{pmatrix}} \ar[r] & E_0\oplus F_0 \ar[d]\\
M_1\oplus M_2 \ar[r] & X_0\ar[r] & X_1
}$$
whose row and columns are conflations. In particular, the conflation $X_0\to E_0\oplus F_0\to X_1$ splits since $\mbE(\mcX_{m}^{\vee},\mcX)=0$ (see Corollary \ref{cor: n+1 rigid->1-cot} (1)) and thus $E_0\oplus F_0\simeq X_0\oplus X_1\in \mcX_{m}^{\vee}.$ Therefore, by the inductive hypothesis, it follows that $E_0,F_0\in \mcX_{m}^{\vee}.$ Hence $M_1,M_2\in \mcX_{m+1}^{\vee}\subseteq \mcX_{n+1}^{\vee}.$
\end{proof}

\noindent We also have the duals results for the class $\mcX^{\wedge}_n$ which we mention below without proofs.

\begin{lemma}\label{lem dual: Xvee cerrada por ext}
Let $\mcC$ be an extriangulated category with higher $\mbE$-extension groups, $n\geq 0$ and let
$\mcX=\free(\mcX)$ be an
$(n+2)$-rigid subcategory of $\mcC.$ Then:
\begin{enumerate}
\item $\mcX^{\wedge}_n$ is closed under 
extensions.
\item If $Y_0\to Y_1\to C\dashrightarrow$ is an $\mathbb{E}$-triangle with $Y_0, Y_1\in \mcX^{\wedge}_n$ then $C\in \mcX_{n+1}^{\wedge}$. 
\item $\mcX_{n+1}^{\wedge}=\smd(\mcX_{n+1}^{\wedge})$    if $\mcX=\smd(\mcX).$
\end{enumerate}
\end{lemma}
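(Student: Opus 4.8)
The plan is to prove Lemma~\ref{lem dual: Xvee cerrada por ext} by dualizing the proof of Lemma~\ref{lem: Xvee cerrada por ext} verbatim, since $\mcX^{\wedge}_n$ and $\mcX^{\vee}_n$ are exchanged by passing to the opposite extriangulated category $\mcC^{op}$. The key observation is that an $(n+2)$-rigid subcategory $\mcX$ in $\mcC$ is again $(n+2)$-rigid in $\mcC^{op}$, because the condition $\mathbb{E}^{\leq n+1}(\mcX, \mcX) = 0$ is self-dual under the identification $\mbE^i_{\mcC^{op}}(X,Y) = \mbE^i_{\mcC}(Y,X)$; moreover $\free$, $\smd$, and $\add$ are preserved, and the dual of a finite $\mcX$-coresolution is exactly a finite $\mcX$-resolution, so $\coresdim^{op}_{\mcX} = \resdim_{\mcX}$ and $(\mcX^{\vee}_n)$ computed in $\mcC^{op}$ equals $(\mcX^{\wedge}_n)$ computed in $\mcC$. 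Under this dictionary, statement (1) of Lemma~\ref{lem: Xvee cerrada por ext} (that $\mcX^{\vee}_n$ is closed under extensions) transports to statement (1) here; statement (2) transports to (2); and (3) to (3). I would therefore simply invoke the already-established Lemma~\ref{lem: Xvee cerrada por ext} applied to $\mcC^{op}$ and translate back.

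If one prefers a self-contained argument rather than the formal dualization, I would mirror each step. For (1), given an $\mathbb{E}$-triangle $A \to B \to C \dashrightarrow$ with $A, C \in \mcX^{\wedge}_n$, I would induct on $\resdim_{\mcX}(A)$ (the dual of inducting on $\coresdim_{\mcX}(C)$), using the $\mbE$-projective resolution $X_0 \to A \to A' \dashrightarrow$ with $X_0 \in \mcX$ together with Proposition~\ref{Nakaoka 3.15} to build the relevant commutative diagrams, and using $\mathbb{E}(\mcX, \mcX^{\wedge}_n) = 0$ (the dual orthogonality, which follows from Corollary~\ref{cor: n+1 rigid->1-cot}(1)) to split the auxiliary conflation. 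For (2), from $Y_0 \to Y_1 \to C \dashrightarrow$ with $Y_0, Y_1 \in \mcX^{\wedge}_n$, I would take a resolution $X_1 \to X_0 \to Y_1 \dashrightarrow$ type triangle and apply (ET4)$^{op}$ to obtain $C \in \mcX^{\wedge}_{n+1}$, using part (1) to control the resolution dimension of the middle term. For (3), I would induct on $\resdim_{\mcX}(M)$ exactly as in the original, using (ET4) and the dual of \cite[Prop. 3.17]{Nakaoka1} to split off summands, and invoking $\mathbb{E}(\mcX, \mcX^{\wedge}_m) = 0$ at the key splitting step.

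The main point to verify carefully is that all the structural ingredients used in the proof of Lemma~\ref{lem: Xvee cerrada por ext} are genuinely self-dual: that $\mcC^{op}$ is again extriangulated with higher $\mbE$-extension groups (which holds because the axioms (ET1)--(ET4) and the conditions $(\mbE1)$--$(\mbE4)$ are stated in a self-dual manner, with (ET3)/(ET3)$^{op}$ and (ET4)/(ET4)$^{op}$ interchanged), that the orthogonality $\mathbb{E}^{\leq k}(\mcX^{\vee}_j, \mcX^{\wedge}_i) = 0$ from Corollary~\ref{cor: n+1 rigid->1-cot}(1) dualizes correctly, and that $\mcP_{\mbE}(\mcC^{op}) = \mcI_{\mbE}(\mcC)$ so that syzygies and cosyzygies swap. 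The hard part is purely bookkeeping: making sure the roles of the two $\mbE$-triangles fed into (ET4) versus (ET4)$^{op}$ are swapped consistently, and that the directions of all conflations in the diagrams are reversed. Once the duality dictionary is set up, no new mathematical content is required, so I would present the proof as an application of Lemma~\ref{lem: Xvee cerrada por ext} to the opposite category, which is why the authors state it without proof.
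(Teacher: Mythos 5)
Your proposal is correct and takes essentially the same approach the paper intends: Lemma~\ref{lem dual: Xvee cerrada por ext} is stated there without proof precisely because it is the formal dual of Lemma~\ref{lem: Xvee cerrada por ext}, i.e.\ the original lemma applied to $\mcC^{op}$ under the dictionary you set up ($(n+2)$-rigidity self-dual, $\coresdim^{op}_{\mcX}=\resdim_{\mcX}$, $(\mcX^{\vee}_n)^{op}=\mcX^{\wedge}_n$, $\mcP_{\mbE}(\mcC^{op})=\mcI_{\mbE}(\mcC)$). Only a cosmetic slip: in your self-contained sketch of (1) the resolution triangle of $A$ should read $A'\to X_0\to A\dashrightarrow$ rather than $X_0\to A\to A'$, which does not affect the argument.
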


\begin{definition}
Let $\mcC$ be an extriangulated category and $\mcX, \mcY \subseteq \mcC$. We say that $\mcY$ has \emph{enough deflations} from $\mcX$ if, for any $C\in\mcY,$ there is a deflation $f: X\to C$ with $X\in \mcX.$ If in addition $f\in\mcP_{1,\mbE}(\mcC),$ we say that  $\mcY$ has \emph{enough $\mbE$-projective morphisms} from $\mcX.$ Dually, we say that $\mcY$ has \emph{enough inflations} to $\mcX$ if, for any $C\in\mcY,$ there is an inflation $g: C\to X$ with $X\in \mcX.$ If in addition $g\in\mcI_{1,\mbE}(\mcC),$ we say that  $\mcY$ has \emph{enough $\mbE$-injective morphisms} to
 $\mcX.$ 
\end{definition}

\begin{proposition}\label{pro:wicac} For an extriangulated category $\mcC,$ $\mcX\subseteq\mcY\subseteq\mcC$ such that $\mcX=\free(\mcX)$ and $\mcY=\add(\mcY)$ has enough deflations from $\mcX,$ and the quotient category $\mcY/[\mcX],$ the following statements hold true.
\begin{enumerate}
\item Let $A\xrightarrow{f}B\xrightarrow{g} A$ in $\mcY$ be such that $[g][f]=[1_A]$ in $\mcY/[\mcX].$ Then, there exists a morphism $g_0:X\to A$ in $\mcY,$ with 
$X\in\mcX,$ and an splitting $\mbE$-triangle in $\mcY$
\begin{center}
$K\xrightarrow{h}B\oplus X\xrightarrow{(g\;g_0)}A\dashrightarrow$ 
\end{center}
i.e. $B\oplus X=K\oplus A,$ with inclusions 
$\mu_1=h:K\to B\oplus X,$ $\mu_2=\begin{pmatrix}f\\r \end{pmatrix}:A\to B\oplus X$ and projections $\pi_1:B\oplus X\to K,$ $\pi_2=(g\;g_0):B\oplus X\to A.$

\item $\mcY/[\mcX]$ is a weakly idempotent complete additive category.
\end{enumerate}
\end{proposition}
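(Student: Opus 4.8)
The plan is to establish (1) by hand and then read off (2) from it together with Lemma~\ref{wicac}. For (1), I start from $[g][f]=[1_A]$, which says exactly that $gf-1_A$ factors through $\mcX$; since $\mcX=\free(\mcX)$ I may absorb the factorization into one object and write $gf-1_A=\beta\alpha$ with $\alpha\colon A\to X'$, $\beta\colon X'\to A$ and $X'\in\mcX$. The purpose of $X'$ is to turn the would-be splitting into an honest one: putting $G:=(g\ {\,}-\beta)\colon B\oplus X'\to A$ and $F:=\begin{pmatrix}f\\ \alpha\end{pmatrix}\colon A\to B\oplus X'$ gives $GF=gf-\beta\alpha=1_A$ in $\mcY$. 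I then apply the hypothesis that $\mcY$ has enough deflations from $\mcX$ to the object $A\in\mcY$, obtaining a deflation $\rho\colon\hat X\to A$ with $\hat X\in\mcX$, and I set $X:=X'\oplus\hat X\in\mcX$ and $g_0:=(-\beta\ {\,}\rho)\colon X\to A$, so that $(g\ {\,}g_0)=(g\ {\,}-\beta\ {\,}\rho)\colon B\oplus X\to A$ is a split epimorphism with section $\begin{pmatrix}f\\ \alpha\\ 0\end{pmatrix}$.

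The main obstacle is that a split epimorphism need not be a deflation in a category that is not yet weakly idempotent complete, and the direct attempts to realize $(g\ {\,}g_0)$ as a deflation founder on the fact that $g$ and $\beta$ cannot be lifted along $\rho$. The device that circumvents this is to factor $(g\ {\,}g_0)$ as a composite of two genuine deflations, namely $(g\ {\,}-\beta\ {\,}\rho)=(g\ {\,}-\beta\ {\,}1_A)\circ\bigl(1_{B\oplus X'}\oplus\rho\bigr)$. The right-hand factor $1_{B\oplus X'}\oplus\rho\colon B\oplus X'\oplus\hat X\to B\oplus X'\oplus A$ is a deflation, being the direct sum of the identity conflation on $B\oplus X'$ with the conflation realizing $\rho$. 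The left-hand factor $(g\ {\,}-\beta\ {\,}1_A)\colon B\oplus X'\oplus A\to A$ carries an identity on its last summand, and a lower-triangular automorphism of $B\oplus X'\oplus A$ with identity diagonal identifies it with the split projection $(0\ {\,}0\ {\,}1_A)$, which is a deflation; hence it too is a deflation. Since the composite of two deflations is a deflation, $(g\ {\,}g_0)$ is a deflation and thus admits a conflation $K\xrightarrow{h}B\oplus X\xrightarrow{(g\ {\,}g_0)}A\dashrightarrow\delta_0$.

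To finish (1) I check that this $\mbE$-triangle splits. Because $(g\ {\,}g_0)$ has a section, the map $\mcC(-,(g\ {\,}g_0))$ is surjective, so in the long exact sequence of Corollary~\ref{suc exact ext1} the connecting map $(\delta_0)_{\#}\colon\mcC(-,A)\to\mbE(-,K)$ vanishes; evaluating at $1_A$ gives $\delta_0=\delta_0\cdot 1_A=0$. Hence the triangle is split and produces the coproduct decomposition $B\oplus X=K\oplus A$ with inclusions $\mu_1=h$ and $\mu_2=\begin{pmatrix}f\\ r\end{pmatrix}$ (where $r=\begin{pmatrix}\alpha\\ 0\end{pmatrix}\colon A\to X$) and projections $\pi_1$ and $\pi_2=(g\ {\,}g_0)$, which is exactly the assertion of (1).

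For (2), the quotient $\mcY/[\mcX]$ is additive because $[\mcX]$ is an ideal ($\mcX$ being closed under finite coproducts), so by Lemma~\ref{wicac} it suffices to verify condition (d). Given $A\xrightarrow{\phi}B\xrightarrow{\psi}A$ in $\mcY/[\mcX]$ with $\psi\phi=[1_A]$, I lift $\phi,\psi$ to $f,g$ in $\mcY$ using fullness of the quotient functor, so $[g][f]=[1_A]$, and apply (1). Pushing the decomposition $B\oplus X=K\oplus A$ through the quotient functor and using that $X\in\mcX$ is a zero object there yields $B\simeq K\oplus A$ in $\mcY/[\mcX]$; moreover $[r]=0$ and $[g_0]=0$ since $r$ and $g_0$ factor through $\mcX$, so under this identification the inclusion $\mu_2$ becomes $\phi$ and the projection $\pi_2$ becomes $\psi$. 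This is precisely the coproduct decomposition demanded by Lemma~\ref{wicac}(d), so $\mcY/[\mcX]$ is weakly idempotent complete.
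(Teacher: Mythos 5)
Your proof is correct, and at the level of the construction it is the same as the paper's: you factor $gf-1_A$ through a single object of $\mcX$, enlarge it by an object $\hat X\in\mcX$ admitting a deflation $\rho\colon\hat X\to A$ (the paper's $X_1\xrightarrow{p}A$), set $X:=X'\oplus\hat X$ and $g_0$ accordingly, and then split the resulting $\mbE$-triangle using the section $\begin{pmatrix}f\\ r\end{pmatrix}$. The only divergence is in how the two auxiliary facts are justified. Where the paper simply cites \cite[Cor. 3.16]{Nakaoka1} for the existence of the $\mbE$-triangle $K\to B\oplus X\xrightarrow{(g\;g_0)}A\dashrightarrow$, and \cite[Cor. 3.16, Rk. 2.11(1)]{Nakaoka1} for its splitting, you re-derive both from the axioms: the deflation property via the factorization $(g\;{-\beta}\;\rho)=(g\;{-\beta}\;1_A)\circ(1_{B\oplus X'}\oplus\rho)$, a triangular automorphism identifying $(g\;{-\beta}\;1_A)$ with a split projection, and closure of deflations under composition; the splitting via the long exact sequence of Corollary~\ref{suc exact ext1} evaluated at $1_A$. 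Both arguments are valid; in effect you have given an inline proof of the special case of Nakaoka--Palu's Corollary 3.16 that the paper invokes. Note, though, that closure of deflations under composition is itself a consequence of (ET4)$^{op}$, so you have not eliminated that external input, only pushed it down to a more primitive fact; what your version buys is self-containedness, at the cost of length. Your treatment of (2) likewise matches the paper's one-line proof (Lemma~\ref{wicac} plus additivity of the quotient functor), with the verification of condition (d) usefully spelled out. One small omission: statement (1) asserts that the splitting $\mbE$-triangle lies \emph{in} $\mcY$, so you should add the one-line remark that $K\in\mcY$; this is immediate from your decomposition $B\oplus X=K\oplus A$ together with the hypothesis $\mcY=\add(\mcY)$ (closure under direct summands), and it is exactly the observation with which the paper closes its proof of (1).
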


\begin{proof} (1) Since $[g][f]=[1_A]$ in $\mcY/[\mcX],$ there are morphisms 
$A\xrightarrow{s}X_0\xrightarrow{t}A$ with $X_0\in\mcX$ and $gf-1_A=ts.$ Consider the morphism $(g\;t):B\oplus X_0\to A.$ Since $\mcY$ has enough deflations from $\mcX,$
there is a deflation $p:X_1\to A,$ with $X_1\in \mcX.$ Thus $X:=X_0\oplus X_1\in\mcX$ and we can take $g_0:=(t\;p):X\to A.$ Moreover, from \cite[Cor. 3.16]{Nakaoka1} there is an $\mbE$-triangle $K\xrightarrow{h}B\oplus X\xrightarrow{(g\;g_0)}A\dashrightarrow$ in $\mcC.$ Consider $r:=\begin{pmatrix} -s\\ 0\end{pmatrix}:A\to X.$ Then, for the morphism $\begin{pmatrix}f\\ r\end{pmatrix}:A\to B\oplus X,$ we have that 
$(g\;g_0)\begin{pmatrix}f\\ r\end{pmatrix}=gf-g_0r=gf-ts=1_A.$ Therefore, by \cite[Cor. 3.16, Rk. 2.11(1)]{Nakaoka1}, the above $\mbE$-triangle splits as desired. To finish the proof of (1), we need to show that $K\in\mcY,$ but it follows since $\mcY$ is closed under direct summands.
\
 
(2) It follows from (1) and Lemma \ref{wicac} since the quotient functor $\pi:\mcY\to  \mcY/[\mcX]$ is additive.
\end{proof}

Let $\mcC$ be an additive category. We recall that $\mcC$ is {\bf Krull-Schmidt} if each object in $\mcC$ decomposes into a finite coproduct of objects having local endomorphism ring. We denote by $\mathrm{ind}(\mcC)$ the full subcategory of $\mcC$ whose objects are determined by choosing one object for each iso-class of indecomposable objects in $\mcC.$ 

\begin{proposition}\label{pro:KS} For an additive category $\mcC,$ $\mcX\subseteq\mcY\subseteq\mcC$ such that $\mcX=\add(\mcX)$ and $\mcY=\free(\mcY)$ and the quotient category $\mcD:=\mcY/[\mcX],$ the following statements hold true.
\begin{itemize}
\item[(a)] Let $\mcY$ be weakly idempotent complete. Then 
 \begin{itemize}
 \item[(a1)]   $\forall\,Y\in \mcY$ with $\End_\mcC(Y)$ local, $Y\not\in \mcX\;\Leftrightarrow\;[\mcX](Y,Y)\subseteq\rad\,\End_\mcC(Y).$
  \item[(a2)] $\forall\,Y\in\mcY,$  $\End_\mcD(Y)=0$ $\Leftrightarrow$ $Y\in\mcX.$
 \end{itemize}
\item[(b)] Let $\mcY$ be Krull-Schmidt. Then, $\mcD$ is Krull-Schmidt and $\ind(\mcD)=\ind(\mcY)-\mcX.$
\end{itemize}
\end{proposition}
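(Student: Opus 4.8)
The plan is to reduce the whole proposition to two elementary observations, working throughout with the standard identification $\End_\mcD(Y)=\End_\mcC(Y)/[\mcX](Y,Y)$: namely (i) that a morphism in $[\mcX](Y,Y)$ which happens to be an isomorphism forces $Y$ to be a direct summand of an object of $\mcX$, and hence $Y\in\mcX$; and (ii) that a quotient of a local ring by an ideal contained in its radical is again local. Observation (i) is where weak idempotent completeness enters: if $f\in[\mcX](Y,Y)$ is a unit, write $f=ba$ with $a\colon Y\to X$, $b\colon X\to Y$ and $X\in\mcX$, set $r:=f^{-1}b$, so that $ra=1_Y$ in $\mcY$; then Lemma~\ref{wicac}(d) yields a decomposition $X\simeq Y\oplus A'$ in $\mcY$, and since $\mcX=\add(\mcX)=\smd(\free(\mcX))$ is closed under summands this gives $Y\in\mcX$.

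For (a1) I would argue both implications through the identity. For $(\Leftarrow)$ I prove the contrapositive: if $Y\in\mcX$ then $1_Y=1_Y\circ 1_Y\in[\mcX](Y,Y)$, whereas $1_Y\notin\rad\End_\mcC(Y)$ because the radical of a local ring contains no unit, so $[\mcX](Y,Y)\not\subseteq\rad\End_\mcC(Y)$. For $(\Rightarrow)$, assume $Y\notin\mcX$; since in the local ring $\End_\mcC(Y)$ the radical is exactly the set of non-units, it suffices to show each $f\in[\mcX](Y,Y)$ is a non-unit, and this is immediate from observation (i), as a unit would force the contradiction $Y\in\mcX$. The same retract argument settles (a2): $\End_\mcD(Y)=0$ means $1_Y\in[\mcX](Y,Y)$, i.e.\ $1_Y$ factors through some $X\in\mcX$, whence $X\simeq Y\oplus A'$ by Lemma~\ref{wicac}(d) and $Y\in\mcX$; conversely $Y\in\mcX$ gives $1_Y\in[\mcX](Y,Y)$ and thus $\End_\mcD(Y)=0$.

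For (b) I would first recall that a Krull--Schmidt category is weakly idempotent complete, so that part (a) is available for $\mcY$ (in particular the indecomposable summands produced below are genuine objects of $\mcY$). Given $Y\in\mcY$, decompose it in $\mcY$ as $Y\simeq\bigoplus_{i=1}^m Y_i$ with each $\End_\mcC(Y_i)$ local. Applying the additive quotient functor $\pi\colon\mcY\to\mcD$, the summands $Y_i\in\mcX$ become zero objects by (a2) (in an additive category an object with vanishing endomorphism ring is a zero object), while for each $Y_i\notin\mcX$ the ring $\End_\mcD(Y_i)=\End_\mcC(Y_i)/[\mcX](Y_i,Y_i)$ is, by (a1) together with observation (ii), a quotient of a local ring by an ideal contained in its radical, hence local. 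Therefore $Y\simeq\bigoplus_{Y_i\notin\mcX}Y_i$ in $\mcD$ is a finite coproduct of objects with local endomorphism rings, so $\mcD$ is Krull--Schmidt.

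Finally, the displayed decomposition already shows that every indecomposable of $\mcD$ is isomorphic in $\mcD$ to some $Y_i\in\ind(\mcY)$ with $Y_i\notin\mcX$, and conversely each such $Y_i$ has nonzero local $\End_\mcD(Y_i)$ and is thus indecomposable in $\mcD$; what remains for $\ind(\mcD)=\ind(\mcY)-\mcX$ is that distinct representatives $Y,Y'\in\ind(\mcY)-\mcX$ stay non-isomorphic in $\mcD$. If $\underline f\colon Y\to Y'$ were an isomorphism in $\mcD$ with inverse $\underline g$, I would lift to $f,g$ in $\mcY$; then $gf-1_Y\in[\mcX](Y,Y)\subseteq\rad\End_\mcC(Y)$ by (a1), so $gf$ is a unit, and likewise $fg$, whence $f$ is both a split mono and a split epi, i.e.\ an isomorphism in $\mcY$, contradicting the choice of distinct representatives. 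The hard part, as indicated, is observation (i): passing from "$Y$ is a retract of an object of $\mcX$" to "$Y$ is an honest direct summand, hence in $\mcX$" is precisely where the weak idempotent completeness of $\mcY$ is indispensable, and it is this hypothesis that makes the quotient $\mcD$ inherit the Krull--Schmidt structure so cleanly.
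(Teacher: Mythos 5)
Your proof is correct and follows essentially the same route as the paper's: weak idempotent completeness turns a unit in $[\mcX](Y,Y)$ into a split mono from $Y$ into an object of $\mcX=\smd(\mcX)$, forcing $Y\in\mcX$, and (b) is obtained by pushing a Krull--Schmidt decomposition through the additive quotient functor and observing that a quotient of a local ring by an ideal inside its radical is local. The only difference is that you also supply details the paper omits: the proof of (a2), which is left to the reader, and the verification in (b) that objects of $\ind(\mcY)-\mcX$ stay indecomposable and pairwise non-isomorphic in $\mcD$ (via the unit-plus-radical argument), which the paper's proof of the equality $\ind(\mcD)=\ind(\mcY)-\mcX$ does not spell out.
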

\begin{proof} (a) The item (a2) is let to the reader. Let us show (a1). Let $Y\in\mcY$ be such that $\End_\mcC(Y)$ is local. Assume that $[\mcX](Y,Y)\subseteq\rad\,\End_\mcC(Y).$ Since $1_Y\not\in\rad\,\End_\mcC(Y),$ it follows that $Y\not\in \mcX.$
\

Let $Y\not\in \mcX$ and $\varphi\in [\mcX](Y,Y).$ Then, there are morphisms $Y\xrightarrow{f}X\xrightarrow{g} Y,$ with $X\in\mcX$ and $\varphi=gf.$ Suppose that $\varphi\not\in\rad\,\End_\mcC(Y).$ Then $\varphi$ is an isomorphism and thus $f:Y\to X$ is a split-mono. Therefore $Y\in\mcX$ since $\mcY$ is weakly idempotent complete and $\mcX=\smd(\mcX).$ This contradiction implies that 
$\varphi\in\rad\,\End_\mcC(Y).$
\

(b) Since $\mcY$ is Krull-Schmidt, by   \cite[Cor. 4.4]{HK15}, we get that $\mcY$ is weakly idempotent complete.  Let $Y\in \mcY.$  Then there is a decomposition $Y=\oplus_{i=1}^nY_i$ with $\End_\mcC(Y)$ local $\forall\,i.$ Using that the quotient functor $\pi:\mcY\to \mcD$ is additive, the same decomposition $Y=\oplus_{i=1}^nY_i$ holds in $\mcD.$ Notice that, from (a2), we know that $Y_i=0$ in $\mcD$ iff $Y_i\in\mcX.$ Thus, we can assume that $Y=\oplus_{i=1}^nY_i$ with $Y_i\not\in\mcX$ $\forall\,i.$ Therefore, by (a1), we have 
$$\frac{\End_\mcD(Y_i)}{\rad\,\End_\mcD(Y_i)}=\frac{\End_\mcC(Y_i)/[\mcX](Y_i,Y_i)}{\rad\,\End_\mcC(Y_i)/[\mcX](Y_i,Y_i)}\simeq 
\frac{\End_\mcC(Y_i)}{\rad\,\End_\mcC(Y_i)}.$$
Thus, the ring $\End_\mcD(Y_i)$ is local $\forall\,i.$ 
\end{proof}

As a particular case of $(n+2)$-rigid subcategories we have the $(n+2)$-cluster tilting ones that we introduce now.

\begin{definition} Let $\mcC$ be an extriangulated category with higher $\mbE$-extension groups and let $n\geq 0.$ A subcategory $\mcX\subseteq \mcC$ is said to be
$(n+2)$-cluster tilting subcategory of $\mcC$ if it satisfies the following:
\begin{enumerate}
\item[(CT1)] $\mcX$ is strongly precovering and preenveloping; 
\item[(CT2)] $\mcX=\bigcap_{k=1}^{n+1}\mcX^{\perp_k}=\bigcap_{k=1}^{n+1}{}^{\perp_k}\mcX.$
\end{enumerate}
\end{definition}

Recall  that an $(n+2)$-cluster tilting subcategory $\mcX$ of $\mcC$ satisfies the following: $\mcP_{\mbE}(\mcC)\cup \mcI_{\mbE}(\mcC)\subseteq\mcX=\add(\mcX).$  Thus, by Remark \ref{str-prec-equiv}, we get that the above definition coincides with \cite[Def. 5.3]{LNheartsoftwin} since in this paper it is assumed that $\mcC$ has enough $\mbE$-projectives and $\mbE$-injectives.

 For such categories we have the following characterization which was firstly stated
in \cite[Thm. 5.26]{HMP} for abelian categories.

\begin{theorem} \cite[Thm. 3.1]{HZontherelation}\label{teo: caract n-cluster}
Let $\mcC$ be an extriangulated category with higher $\mbE$-extension groups. Then, for any subcategory
$\mcX$ of $\mcC$ and any integer $n\geq 0$, the following statements are equivalent:
\begin{enumerate}
\item $(\mcX, \mcX)$ is an $(n+1)$-cotorsion pair in $\mcC$;
\item $\mcX$ is an $(n+2)$-cluster tilting subcategory of $\mcC$.
\end{enumerate}
In this case,
the equalities $\mcX_{n+1}^{\vee}=\mcC=\mcX^{\wedge}_{n+1}$ hold true.
\end{theorem}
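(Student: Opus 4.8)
The plan is to first rewrite condition (1) in usable form. Unwinding Definition~\ref{def: CnCP ext} with $\mcA=\mcB=\mcX$ and $\mcS=\mcC$, the pair $(\mcX,\mcX)$ is an $(n+1)$-cotorsion pair in $\mcC$ exactly when $\mcX=\add(\mcX)$, $\mbE^{\leq n+1}(\mcX,\mcX)=0$ (i.e. $\mcX$ is $(n+2)$-rigid), and every $C\in\mcC$ admits $\mbE$-triangles $K\to X\to C\dashrightarrow$ and $C\to X'\to K'\dashrightarrow$ with $X,X'\in\mcX$, $K\in\mcX^{\wedge}_n$ and $K'\in\mcX^{\vee}_n$. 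By the identities $\mathrm{Cone}(\mcX^{\wedge}_n,\mcX)=\mcX^{\wedge}_{n+1}$ and $\mathrm{CoCone}(\mcX,\mcX^{\vee}_n)=\mcX^{\vee}_{n+1}$ recalled in the preliminaries, this last clause is equivalent to $\mcC=\mcX^{\wedge}_{n+1}=\mcX^{\vee}_{n+1}$, which is also the ``in this case'' statement. Symmetrically, since each $\mcX^{\perp_k}$ and ${}^{\perp_k}\mcX$ is closed under finite coproducts and direct summands (by additivity of $\mbE^k$), condition (CT2) forces $\mcX=\add(\mcX)$ and, read on objects of $\mcX$, also $(n+2)$-rigidity. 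Hence both implications reduce to matching the completeness clauses against (CT1)--(CT2).

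For $(2)\Rightarrow(1)$, I would fix $C\in\mcC$ and iterate (CT1): strongly precovering yields $\mbE$-triangles $\Omega^{j+1}C\to X_j\xrightarrow{g_j}\Omega^{j}C\dashrightarrow$ with $X_j\in\mcX$ and $g_j$ an $\mcX$-precover, where $\Omega^0C:=C$ and $\Omega^{j}C$ is the $j$-th syzygy. The key is to show $\Omega^{n+1}C\in\mcX$; using $\mcX=\bigcap_{k=1}^{n+1}\mcX^{\perp_k}$, it suffices to check $\mbE^{\leq n+1}(\mcX,\Omega^{n+1}C)=0$. Two inputs combine. First, applying $\mcC(X',-)$ for $X'\in\mcX$ to the $j$-th triangle (Corollary~\ref{suc exact ext1}) and using that $g_j$ is a precover shows $\mbE^1(X',\Omega^{j+1}C)$ embeds into $\mbE^1(X',X_j)=0$, so $\mbE^1(\mcX,\Omega^{j+1}C)=0$ for all $j\geq 0$. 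Second, the long exact sequence of higher extensions attached to the $j$-th triangle, combined with $\mbE^{i}(\mcX,X_j)=0$ for $1\leq i\leq n+1$, gives the dimension shift $\mbE^{i+1}(X',\Omega^{j+1}C)\cong\mbE^{i}(X',\Omega^{j}C)$ for $1\leq i\leq n$. Iterating the shift reduces each $\mbE^{i}(\mcX,\Omega^{n+1}C)$ with $2\leq i\leq n+1$ down to $\mbE^1(\mcX,\Omega^{n+2-i}C)=0$. Therefore $\Omega^{n+1}C\in\mcX$, so $C\in\mcX^{\wedge}_{n+1}$ and $\Omega C\in\mcX^{\wedge}_n$, which is exactly the left completeness triangle; the dual computation with strongly preenveloping and $\mcX=\bigcap_{k=1}^{n+1}{}^{\perp_k}\mcX$ delivers $\mcC=\mcX^{\vee}_{n+1}$ and right completeness.

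For $(1)\Rightarrow(2)$, condition (CT1) follows because the completeness triangle $K\to X\xrightarrow{g}C\dashrightarrow$ has $K\in\mcX^{\wedge}_n$, and Corollary~\ref{cor: n+1 rigid->1-cot}(1) (taking $j=0$, $i=n$, $k=1$) gives $\mbE^1(\mcX,\mcX^{\wedge}_n)=0$; feeding this into Corollary~\ref{suc exact ext1} makes $\mcC(X',g)$ surjective, so $g$ is an $\mcX$-precover realized by an $\mbE$-triangle and $\mcX$ is strongly precovering (dually, preenveloping). For (CT2), the inclusion $\mcX\subseteq\bigcap_k\mcX^{\perp_k}$ is rigidity. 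For the reverse inclusion I would take $N$ with $\mbE^{\leq n+1}(\mcX,N)=0$ and, since $N\in\mcC=\mcX^{\vee}_{n+1}$, form cosyzygies $N=N_0,N_1,\dots,N_{n+1}$ from $\mbE$-triangles $N_j\to X^j\to N_{j+1}\dashrightarrow$ with $X^j\in\mcX$ and $N_{n+1}\in\mcX$. The same shift (now in the second variable) gives $\mbE^{\leq n+1-j}(\mcX,N_j)=0$, so in particular $\mbE^1(\mcX,N_j)=0$ for $j\leq n$. A downward induction then splits each triangle: with $N_{j+1}\in\mcX$, the defining class lies in $\mbE^1(N_{j+1},N_j)\subseteq\mbE^1(\mcX,N_j)=0$, so $N_j$ is a direct summand of $X^j\in\mcX$, whence $N_j\in\smd(\mcX)=\mcX$; descending to $N_0=N$ gives $N\in\mcX$, and dually for ${}^{\perp}$.

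The main obstacle I expect is the degree bookkeeping. Everything turns on the rigidity window $\mbE^{\leq n+1}(\mcX,\mcX)=0$ being exactly wide enough: the shift $\mbE^{i+1}(X',\Omega^{j+1}C)\cong\mbE^{i}(X',\Omega^{j}C)$ needs both $\mbE^{i}(\mcX,X_j)$ and $\mbE^{i+1}(\mcX,X_j)$ to vanish, which pins $i$ to $1\leq i\leq n$, and it is precisely this range that lets every $\mbE^{i}(\mcX,\Omega^{n+1}C)$ collapse to an $\mbE^1$ of a strictly positive syzygy. One must also respect the side on which each tool acts: the precover property controls the \emph{right} orthogonal $\mbE^{\bullet}(\mcX,-)$, so (CT1) must be paired with $\mcX=\bigcap_k\mcX^{\perp_k}$ (and the preenvelope with $\mcX=\bigcap_k{}^{\perp_k}\mcX$). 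A final routine point to confirm is that the long exact sequences of higher $\mbE$-extensions are available under the sole assumption of higher $\mbE$-extension groups, as provided by the framework of Definition~\ref{def: higher E-extensions} and \cite{GNP}, in line with Lemma~\ref{longExSeq}.
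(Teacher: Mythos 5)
The paper contains no internal proof of Theorem~\ref{teo: caract n-cluster}: it is imported verbatim from \cite{HZontherelation}, so your argument can only be checked against the paper's framework rather than against an in-house proof. Measured that way, your unwinding of condition (1) (including the ``in this case'' equalities) and your whole direction $(1)\Rightarrow(2)$ are correct and stay inside what the paper actually supplies: the four-term exact sequences of axiom $(\mbE 2)$, Corollary~\ref{suc exact ext1}, Corollary~\ref{cor: n+1 rigid->1-cot}, closure of orthogonals under $\add$, and the splitting of an $\mbE$-triangle realizing $0$. The key structural point is that every shift you use in that direction propagates vanishing \emph{downwards}: from $\mbE^{i}(\mcX,X^{j})=0$ and $\mbE^{i+1}(\mcX,N_{j})=0$ you conclude $\mbE^{i}(\mcX,N_{j+1})=0$, and this needs exactness only at the two interior terms of $\mbE^{i}(-,N_{j})\to\mbE^{i}(-,X^{j})\to\mbE^{i}(-,N_{j+1})\to\mbE^{i+1}(-,N_{j})$, which is exactly what $(\mbE 2)$ grants. (The same remark explains why the paper's Proposition~\ref{pro: n+1 rigid->1-cot}(1), which you invoke, is sound under the minimal axioms.)

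The gap sits in $(2)\Rightarrow(1)$, at precisely the point you dismiss as ``a final routine point to confirm''. There you need the \emph{upward} shift $\mbE^{i+1}(X',\Omega^{j+1}C)\cong\mbE^{i}(X',\Omega^{j}C)$, whose surjectivity half amounts to exactness of the concatenated sequences at the junction term, i.e. $\Ker\bigl(\mbE^{i+1}(X',\Omega^{j+1}C)\to\mbE^{i+1}(X',X_{j})\bigr)\subseteq\Ima(\Delta^{i}_{\delta})$. Definition~\ref{def: higher E-extensions} does not postulate this: $(\mbE 2)$ gives four-term sequences exact only at their two interior positions. One inclusion, $\Ima(\Delta^{i}_{\delta})\subseteq\Ker\,\mbE^{i+1}(X',x)$, does follow from $(\mbE 3)$ applied to the morphism $(x,1_{X_{j}},0)$ into the split triangle $X_{j}\to X_{j}\to 0$, but the reverse inclusion is not among, and does not evidently follow from, $(\mbE 1)$--$(\mbE 4)$. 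Without it you cannot push vanishing up the syzygy tower at all: the only shift available from the axioms derives vanishing for $\Omega^{j}C$ from vanishing for $\Omega^{j+1}C$, so reaching $\mbE^{i}(\mcX,\Omega^{n+1}C)=0$ ``from above'' forces an infinite regress through degrees $>n+1$, where $(n+2)$-rigidity gives no control. Hence your proof of $(2)\Rightarrow(1)$ is complete only under an additional hypothesis guaranteeing genuine long exact sequences of higher extensions --- for instance enough $\mbE$-projectives and $\mbE$-injectives as in Lemma~\ref{longExSeq} (the setting of the cited source \cite{HZontherelation} and of the paper's applications in Corollary~\ref{teo: n-cluster tilting}), or smallness combined with the full strength of the construction in \cite{GNP}. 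That hypothesis is the crux of this implication and must be stated explicitly, not treated as bookkeeping.
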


\section{\textbf{Quotients in small extriangulated categories}}\label{sec:coresolutions}

In this section we assume that $\mcC$ is a small extriangulated category. Thus, by Theorem \ref{stc-heg}, we know that $\mcC$ has higher extension groups $\{\mbE^i\}_{i\geq 1}.$ Let $\mcX\subseteq\mcC$  and  $n\geq 0.$ Since the coproduct of two $\mbE$-triangles is an $\mbE$-triangle, we get that
$\mcX^{\vee}_n=\free(\mcX^{\vee}_n)$ and $\mcX^{\wedge}_n=\free(\mcX^{\wedge}_n)$ if  $\mcX=\free(\mcX).$ In particular $\mcX^{\vee}_n$ and $\mcX^{\wedge}_n$ are small full additive subcategories of $\mcC$ if  $\mcX=\free(\mcX).$ We begin by defining a functor over $\mcX_{n+1}^{\vee}$ (resp., $\mcX_{n+1}^{\wedge}$) when $\mcX$ is an $(n+2)$-rigid
subcategory of $\mcC.$ This functor allows us to stablish an equivalence of categories into a suitable subcategory of 
$\Mod\underline{\mcX_{n}^\vee}$ (resp., $\overline{\mcX_{n}^\wedge}\Mod$). For the sake of simplicity
we omit the dual proofs.
\vspace{0.2cm}

Let $\mcC$ be a small extriangulated category, $n\geq 0$ and $\mcX\subseteq \mcC$ be an $(n+2)$-rigid subcategory of $\mcC.$ Consider the class of morphisms
$$\mathcal{P}_1(\mcX_n^{\vee}):=\mcP_{1,\mbE}(\mcC)\cap \mathrm{Mor}(\mcX_n^{\vee})=\{f\in \mathrm{Mor}(\mcX_n^{\vee}) : \mbE(f,-)=0\}.$$
Notice that $\mathcal{P}_1(\mcX_n^{\vee})$ is an ideal of the small $\mbZ$-category $\mcX_n^{\vee}.$  Consider the  stable category 
$\underline{\mcX_{n}^{\vee}}:=\mcX_n^{\vee}/\mathcal{P}_1(\mcX_n^{\vee})$ and let 
$\pi_{\mathcal{P}_1(\mcX_n^{\vee})}:\mcX^{\vee}_n\to \underline{\mcX_{n}^{\vee}}$ be the natural projection functor.
 For each $M\in \mcX^{\vee}_{n+1}$, consider the restriction $\mathbb{E}(-,M)|_{\mcX_n^{\vee}}$ of the functor $\mathbb{E}(-,M)$ on $\mcX^{\vee}_{n}$. Since $\mbE(f, M)=0$ for any $f\in \mathcal{P}_1(\mcX_n^{\vee})$, by the universal property of 
$\pi_{\mathcal{P}_1(\mcX_n^{\vee})}$, $\exists !\,\underline{\mbE}(-,M):\underline{\mcX^{\vee}_n}\to \rm{Ab}$ additive functor such that the following diagram commutes
\[\xymatrix{
\mcX_{n}^{\vee}\ar[rr]^{\mathbb{E}(-,M)|_{\mcX_n^{\vee}}}\ar[dr]_{\pi_{\mathcal{P}_1(\mcX_n^{\vee})}} & & \rm{Ab}
\\
& \underline{\mcX^{\vee}_n}\ar@{-->}[ur]_{\underline{\mbE}(-,M)} &  
}\]
that means that 
$\underline{\mbE}(\underline{g}, M)=\mbE(g, M)$, for any $g\in \mathrm{Mor}(\mcX_n^{\vee})$.
Thus
$$\mathbb{H}:\mcX_{n+1}^{\vee}\to \Mod\underline{\mcX_n^{\vee}},\;(M\xrightarrow{h}N)\mapsto(\underline{\mbE}(-, M)\xrightarrow{\mathbb{E}(-,h)|_{\mcX_n^{\vee}}}\underline{\mbE}(-, N)),$$
is a well defined additive functor. Consider 
\begin{center}
$\mathcal{K}:= \Ker(\mathbb{H})=\{
h\in \mathrm{Mor}(\mcX_{n+1}^{\vee}) : \mbE(-,h)\mid_{\mcX_{n}^{\vee}}=0\}$
\end{center}
 and let 
$\pi_{\mathcal{K}}:\mcX^{\vee}_{n+1}\to \mcX^{\vee}_{n+1}/\mathcal{K}$ 
be its quotient functor. For each $h\in \mathrm{Mor}(\mcX^{\vee}_{n+1}),$ we set  
$[h]:=\pi_{\mathcal{K}}(h).$ Now, by the universal property of $\pi_{\mathcal{K}}$, there exists a unique additive functor
$\mathbb{F}:\mcX^{\vee}_{n+1}/\mathcal{K}\to \Mod \underline{\mcX^{\vee}_n}$ such that the following diagram commutes
\[
\xymatrix{
\mcX^{\vee}_{n+1}\ar[d]_{\pi_{\mathcal{K}}}\ar[dr]^{\mathbb{H}} &\\
\mcX^{\vee}_{n+1}/\mathcal{K}\ar@{-->}[r]_{\mathbb{F}} & \Mod \underline{\mcX^{\vee}_n .}
}
\]

The next lemma plays an important role for the rest of this work.

\begin{lemma}\label{lem: ImF}
Let $\mcC$ be a small extriangulated category, $n\geq 0$ and let
$\mcX$ be an $(n+2)$-rigid subcategory of
$\mcC$. Then, 
for any $\mathbb{E}$-triangle $C\to X_0\mathop{\to}\limits^{d_1} X_1\mathop{\dashrightarrow}\limits^{\delta},$
where $X_0\in \mcX$ and $X_1\in \mcX_{n}^{\vee}$, there exists an exact sequence in $\Mod \underline{\mcX^{\vee}_{n}}$
$$\underline{\mcX^{\vee}_{n}}(-,X_0)\xrightarrow{\underline{\mcX^{\vee}_n}(-,\underline{d_1})} \underline{\mcX^{\vee}_{n}}(-, X_1)\mathop{\longrightarrow}\limits^{\underline{\delta}_{\#}}
\mathbb{H}(C)\to 0.$$
Thus, $\mathbb{F}(C)=\mathbb{H}(C)\in \mathrm{fp}\underline{\mcX_{n}^{\vee}}$.
\end{lemma}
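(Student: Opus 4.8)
The plan is to present the displayed sequence as a finite projective presentation of $\mathbb{H}(C)$ in $\Mod\underline{\mcX_n^\vee}$, obtained by restricting the Hom-$\mbE$ exact sequence of Corollary~\ref{suc exact ext1} to $\mcX_n^\vee$ and then pushing it down to the stable quotient $\underline{\mcX_n^\vee}$. By Lemma~\ref{lem: Xvee cerrada por ext}(2) the hypotheses already force $C\in\mcX_{n+1}^\vee$, so $\mathbb{H}(C)$ is defined. I would apply the second exact sequence of Corollary~\ref{suc exact ext1} to the given $\mbE$-triangle $C\xrightarrow{x}X_0\xrightarrow{d_1}X_1\dashrightarrow$ and evaluate it at an arbitrary $Z\in\mcX_n^\vee$, obtaining the exact sequence
$$\mcC(Z,X_0)\xrightarrow{\mcC(Z,d_1)}\mcC(Z,X_1)\xrightarrow{\delta_{\#}}\mbE(Z,C)\xrightarrow{\mbE(Z,x)}\mbE(Z,X_0).$$
The decisive point is that $\mbE(Z,X_0)=0$: since $X_0\in\mcX\subseteq\mcX_0^\wedge$ and $Z\in\mcX_n^\vee$, this is the instance $i=0$, $j=n$, $k=1$ of the vanishing $\mbE^{\le k}(\mcX_j^\vee,\mcX_i^\wedge)=0$ in Corollary~\ref{cor: n+1 rigid->1-cot}(1). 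Hence $\delta_{\#}$ is surjective, and the truncation $\mcC(Z,X_0)\to\mcC(Z,X_1)\xrightarrow{\delta_{\#}}\mbE(Z,C)\to 0$ is exact for every $Z$.

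Next I would descend the two relevant maps to $\underline{\mcX_n^\vee}$. Any $g\in\mathcal{P}_1(\mcX_n^\vee)$ satisfies $\mbE(g,-)=0$ by definition, so $\delta_{\#}g=\delta\cdot g=0$; thus $\delta_{\#}$ vanishes on $\mathcal{P}_1(\mcX_n^\vee)$-morphisms and, by the same universal property used to build $\mathbb{H}$, factors through the projection onto $\underline{\mcX_n^\vee}(-,X_1)$, yielding a natural transformation $\underline{\delta}_{\#}\colon\underline{\mcX_n^\vee}(-,X_1)\to\underline{\mbE}(-,C)=\mathbb{H}(C)$ whose naturality is immediate from the identities $(a\cdot\delta)\cdot c=a\cdot(\delta\cdot c)$. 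Since $\mathcal{P}_1(\mcX_n^\vee)$ is an ideal of $\mcX_n^\vee$, postcomposition with $d_1$ sends $\mathcal{P}_1(\mcX_n^\vee)$-morphisms to $\mathcal{P}_1(\mcX_n^\vee)$-morphisms, so it descends to $\underline{\mcX_n^\vee}(-,\underline{d_1})$.

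Finally I would check exactness after the quotient, fibrewise in $Z$. Writing $P_1:=\mathcal{P}_1(\mcX_n^\vee)(Z,X_1)$, the previous paragraph gives $P_1\subseteq\Ker(\delta_{\#})$, while the first step gives $\Ker(\delta_{\#})=\Ima(\mcC(Z,d_1))$ together with the surjectivity of $\delta_{\#}$. Consequently $\underline{\delta}_{\#}$ stays surjective and its kernel equals $\Ima(\mcC(Z,d_1))/P_1$, which is precisely the image of $\underline{\mcX_n^\vee}(Z,\underline{d_1})$ because $P_1\subseteq\Ima(\mcC(Z,d_1))$. This gives exactness at every $Z$, hence exactness of the displayed sequence. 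As $X_0\in\mcX_0^\vee\subseteq\mcX_n^\vee$ and $X_1\in\mcX_n^\vee$, the two outer functors are representable, so the sequence exhibits $\mathbb{H}(C)$ as the cokernel of a morphism between finitely generated projectives, giving $\mathbb{H}(C)\in\fp\underline{\mcX_n^\vee}$; and $\mathbb{F}(C)=\mathbb{H}(C)$ by the defining factorization of $\mathbb{F}$.

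I expect the only genuine obstacle to be the surjectivity of $\delta_{\#}$, which rests entirely on the rigidity-driven vanishing $\mbE(\mcX_n^\vee,\mcX)=0$ supplied by Corollary~\ref{cor: n+1 rigid->1-cot}(1); everything else is bookkeeping, namely verifying that both maps survive the passage to the stable category and that the $\mathcal{P}_1(\mcX_n^\vee)$-morphisms one quotients out already lie in the image of the preceding map, so that exactness is not disturbed.
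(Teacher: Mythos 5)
Your proposal is correct and follows essentially the same route as the paper's proof: apply the second exact sequence of Corollary~\ref{suc exact ext1} to the given $\mbE$-triangle, kill $\mbE(\mcX_n^{\vee},X_0)$ via Corollary~\ref{cor: n+1 rigid->1-cot}(1), descend $\delta_{\#}$ and $\mcC(-,d_1)$ to $\underline{\mcX_n^{\vee}}$ using that $\mathcal{P}_1(\mcX_n^{\vee})$ is an ideal killed by $\delta_{\#}$, and check exactness objectwise. The only quibble is your opening citation: $C\in\mcX_{n+1}^{\vee}$ is immediate from the definition $\mcX_{n+1}^{\vee}=\mathrm{CoCone}(\mcX,\mcX_n^{\vee})$, so invoking Lemma~\ref{lem: Xvee cerrada por ext}(2) (which assumes $\mcX=\free(\mcX)$ and both end terms in $\mcX_n^{\vee}$) is unnecessary, though harmless.
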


\begin{proof} 
Let $C\to X_0\mathop{\to}\limits^{d_1} X_1\mathop{\dashrightarrow}\limits^{\delta}$ be an 
$\mathbb{E}$-triangle with $X_0\in \mcX$ and $X_1\in \mcX_{n}^{\vee}$. 
By applying $\mcC(Y,-),$ with $Y\in \mcX_n^{\vee},$ to the above $\mathbb{E}$-triangle, we get an exact sequence
\begin{align}\label{suc: delta epi}
\mcC(Y, X_0)\mathop{\longrightarrow}\limits^{\mcC(Y,d_1)} \mcC(Y, X_1)\mathop{\to}\limits^{\delta_{\#}} \mathbb{E}(Y,C)\to \mathbb{E}(Y, X_0)=0,
\end{align}
where $\mathbb{E}(Y,X_0)=0$ due to
$\mathbb{E}(\mcX^{\vee}_n, \mcX)=0$ by Corollary~\ref{cor: n+1 rigid->1-cot} (1).
Notice that  $\delta\cdot f=0$, for any $f\in \mathcal{P}_1(\mcX_n^{\vee})(Y, X_1),$ and thus, $\delta_{\#}$ induces the 
surjective morphism
\begin{center}
$\underline{\delta}_{\#}:\underline{\mcC}(Y,X_1)\to \mathbb{E}(Y,C),\;  \underline{f}\mapsto \delta_{\#}(f).$
\end{center}
Moreover, $\underline{\delta}_{\#}\circ \underline{\mcC}(Y, \underline{d_1})=0$ since $\delta_{\#}\circ \mcC (Y, d_1)=0.$
Let $f\in \mcC(Y, X_1)$ be
such that $\underline{\delta}_{\#}(\underline{f})=0$. By definition
$\delta_{\#}(f)=0$ and so there exists $h\in 
\mcC(Y, X_0)$ such that $f=d_1h$ due to
the
exactness of \eqref{suc: delta epi}. Thus
$\underline{f}=\underline{d_1}\underline{h}$ and  then
the sequence
$$\underline{\mcX^{\vee}_n}(-,X_0)
\mathop{\longrightarrow}\limits^{\underline{\mcX^{\vee}_n}(-,\underline{d_1})} \underline{\mcX^{\vee}_n}(-, X_1)\mathop{\to}\limits^{\underline{\delta}_{\#}} 
\mathbb{H}(C)\to 0$$ 
is exact in $\Mod\underline{\mcX_n^{\vee}}.$ Therefore $\mathbb{F}(C)=\mathbb{H}(C)\in \mathrm{fp}\underline{\mcX^{\vee}_n}$.
\end{proof}

\begin{theorem}\label{thm: equivalence full and kernel}
Let $\mcC$ be a small extriangulated category, $n\geq 0$ and let  
$\mcX$  be an $(n+2)$-rigid subcategory of $\mcC.$ Then, the following statements hold true, for the ideal $\mathcal{K}:=\Ker(\mathbb{H})$ and the additive functor $\mathbb{F}:
\mcX^{\vee}_{n+1}/\mathcal{K}\to \mathrm{fp}\underline{\mcX^{\vee}_n}.$ 
\begin{enumerate}[(a)]
\item $\mathbb{F}$ is full and faithful.
\item $\mathcal{K}=[\mcX]$ where $[\mcX]$ denotes the class of morphisms in $\mcX_{n+1}^{\vee}$ which factor through objects in $\mcX$.
\end{enumerate}
\end{theorem}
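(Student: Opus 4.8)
The plan is to read faithfulness and part~(b) directly off the bifunctor $\mbE$, and to obtain fullness by the standard lifting of a natural transformation along projective presentations. Faithfulness is built into the construction: as $\mathbb{F}$ is induced on $\mcX_{n+1}^{\vee}/\mathcal{K}$ by $\mathbb{H}$, one has $\mathbb{F}([h])=\mathbb{H}(h)$, so $\mathbb{F}([h])=0$ forces $h\in\mathcal{K}=\Ker(\mathbb{H})$, i.e. $[h]=0$. For the remaining arguments I would fix, for each $M\in\mcX_{n+1}^{\vee}$, a conflation $M\xrightarrow{x^M}X_0^M\xrightarrow{d_1^M}X_1^M\dashrightarrow\delta_M$ with $X_0^M\in\mcX$ and $X_1^M\in\mcX_n^{\vee}$, which exists since $\mcX_{n+1}^{\vee}=\mathrm{CoCone}(\mcX,\mcX_n^{\vee})$; Lemma~\ref{lem: ImF} turns it into a projective presentation $\underline{\mcX_n^{\vee}}(-,X_0^M)\xrightarrow{\underline{\mcX_n^{\vee}}(-,\underline{d_1^M})}\underline{\mcX_n^{\vee}}(-,X_1^M)\xrightarrow{(\underline{\delta_M})_{\#}}\mathbb{H}(M)\to 0$ with $(\underline{\delta_M})_{\#}$ epic.

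For fullness, given $M,N\in\mcX_{n+1}^{\vee}$ and $\eta\colon\mathbb{H}(M)\to\mathbb{H}(N)$, I would lift $\eta$ along the two presentations. Projectivity of the representable $\underline{\mcX_n^{\vee}}(-,X_1^M)$ together with $(\underline{\delta_N})_{\#}$ being epic lets $\eta\circ(\underline{\delta_M})_{\#}$ factor through $(\underline{\delta_N})_{\#}$; by Yoneda the factorization is $\underline{\mcX_n^{\vee}}(-,\underline{b_1})$ for some $\underline{b_1}\colon\underline{X_1^M}\to\underline{X_1^N}$. Precomposing with $\underline{\mcX_n^{\vee}}(-,\underline{d_1^M})$ and using that the presentation is a complex shows $\underline{\mcX_n^{\vee}}(-,\underline{b_1}\,\underline{d_1^M})$ lands in the image of $\underline{\mcX_n^{\vee}}(-,\underline{d_1^N})$, so it lifts (projectivity plus Yoneda) to $\underline{b_0}\colon\underline{X_0^M}\to\underline{X_0^N}$ with $\underline{d_1^N}\,\underline{b_0}=\underline{b_1}\,\underline{d_1^M}$. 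Choosing representatives in $\mcC$, the difference $b_1d_1^M-d_1^Nb_0$ is $\mbE$-projective, hence by Remark~\ref{proy-morf}(4) equals $d_1^Nq$ for some $q$; replacing $b_0$ by $b_0+q$ makes $d_1^Nb_0=b_1d_1^M$ strictly. Then (ET3)$^{op}$ yields $h\colon M\to N$ such that $(h,b_0,b_1)$ is a morphism of $\mbE$-triangles, whence $(h,b_1)\colon\delta_M\to\delta_N$ is a morphism of $\mbE$-extensions, i.e. $h\cdot\delta_M=\delta_N\cdot b_1$.

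It then remains to check $\mathbb{H}(h)=\eta$. For $Y\in\mcX_n^{\vee}$ and $f\in\mcC(Y,X_1^M)$, the relation $h\cdot\delta_M=\delta_N\cdot b_1$ gives $\mbE(Y,h)\big((\delta_M)_{\#}f\big)=(h\cdot\delta_M)\cdot f=(\delta_N\cdot b_1)\cdot f=(\delta_N)_{\#}(b_1f)$, that is $\mathbb{H}(h)\circ(\underline{\delta_M})_{\#}=(\underline{\delta_N})_{\#}\circ\underline{\mcX_n^{\vee}}(-,\underline{b_1})=\eta\circ(\underline{\delta_M})_{\#}$. Since $(\underline{\delta_M})_{\#}$ is epic this forces $\mathbb{H}(h)=\eta$, i.e. $\mathbb{F}([h])=\eta$, proving $\mathbb{F}$ full.

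For~(b), the inclusion $[\mcX]\subseteq\mathcal{K}$ is immediate, since a morphism factoring through $X\in\mcX$ induces on each $\mbE(Y,-)$ with $Y\in\mcX_n^{\vee}$ a map through $\mbE(Y,X)=0$, using $\mbE(\mcX_n^{\vee},\mcX)=0$ from Corollary~\ref{cor: n+1 rigid->1-cot}(1). Conversely, for $h\colon M\to N$ in $\mathcal{K}$ I would use the conflation $M\xrightarrow{x^M}X_0^M\xrightarrow{d_1^M}X_1^M\dashrightarrow\delta_M$ with $X_1^M\in\mcX_n^{\vee}$: from $h\in\mathcal{K}$ we get $\mbE(X_1^M,h)=0$, so $\delta_M^{\#}(h)=h\cdot\delta_M=\mbE(X_1^M,h)(\delta_M)=0$, and the contravariant exact sequence of Corollary~\ref{suc exact ext1} places $h$ in the image of $\mcC(x^M,N)$; thus $h=g\,x^M$ with $X_0^M\in\mcX$, giving $h\in[\mcX]$. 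I expect the only real obstacle to be the fullness step, specifically arranging the lifted square to commute strictly in $\mcC$ so that (ET3)$^{op}$ applies; the other verifications are formal computations with $\mbE$.
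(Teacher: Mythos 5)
Your proposal is correct. For part (a) you follow essentially the paper's own route: lift $\eta$ along the two projective presentations furnished by Lemma~\ref{lem: ImF}, observe that the commutativity defect $b_1d_1^M-d_1^Nb_0$ lies in $\mathcal{P}_1(\mcX_n^{\vee})$, factor it through the deflation $d_1^N$ (Remark~\ref{proy-morf}(4)) so as to make the square commute strictly, complete to a morphism of $\mbE$-triangles, and conclude because $(\underline{\delta_M})_{\#}$ is an epimorphism; the paper performs the identical correction (applied to its $e_0$ rather than to $b_0$) and finishes the same way. As a side note, your appeal to (ET3)$^{op}$ names the axiom actually needed for filling in the left-hand map, whereas the paper writes (ET3). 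The genuine divergence is in the inclusion $\mathcal{K}\subseteq[\mcX]$ of part (b). The paper, adapting \cite[Thm. 3.4]{zhou2019cluster}, builds an auxiliary $\mbE$-triangle $M\to N\oplus X_0\to K$ via \cite[Cor. 3.16]{Nakaoka1}, proves $\mbE(X_1,g)=0$ using both $f\in\mathcal{K}$ and the rigidity $\mbE(\mcX_n^{\vee},\mcX)=0$, lifts $\delta$ through $\gamma$, and extracts the factorization from the resulting morphism of $\mbE$-triangles. You instead apply the long exact sequence of Corollary~\ref{suc exact ext1} directly to the given conflation $M\xrightarrow{x^M}X_0^M\to X_1^M$: since $h\in\mathcal{K}$ gives $\mbE(X_1^M,h)=0$, you get $\delta_M^{\#}(h)=h\cdot\delta_M=0$, and exactness at $\mcC(M,N)$ hands you $h=g\,x^M$ with $X_0^M\in\mcX$. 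This is shorter, avoids the auxiliary triangle entirely, and does not even invoke rigidity for this inclusion (rigidity remains essential for $[\mcX]\subseteq\mathcal{K}$ and for Lemma~\ref{lem: ImF}); the paper's longer detour buys nothing extra here.
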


\begin{proof}
The proof consists in adapting \cite[Thm. 3.4]{zhou2019cluster} for the general case.
\

(a) It is clear that $\mathbb{F}$ is  faithful since  $\mathcal{K}:=\Ker(\mathbb{H}).$ Let us show that  $\mathbb{F}$ is full. Indeed, consider  $M, N\in \mcX^{\vee}_{n+1}$ and $\gamma: \mathbb{F}(M)\to 
\mathbb{F}(N)$ in 
$\mathrm{fp}\underline{\mcX_n^{\vee}}$. Since $M, N\in
\mcX^{\vee}_{n+1},$ there exist $\mathbb{E}$-triangles 
$M\mathop{\to}\limits^{a} X_0\mathop{\to}\limits^{b} X_1\mathop{\dashrightarrow}\limits^{\delta}$ and $N\mathop{\to}\limits^{c} Y_0\mathop{\to}\limits^{d} Y_1\mathop{\dashrightarrow}\limits^{\delta'}$,
with $X_0, Y_0\in \mcX$ and $X_1, Y_1\in \mcX_{n}^{\vee}$. Using that $\underline{\mcX_n^{\vee}}(-, X_1)$ 
and $\underline{\mcX^{\vee}_n}(-,X_0)$ are projectives in $\Mod\underline{\mcX_n^{\vee}}$, by Lemma~\ref{lem: ImF}, we get
the following commutative diagram with exact rows
in $\Mod\underline{\mcX_n^{\vee}}$:
\[
\xymatrix@C=10mm{
\underline{\mcX^{\vee}_n}(-,X_0)\ar[r]^{\underline{\mcX^{\vee}_n}(-,\underline{b})}\ar[d]^{\alpha} & \underline{\mcX^{\vee}_n}(-,X_1)\ar[r]^{\underline{\delta}_\#}\ar[d]^{\beta} & \mathbb{F}(M)\ar[r]\ar[d]^{\gamma} &
0\\
\underline{\mcX^{\vee}_n}(-,Y_0)\ar[r]_{\underline{\mcX^{\vee}_n}(-,\underline{d})} & \underline{\mcX^{\vee}_n}(-,Y_1)\ar[r]_{\underline{\delta}'_\#} & \mathbb{F}(N)\ar[r] &
0.
}
\]
By Yoneda's Lemma, there exist 
$e_0:X_0\to Y_0$ and $e_1:X_1\to Y_1$ such that
$\alpha=\underline{\mcX^{\vee}_n}(-, \underline{e_0})$, $\beta=\underline{\mcX_n^{\vee}}(-, \underline{e_1})$ and $\underline{de_0}=\underline{e_1b}$. Thus, $de_0-e_1b\in \mathcal{P}_1(\mcX_n^{\vee})$.

On the other hand, since $N\mathop{\to}\limits^{c} Y_0\mathop{\to}\limits^{d} Y_1\dashrightarrow$ is an
$\mbE$-triangle and $de_0-e_1b\in \mathcal{P}_1(\mcX_n^{\vee})$, there exists $h:X_0\to Y_0$ such that 
$dh=de_0-e_1b,$ i.e.
\[
\xymatrix{
&  & X_0\ar[d]^{de_0-e_1b}\ar@{-->}[dl]_{h} \\
N\ar[r]_{c} & Y_0\ar[r]_{d} & Y_1  \,.
}
\]
Let
$i:=e_0-h$. Then, $di=de_0-dh=de_0-de_0+e_1b=e_1b$. By 
(ET3), we get a morphism of $\mathbb{E}$-triangles
\[
\xymatrix{
M\ar[r]^{a}\ar@{-->}[d]_{j} & X_0\ar[r]^{b}\ar[d]_{i} & X_1\ar@{-->}[r]^{\delta}\ar[d]_{e_1} &\\
N\ar[r]^{c} & Y_0\ar[r]^{d} & Y_1\ar@{-->}[r]^{\delta'} & 
}
\]
 and then, from Lemma~\ref{lem: ImF} again, we have the
commutative diagram in $\Mod\underline{\mcX_n^{\vee}}$
\[
\xymatrix@C=10mm{
\underline{\mcX^{\vee}_n}(-,X_0)\ar[r]^{\underline{\mcX^{\vee}_n}(-,\underline{b})}\ar[d]^{\underline{\mcX}_n^{\vee}(-, \underline{i})} & \underline{\mcX^{\vee}_n}(-,X_1)\ar[r]^{\underline{\delta}_\#}\ar[d]^{\underline{\mcX}_n^{\vee}(-, \underline{e_1})} & \mathbb{F}(M)\ar[r]\ar[d]^{\mathbb{F}([j])} &
0\\
\underline{\mcX^{\vee}_n}(-,Y_0)\ar[r]_{\underline{\mcX^{\vee}_n}(-,\underline{d})} & \underline{\mcX^{\vee}_n}(-,Y_1)\ar[r]_{\underline{\delta}'_\#} & \mathbb{F}(N)\ar[r] &
0.
}
\]
Hence, $\mathbb{F}([j])\underline{\delta}_\#=\underline{\delta}'_\# \underline{\mcX_{n}^{\vee}}(-,\underline{e_1})=\gamma\underline{\delta}_\#$ and thus $\gamma=\mathbb{F}([j])$ since
$\underline{\delta}_\#$ is an epimorphism.
\

(b)  Let us show that $[\mcX]\subseteq \mathcal{K}.$ Indeed, for  $f\in[\mcX],$  there exist $M\xrightarrow{g}X\xrightarrow{h} N$ such that $f=hg$ and $X\in\mcX.$  Notice that $\mbE(-,h)|_{\mcX_n^{\vee}}=0$ since $\mbE(\mcX_n^{\vee},\mcX)=0$ (see Corollary \ref{cor: n+1 rigid->1-cot} (1)). Therefore $\mbE(-,f)|_{\mcX_n^{\vee}}=0$ and thus $f\in\mathcal{K}.$
\

Now, we prove that $\mathcal{K}\subseteq[\mcX].$ Consider $f:M\to N$  in $\mcX^{\vee}_{n+1}$ with
$\mbE(-,f)|_{\mcX_n^{\vee}}=0.$ Since $M\in\mcX^{\vee}_{n+1},$ there is  an $\mathbb{E}$-triangle $M\mathop{\to}\limits^{a} X_0
\mathop{\to}\limits^{b} X_1 \mathop{\dashrightarrow}\limits^{\delta},$ 
where $X_0\in \mcX$ and $X_1\in \mcX_{n}^{\vee}$. From \cite[Cor. 3.16]{Nakaoka1} there exists an $\mathbb{E}$-triangle
$M\mathop{\longrightarrow}\limits^{g
} N\oplus X_0\mathop{\longrightarrow}\limits^{c} K\mathop{\dashrightarrow}\limits^{\gamma}$
where $g:=\left(\tiny{
\begin{array}{cc}
f\\
a
\end{array}}\right)$.
Applying $\mcC(X_1, -)$  to the previous $\mathbb{E}$-triangle, we get an exact sequence
$$\mcC(X_1, N\oplus X_0)\longrightarrow \mcC(X_1, K)\mathop{\longrightarrow}\limits^{\gamma_\#} \mathbb{E}(X_1, M)\xrightarrow{\mathbb{E}(X_1, g)} \mathbb{E}(X_1, N\oplus X_0).$$
We assert that $\mathbb{E}(X_1, g)=0.$ Indeed, let $\mu\in \mathbb{E}(X_1, M).$ On the one hand, we have that $f\cdot\mu=0$ since $\mbE(-,f)|_{\mcX_n^{\vee}}=0$ and $X_1\in \mcX_{n}^{\vee}.$ On the other hand, $a\cdot\mu=0$ since $\mbE(\mcX_{n}^{\vee}, \mcX)=0$ (see Corollary \ref{cor: n+1 rigid->1-cot} (1)). Therefore 
\begin{center}
$g\cdot \mu=\begin{pmatrix} f\\ a\end{pmatrix}\cdot\mu=\begin{pmatrix} f\cdot\mu\\ a\cdot\mu\end{pmatrix}=0,$
\end{center}
proving that $\mathbb{E}(X_1, g)=0.$  In particular, $\gamma_\#$ is an epimorphism. Thus, there is $h:X_1\to K$ such that $\delta=\gamma \cdot h$. Then, we get a morphism of $\mathbb{E}$-triangles
\[
\xymatrix{
M\ar[r]^{a}\ar@{=}[d] & X_0\ar[r]^{b}\ar[d] & X_1\ar@{-->}[r]^{\delta}\ar[d]^{h} & \\
M\ar[r]_{g} & N\oplus X_0\ar[r]_{c} & K\ar@{-->}[r]_{\gamma} &. 
}
\]
Hence, $g$ factors through $X_0\in\mcX$ which implies so is $f$. Hence, $f\in [\mcX]$. 
\end{proof}

A natural question is when the image of $\mathbb{F}$ is dense in $\mathrm{fp}\underline{\mcX^{\vee}_n}$. For this, we analyse the condition $\mathbb{E}(\mcX^{\vee}_n, \mcX^{\vee}_n)=0$ (notice that this 
condition automatically holds for any $2$-rigid subcategory $\mcX\subseteq \mcC$).

\begin{theorem}\label{theo: equiv con fp}
Let $\mcC$ be a small extriangulated category, $n\geq 0$ and $\mcX=\free(\mcX)$ be 
 an $(n+2)$-rigid subcategory of $\mcC$ such that  
$\mcX_{n}^{\vee}$ has enough $\mbE$-projective morphisms from $\mcX$. Then, 
the fully faithful functor $\mathbb{F}:\mcX_{n+1}^{\vee}/[\mcX]\to \mathrm{fp}\underline{\mcX_n^{\vee}}$ (see Theorem~\ref{thm: equivalence full and kernel}) is an equivalence of  categories whenever
$\mbE(\mcX_{n}^{\vee},\mcX_{n}^{\vee})=0.$
\end{theorem}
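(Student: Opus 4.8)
The plan is to lean on Theorem~\ref{thm: equivalence full and kernel}, which already gives that $\mathbb{F}\colon\mcX^{\vee}_{n+1}/[\mcX]\to\mathrm{fp}\underline{\mcX_n^\vee}$ is full and faithful (with $\mathcal{K}=[\mcX]$); hence it only remains to prove that $\mathbb{F}$ is dense. So I would fix an arbitrary $F\in\mathrm{fp}\underline{\mcX_n^\vee}$ and produce $C\in\mcX_{n+1}^\vee$ with $\mathbb{F}(C)\cong F$. Since $F$ is finitely presented, I would start from a projective presentation $\underline{\mcX_n^\vee}(-,Y_1)\xrightarrow{\underline{\mcX_n^\vee}(-,\underline{w})}\underline{\mcX_n^\vee}(-,Y_0)\to F\to 0$ in $\Mod\underline{\mcX_n^\vee}$, arising by Yoneda from a morphism $w\colon Y_1\to Y_0$ with $Y_0,Y_1\in\mcX_n^\vee$.

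The first key step is to realize representable functors. Using that $\mcX_n^\vee$ has enough $\mbE$-projective morphisms from $\mcX$, I would pick a deflation $p_0\colon X_0\to Y_0$ with $X_0\in\mcX$ and $p_0\in\mcP_{1,\mbE}(\mcC)$, fitting in an $\mbE$-triangle $C_0\xrightarrow{i_0}X_0\xrightarrow{p_0}Y_0\dashrightarrow\delta_0$. As $X_0\in\mcX\subseteq\mcX_n^\vee$ and $Y_0\in\mcX_n^\vee$, Lemma~\ref{lem: Xvee cerrada por ext}(2) gives $C_0\in\mcX_{n+1}^\vee$, and Lemma~\ref{lem: ImF} together with $\underline{p_0}=0$ (since $p_0$ is $\mbE$-projective) shows that $\underline{\delta_0}_\#$ induces a natural isomorphism $\theta\colon\underline{\mcX_n^\vee}(-,Y_0)\xrightarrow{\sim}\mathbb{F}(C_0)$. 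Along the way I would record, for $Y\in\mcX_n^\vee$, the identification $\Ima\,\mcC(Y,p_0)=\mcP_1(\mcX_n^\vee)(Y,Y_0)$, which follows from Remark~\ref{proy-morf}(4) applied to the above $\mbE$-triangle (a morphism into $Y_0$ is $\mbE$-projective precisely when it factors through $p_0$), using $\mbE(\mcX_n^\vee,\mcX)=0$ from Corollary~\ref{cor: n+1 rigid->1-cot}(1) for surjectivity of $(\delta_0)_\#$. In particular every representable lies in the essential image of $\mathbb{F}$.

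To encode the relation $w$, I would pull back $\delta_0$ along $w$: form $\delta_0\cdot w\in\mbE(Y_1,C_0)$ and realize it as an $\mbE$-triangle $C_0\xrightarrow{j}C\xrightarrow{q}Y_1\dashrightarrow\delta_0 w$, equipped with the morphism of $\mbE$-triangles realizing $(\mathrm{id}_{C_0},w)$. The associated pullback square yields, via the dual of Proposition~\ref{Nakaoka 3.15}, an $\mbE$-triangle $C\to X_0\oplus Y_1\to Y_0\dashrightarrow$; since $X_0\oplus Y_1,Y_0\in\mcX_n^\vee$, Lemma~\ref{lem: Xvee cerrada por ext}(2) gives $C\in\mcX_{n+1}^\vee$, so $C$ is a legitimate candidate. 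Finally I would apply $\mcC(Y,-)$ and $\mbE(Y,-)$ (the long exact sequences of Corollary~\ref{suc exact ext1}) to $C_0\xrightarrow{j}C\xrightarrow{q}Y_1\dashrightarrow\delta_0 w$ for $Y\in\mcX_n^\vee$, obtaining the exact row $\mcC(Y,Y_1)\xrightarrow{(\delta_0 w)_\#}\mbE(Y,C_0)\xrightarrow{\mbE(Y,j)}\mbE(Y,C)\xrightarrow{\mbE(Y,q)}\mbE(Y,Y_1)$. This is exactly the point where $\mbE(\mcX_n^\vee,\mcX_n^\vee)=0$ is used: it forces $\mbE(Y,Y_1)=0$, so $\mbE(Y,j)$ is surjective with kernel $\Ima(\delta_0 w)_\#$. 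Transporting through $\theta$ and using $(\delta_0 w)_\#(g)=\theta(\underline{w}_*\underline{g})$, I obtain $\mathbb{F}(C)(Y)=\mbE(Y,C)\cong\mbE(Y,C_0)/\Ima(\delta_0 w)_\#\cong\underline{\mcX_n^\vee}(Y,Y_0)/\Ima\underline{w}_*=F(Y)$, naturally in $Y$, whence $\mathbb{F}(C)\cong F$.

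The routine parts are the full-faithfulness (already available) and the diagram chases pinning down $\theta$ and the image of $(\delta_0 w)_\#$. The main obstacle is the density argument, and its crux is that the single vanishing $\mbE(\mcX_n^\vee,\mcX_n^\vee)=0$ is precisely what makes the target of $\mbE(Y,q)$ trivial, so that $\mbE(-,C)$ restricted to $\mcX_n^\vee$ computes the cokernel of $\underline{w}_*$ instead of retaining a spurious $\mbE(-,Y_1)$ tail; verifying naturality of the resulting isomorphism (compatibility of $\theta$ as $Y$ varies and with $w$) is the one place that demands genuine care.
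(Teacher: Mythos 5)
Your proposal is correct: it reduces to density exactly as the paper does, but it builds the witnessing object by a different construction resting on a different key lemma. The paper's proof is a one\mbox{-}step construction: it augments $w$ by the $\mbE$-projective deflation $b\colon X\to Y_0$ to obtain a deflation $(w\;b)\colon Y_1\oplus X\to Y_0$ via \cite[Cor. 3.16]{Nakaoka1}, takes the resulting $\mbE$-triangle $K\to Y_1\oplus X\to Y_0\dashrightarrow$, gets $K\in\mcX_{n+1}^{\vee}$ from Lemma~\ref{lem: Xvee cerrada por ext}(2), and reruns the proof of Lemma~\ref{lem: ImF} (the hypothesis $\mbE(\mcX_n^{\vee},\mcX_n^{\vee})=0$ now kills $\mbE(-,Y_1\oplus X)$, and $\underline{b}=0$), exhibiting $\mathbb{F}(K)$ directly as the cokernel of $(\alpha,0)$, hence isomorphic to $G$. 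You instead argue in two steps: first that representables lie in the essential image ($\mathbb{F}(C_0)\cong\underline{\mcX_n^{\vee}}(-,Y_0)$ for $C_0$ the cocone of the $\mbE$-projective deflation), then that relations can be imposed by realizing the pulled-back extension $\delta_0\cdot w$ and computing $\mbE(-,C)$ from the long exact sequence. The paper's route buys economy: it needs neither Proposition~\ref{Nakaoka 3.15} nor any transport of structure, since the cokernel presentation drops out of the Lemma~\ref{lem: ImF} template. Your route buys conceptual transparency, but pays at one point: $C\in\mcX_{n+1}^{\vee}$ cannot be read off the defining triangle $C_0\to C\to Y_1$ (Lemma~\ref{lem: Xvee cerrada por ext}(1) concerns $\mcX_n^{\vee}$, while $C_0$ lies only in $\mcX_{n+1}^{\vee}$), so the homotopy-pullback triangle $C\to X_0\oplus Y_1\to Y_0$ is indispensable, and this is the one compressed spot in your write-up: the dual of Proposition~\ref{Nakaoka 3.15}, applied to the two $\mbE$-triangles emanating from $C_0$, yields $\mbE$-triangles $C\to M\to Y_0$ and $X_0\to M\to Y_1$ for some object $M$, and to identify $M\simeq X_0\oplus Y_1$ you must still observe that the latter realizes $i_0\cdot(\delta_0\cdot w)=(i_0\cdot\delta_0)\cdot w=0$ (equivalently, use $\mbE(Y_1,X_0)=0$ from Corollary~\ref{cor: n+1 rigid->1-cot}(1)), so it splits by \cite[Rmk. 2.11(1)]{Nakaoka1}. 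With that one line added, your argument is complete.
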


\begin{proof}
Assume that $\mbE(\mcX_{n}^{\vee},\mcX_{n}^{\vee})=0$ and let $G\in \mathrm{fp} \underline{\mcX_n^{\vee}}.$ Then, 
there exists an exact sequence $\underline{\mcX_n^{\vee}}(-,Y_1)\mathop{\longrightarrow}\limits^{\alpha}\underline{\mcX_n^{\vee}}(-,Y_0)\to G\to 0$ in $\Mod\underline{\mcX_n^{\vee}}$,
with $Y_0, Y_1\in \mcX^{\vee}_n$. By Yoneda's Lemma, there exists a morphism 
$f: Y_1\to Y_0$ such that $\alpha=\underline{\mcX_n^{\vee}}(-,\underline{f})$. Since 
$\mcX_n^{\vee}$ has enough $\mbE$-projective morphisms from $\mcX$, there is an $\mbE$-projective morphisms $b:X\to Y_0,$ with $X\in\mcX.$ Thus, by \cite[Cor. 3.16]{Nakaoka1}, we get an $\mathbb{E}$-triangle
$K\mathop{\longrightarrow}\limits^{c} Y_1\oplus X\mathop{\longrightarrow}\limits^{(f,\, b)} Y_0\mathop{\dashrightarrow}\limits^{\delta}$ 
with $Y_1\oplus X, Y_0\in \mcX_n^{\vee}$. Thus, by Lemma~\ref{lem: Xvee cerrada por ext} we have
that $K\in \mcX^{\vee}_{n+1}$ since $\mcX$ is closed under extensions.
Now, by following the proof as in Lemma~\ref{lem: ImF} and using that $\mathbb{E}(\mcX_n^{\vee}, \mcX_n^{\vee})=0,$ we get an exact sequence
$$\underline{\mcX_n^{\vee}}(-,Y_1)\oplus \underline{\mcX_n^{\vee}}(-,X)\mathop{\longrightarrow}\limits^{(\alpha , 0)} 
\underline{\mcX_n^{\vee}}(-,Y_0)\to\mathbb{F}(K)\to 0.$$
Thus, $G\simeq \mathbb{F}(K)$ due to $\mathbb{F}(K)$ and $G$ are cokernels of $\alpha$.
\end{proof} 

The following result gives us an alternative condition for $\mbE(\mcX_n^{\vee},
\mcX_n^{\vee})=0$.

\begin{lemma}\label{lem: E(Xvee,Xvee)=0}
Let $n\geq 0$, $\mcC$ be a small extriangulated category and $\mcX=\add (\mcX)\subseteq \mcC$ be
an $(n+2)$-rigid subcategory of $\mcC$. Then, $\mathbb{E}(\mcX^{\vee}_n, \mcX^{\vee}_n)=0$ if, and only if, 
$\mcX^{\vee}_n=\mcX$.
\end{lemma}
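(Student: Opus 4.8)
The plan is to prove the two implications separately; the forward direction is immediate, while the converse comes down to a single application of the splitting criterion.

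For the implication $\mcX_n^\vee = \mcX \Rightarrow \mbE(\mcX_n^\vee, \mcX_n^\vee) = 0$, I would simply substitute $\mcX_n^\vee = \mcX$ and recall that $\mcX$ is $(n+2)$-rigid, so $\mbE^{\leq n+1}(\mcX, \mcX) = 0$; in particular $\mbE(\mcX, \mcX) = \mbE^1(\mcX, \mcX) = 0$. This needs nothing beyond $(n+2)$-rigidity and the definition of $\mcX_n^\vee$.

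For the converse, assume $\mbE(\mcX_n^\vee, \mcX_n^\vee) = 0$. Since $\mcX = \add(\mcX)$, one has $0 \in \mcX$ and $\mcX = \smd(\mcX)$, and the inclusion $\mcX \subseteq \mcX_n^\vee$ is clear. It remains to show $\mcX_n^\vee \subseteq \mcX$. Given $C \in \mcX_n^\vee$ with $m := \coresdim_{\mcX}(C)$, if $m = 0$ then $C \in \iso(\mcX) = \mcX$ and we are done. If $m \geq 1$, the first step of an $\mcX$-coresolution of $C$ provides an $\mbE$-triangle $C \to X_0 \to Z \dashrightarrow \delta$ with $X_0 \in \mcX$ and $Z \in \mcX_{m-1}^\vee$; this uses the identity $\mathrm{CoCone}(\mcX, \mcX_{m-1}^\vee) = \mcX_m^\vee$ recalled in the preliminaries. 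Since $m - 1 \leq n$, we have $Z \in \mcX_n^\vee$, and of course $C \in \mcX_n^\vee$, so $\delta \in \mbE(Z, C) = 0$ by hypothesis.

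The key observation is then that a conflation realizing the split $\mbE$-extension forces $X_0 \simeq C \oplus Z$ by \cite[Rmk. 2.11(1)]{Nakaoka1}. As $\mcX = \smd(\mcX)$ and $X_0 \in \mcX$, the direct summand $C$ of $X_0$ lies in $\mcX$. This yields $\mcX_n^\vee \subseteq \mcX$, hence the desired equality. The only point requiring care is verifying that both entries $Z$ and $C$ of the extension $\delta$ genuinely lie in $\mcX_n^\vee$, so that the vanishing hypothesis applies; once this is secured, no induction is needed, since the argument shows directly that every object of $\mcX_n^\vee$ is a direct summand of an object of $\mcX$ and is therefore already in $\mcX$.
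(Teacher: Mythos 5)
Your proof is correct, and both directions are handled. For the forward implication your argument is: given $C\in\mcX_n^\vee$ of positive $\mcX$-coresolution dimension $m$, the first conflation $C\to X_0\to Z$ of a coresolution (with $X_0\in\mcX$ and $Z\in\mcX_{m-1}^\vee\subseteq\mcX_n^\vee$, the inclusion using $0\in\mcX$, which holds because $\mcX=\add(\mcX)$) realizes an extension $\delta\in\mbE(Z,C)=0$ by the vanishing hypothesis, so $X_0\simeq C\oplus Z$ by \cite[Rmk. 2.11(1)]{Nakaoka1} and hence $C\in\smd(\mcX)=\mcX$; as you note, no induction is required. The paper proves the same implication by citing machinery instead of arguing directly: by Corollary~\ref{cor: n+1 rigid->1-cot}\,(2), the pair $(\mcX_n^\vee,\mcX)$ is a right $1$-cotorsion pair cut on $\mcX_{n+1}^\vee$, which yields the identity $(\mcX_n^\vee)^{\perp_1}\cap\mcX_{n+1}^\vee=\mcX$, and the hypothesis then gives $\mcX_n^\vee\subseteq(\mcX_n^\vee)^{\perp_1}\cap\mcX_{n+1}^\vee=\mcX$. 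Unpacking that identity amounts to exactly your splitting argument (run for objects of $\mcX_{n+1}^\vee$ rather than $\mcX_n^\vee$), so the two proofs share the same kernel: yours is the more elementary, self-contained version, trading the cut-cotorsion-pair formalism of \cite{HMSS} for a direct application of the splitting criterion, while the paper's buys brevity by reusing a corollary it has already established. A final cosmetic difference: the paper leaves the trivial converse (that $\mcX_n^\vee=\mcX$ forces the vanishing, by $(n+2)$-rigidity) implicit, whereas you record it explicitly.
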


\begin{proof}
On the one hand, suppose that $\mathbb{E}(\mcX^{\vee}_n,
\mcX^{\vee}_n)=0$. From Corollary~\ref{cor: n+1 rigid->1-cot} we know that $(\mcX^{\vee}_n, \mcX)$ is a 
right $1$-cotorsion pair cut on $\mcX^{\vee}_{n+1}$ which implies $(\mcX^{\vee}_{n})^{\perp_1}\cap 
\mcX^{\vee}_{n+1}=\mcX\cap \mcX^{\vee}_{n+1}=\mcX$. Now, if $\mathbb{E}(\mcX^{\vee}_n, \mcX^{\vee}_n)=0$ in particular we have that $\mcX\subseteq \mcX^{\vee}_n\subseteq (\mcX^{\vee}_{n})^{\perp_1}\cap \mcX^{\vee}_{n+1}=\mcX$. Therefore, $\mcX_{n}^{\vee}=\mcX$.
\end{proof}

We also have the dual construction along with its corresponding outcomes. For the sake of completeness, in what follows, we give all the details without proofs.

Let $\mcC$ be a small extriangulated category, $n\geq 0$ and  $\mcX$ be an $(n+2)$-rigid subcategory of $\mcC.$ Consider the class of morphisms
\begin{center}
$\mathcal{I}_1(\mcX_n^{\wedge}):=\mathcal{I}_{1,\mbE}(\mcC)\cap \mathrm{Mor}(\mcX_n^{\wedge})=\{f\in \mathrm{Mor}(\mcX_n^{\wedge}) : \mbE(-,f)=0\}.$
\end{center}
Notice that $\mathcal{I}_1(\mcX_n^{\wedge})$ is  an ideal of the $\mbZ$-category $\mcX_n^{\wedge}.$ Consider the co-stable category 
$\overline{\mcX_{n}^{\wedge}}:=\mcX_n^{\wedge}/\mathcal{I}_1(\mcX_n^{\wedge})$ and let $\pi_{\mathcal{I}_1(\mcX_n^{\wedge})}:\mcX^{\wedge}_n\to \overline{\mcX_{n}^{\wedge}}$ be the natural projection functor. For each $M\in \mcX^{\wedge}_{n+1}$, consider the restriction $\mathbb{E}(M,-)|_{\mcX_n^{\wedge}}$ of the functor $\mathbb{E}(M,-)$ on $\mcX^{\wedge}_{n}$. Since $\mbE(M, f)=0$ for any $f\in \mathcal{I}_1(\mcX_n^{\wedge})$, by the universal property of 
$\pi_{\mathcal{I}_1(\mcX_n^{\wedge})}$, $\exists!\,\overline{\mbE}(M,-); \overline{\mcX^{\wedge}_n}\to \rm{Ab}$ additive functor such that the following diagram commutes
\[
\xymatrix{
\mcX_{n}^{\wedge}\ar[rr]^{\mathbb{E}(M,-)|_{\mcX_n^{\wedge}}}\ar[dr]_{\pi_{\mathcal{I}_1(\mcX_n^{\wedge})}} & & \rm{Ab}
\\
& \overline{\mcX^{\wedge}_n}\ar@{-->}[ur]_{\overline{\mbE}(M,-)} &  
}
\]
that is,
$\overline{\mbE}(M, \overline{g})=\mbE(M, g)$, for any $g\in \mathrm{Mor}(\mcX_n^{\wedge})$. 
Thus
$$\mathbb{G}:\mcX_{n+1}^{\wedge}\to \mcX_n^{\wedge}\Mod,\;(M\xrightarrow{h}N)\mapsto(\overline{\mbE}(N,-)\xrightarrow{\mbE(h,-)|_{\mcX_n^{\wedge}}}  \overline{\mbE}(M,-)),$$
is a well defined additive functor. Consider
\begin{center} 
$\mathcal{N}:=\Ker(\mathbb{G})=\{h\in \mathrm{Mor}(\mcX_{n+1}^{\wedge}) : \mbE(h,-)\mid_{\mcX_{n}^{\wedge}}=0\}$
\end{center}
and let $\pi_{\mathcal{N}}:\mcX^{\wedge}_{n+1}\to \mcX^{\wedge}_{n+1}/\mathcal{N}$ be its quotient functor.
We shall write 
$[h]$ to denote the residue class 
in $\mcX^{\wedge}_{n+1}/\mathcal{N}$ of a morphism $h$ in $\mcX_{n+1}^{\wedge}$. 
Thus, by the universal property of $\pi_{\mathcal{N}}$, there exists a unique additive functor
$\mathbb{K}:\mcX^{\wedge}_{n+1}/\mathcal{N}\to \overline{\mcX_n^{\wedge}}\Mod$ such that the following diagram commutes
\[
\xymatrix{
\mcX^{\wedge}_{n+1}\ar[d]_{\pi_{\mathcal{N}}}\ar[dr]^{\mathbb{G}} &\\
\mcX^{\wedge}_{n+1}/\mathcal{N}\ar@{-->}[r]_{\mathbb{K}} & \overline{\mcX_n^{\wedge}}\Mod\,. 
}
\]

The following result is the dual  of Lemma \ref{lem: ImF}.

\begin{lemma}\label{lem: ImK}
Let $\mcC$ be a small extriangulated category, $n\geq 0$ and let  $\mcX$ be an $(n+2)$-rigid subcategory of $\mcC.$ Then, 
for any $\mathbb{E}$-triangle $X_1\mathop{\to}\limits^{d_1} X_0\to C\mathop{\dashrightarrow}\limits^{\delta},$
where $X_0\in \mcX$ and $X_1\in \mcX_{n}^{\wedge}$, there exists an exact sequence in 
$\overline{\mcX^{\wedge}_{n}}\Mod$
$$\overline{\mcX^{\wedge}_{n}}(X_0,-)\xrightarrow{\overline{\mcX^{\wedge}_n}
(\overline{d_1},-)} \overline{\mcX^{\wedge}_{n}}(X_1,-)\mathop{\longrightarrow}\limits^{\overline{\delta}^{\#}}
\mathbb{G}(C)\to 0.$$
Thus, $\mathbb{K}(C)=\mathbb{G}(C)\in \overline{\mcX_{n}^{\wedge}}\mathrm{fp}$.
\end{lemma}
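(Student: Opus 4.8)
The plan is to mirror the proof of Lemma~\ref{lem: ImF}, replacing the second exact sequence of Corollary~\ref{suc exact ext1} (and the transformation $\delta_{\#}$) by the first exact sequence of that corollary (and the transformation $\delta^{\#}$). First I would fix $Y\in\mcX_n^{\wedge}$ and apply the first exact sequence of Corollary~\ref{suc exact ext1} to the given $\mathbb{E}$-triangle $X_1\xrightarrow{d_1}X_0\to C$ realizing $\delta$, obtaining the exact sequence
\[
\mcC(X_0,Y)\xrightarrow{\mcC(d_1,Y)}\mcC(X_1,Y)\xrightarrow{\delta^{\#}}\mathbb{E}(C,Y)\to\mathbb{E}(X_0,Y).
\]
The crucial vanishing is $\mathbb{E}(X_0,Y)=0$: as $X_0\in\mcX$ and $Y\in\mcX_n^{\wedge}$, this is an instance of $\mathbb{E}(\mcX,\mcX_n^{\wedge})=0$, which follows from Corollary~\ref{cor: n+1 rigid->1-cot}(1) taking $j=0$, $i=n$ and $k=1$ (so $k+i+j=n+1$). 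Hence $\delta^{\#}\colon\mcC(X_1,Y)\to\mathbb{E}(C,Y)$ is surjective and its composite with $\mcC(d_1,Y)$ vanishes.

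Next I would descend $\delta^{\#}$ to the co-stable category. If $f\in\mathcal{I}_1(\mcX_n^{\wedge})(X_1,Y)$, that is $\mathbb{E}(-,f)=0$, then $\delta^{\#}(f)=f\cdot\delta=\mathbb{E}(C,f)(\delta)=0$, so $\delta^{\#}$ annihilates $\mathcal{I}_1(\mcX_n^{\wedge})(X_1,Y)$ and factors through $\overline{\mcC}(X_1,Y)$, yielding a surjection $\overline{\delta}^{\#}\colon\overline{\mcC}(X_1,Y)\to\mathbb{E}(C,Y)=\mathbb{G}(C)(Y)$, $\overline{f}\mapsto f\cdot\delta$; its naturality in $Y$ comes from the naturality of $\delta^{\#}$ together with the universal property defining $\overline{\mathbb{E}}(C,-)$. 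Since $X_0\in\mcX\subseteq\mcX_n^{\wedge}$, the representable $\overline{\mcX_n^{\wedge}}(X_0,-)$ is available, and I obtain a sequence
\[
\overline{\mcX_n^{\wedge}}(X_0,-)\xrightarrow{\overline{\mcX_n^{\wedge}}(\overline{d_1},-)}\overline{\mcX_n^{\wedge}}(X_1,-)\xrightarrow{\overline{\delta}^{\#}}\mathbb{G}(C)\to 0
\]
of left $\overline{\mcX_n^{\wedge}}$-modules whose last arrow is epic.

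For exactness in the middle I would work objectwise at each $Y$. The composite is zero because $\delta^{\#}\circ\mcC(d_1,Y)=0$ already in $\mcC$. Conversely, if $\overline{f}\in\overline{\mcC}(X_1,Y)$ satisfies $\overline{\delta}^{\#}(\overline{f})=0$, then $\delta^{\#}(f)=0$ in $\mathbb{E}(C,Y)$ --- note the target is \emph{not} quotiented --- so by exactness of the original sequence $f=h\circ d_1$ for some $h\colon X_0\to Y$, and therefore $\overline{f}=\overline{\mcX_n^{\wedge}}(\overline{d_1},Y)(\overline{h})$ lies in the image. This establishes the claimed exact sequence and exhibits $\mathbb{G}(C)$ as a finitely presented left $\overline{\mcX_n^{\wedge}}$-module; since $\mathbb{K}$ is the functor induced by $\mathbb{G}$ through $\pi_{\mathcal{N}}$, we conclude $\mathbb{K}(C)=\mathbb{G}(C)\in\overline{\mcX_n^{\wedge}}\mathrm{fp}$. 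I do not expect a genuine difficulty, the argument being strictly dual to Lemma~\ref{lem: ImF}; the only point demanding care is the bookkeeping for the co-stable quotient --- verifying that $\overline{\delta}^{\#}$ is well defined and natural, and performing the middle-exactness lift in $\mcC$ before passing to $\overline{\mcC}$.
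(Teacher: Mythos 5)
Your proof is correct and is precisely what the paper intends: the paper states Lemma~\ref{lem: ImK} without proof as the dual of Lemma~\ref{lem: ImF}, and your argument is a faithful dualization of that proof, including the same key vanishing $\mathbb{E}(\mcX,\mcX_n^{\wedge})=0$ from Corollary~\ref{cor: n+1 rigid->1-cot}(1) and the same descent of $\delta^{\#}$ to the co-stable category followed by the middle-exactness lift performed in $\mcC$.
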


The following result is the dual  of Theorem \ref{thm: equivalence full and kernel}.

\begin{theorem}\label{thm: equivalence full and kernel2}
Let $\mcC$ be a small extriangulated category, $n\geq 0$ and let  
$\mcX$ be an $(n+2)$-rigid subcategory of $\mcC.$ Then, the following statements hold true, for the ideal 
$\mathcal{N}:=\Ker(\mathbb{G})$ and the additive functor $\mathbb{K}:
\mcX^{\wedge}_{n+1}/\mathcal{N}\to \overline{\mcX^{\wedge}_n}\mathrm{fp}$.
\begin{enumerate}[(a)]
\item $\mathbb{K}$ is full and faithful.
\item $\mathcal{N}=[\mcX]$ where $[\mcX]$ denotes class of morphisms in $\mcX_{n+1}^{\wedge}$ which factor through  objects in $\mcX$.
\end{enumerate}
\end{theorem}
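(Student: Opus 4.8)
Since the whole statement is the mirror image of Theorem~\ref{thm: equivalence full and kernel}, the plan is to run that proof with every arrow reversed: $\mcX$-coresolutions are replaced by $\mcX$-resolutions, the contravariant representables $\underline{\mcX_n^\vee}(-,?)$ by the covariant ones $\overline{\mcX_n^\wedge}(?,-)$, and deflations together with $\mbE$-projective morphisms by inflations together with $\mbE$-injective morphisms. Faithfulness of $\mathbb{K}$ is immediate from $\mathcal{N}=\Ker(\mathbb{G})$, so the content is fullness in (a) and the inclusion $\mathcal{N}\subseteq[\mcX]$ in (b).

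For fullness I would fix $M,N\in\mcX_{n+1}^\wedge$ and $\gamma\colon\mathbb{K}(M)\to\mathbb{K}(N)$ in $\overline{\mcX_n^\wedge}\mathrm{fp}$. As $\mcX_{n+1}^\wedge=\mathrm{Cone}(\mcX_n^\wedge,\mcX)$, there are $\mbE$-triangles $X_1\xrightarrow{d_1}X_0\to M\dashrightarrow^\delta$ and $Y_1\xrightarrow{d_1'}Y_0\to N\dashrightarrow^{\delta'}$ with $X_0,Y_0\in\mcX$ and $X_1,Y_1\in\mcX_n^\wedge$. By Lemma~\ref{lem: ImK} these induce projective presentations of $\mathbb{K}(M)$ and $\mathbb{K}(N)$ by the covariant representables $\overline{\mcX_n^\wedge}(X_i,-)$ and $\overline{\mcX_n^\wedge}(Y_i,-)$. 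Lifting $\gamma$ along the projectives and invoking Yoneda's lemma yields $e_0\colon Y_0\to X_0$ and $e_1\colon Y_1\to X_1$ with $d_1e_1-e_0d_1'\in\mcI_1(\mcX_n^\wedge)$. The decisive correction step is to observe that this $\mbE$-injective morphism factors through the inflation $d_1'$, by the injective dual of Remark~\ref{proy-morf}(4); writing $d_1e_1-e_0d_1'=hd_1'$ and replacing $e_0$ by $i:=e_0+h$ makes the square $d_1e_1=id_1'$ commute, so (ET3) produces $j\colon N\to M$ realizing a morphism of $\mbE$-triangles with first and middle components $e_1$ and $i$. A final application of Lemma~\ref{lem: ImK} gives $\mathbb{K}([j])\circ\overline{\delta}^\#=\gamma\circ\overline{\delta}^\#$, whence $\mathbb{K}([j])=\gamma$ because $\overline{\delta}^\#$ is an epimorphism.

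For (b), the inclusion $[\mcX]\subseteq\mathcal{N}$ is routine: if $f=hg$ factors through $X\in\mcX$ then $\mbE(f,-)|_{\mcX_n^\wedge}$ vanishes because $\mbE(X,\mcX_n^\wedge)\subseteq\mbE(\mcX_0^\vee,\mcX_n^\wedge)=0$ by Corollary~\ref{cor: n+1 rigid->1-cot}(1) (with $j=0$, $i=n$, $k=1$). For $\mathcal{N}\subseteq[\mcX]$, given $f\colon M\to N$ with $\mbE(f,-)|_{\mcX_n^\wedge}=0$, I would choose an $\mbE$-triangle $X_1\xrightarrow{b}X_0\xrightarrow{a}N\dashrightarrow^\delta$ with $X_0\in\mcX$, $X_1\in\mcX_n^\wedge$, and use the dual of \cite[Cor. 3.16]{Nakaoka1} to form an $\mbE$-triangle $K\xrightarrow{c}M\oplus X_0\xrightarrow{(f\ a)}N\dashrightarrow^\gamma$. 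Applying $\mcC(-,X_1)$ and Corollary~\ref{suc exact ext1} yields the exact piece $\mcC(K,X_1)\xrightarrow{\gamma^\#}\mbE(N,X_1)\xrightarrow{\mbE((f\ a),X_1)}\mbE(M\oplus X_0,X_1)$, and the key computation is that $\mbE((f\ a),X_1)=0$: for $\mu\in\mbE(N,X_1)$ one has $\mu\cdot(f\ a)=(\mu\cdot f,\ \mu\cdot a)$, whose first entry vanishes since $\mbE(f,-)|_{\mcX_n^\wedge}=0$ and $X_1\in\mcX_n^\wedge$, and whose second entry lies in $\mbE(X_0,X_1)\subseteq\mbE(\mcX,\mcX_n^\wedge)=0$. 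Hence $\gamma^\#$ is surjective and $\delta=h\cdot\gamma$ for some $h\colon K\to X_1$, so $(h,\mathrm{id}_N)$ is a morphism of $\mbE$-extensions realized by a morphism of $\mbE$-triangles whose middle component $s\colon M\oplus X_0\to X_0$ satisfies $as=(f\ a)$. Restricting to the $M$-summand shows $f$ factors through $X_0\in\mcX$, i.e. $f\in[\mcX]$.

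The main obstacle is bookkeeping about variance rather than a new idea: because $\mathbb{K}$ realizes a duality (it reverses arrows), each orthogonality and factorization fact borrowed from Theorem~\ref{thm: equivalence full and kernel} must be used in its opposite form, and one must check that the correction step in fullness genuinely uses an inflation (not a deflation) and that the vanishing $\mbE((f\ a),X_1)=0$ in (b) splits into the two summand computations above. Once the orientation is fixed, every step mirrors the covariant case.
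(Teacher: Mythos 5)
Your proposal is correct and follows exactly the route the paper intends: the paper states Theorem~\ref{thm: equivalence full and kernel2} without proof as the formal dual of Theorem~\ref{thm: equivalence full and kernel}, and your argument is precisely that dualization, with the variance bookkeeping (covariant representables, the inflation-based correction step via the dual of Remark~\ref{proy-morf}(4), the vanishing $\mbE(\mcX,\mcX_n^{\wedge})=0$ from Corollary~\ref{cor: n+1 rigid->1-cot}(1), and the dual of \cite[Cor.~3.16]{Nakaoka1}) all carried out correctly.
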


The following two outcomes are the dual of Theorem~\ref{theo: equiv con fp} and Lemma~\ref{lem: E(Xvee,Xvee)=0}, respectively.

\begin{theorem}\label{theo: equiv con fp2}
Let $\mcC$ be a small extriangulated category, $n\geq 0$ and $\mcX=\free(\mcX)$ be 
 an $(n+2)$-rigid subcategory of $\mcC$ such that  
$\mcX_{n}^{\wedge}$ has enough $\mbE$-injective morphisms to $\mcX$. Then, 
the fully faithful functor $\mathbb{F}:\mcX_{n+1}^{\wedge}/[\mcX]\to \mathrm{fp}\overline{\mcX_n^{\wedge}}$ (see Theorem~\ref{thm: equivalence full and kernel2}) is an equivalence of  categories whenever
$\mbE(\mcX_{n}^{\wedge},\mcX_{n}^{\wedge})=0.$
\end{theorem}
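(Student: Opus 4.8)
Since Theorem~\ref{thm: equivalence full and kernel2} already gives that the functor $\mathbb{K}\colon\mcX_{n+1}^{\wedge}/[\mcX]\to\overline{\mcX_n^{\wedge}}\mathrm{fp}$ is fully faithful, the plan is to prove only that $\mathbb{K}$ is dense, transcribing the argument of Theorem~\ref{theo: equiv con fp} under the duality that exchanges $\mcX_n^{\vee}$ with $\mcX_n^{\wedge}$. First I would fix $G\in\overline{\mcX_n^{\wedge}}\mathrm{fp}$ and pick a presentation by covariant representables, that is, an exact sequence $\overline{\mcX_n^{\wedge}}(Y_1,-)\xrightarrow{\alpha}\overline{\mcX_n^{\wedge}}(Y_0,-)\to G\to 0$ in $\overline{\mcX_n^{\wedge}}\Mod$ with $Y_0,Y_1\in\mcX_n^{\wedge}$. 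By Yoneda's Lemma, $\alpha=\overline{\mcX_n^{\wedge}}(\overline{f},-)$ for some $f\colon Y_0\to Y_1$. The hypothesis that $\mcX_n^{\wedge}$ has enough $\mbE$-injective morphisms to $\mcX$ then supplies an $\mbE$-injective morphism $b\colon Y_0\to X$ with $X\in\mcX$.

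Next I would combine $f$ and $b$ into the morphism $u:=\left(\begin{smallmatrix}f\\ b\end{smallmatrix}\right)\colon Y_0\to Y_1\oplus X$ and, by the dual of \cite[Cor.~3.16]{Nakaoka1}, realize it as an inflation in an $\mbE$-triangle $Y_0\xrightarrow{u}Y_1\oplus X\to K\mathop{\dashrightarrow}\limits^{\delta}$. Since $Y_0,\,Y_1\oplus X\in\mcX_n^{\wedge}$ and $\mcX$ is closed under extensions, Lemma~\ref{lem dual: Xvee cerrada por ext} yields $K\in\mcX_{n+1}^{\wedge}$, so $K$ lies in the domain of $\mathbb{K}$.

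Then I would run the proof of Lemma~\ref{lem: ImK} on this triangle: applying $\mcC(-,Y)$ for $Y\in\mcX_n^{\wedge}$ and passing to the co-stable category produces a presentation $\overline{\mcX_n^{\wedge}}(Y_1\oplus X,-)\xrightarrow{\overline{\mcX_n^{\wedge}}(\overline{u},-)}\overline{\mcX_n^{\wedge}}(Y_0,-)\to\mathbb{K}(K)\to 0$. The reason this presentation exists is the surjectivity of $\delta^{\#}$, which needs $\mbE(Y_1\oplus X,Y)=0$; this I would obtain from $\mbE(\mcX,\mcX_n^{\wedge})=0$ (Corollary~\ref{cor: n+1 rigid->1-cot}~(1)) together with the standing hypothesis $\mbE(\mcX_n^{\wedge},\mcX_n^{\wedge})=0$, killing the summands $\mbE(X,Y)$ and $\mbE(Y_1,Y)$ respectively. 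Decomposing $\overline{\mcX_n^{\wedge}}(Y_1\oplus X,-)=\overline{\mcX_n^{\wedge}}(Y_1,-)\oplus\overline{\mcX_n^{\wedge}}(X,-)$, the map $\overline{\mcX_n^{\wedge}}(\overline{u},-)$ becomes the row $(\alpha\;\;\overline{\mcX_n^{\wedge}}(\overline{b},-))$, whose second entry vanishes because $b$ is $\mbE$-injective, hence $\overline{b}=0$ in $\overline{\mcX_n^{\wedge}}$. Thus $\mathbb{K}(K)$ and $G$ are both cokernels of $\alpha$, giving $G\simeq\mathbb{K}(K)$ and establishing density.

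The step I expect to demand the most care is the last one, where the extra summand $\overline{\mcX_n^{\wedge}}(X,-)$ must be shown to contribute nothing. This is the only genuinely conceptual point of the dualization: it is exactly here that both orthogonality conditions are used simultaneously — $\mbE(\mcX_n^{\wedge},\mcX_n^{\wedge})=0$ to force $\delta^{\#}$ to be an epimorphism, and the definition of the co-stable category (where $\mbE$-injective morphisms become zero) to delete the $X$-component — so that the presentation of $\mathbb{K}(K)$ collapses back onto $\alpha$. Everything else is a routine transcription of Theorem~\ref{theo: equiv con fp} with the arrows reversed.
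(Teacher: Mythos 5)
Your proof is correct and takes essentially the same approach as the paper: the paper states this theorem without proof as the formal dual of Theorem~\ref{theo: equiv con fp}, and your argument --- presentation of $G$ by covariant representables, Yoneda to get $f\colon Y_0\to Y_1$, the inflation $\left(\begin{smallmatrix}f\\ b\end{smallmatrix}\right)\colon Y_0\to Y_1\oplus X$ via the dual of \cite[Cor. 3.16]{Nakaoka1}, membership $K\in\mcX_{n+1}^{\wedge}$ via Lemma~\ref{lem dual: Xvee cerrada por ext}, surjectivity of $\delta^{\#}$ from $\mbE(\mcX,\mcX_n^{\wedge})=0$ together with $\mbE(\mcX_n^{\wedge},\mcX_n^{\wedge})=0$, and the vanishing of the summand $\overline{\mcX_n^{\wedge}}(\overline{b},-)$ because $\overline{b}=0$ in the co-stable category --- is exactly that dualization, with the implicit steps of the original proof correctly made explicit. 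You also rightly read the functor in the statement as $\mathbb{K}\colon\mcX_{n+1}^{\wedge}/[\mcX]\to\overline{\mcX_n^{\wedge}}\mathrm{fp}$; the statement's ``$\mathbb{F}$'' and ``$\mathrm{fp}\overline{\mcX_n^{\wedge}}$'' are notational slips carried over from the undualized version.
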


\begin{lemma}\label{lem: E(Xw,Xw)=0}
Let $n\geq 0$, $\mcC$ be a small extriangulated category and $\mcX=\add (\mcX)\subseteq \mcC$ be
an $(n+2)$-rigid subcategory of $\mcC$. Then, $\mathbb{E}(\mcX^{\wedge}_n, \mcX^{\wedge}_n)=0$ if, and only if, 
$\mcX^{\wedge}_n=\mcX$.
\end{lemma}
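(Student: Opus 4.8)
The plan is to mirror, in dual form, the argument given for Lemma~\ref{lem: E(Xvee,Xvee)=0}, trading the right cut cotorsion pair for the corresponding left one. The implication ``$\mcX^{\wedge}_n=\mcX\Rightarrow \mathbb{E}(\mcX^{\wedge}_n,\mcX^{\wedge}_n)=0$'' is immediate: under this hypothesis $\mathbb{E}(\mcX^{\wedge}_n,\mcX^{\wedge}_n)=\mathbb{E}(\mcX,\mcX)$, and this group vanishes because $\mcX$ is $(n+2)$-rigid, so that $\mathbb{E}^{\leq n+1}(\mcX,\mcX)=0$ and in particular $\mathbb{E}^{1}(\mcX,\mcX)=\mathbb{E}(\mcX,\mcX)=0$ by condition $(\mbE1)$. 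The substance of the statement is therefore the converse.

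For the converse I would begin from Corollary~\ref{cor: n+1 rigid->1-cot}~(2), which asserts that $(\mcX,\mcX^{\wedge}_n)$ is a left $1$-cotorsion pair cut on $\mcX^{\wedge}_{n+1}$. The key intermediate claim to establish is the identity
\[
{}^{\perp_1}(\mcX^{\wedge}_n)\cap \mcX^{\wedge}_{n+1}=\mcX .
\]
The inclusion $\supseteq$ follows because $\mcX=\mcX^{\wedge}_0\subseteq \mcX^{\wedge}_{n+1}$, while the orthogonality requirement of the left cut pair gives $\mathbb{E}(\mcX\cap\mcX^{\wedge}_{n+1},\mcX^{\wedge}_n)=\mathbb{E}(\mcX,\mcX^{\wedge}_n)=0$, i.e.\ $\mcX\subseteq {}^{\perp_1}(\mcX^{\wedge}_n)$. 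For $\subseteq$, take $M\in {}^{\perp_1}(\mcX^{\wedge}_n)\cap \mcX^{\wedge}_{n+1}$; the left completeness of the cut pair supplies an $\mathbb{E}$-triangle $K\to X\to M\dashrightarrow\mu$ with $X\in \mcX$ and $K\in \mcX^{\wedge}_n$, so that $\mu\in \mathbb{E}(M,K)$. Since $M\in {}^{\perp_1}(\mcX^{\wedge}_n)$ and $K\in \mcX^{\wedge}_n$, we get $\mathbb{E}(M,K)=0$, whence $\mu=0$ and the conflation splits (cf.\ \cite[Rmk. 2.11(1)]{Nakaoka1}), giving $X\simeq K\oplus M$. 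As $\mcX=\add(\mcX)=\smd(\mcX)$, the summand $M$ of $X\in\mcX$ lies in $\mcX$.

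Granting this identity, the conclusion is immediate. Assuming $\mathbb{E}(\mcX^{\wedge}_n,\mcX^{\wedge}_n)=0$, every object of $\mcX^{\wedge}_n$ belongs to ${}^{\perp_1}(\mcX^{\wedge}_n)$, and obviously $\mcX^{\wedge}_n\subseteq \mcX^{\wedge}_{n+1}$; combining with $\mcX\subseteq\mcX^{\wedge}_n$ yields
\[
\mcX\subseteq \mcX^{\wedge}_n\subseteq {}^{\perp_1}(\mcX^{\wedge}_n)\cap \mcX^{\wedge}_{n+1}=\mcX ,
\]
so $\mcX^{\wedge}_n=\mcX$. The only genuinely delicate point is the intermediate identity: one must take care that the left completeness triangle of the cut pair has the correct shape (with $K$ on the left and $M$ as the cokernel term) and that the splitting is extracted from $\mathbb{E}(M,K)=0$ with the two variables in the right variance. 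These are precisely the features that get flipped under dualization, but both follow from unwinding the definition of a left $(n+1)$-cotorsion pair cut in the case $n+1=1$ together with $\mcX$ being closed under direct summands, so no obstacle beyond careful bookkeeping is expected.
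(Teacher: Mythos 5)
Your proof is correct and follows essentially the same route as the paper: the paper proves the $\vee$-version (Lemma~\ref{lem: E(Xvee,Xvee)=0}) by combining the cut cotorsion pair from Corollary~\ref{cor: n+1 rigid->1-cot}~(2) with the sandwich $\mcX\subseteq\mcX^{\vee}_n\subseteq(\mcX^{\vee}_n)^{\perp_1}\cap\mcX^{\vee}_{n+1}=\mcX$, and leaves the present statement as its formal dual, which is exactly what you carried out. Your only addition is to spell out the splitting argument behind the identity ${}^{\perp_1}(\mcX^{\wedge}_n)\cap\mcX^{\wedge}_{n+1}=\mcX$, which the paper compresses into the phrase ``which implies''; this is a faithful (and welcome) expansion, not a different proof.
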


The following proposition tell us that there is no hope in finding an $(n+2)$-cluster tilting
subcategory $\mcX\subseteq \mcC$ with $n\geq 1$ satisfying $\mbE(\mcX^{\vee}_n, \mcX^{\vee}_n)=0$
(or well $\mbE(\mcX^{\wedge}_n, \mcX^{\wedge}_n)=0$) beyond $\mcX=\mathcal{P}_{\mbE}(\mcC)$. Furthermore, the following result
characterises the whole extriangulated category $\mcC$.

\begin{proposition}\label{prop: Xv=X=Xw}
Let $n\geq 1$, $\mcC$ be a small extriangulated category and $\mcX\subseteq \mcC$ be
an $(n+2)$-cluster tilting subcategory of $\mcC$. Then, the following statements are equivalent.
\begin{enumerate}[(a)]
\item $\mathbb{E}(\mcX_n^{\vee}, \mcX_n^{\vee})=0$.
\item $\mcC=\mcX$.
\item $\mcX=\mathcal{P}_{\mbE}(\mcC)=\mathcal{I}_{\mbE}(\mcC)$.
\item $\mathbb{E}(\mcX_n^{\wedge}, \mcX_n^{\wedge})=0$.
\end{enumerate}
\end{proposition}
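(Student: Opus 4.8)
The plan is to route every implication through statement (b), proving the three equivalences (a)$\Leftrightarrow$(b), (d)$\Leftrightarrow$(b) and (c)$\Leftrightarrow$(b). The available tools are Lemmas~\ref{lem: E(Xvee,Xvee)=0} and~\ref{lem: E(Xw,Xw)=0}, the identities $\mcC=\mcX_{n+1}^{\vee}=\mcX_{n+1}^{\wedge}$ supplied by Theorem~\ref{teo: caract n-cluster}, and the vanishing axiom $(\mbE4)$ for the higher extension groups. Recall also that a cluster tilting $\mcX$ satisfies $\mcX=\add(\mcX)$, so $0\in\mcX$ and the monotonicity $\mcX_j^{\vee}\subseteq\mcX_{j+1}^{\vee}$ (resp.\ $\mcX_j^{\wedge}\subseteq\mcX_{j+1}^{\wedge}$) holds.

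For (a)$\Leftrightarrow$(b): by Lemma~\ref{lem: E(Xvee,Xvee)=0}, condition (a) is equivalent to $\mcX_n^{\vee}=\mcX$. If this holds, then using $\mcX_{n+1}^{\vee}=\mathrm{CoCone}(\mcX,\mcX_n^{\vee})$ together with Theorem~\ref{teo: caract n-cluster} we obtain $\mcC=\mcX_{n+1}^{\vee}=\mathrm{CoCone}(\mcX,\mcX)=\mcX_1^{\vee}$; since $n\geq1$, the monotonicity gives $\mcX_1^{\vee}\subseteq\mcX_n^{\vee}=\mcX$, so $\mcC=\mcX$, which is (b). Conversely (b) trivially forces $\mcX_n^{\vee}=\mcX$, hence (a) again by Lemma~\ref{lem: E(Xvee,Xvee)=0}. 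The equivalence (d)$\Leftrightarrow$(b) is the formal dual, replacing $\mcX_n^{\vee}$ by $\mcX_n^{\wedge}$, Lemma~\ref{lem: E(Xvee,Xvee)=0} by Lemma~\ref{lem: E(Xw,Xw)=0}, and $\mcX_{n+1}^{\vee}$ by $\mcX_{n+1}^{\wedge}$.

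For (b)$\Rightarrow$(c): if $\mcC=\mcX$, then $(n+2)$-rigidity yields $\mbE(\mcC,\mcC)=\mbE(\mcX,\mcX)=0$, so $\mbE$ is the zero bifunctor; by the exact sequences of Corollary~\ref{suc exact ext1} every object is then simultaneously $\mbE$-projective and $\mbE$-injective, whence $\mcP_{\mbE}(\mcC)=\mcC=\mcX=\mcI_{\mbE}(\mcC)$. For the reverse (c)$\Rightarrow$(b), the key observation is that $\mcX=\mcP_{\mbE}(\mcC)$ gives $1_X\in\mcP_{1,\mbE}(\mcC)$ for each $X\in\mcX$ by Remark~\ref{proy-morf}(1); axiom $(\mbE4)$ then forces $\mbE^{k}(1_X,-)=0$, i.e.\ (by functoriality of $\mbE^{k}$) $\mbE^{k}(X,-)=0$, for every $k\geq1$. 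Thus $\mbE^{k}(\mcX,\mcC)=0$ for all $k\geq1$, so $\mcC\subseteq\mcX^{\perp_k}$ for every such $k$, and in particular $\mcC\subseteq\bigcap_{k=1}^{n+1}\mcX^{\perp_k}=\mcX$ by condition (CT2); hence $\mcC=\mcX$.

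I expect the main obstacle to be the implication (c)$\Rightarrow$(b): it is the only step where one must invoke the full intersection (CT2) defining an $(n+2)$-cluster tilting subcategory, rather than mere rigidity, and it relies on promoting the first-degree projectivity $\mcX=\mcP_{\mbE}(\mcC)$ to the vanishing of all higher extensions $\mbE^{\geq1}(\mcX,\mcC)$ via $(\mbE4)$. A secondary subtlety is the role of the hypothesis $n\geq1$ in (a)$\Rightarrow$(b), which is precisely what licenses the collapse $\mcX_1^{\vee}\subseteq\mcX_n^{\vee}$; for $n=0$ the argument would no longer close.
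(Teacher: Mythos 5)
Your proposal is correct, and its core coincides with the paper's proof: your (a)$\Rightarrow$(b) --- Lemma~\ref{lem: E(Xvee,Xvee)=0} gives $\mcX_n^{\vee}=\mcX$, Theorem~\ref{teo: caract n-cluster} gives $\mcC=\mcX_{n+1}^{\vee}=\mathrm{CoCone}(\mcX,\mcX)=\mcX_1^{\vee}\subseteq\mcX_n^{\vee}$ since $n\geq 1$ --- is exactly the paper's argument, and your (b)$\Rightarrow$(c) (rigidity forces $\mbE(\mcC,\mcC)=0$, so by Corollary~\ref{suc exact ext1} every object is $\mbE$-projective and $\mbE$-injective) is the same computation the paper phrases as $\mcX\subseteq{}^{\perp_1}\mcX={}^{\perp_1}\mcC\subseteq\mcP_{\mbE}(\mcC)$. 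The genuine difference lies in the decomposition and in the implication leaving (c). The paper runs the cycle (a)$\Rightarrow$(b)$\Rightarrow$(c)$\Rightarrow$(a), dismissing (c)$\Rightarrow$(a) as clear (the intended elementary argument: if $\mcX=\mcP_{\mbE}(\mcC)$, then any conflation $C\to X_0\to X_1$ with $X_0,X_1\in\mcX$ has extension class in $\mbE(X_1,C)=0$, hence splits, so inductively $\mcX_n^{\vee}\subseteq\add(\mcX)=\mcX$ and (a) follows from rigidity), and it treats (d) by duality just as you do. You instead prove (c)$\Rightarrow$(b) directly: Remark~\ref{proy-morf}(1) gives $1_X\in\mcP_{1,\mbE}(\mcC)$ for $X\in\mcX=\mcP_{\mbE}(\mcC)$, axiom ($\mbE 4$) upgrades this to $\mbE^{k}(X,-)=0$ for all $k\geq 1$, and then (CT2) yields $\mcC\subseteq\bigcap_{k=1}^{n+1}\mcX^{\perp_k}=\mcX$. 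This route is valid and has two merits the paper's version lacks: it makes explicit the step the paper leaves unproved, and it shows that the projective half of (c) alone already implies (b). Its cost is heavier machinery --- it invokes the higher-extension axiom ($\mbE 4$) and the full orthogonality condition (CT2), where a degree-one splitting argument suffices.
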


\begin{proof}
We only prove that (a), (b) and (c) are equivalent. The equivalence with (d) follows in a similar way.

(a)$\Rightarrow$(b): From Lemma~\ref{lem: E(Xvee,Xvee)=0} and Theorem~\ref{teo: caract n-cluster}
we have that $\mcX^{\vee}_n=\mcX$ and $\mcC=\mcX_{n+1}^{\vee}$, respectively. Thus, we get
$\mcC=\mcX^{\vee}_1$ and then $\mcC=\mcX_1^{\vee}\subseteq \mcX_{n}^{\vee}=\mcX$ due to $n\geq 1$. Therefore,
$\mcC=\mcX$.

(b)$\Rightarrow$(c): Assume that $\mcC=\mcX$. Since $\mcX$ is $(n+2)$-cluster tilting we have that the containment
$\mathcal{P}_{\mbE}(\mcC)\subseteq \mcX$ holds true by definition. On the other hand, 
$\mcX\subseteq {}^{\perp_1}\mcX={}^{\perp_1}\mcC\subseteq \mathcal{P}_{\mbE}(\mcC)$. Hence,
$\mcX=\mathcal{P}_{\mbE}(\mcC)$. Dually, the equality $\mcX=\mathcal{I}_{\mbE}(\mcC)$ holds true.\

(c)$\Rightarrow$(a): It is clear. 
\end{proof}

\subsection*{\textbf{Properties of the quotients \boldmath{$\mcX_{n+1}^{\vee}/[\mcX]$} and \boldmath{$\mcX_{n+1}^{\wedge}/[\mcX]$}}}

In the following lines, we present some properties of the quotients $\mcX_{n+1}^{\vee}/[\mcX]$ and $\mcX_{n+1}^{\wedge}/[\mcX]$, respectively. We start with the  following result, and its dual, which is a generalization of \cite[Lem. 3.1]{LZgorensteindimension}.

\begin{lemma}\label{lem: Y->C epi}
Let $\mcC$ be a small extriangulated category, $n\geq 0$ and let $\mcX$ be an $(n+2)$-rigid subcategory of $\mcC$ such that 
$\mcX$ has enough $\mbE$-projective morphisms from $\mcX.$ Then, 
for every $C\in \mcX_{n+1}^{\vee}$, there exists an $\mbE$-triangle $Y_0\to Y\xrightarrow{\beta} C\dashrightarrow,$ with
$Y\in\mcX_{n+1}^{\vee}$ and $Y_0\in\mcX^{\vee}_1,$ satisfying the following:
\begin{enumerate}[(a)]
\item $Y$ admits an $\mbE$-triangle $Y\mathop{\to}\limits^{r} X\mathop{\to}\limits^{q} X_1\dashrightarrow,$ where $q\in\mcP_{1,\mbE}(\mcC),$ $X\in\mcX$ and $X_1\in \mcX_n^{\vee}.$ In particular, if $\mcC$ has enough $\mbE$-projectives and $\mcP_{\mbE}(\mcC)\subseteq\mcX,$ we can choose $X\in \mcP_{\mbE}(\mcC).$

\item $[\beta]$ is an epimorphism in $\mcX_{n+1}^{\vee}/[\mcX]$.
\end{enumerate} 
\end{lemma}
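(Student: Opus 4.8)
The plan is to build $Y$ as a pullback of the first cosyzygy of $C$ against an $\mbE$-projective deflation, and then to read off the two $\mbE$-triangles of (a) from a single application of (ET4)$^{op}$. Since $C\in\mcX_{n+1}^{\vee}=\mathrm{CoCone}(\mcX,\mcX_n^{\vee})$, there is an $\mbE$-triangle $C\xrightarrow{a}X_0\xrightarrow{b}C_1\dashrightarrow$ with $X_0\in\mcX$ and $C_1\in\mcX_n^{\vee}$. As $\mcX$ has enough $\mbE$-projective morphisms from $\mcX$ and $X_0\in\mcX$, I would choose an $\mbE$-triangle $X_1'\xrightarrow{t}X\xrightarrow{q}X_0\dashrightarrow$ with $X\in\mcX$ and $q\in\mcP_{1,\mbE}(\mcC)$; reading this triangle as a coresolution of $X_1'$ of length $\leq 1$ shows $X_1'\in\mcX_1^{\vee}$. (If in addition $\mcC$ has enough $\mbE$-projectives and $\mcP_{\mbE}(\mcC)\subseteq\mcX$, one may instead take a deflation $q\colon X\to X_0$ with $X\in\mcP_{\mbE}(\mcC)$, which automatically lies in $\mcP_{1,\mbE}(\mcC)$ because $\mbE(X,-)=0$.)

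Next I would apply (ET4)$^{op}$ to the pair $X_1'\xrightarrow{t}X\xrightarrow{q}X_0$ and $C\xrightarrow{a}X_0\xrightarrow{b}C_1$, which share $X_0$ as the end of the first triangle and the middle of the second. This yields an object $Y$ together with a commutative diagram whose top row is an $\mbE$-triangle $X_1'\xrightarrow{s}Y\xrightarrow{\beta}C\dashrightarrow$ and whose middle column is an $\mbE$-triangle $Y\xrightarrow{r}X\xrightarrow{h}C_1\dashrightarrow$; commutativity of the lower square forces $h=bq$. Setting $Y_0:=X_1'$ and $X_1:=C_1$ gives (a): here $X\in\mcX$, $C_1\in\mcX_n^{\vee}$, and $bq\in\mcP_{1,\mbE}(\mcC)$ since $\mcP_{1,\mbE}(\mcC)$ is an ideal containing $q$. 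The triangle $Y\xrightarrow{r}X\xrightarrow{bq}C_1\dashrightarrow$ exhibits $Y\in\mathrm{CoCone}(\mcX,\mcX_n^{\vee})=\mcX_{n+1}^{\vee}$, while $Y_0=X_1'\in\mcX_1^{\vee}$ as noted; and $\beta$ is a deflation by construction.

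For (b) the plan is to translate the categorical statement into a vanishing of a connecting map. By Theorem~\ref{thm: equivalence full and kernel}, $\mathcal{K}=[\mcX]$ and $\mathbb{F}$ is faithful, so for any morphism $h$ between objects of $\mcX_{n+1}^{\vee}$ one has $[h]=0$ if and only if $\mbE(D,h)=0$ for all $D\in\mcX_n^{\vee}$. Thus, given $g\colon C\to Z$ in $\mcX_{n+1}^{\vee}$ with $[g][\beta]=0$, one gets $\mbE(D,g)\circ\mbE(D,\beta)=\mbE(D,g\beta)=0$ for every such $D$; if each $\mbE(D,\beta)$ is surjective, this forces $\mbE(D,g)=0$ and hence $[g]=0$, so $[\beta]$ is epic. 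To get that surjectivity, I would use that the top two rows of the (ET4)$^{op}$ diagram form a morphism of $\mbE$-triangles from $X_1'\xrightarrow{s}Y\xrightarrow{\beta}C$ to $X_1'\xrightarrow{t}X\xrightarrow{q}X_0$ with verticals $(\mathrm{id},r,a)$. The naturality square $(\mbE3)$ of Definition~\ref{def: higher E-extensions} then gives $\Delta_{\theta}=\Delta_{\eta}\circ\mbE(-,a)$ for the defining extensions $\theta,\eta$. Evaluating at $D\in\mcX_n^{\vee}$ and using $\mbE(\mcX_n^{\vee},\mcX)=0$ from Corollary~\ref{cor: n+1 rigid->1-cot} (so $\mbE(D,X_0)=0$ as $X_0\in\mcX$) yields $\mbE(D,a)=0$, whence $\Delta_{\theta}=0$ on $\mbE(D,C)$. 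The exact sequence $(\mbE2)$ of the triangle $X_1'\xrightarrow{s}Y\xrightarrow{\beta}C$ then makes $\mbE(D,\beta)\colon\mbE(D,Y)\to\mbE(D,C)$ surjective, finishing (b).

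The step I expect to be the main obstacle is part (b): the key idea is to reformulate ``$[\beta]$ epic'' as surjectivity of $\mbE(D,\beta)$ over $\mcX_n^{\vee}$ via the faithfulness of $\mathbb{F}$, and then to recognize that the obstruction to that surjectivity is the connecting map $\Delta_{\theta}$, which vanishes precisely because it factors through $\mbE(D,X_0)=0$ with $X_0\in\mcX$. The secondary point requiring care is the bookkeeping of (ET4)$^{op}$: one must correctly identify the cokernel map as $bq$ (so that $\mbE$-projectivity is inherited and $Y\in\mcX_{n+1}^{\vee}$) and check that the resulting morphism of $\mbE$-triangles has exactly the shape needed to apply $(\mbE3)$.
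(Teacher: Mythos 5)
Your proposal is correct, and part (a) follows essentially the paper's own construction: the paper likewise takes an $\mbE$-triangle $C\xrightarrow{a}X_0\xrightarrow{b}X_1\dashrightarrow$ with $X_0\in\mcX$, $X_1\in\mcX_n^{\vee}$, resolves $X_0$ by an $\mbE$-projective morphism $p\colon X\to X_0$ with $X\in\mcX$, and applies (ET4)$^{op}$ to produce the $\mbE$-triangles $Y_0\to Y\xrightarrow{\beta}C\dashrightarrow$ and $Y\xrightarrow{r}X\xrightarrow{q}X_1\dashrightarrow$ with $q=bp$, using the same bookkeeping as you do ($q\in\mcP_{1,\mbE}(\mcC)$ by the ideal property, $Y\in\mathrm{CoCone}(\mcX,\mcX_n^{\vee})=\mcX_{n+1}^{\vee}$, $Y_0\in\mcX_1^{\vee}$).

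Part (b) is where you genuinely depart from the paper. The paper stays at the level of morphisms: if $[c][\beta]=0$, it factors $c\beta$ through some $X'\in\mcX$, lifts that factorization along $r$ using $\mbE(X_1,X')=0$ (Corollary \ref{cor: n+1 rigid->1-cot}), and then invokes the fact that the square $(I)$ in the (ET4)$^{op}$ diagram is a weak pushout (\cite[Lem. 3.13]{Nakaoka1}) to obtain $h\colon X_0\to B$ with $ha=c$, whence $[c]=0$. You instead argue through the functor category: Theorem \ref{thm: equivalence full and kernel} identifies $[\mcX]$ with $\mathcal{K}=\Ker(\mathbb{H})$, so $[g]=0$ if and only if $\mbE(D,g)=0$ for all $D\in\mcX_n^{\vee}$, and you then prove surjectivity of $\mbE(D,\beta)$ by combining the exact sequence ($\mbE2$), the naturality ($\mbE3$) applied to the morphism of $\mbE$-triangles $(\mathrm{id},r,a)$ extracted from the (ET4)$^{op}$ diagram (a point you rightly flag: this triplet really is a morphism of $\mbE$-triangles, because the axiom states that the top row realizes the restriction along $a$ of the extension realized by the bottom row), and the vanishing $\mbE(D,X_0)=0$. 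Both proofs ultimately rest on the same vanishing $\mbE(\mcX_n^{\vee},\mcX)=0$ from Corollary \ref{cor: n+1 rigid->1-cot}. The paper's route is more elementary and self-contained: it needs neither Theorem \ref{thm: equivalence full and kernel} nor the axioms of higher $\mbE$-extension groups, so the lemma would survive in settings where that machinery is unavailable. Your route is legitimate here (the theorem precedes the lemma and smallness guarantees the higher extension structure), and it records a sharper structural fact: $\mathbb{H}(\beta)=\mbE(-,\beta)|_{\mcX_n^{\vee}}$ is an epimorphism in $\Mod\underline{\mcX_n^{\vee}}$, i.e.\ $\mathbb{F}[\beta]$ is epic and faithfulness of $\mathbb{F}$ reflects it, which meshes directly with the later analysis of the exact structure $\varepsilon_{\mathbb{F}}$.
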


\begin{proof} Let $C\in \mcX_{n+1}^{\vee}$ and $C\mathop{\to}\limits^{a} X_0\mathop{\to}\limits^{b} X_1\dashrightarrow$ be an $\mbE$-triangle with $X_0\in \mcX$ and $X_1\in \mcX_n^{\vee}$. Since we assumed that $\mcX$ has enough $\mbE$-projective morphisms from $\mcX,$ we have an $\mbE$-triangle $Y_0\to X\mathop{\to}\limits^{p} X_0\dashrightarrow,$ where $p\in\mcP_{1,\mbE}(\mcC)$  and 
$X\in \mcX$. Now, 
by using $(ET4)^{op}$, we get the following commutative diagram in $\mcC$,
\[
\xymatrix{
Y_0\ar@{=}[r]\ar[d] & Y_0\ar[d] &\\
Y\ar[d]_{\beta}\ar[r]^{r}\ar@{}[dr]|{(I)} & X\ar[d]^{p}\ar[r]^{q} & X_1\ar@{=}[d]\\
C\ar[r]_{a} & X_0\ar[r]_{b} & X_1,
}
\]
where $q\in\mcP_{1,\mbE}(\mcC)$ since $q=bp$ and $p\in\mcP_{1,\mbE}(\mcC).$  Therefore, (a) is satisfied.

We prove (b). Let $c:C\to B$ be a morphism in $\mcX_{n+1}^{\vee}$ such that $[c][\beta]=0$.
Thus, there exist $X'\in \mcX$, $f:Y\to X'$ and $g:X'\to B$ such that the following 
diagram commutes
\[
\xymatrix{
Y\ar[rr]^{c\beta}\ar[dr]_{f} & & B\\
& X'\ar[ur]_{g}  &
}
\]
Applying $\mcC(-,X')$ to the $\mbE$-triangle $Y\mathop{\to}\limits^{r} X\mathop{\to}\limits^{q} X_1\dashrightarrow$ and using that
$\mbE(X_1, X')=0$ (see Corollary \ref{cor: n+1 rigid->1-cot} (1)), we obtain a morphism $s:X\to X'$ satisfying $sr=f$. Thus, we get the following  diagram 
\[
\xymatrix{
Y\ar[r]^{r}\ar[d]_{\beta}\ar@{}[dr]|{(I)} & X\ar[d]^{p}\ar@(ur,ur)[ddr]^{gs} & \\
C\ar[r]_{a}\ar@(dl,dl)[drr]_{c} & X_0\ar@{-->}[dr]^{h} & \\
& & B
}
\]
where $gsr=c\beta$. We know from \cite[Lem. 3.13]{Nakaoka1} that (I) is a weak pushout so there
is a morphism $h:X_0\to B$ satisfying $ha=c$. Hence, $[c]=0$.
\end{proof}

\noindent The following is the dual version of Lemma~\ref{lem: Y->C epi}.

\begin{lemma}\label{lem: Y->C epi2}
Let $\mcC$ be a small extriangulated category, $n\geq 0$ and let $\mcX$ be an $(n+2)$-rigid subcategory of $\mcC$ such that $\mcX$ has enough $\mbE$-injective morphisms to $\mcX.$ Then, 
for every $C\in \mcX_{n+1}^{\wedge}$, there exists an $\mbE$-triangle $C\xrightarrow{\alpha}  Y\to Y_0\dashrightarrow,$ with
$Y\in\mcX_{n+1}^{\wedge}$ and $Y_0\in\mcX^{\wedge}_1,$
 satisfying the following:
\begin{enumerate}[(a)]
\item $Y$ admits an $\mbE$-triangle $X_1\mathop{\to}\limits^{q} X\mathop{\to}\limits^{r} Y\dashrightarrow$
where $q\in\mcI_{1,\mbE}(\mcC),$ $X\in X$ and $X_1\in \mcX_n^{\wedge}$. In particular, if $\mcC$ has enough $\mbE$-injectives and $\mcI_{\mbE}(\mcC)\subseteq\mcX,$ we can choose $X\in \mcI_{\mbE}(\mcC).$
\item $[\alpha]$ is a monomorphism in $\mcX_{n+1}^{\wedge}/[\mcX]$.
\end{enumerate}
\end{lemma}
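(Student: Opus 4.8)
The plan is to produce the $\mbE$-dual of the proof of Lemma~\ref{lem: Y->C epi}, step for step: deflations become inflations, $(ET4)^{op}$ becomes $(ET4)$, weak pushouts become weak pullbacks, the projective morphism $p\in\mcP_{1,\mbE}(\mcC)$ becomes an injective morphism $p\in\mcI_{1,\mbE}(\mcC)$, and the hypothesis ``enough $\mbE$-projective morphisms from $\mcX$'' becomes ``enough $\mbE$-injective morphisms to $\mcX$''. Since $(n+2)$-rigidity and the vanishing statements in Corollary~\ref{cor: n+1 rigid->1-cot}(1) are self-dual, all the orthogonality inputs carry over unchanged.

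For part (a), I would first use $C\in\mcX_{n+1}^{\wedge}=\mathrm{Cone}(\mcX_n^{\wedge},\mcX)$ to fix an $\mbE$-triangle $X_1\xrightarrow{a}X_0\xrightarrow{b}C\dashrightarrow$ with $X_0\in\mcX$ and $X_1\in\mcX_n^{\wedge}$. Since $\mcX$ has enough $\mbE$-injective morphisms to $\mcX$ and $X_0\in\mcX$, there is an $\mbE$-triangle $X_0\xrightarrow{p}X\to Y_0\dashrightarrow$ with $p\in\mcI_{1,\mbE}(\mcC)$ and $X\in\mcX$; note that $Y_0\in\mathrm{Cone}(\mcX,\mcX)\subseteq\mcX_1^{\wedge}$. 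Applying (ET4) to this pair of $\mbE$-triangles (the middle term $X_0$ of the first being the source of the second) yields an object $Y$, an $\mbE$-triangle $X_1\xrightarrow{q}X\xrightarrow{r}Y\dashrightarrow$ with $q=pa$, and an $\mbE$-triangle $C\xrightarrow{\alpha}Y\to Y_0\dashrightarrow$ fitting into the resulting commutative diagram. Because $\mbE(-,q)=\mbE(-,pa)$ factors through $\mbE(-,p)=0$, we get $q\in\mcI_{1,\mbE}(\mcC)$; and since $X\in\mcX$ and $X_1\in\mcX_n^{\wedge}$, we conclude $Y\in\mathrm{Cone}(\mcX_n^{\wedge},\mcX)=\mcX_{n+1}^{\wedge}$. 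The ``in particular'' clause follows from the dual of Remark~\ref{proy-morf}(2): if $\mcC$ has enough $\mbE$-injectives and $\mcI_{\mbE}(\mcC)\subseteq\mcX$, one takes $X_0\to X$ to be an inflation into an $\mbE$-injective object of $\mcX$, which is then automatically $\mbE$-injective.

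For part (b), I must show that $[\alpha][c]=0$ forces $[c]=0$ for any $c:B\to C$ in $\mcX_{n+1}^{\wedge}$. Dualizing, $[\alpha c]=0$ means $\alpha c=gf$ factors through some $X'\in\mcX$ via $f:B\to X'$ and $g:X'\to Y$. Applying $\mcC(X',-)$ to $X_1\xrightarrow{q}X\xrightarrow{r}Y\dashrightarrow$ and using $\mbE(X',X_1)=0$ (Corollary~\ref{cor: n+1 rigid->1-cot}(1), since $X'\in\mcX=\mcX_0^{\vee}$ and $X_1\in\mcX_n^{\wedge}$) lets me lift $g$ to $s:X'\to X$ with $rs=g$. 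Then $u:=sf:B\to X$ satisfies $ru=rsf=gf=\alpha c$. The square with edges $X_0\xrightarrow{b}C$, $X_0\xrightarrow{p}X$, $C\xrightarrow{\alpha}Y$, $X\xrightarrow{r}Y$ obtained in part (a) is a weak pullback by the dual of \cite[Lem. 3.13]{Nakaoka1}; applying its universal property to $u$ and $c$ (which agree after composing to $Y$) produces $h:B\to X_0$ with $bh=c$. Hence $c$ factors through $X_0\in\mcX$, so $[c]=0$.

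The main obstacle is organizational rather than mathematical: one must be scrupulous about the variance of the Hom/$\mbE$ functors and the orientation of the (ET4) diagram, so that the correct square is identified as a weak pullback and so that the extension group forced to vanish is $\mbE(X',X_1)$ (covariant lift) rather than $\mbE(X_1,X')$. Everything else — the vanishing of $\mbE(\mcX,\mcX_n^{\wedge})$ and the $\mbE$-injectivity of $q$ — is a routine transcription of the corresponding step in Lemma~\ref{lem: Y->C epi}, so no genuinely new idea is needed.
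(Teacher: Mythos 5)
Your proposal is correct and is exactly the proof the paper intends: the paper omits the argument precisely because it is the formal dual of Lemma~\ref{lem: Y->C epi}, and your transcription (replacing $(ET4)^{op}$ by $(ET4)$, the weak pushout $(I)$ by the dual weak pullback from \cite[Lem. 3.13]{Nakaoka1}, and the vanishing $\mbE(X_1,X')=0$ by $\mbE(X',X_1)=0$ via Corollary~\ref{cor: n+1 rigid->1-cot}(1) with $j=0$, $i=n$, $k=1$) matches that dualization step for step, including the factorization $\mbE(-,q)=\mbE(-,p)\circ\mbE(-,a)=0$ giving $q\in\mcI_{1,\mbE}(\mcC)$ and the membership $Y\in\mathrm{Cone}(\mcX_n^{\wedge},\mcX)=\mcX_{n+1}^{\wedge}$.
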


The Proposition below (and its dual) gives more information related to Lemma~\ref{lem: Y->C epi}  (and~\ref{lem: Y->C epi2}) when $\mcC$ has enough $\mbE$-projectives (injectives).

\begin{proposition}\label{pro:enough proj}
Let $\mcC$ be a small extriangulated category
with  enough $\mbE$-projectives, $n\geq 0$ and let $\mcX$ be an $(n+2)$-rigid subcategory of $\mcC$ such that $\mathcal{P}_{\mbE}(\mcC)\subseteq \mcX.$ Then, $\Omega(\mcX_{n}^{\vee})\subseteq\mcX_{n+1}^{\vee}$ and the following statements hold true.
\begin{itemize}
\item[(a)] $\forall\,C\in \mcX_{n+1}^{\vee}$ there exists an $\mbE$-triangle 
$Y_0\to Y\xrightarrow{g} C\dashrightarrow,$ with $Y\in\Omega(\mcX_{n}^{\vee})$ and 
$Y_0\in\mcX^{\vee}_1,$ such that $[g]$ is an epimorphism in  $\mcX^{\vee}_{n+1}/[\mcX].$
\item[(b)] $\Omega(\mcX_{n}^{\vee})/[\mcX]\subseteq\Proj(\mcX_{n+1}^{\vee}/[\mcX])$ if $\mcX=\free(\mcX).$
\end{itemize}
\end{proposition}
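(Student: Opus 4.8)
The plan is to dispatch the inclusion $\Omega(\mcX_n^{\vee})\subseteq\mcX_{n+1}^{\vee}$ and part (a) directly from the machinery already assembled, and to concentrate the real work of (b) into one statement about the functor $\mathbb{F}$. For the inclusion, take $Y\in\Omega(\mcX_n^{\vee})$, so there is an $\mbE$-triangle $Y\to P\to X\dashrightarrow$ with $P\in\mcP_{\mbE}(\mcC)$ and $X\in\mcX_n^{\vee}$; since $0\in\mcX$ gives $\mcP_{\mbE}(\mcC)\subseteq\mcX=\mcX_0^{\vee}\subseteq\mcX_n^{\vee}$, both $P$ and $X$ lie in $\mcX_n^{\vee}$, and Lemma~\ref{lem: Xvee cerrada por ext}(2) yields $Y\in\mcX_{n+1}^{\vee}$. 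For (a) I would first verify the hypothesis of Lemma~\ref{lem: Y->C epi}: for $C\in\mcX$ a deflation $P\to C$ with $P\in\mcP_{\mbE}(\mcC)\subseteq\mcX$ exists by enough $\mbE$-projectives, and it is $\mbE$-projective by Remark~\ref{proy-morf}(2), so $\mcX$ has enough $\mbE$-projective morphisms from $\mcX$. Applying Lemma~\ref{lem: Y->C epi} to $C\in\mcX_{n+1}^{\vee}$ produces the $\mbE$-triangle $Y_0\to Y\xrightarrow{g}C\dashrightarrow$ with $Y_0\in\mcX_1^{\vee}$ and $[g]$ epic, and the ``in particular'' clause of Lemma~\ref{lem: Y->C epi}(a) (available because $\mcC$ has enough $\mbE$-projectives and $\mcP_{\mbE}(\mcC)\subseteq\mcX$) gives $Y$ an $\mbE$-triangle $Y\to P\to X_1$ with $P\in\mcP_{\mbE}(\mcC)$, $X_1\in\mcX_n^{\vee}$, i.e. $Y\in\Omega(\mcX_n^{\vee})$.

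For (b), fix $Y\in\Omega(\mcX_n^{\vee})$ with $\mbE$-triangle $Y\to P\xrightarrow{q}X_1\dashrightarrow$, $P\in\mcP_{\mbE}(\mcC)$, $X_1\in\mcX_n^{\vee}$. First I would compute $\mathbb{F}(Y)$: as $P\in\mcP_{\mbE}(\mcC)\cap\mcX_n^{\vee}$ we have $\underline{1_P}=0$, so $P\cong 0$ in $\underline{\mcX_n^{\vee}}$ and $\underline{\mcX_n^{\vee}}(-,P)=0$; feeding the triangle into Lemma~\ref{lem: ImF} then collapses the presentation to an isomorphism $\mathbb{F}(Y)\cong\underline{\mcX_n^{\vee}}(-,X_1)$. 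Hence $\mathbb{F}(Y)$ is representable, so projective in $\fp\underline{\mcX_n^{\vee}}$. Since $\mathbb{F}$ is fully faithful (Theorem~\ref{thm: equivalence full and kernel}), the projectivity of $Y$ in $\mcX_{n+1}^{\vee}/[\mcX]$ will follow once I know that $\mathbb{F}$ carries epimorphisms to epimorphisms: given an epimorphism $[\varphi]$ and a map $[\psi]$ into its target, one lifts $\mathbb{F}[\psi]$ along the epimorphism $\mathbb{F}[\varphi]$ using projectivity of $\mathbb{F}(Y)$, and transports the lift back by fullness and faithfulness.

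The hard part will be exactly this preservation of epimorphisms, and I expect it to be the technical heart of the argument. I would prove it as follows. Let $[\varphi]\colon M\to N$ be epic in $\mcX_{n+1}^{\vee}/[\mcX]$ and choose a presentation $M\xrightarrow{a}X_0\xrightarrow{b}X_1'\dashrightarrow$ with $X_0\in\mcX$, $X_1'\in\mcX_n^{\vee}$. Completing $\binom{\varphi}{a}\colon M\to N\oplus X_0$ via \cite[Cor.~3.16]{Nakaoka1} to an $\mbE$-triangle $M\xrightarrow{\binom{\varphi}{a}}N\oplus X_0\xrightarrow{c}K\dashrightarrow$, a $3\times 3$ (octahedral) comparison with the presentation of $M$ shows that $c\iota_N\colon N\to K$ is a deflation sitting in an $\mbE$-triangle $N\xrightarrow{c\iota_N}K\to X_1'\dashrightarrow$. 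As $N\in\mcX_{n+1}^{\vee}$ and $X_1'\in\mcX_n^{\vee}$, the closure mechanism of Lemma~\ref{lem: Xvee cerrada por ext} (apply the dual of Proposition~\ref{Nakaoka 3.15} to split off an $\mcX_n^{\vee}$-extension and then invoke Lemma~\ref{lem: Xvee cerrada por ext}(2)) gives $K\in\mcX_{n+1}^{\vee}$. Now $c\iota_N\varphi=-c\iota_{X_0}a$ factors through $X_0\in\mcX$, so $[c\iota_N][\varphi]=0$ and therefore $[c\iota_N]=0$ because $[\varphi]$ is epic; by Theorem~\ref{thm: equivalence full and kernel}(b) this forces $\mbE(-,c\iota_N)|_{\mcX_n^{\vee}}=0$.

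Finally, applying $\mbE(Z,-)$ to $M\to N\oplus X_0\to K$ for $Z\in\mcX_n^{\vee}$ and using $\mbE(Z,X_0)=0$ gives the exact sequence $\mbE(Z,M)\xrightarrow{\mbE(Z,\varphi)}\mbE(Z,N)\xrightarrow{\mbE(Z,c\iota_N)}\mbE(Z,K)$; since the second map vanishes, $\mbE(Z,\varphi)$ is surjective for every $Z\in\mcX_n^{\vee}$, that is, $\mathbb{F}[\varphi]$ is epic. Combined with the previous paragraph this proves that $\Omega(\mcX_n^{\vee})/[\mcX]\subseteq\Proj(\mcX_{n+1}^{\vee}/[\mcX])$, the hypothesis $\mcX=\free(\mcX)$ being used only to ensure $[\mcX]$ is an ideal so that the quotient is additive. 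The two steps I would scrutinize most carefully are the $3\times 3$ identification of the triangle $N\to K\to X_1'$ and the resulting membership $K\in\mcX_{n+1}^{\vee}$, since it is precisely there that the $(n+2)$-rigidity of $\mcX$ enters, through Lemma~\ref{lem: Xvee cerrada por ext}.
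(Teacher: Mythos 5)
Your handling of the preliminary inclusion and of part (a) is essentially the paper's own proof: the paper likewise observes that enough $\mbE$-projectives together with $\mcP_{\mbE}(\mcC)\subseteq\mcX$ give $\mcX$ enough $\mbE$-projective morphisms from $\mcX$, and then quotes Lemma~\ref{lem: Y->C epi}. (For the inclusion $\Omega(\mcX_n^{\vee})\subseteq\mcX_{n+1}^{\vee}$ you do not need Lemma~\ref{lem: Xvee cerrada por ext}(2), whose statement formally assumes $\mcX=\free(\mcX)$ — not a hypothesis of this part: a conflation $Y\to P\to X$ with $P\in\mcP_{\mbE}(\mcC)\subseteq\mcX$ and $X\in\mcX_n^{\vee}$ already exhibits $Y\in\mathrm{CoCone}(\mcX,\mcX_n^{\vee})=\mcX_{n+1}^{\vee}$.)

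Part (b) is where you genuinely diverge. The paper works entirely inside $\mcC$: it replaces the epimorphism by a deflation $g'=(g\ g_0)$ with $g_0$ a projective deflation, applies (ET4) against an $\mcX$-copresentation, kills the resulting connecting morphism $[u]$ by the epi hypothesis, and assembles the lift from $\mbE(\mcX_n^{\vee},\mcX)=0$, the $\mbE$-projectivity of $P$, and the weak pushout of \cite[Lem. 3.13]{Nakaoka1}. You instead prove that $\mathbb{F}$ carries epimorphisms of $\mcX_{n+1}^{\vee}/[\mcX]$ to surjections of functors, and deduce projectivity formally from representability of $\mathbb{F}(Y)\cong\underline{\mcX_n^{\vee}}(-,X_1)$ and full faithfulness of $\mathbb{F}$. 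Your argument is correct, with two caveats. First, the conflation $N\to K\to X_1'$ cannot be obtained from a bare appeal to an ``octahedral axiom'': (ET4) and (ET4)$^{op}$ only govern compositions of two inflations, respectively two deflations, whereas $a=\pi_{X_0}\circ\binom{\varphi}{a}$ is a mixed composition. The statement is nevertheless available here because the deflation $\pi_{X_0}$ is split; this is exactly the construction the paper itself performs in the proof of Lemma~\ref{lem: Xvee cerrada por ext}(3), and the correct citation is the dual of \cite[Prop. 3.17]{Nakaoka1} together with \cite[Rk. 2.11(2)]{Nakaoka1}. (Also, $c\iota_N$ is then an inflation, not a deflation.) Granting that conflation, your route to $K\in\mcX_{n+1}^{\vee}$ via the dual of Proposition~\ref{Nakaoka 3.15} and Lemma~\ref{lem: Xvee cerrada por ext}(1),(2) is sound, and the remaining steps (the identity $c\iota_N\varphi=-c\iota_{X_0}a$, exactness from Corollary~\ref{suc exact ext1} plus $\mbE(Z,X_0)=0$, Yoneda lifting, faithfulness) are fine. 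Second, your closing claim that $\mcX=\free(\mcX)$ is used only to make $[\mcX]$ an ideal is inaccurate: it is also required by Lemma~\ref{lem: Xvee cerrada por ext}, on which your proof of $K\in\mcX_{n+1}^{\vee}$ crucially depends. As for what each approach buys: yours isolates the reusable fact that $\mathbb{F}$ preserves epimorphisms, at the cost of the extra pushout machinery; the paper's is more elementary and self-contained. It is worth noting that the paper's proof also applies the epi hypothesis to a morphism $[u]\colon C\to D$ whose target $D$ sits in a conflation $C\to D\to X_1'$ with $C\in\mcX_{n+1}^{\vee}$ and $X_1'\in\mcX_n^{\vee}$, so it implicitly needs $D\in\mcX_{n+1}^{\vee}$ — precisely the membership you establish explicitly for $K$; on this point your write-up is more careful than the paper's.
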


\begin{proof} The inclusion $\Omega(\mcX_{n}^{\vee})\subseteq\mcX_{n+1}^{\vee}$ is clear since $\mcC$ has enough $\mbE$-projectives and $\mathcal{P}_{\mbE}(\mcC)\subseteq \mcX.$ Notice that the item (a) follows from Lemma \ref{lem: Y->C epi}.
\

Let us show (b). Assume that $\mcX=\free(\mcX).$ Let $[g]: B\to C$ be an epimorphism in $\mcX^{\vee}_{n+1}/[\mcX]$ and $t:Y\to C$ in $\mcC$ with $Y\in\Omega(\mcX_{n}^{\vee}).$ In particular, we have an $\mbE$-triangle $Y\mathop{\to}\limits^{r} P\mathop{\to}\limits^{q} X_1\dashrightarrow,$ where $P\in\mcP_{\mbE}(\mcC)$ and  $X_1\in\mcX_{n}^{\vee}.$ Since $\mcC$ has enough 
$\mbE$-projectives, there exists a deflation $g_0:P'\to C$ with $P'\in \mcP_{\mbE}(\mcC).$ Hence, by \cite[Cor. 3.16]{Nakaoka1}, there is an  
$\mbE$-triangle $A\mathop{\to}\limits^{f} B\oplus P'\xrightarrow{g'=(g,g_0)} C\dashrightarrow.$  Notice that $[g]=[g']$ in 
$\mcX^{\vee}_{n+1}/[\mcX]$ and $B\oplus P'\in \mcX_{n+1}^{\vee}$ since $P'\in\mcX=\free(\mcX).$ In particular, there is an $\mbE$-triangle $B\oplus P'\mathop{\to}\limits^{a}
X'_0\mathop{\to}\limits^{b} X'_1\dashrightarrow$ with $X'_0\in \mcX$ and $X'_1\in \mcX_{n}^{\vee}.$ Thus, by (ET4) we have a commutative diagram of
$\mbE$-triangles
\[\xymatrix{
A\ar[r]^{f}\ar@{=}[d] & B\oplus P'\ar[r]^{g'}
\ar[d]^{a} & C\ar[d]^{u} \\
A\ar[r]^{c} & X'_0\ar[r]^{d}\ar[d]^{b} & D\ar[d]^{v}\\
& X'_1\ar@{=}[r] & X'_1.
}
\]
Hence $0=[u][g']=[u][g]$ which implies $[u]=0$ since
$[g]$ is an epimorphism in $\mcX^{\vee}_{n+1}/
[\mcX]$. So, $ut$ factors through $\mcX$.

By using that $\mbE(\mcX^{\vee}_{n}, \mcX)=0$ (see Corollary \ref{cor: n+1 rigid->1-cot} (1)), the $\mbE$-triangle $Y\mathop{\to}\limits^{r} P\mathop{\to}\limits^{q} X_1\dashrightarrow$ and the fact that $ut$ factors through $\mcX,$
we get a morphism $s:P\to D$ satisfying $sr=ut$.
Then, as $P\in \mathcal{P}_{\mbE}(\mcC)$, from the $\mbE$-triangle $A\xrightarrow{c}X'_0\xrightarrow{d} D\dashrightarrow,$ there is a morphism $w:P\to X'_0$ such that
$dw=s$. 

Now, from dual of \cite[Lem. 3.13]{Nakaoka1}, there exists 
$h:Y\to B\oplus P'$ satisfying $g'h=t.$ Hence $[t]=[g'][h]=[g][h]$ and thus $Y$ is a projective object in $\mcX_{n+1}^{\vee}/[\mcX]$
\[
\xymatrix{
Y\ar@(ur,ur)[drr]^{t}\ar@(dl,dl)[ddr]_{wr}\ar@{-->}[dr]^{h} & & \\
& B\oplus P'\ar[r]^{g'}\ar[d]_{a} & C\ar[d]^{u}\\
& X'_0\ar[r]_{d} & D.
}
\]
\end{proof}

\begin{proposition}\label{pro:enough inj}
Let $\mcC$ be a small extriangulated category
with  enough $\mbE$-injectives, $n\geq 0$ and let $\mcX$ be an $(n+2)$-rigid subcategory of $\mcC$ such that $\mathcal{I}_{\mbE}(\mcC)\subseteq \mcX.$ Then, $\Sigma(\mcX_{n}^{\wedge})\subseteq\mcX_{n+1}^{\wedge}$ and the following statements hold true.
\begin{itemize}
\item[(a)] $\forall\,C\in \mcX_{n+1}^{\wedge}$ there is an $\mbE$-triangle 
$C\xrightarrow{g} Y\to Y_0\dashrightarrow,$  with $Y\in\Sigma(\mcX_{n}^{\wedge})$ and 
 $Y_0\in \mcX_{1}^{\wedge},$ such that $[g]$ is a monomorphism in  $\mcX^{\wedge}_{n+1}/[\mcX].$
\item[(b)] $\Sigma(\mcX_{n}^{\wedge})/[\mcX]\subseteq\Inj(\mcX_{n+1}^{\wedge}/[\mcX])$ if $\mcX=\free(\mcX).$
\end{itemize}
\end{proposition}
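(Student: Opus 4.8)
The statement is the exact formal dual of Proposition~\ref{pro:enough proj}, so the plan is simply to run that proof in the opposite extriangulated category: $\mbE$-projectives are replaced by $\mbE$-injectives, the syzygy class $\Omega$ by the cosyzygy class $\Sigma$, the coresolution class $\mcX_n^{\vee}$ by the resolution class $\mcX_n^{\wedge}$, deflations and epimorphisms by inflations and monomorphisms, and (ET4) together with the weak pullback of \cite[Lem. 3.13]{Nakaoka1} by (ET4)$^{op}$ and the weak pushout. The inclusion $\Sigma(\mcX_n^{\wedge})\subseteq\mcX_{n+1}^{\wedge}$ is immediate, since $\Sigma(\mcX_n^{\wedge})=\mathrm{Cone}(\mcX_n^{\wedge},\mcI_{\mbE}(\mcC))\subseteq\mathrm{Cone}(\mcX_n^{\wedge},\mcX)=\mcX_{n+1}^{\wedge}$ because $\mcI_{\mbE}(\mcC)\subseteq\mcX$. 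Part (a) then follows directly from Lemma~\ref{lem: Y->C epi2}, whose hypothesis (that $\mcX$ has enough $\mbE$-injective morphisms to $\mcX$) is guaranteed by enough $\mbE$-injectives and $\mcI_{\mbE}(\mcC)\subseteq\mcX$.

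For (b) I would assume $\mcX=\free(\mcX)$, fix $Y\in\Sigma(\mcX_n^{\wedge})$ together with its defining $\mbE$-triangle $X_1\xrightarrow{q} I\xrightarrow{r} Y\dashrightarrow$ (with $I\in\mcI_{\mbE}(\mcC)$ and $X_1\in\mcX_n^{\wedge}$), and take a monomorphism $[g]\colon C\to B$ in $\mcX_{n+1}^{\wedge}/[\mcX]$ and an arbitrary $t\colon C\to Y$; the goal is to find $h$ with $[t]=[h][g]$. First I would use enough $\mbE$-injectives to choose an inflation $g_0\colon C\to I'$ with $I'\in\mcI_{\mbE}(\mcC)\subseteq\mcX$, and apply the dual of \cite[Cor. 3.16]{Nakaoka1} to the inflation $g'=\binom{g}{g_0}\colon C\to B\oplus I'$ to obtain an $\mbE$-triangle $C\xrightarrow{g'} B\oplus I'\xrightarrow{g''} A\dashrightarrow$; note $[g']=[g]$ since $g_0$ factors through $\mcX$. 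As $I'\in\mcX=\free(\mcX)$ we have $B\oplus I'\in\mcX_{n+1}^{\wedge}$, so it admits an $\mbE$-triangle $X_1'\to X_0'\xrightarrow{d} B\oplus I'\dashrightarrow$ with $X_0'\in\mcX$ and $X_1'\in\mcX_n^{\wedge}$.

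Next I would feed the two composable deflations $X_0'\xrightarrow{d} B\oplus I'$ and $B\oplus I'\xrightarrow{g''} A$ into (ET4)$^{op}$, producing an object $E$, an $\mbE$-triangle $X_1'\to E\xrightarrow{u} C\dashrightarrow$, an $\mbE$-triangle $E\xrightarrow{h'} X_0'\to A\dashrightarrow$, and the commutativity $g'u=dh'$ of the upper right-hand square. Since $g'u=dh'$ factors through $X_0'\in\mcX$ this gives $[g'][u]=0$, and as $[g']=[g]$ is a monomorphism we get $[u]=0$, so $u$ (hence $tu\colon E\to Y$) factors through $\mcX$. Using $\mbE(\mcX,\mcX_n^{\wedge})=0$ (Corollary~\ref{cor: n+1 rigid->1-cot}(1) with $j=0$, $i=n$, $k=1$) applied to $X_1\xrightarrow{q} I\xrightarrow{r} Y\dashrightarrow$, I would lift $tu$ through $r$ to obtain $s\colon E\to I$ with $rs=tu$; then $\mbE$-injectivity of $I$ applied to the inflation $h'$ yields $w\colon X_0'\to I$ with $wh'=s$. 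Finally the upper right-hand square is a weak pushout by \cite[Lem. 3.13]{Nakaoka1}, and since $t$ and $rw$ agree on $E$ (indeed $tu=rs=r(wh')=(rw)h'$) it produces $h\colon B\oplus I'\to Y$ with $hg'=t$; hence $[t]=[h][g']=[h][g]$, which shows $Y/[\mcX]\in\Inj(\mcX_{n+1}^{\wedge}/[\mcX])$ and completes (b).

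The only real difficulty is organisational rather than conceptual: one must transpose the (ET4) configuration of the projective case into the correct (ET4)$^{op}$ configuration, keeping careful track of which arrows reverse so that the square extracted is genuinely a weak pushout and the compatibility identity $tu=(rw)h'$ holds on the nose. A second point to handle with care is the replacement of $B$ by the summand $B\oplus I'$: the factorization $hg'=t$ produced by the weak pushout lands through $B\oplus I'$, and one uses $[g']=[g]$ (equivalently composes $h$ with the inclusion $B\hookrightarrow B\oplus I'$) to conclude that $[t]$ factors through $[g]$ itself; this is exactly where the hypotheses $\mcI_{\mbE}(\mcC)\subseteq\mcX$ and $\mcX=\free(\mcX)$ are used. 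No idea beyond the proof of Proposition~\ref{pro:enough proj} is needed.
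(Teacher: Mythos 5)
Your proof is correct and is essentially the paper's own argument: the paper treats Proposition \ref{pro:enough inj} as the formal dual of Proposition \ref{pro:enough proj} (whose proof it gives, omitting the dual one), and your proposal carries out exactly that dualization --- part (a) via Lemma \ref{lem: Y->C epi2} together with the inclusion $\Sigma(\mcX_{n}^{\wedge})=\mathrm{Cone}(\mcX_{n}^{\wedge},\mcI_{\mbE}(\mcC))\subseteq\mathrm{Cone}(\mcX_{n}^{\wedge},\mcX)=\mcX_{n+1}^{\wedge}$, and part (b) by transposing the (ET4)/weak-pullback argument of Proposition \ref{pro:enough proj} into the corresponding (ET4)$^{op}$/weak-pushout one. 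The delicate bookkeeping (the passage from $B$ to $B\oplus I'$, the compatibility $tu=rs=(rw)h'$, and extracting the factorization through $[g]$ itself) is handled correctly and mirrors the paper's projective case step by step.
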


\subsection*{\textbf{Equivalences with \boldmath{$\mathcal{R}_{\mcX}^{n}$}} and \boldmath{$\mathcal{L}_{\mcX}^{n}$}}

One interesting condition that the authors required in \cite[Thm. 3.7]{LZnexantonabel} and 
\cite[Thm. 3.1]{HZabelianquotients} has to do with the existence of certain $n$-exangles ($\mbE$-triangles in the case
n=1). However, the approach that we present is different due to the fact that some results which are valid for 
$n$-exangles may not be true for finite (co)resolutions (see \cite[Lem. 2.8]{LZnexantonabel}).

\begin{definition}\label{Def:RL} Let $\mcC$ be a small extriangulated category, $n\geq 0$ and let $\mcX$ be an $(n+2)$-rigid subcategory of $\mcC.$ We consider the following classes.
\begin{itemize}
\item[(a)] The full subcategory $\mathcal{R}^{n}_\mcX$ of $\Mod\underline{\mcX_n^{\vee}}$
whose elements are $G\in \Mod\underline{\mcX_n^{\vee}}$ for which exists an exact sequence 
$\underline{\mcX_n^{\vee}}(-,X_0)\to \underline{\mcX_n^{\vee}}(-,X_1)\to G\to 0$
with $X_0\in \mcX$ and $X_1\in \mcX_n^{\vee}.$ Notice that $\mathcal{R}^{n}_\mcX\subseteq\mathrm{fp}\underline{\mcX_n^{\vee}}.$

\item[(b)] The full subcategory $\mathcal{L}^{n}_\mcX$ of $\overline{\mcX_n^{\wedge}}\Mod$
whose elements are $G\in \overline{\mcX_n^{\wedge}}\Mod$ for which exists an exact sequence 
$\overline{\mcX_n^{\wedge}}(X_0,-)\to \overline{\mcX_n^{\wedge}}(X_1,-)\to G\to 0$
with $X_0\in \mcX$ and $X_1\in \mcX_n^{\wedge}$. Notice that $\mathcal{L}^{n}_\mcX\subseteq\overline{\mcX_n^{\wedge}}\mathrm{fp}.$
\end{itemize}
\end{definition}

 In the following lines, we mention
some properties of $\mathcal{R}_\mcX^{n},$   $\mathcal{L}_\mcX^{n}$ and
 $\mathrm{fp}\underline{\mcX_n^{\vee}}.$

\begin{proposition}\label{pro: modXveen abelian}
For a small extriangulated category $\mcC,$ $n\geq 0$ and an $(n+2)$-rigid subcategory $\mcX=\free(\mcX)$ of $\mcC,$ the following statements hold true.
\begin{enumerate}[(a)]
\item If $\mcX_{n}^{\vee}$ has enough deflations from $\mcX,$ and $\mcX_n^{\vee}$ is a precovering class in $\mcX_{n+1}^{\vee}$, then $\mathrm{fp}\underline{\mcX_{n}^{\vee}}$ is a full 
abelian subcategory of $\Mod\underline{\mcX_{n}^{\vee}}.$
\item $\mathcal{R}^{n}_{\mcX}$ is a full additive subcategory of $\Mod\underline{\mcX_{n}^{\vee}}$ which is closed under extensions.
\item $\mathcal{L}^{n}_{\mcX}$ is a full additive subcategory of 
$\overline{\mcX_{n}^{\wedge}}\Mod$ which is closed under extensions.
\end{enumerate}
\end{proposition}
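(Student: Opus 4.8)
The plan is to treat the three items separately, dispatching the two concrete ones, (b) and (c), first and leaving the delicate (a) for last. For (b), the subcategory $\mathcal{R}^n_{\mcX}$ is full in $\Mod\underline{\mcX_n^\vee}$ by definition, so it remains to check additivity and closure under extensions. It contains the zero functor (use the presentation built from $X_0=X_1=0\in\mcX$) and is closed under finite direct sums: given $G,G'\in\mathcal{R}^n_{\mcX}$ presented by pairs $(X_0,X_1)$ and $(X_0',X_1')$, the direct sum of the two presentations presents $G\oplus G'$ with terms $\underline{\mcX_n^\vee}(-,X_0\oplus X_0')$ and $\underline{\mcX_n^\vee}(-,X_1\oplus X_1')$, where $X_0\oplus X_0'\in\mcX$ since $\mcX=\free(\mcX)$ and $X_1\oplus X_1'\in\mcX_n^\vee$ since $\mcX_n^\vee=\free(\mcX_n^\vee)$. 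For closure under extensions, let $0\to G_1\to G_2\to G_3\to 0$ be exact with $G_1,G_3\in\mathcal{R}^n_{\mcX}$. The key point is that each representable $\underline{\mcX_n^\vee}(-,X)$ is projective in $\Mod\underline{\mcX_n^\vee}$, so the horseshoe lemma applies: lifting the projective cover of $G_3$ along $G_2\twoheadrightarrow G_3$ and combining with that of $G_1$ yields a presentation $\underline{\mcX_n^\vee}(-,X_0^1\oplus X_0^3)\to\underline{\mcX_n^\vee}(-,X_1^1\oplus X_1^3)\to G_2\to 0$ of the required shape, whence $G_2\in\mathcal{R}^n_{\mcX}$. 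Item (c) follows identically, exchanging right modules for left modules and the representables $\underline{\mcX_n^\vee}(-,X)$ for $\overline{\mcX_n^\wedge}(X,-)$, which are the projectives of $\overline{\mcX_n^\wedge}\Mod$.

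For (a), the strategy is to invoke the classical criterion of Auslander and Freyd: for a skeletally small additive category $\mcB$, the subcategory $\fp\mcB$ of finitely presented functors is a full abelian subcategory of $\Mod\mcB$ if and only if $\mcB$ has weak kernels. Thus it suffices to show that the stable category $\underline{\mcX_n^\vee}$ has weak kernels. To build a weak kernel of $\underline{f}\colon A\to B$, I would use that $\mcX_n^\vee$ has enough deflations from $\mcX$ to pick a deflation $b\colon X_0\to B$ with $X_0\in\mcX$; then $(f\;b)\colon A\oplus X_0\to B$ is again a deflation, and by \cite[Cor. 3.16]{Nakaoka1} it sits in an $\mbE$-triangle $K\xrightarrow{\binom{k_1}{k_2}}A\oplus X_0\xrightarrow{(f\;b)}B\dashrightarrow$. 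Since $A\oplus X_0,B\in\mcX_n^\vee$, Lemma~\ref{lem: Xvee cerrada por ext}(2) gives $K\in\mcX_{n+1}^\vee$, so the precovering hypothesis provides a precover $w\colon W\to K$ with $W\in\mcX_n^\vee$, and I claim $\underline{g}:=\underline{k_1 w}\colon W\to A$ is the desired weak kernel.

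The universal property is the clean part. If $\underline{u}\colon Z\to A$ (with $Z\in\mcX_n^\vee$) satisfies $\underline{f}\,\underline{u}=0$, then $fu\in\mcP_{1,\mbE}(\mcC)$, so by Remark~\ref{proy-morf}(4) it factors through the deflation $b$, say $fu=bv$; hence $(f\;b)\binom{u}{-v}=0$, so $\binom{u}{-v}$ factors through $K$ by the exactness in Corollary~\ref{suc exact ext1}, and the precover property lifts that factorization through $w$, producing $\bar{u}$ with $g\bar{u}=u$. The main obstacle is the remaining half, namely showing $\underline{f}\,\underline{g}=0$, i.e. that $fg=fk_1 w=-bk_2 w$ is an $\mbE$-projective morphism. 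This composite manifestly factors through $X_0\in\mcX$, but factoring through an object of $\mcX$ does \emph{not} by itself force $\mbE$-vanishing, since objects of $\mcX$ need not be $\mbE$-projective. The vanishing therefore has to be squeezed out of the interplay between the chosen deflation $b$, the orthogonality $\mbE(\mcX_n^\vee,\mcX)=0$ from Corollary~\ref{cor: n+1 rigid->1-cot}(1), and the $\mbE$-projectivity characterizations of Remark~\ref{proy-morf}; this is exactly the step where the two standing hypotheses on $\mcX_n^\vee$ must be used in tandem, and it is the crux on which the whole argument for (a) turns.
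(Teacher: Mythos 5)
Your proofs of (b) and (c) are correct and coincide with the paper's: additivity follows from $\mcX=\free(\mcX)$, $\mcX_n^{\vee}=\free(\mcX_n^{\vee})$ and $\underline{\mcX_n^{\vee}}(-,Z_1)\oplus\underline{\mcX_n^{\vee}}(-,Z_2)=\underline{\mcX_n^{\vee}}(-,Z_1\oplus Z_2)$, and extension-closure is exactly the horseshoe argument using that the representables are projective in $\Mod\underline{\mcX_n^{\vee}}$.

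For (a), however, your proposal has a genuine gap, and it is precisely the step you flag: you never establish $\underline{f}\,\underline{g}=0$, i.e. that $fk_1w=-bk_2w$ lies in $\mcP_{1,\mbE}(\mcC)$. Your construction is the paper's construction verbatim (Auslander's criterion, the deflation $b\colon X_0\to B$, the $\mbE$-triangle $K\xrightarrow{\binom{k_1}{k_2}}A\oplus X_0\xrightarrow{(f\ b)}B\dashrightarrow$ from \cite[Cor.~3.16]{Nakaoka1}, $K\in\mcX_{n+1}^{\vee}$ by Lemma~\ref{lem: Xvee cerrada por ext}, the $\mcX_n^{\vee}$-precover, and the factorization argument via Remark~\ref{proy-morf}(4) and Corollary~\ref{suc exact ext1}), and your factorization half is correct. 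But the vanishing half cannot be ``squeezed out'' of the stated hypotheses, because it is false for some admissible choices of $b$: factoring through $X_0\in\mcX$ does not place a morphism in $\mcP_{1,\mbE}(\mcC)$, and for $n=0$ the identity $1_B$ is a permitted deflation from $\mcX$, in which case the $\mbE$-triangle splits, $k_1$ becomes an isomorphism, and the candidate weak kernel is $\underline{1_A}$ up to isomorphism --- a weak kernel of $\underline{f}$ only when $\underline{f}=0$. So no argument valid for an arbitrary admissible $b$ exists. You should also know that the paper's own proof has exactly the same omission: it checks only that every $[\alpha]$ killed by $[f]$ factors through $[gj]$, and never that $[f][gj]=0$, which is part of the exactness required of a pseudokernel in Auslander's criterion.

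The repair is to strengthen the hypothesis to the one the paper itself uses in its subsequent results (Theorem~\ref{thm: equivalence dense}): that $\mcX_n^{\vee}$ has enough $\mbE$-\emph{projective morphisms} from $\mcX$, not merely enough deflations. Then $b$ may be chosen with $\mbE(b,-)=0$, and since $\mcP_{1,\mbE}(\mcC)$ is an ideal of $\mcC$ we get $fk_1w=-b(k_2w)\in\mcP_{1,\mbE}(\mcC)$, i.e. $\underline{f}\,\underline{g}=0$ in one line; the rest of your argument then goes through unchanged. This is also exactly what makes the prototype \cite[Prop.~3.1]{zhou2019cluster} work: there the deflation is taken from an $\mbE$-projective object, so the offending composite factors through a projective and dies in the stable category. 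In summary: (b) and (c) are complete; (a), both as you argue it and as the paper argues it, needs the deflation hypothesis upgraded (or an entirely different argument), and with that upgrade your proof is complete.
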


\begin{proof} Since $\mcX=\mathrm{free}(\mcX),$ we have that 
 $\mcX_n^{\vee}=\mathrm{free}(\mcX_n^{\vee})$ and thus $\mcX_n^{\vee}$ is a full additive subcategory of $\mcC.$ Therefore $\underline{\mcX_{n}^{\vee}}$ is an additive category. Similarly, $\mcX_{n}^{\wedge}$ and $\overline{\mcX_{n}^{\wedge}}$ are also additive.
 \
 
(a) It follows as in \cite[Prop. 3.1]{zhou2019cluster}. Indeed, by \cite[Sec. 2]{auslander1966coherent}, we only need
to prove that $\underline{\mcX_n^{\vee}}$ has pseudokernels.
Let $f:M\to N$ be a morphism in $\mcX^{\vee}_n$. Since $\mcX_n^{\vee}$ has enough deflations from $\mcX$,
there exists an $\mbE$-triangle $K_0\mathop{\to}\limits^{d_0} X_0\mathop{\to}\limits^{d_1} N\mathop{\dashrightarrow}\limits^{\delta}$ where $X_0\in \mcX$. From \cite[Cor. 3.16]{Nakaoka1} we get an $\mbE$-triangle
$$
K\mathop{\longrightarrow}\limits^{\tiny \left(
\begin{array}{c}
g\\
h
\end{array} \right)
} M\oplus X_0\mathop{\longrightarrow}\limits^{
(f\, d_1)
} N\mathop{\dashrightarrow}\limits^{\delta '}
$$
Notice that $\mcX$ is closed under extensions, and then $K\in \mcX_{n+1}^{\vee}$ from Lemma~\ref{lem: Xvee cerrada por ext}.
Let $j:L\to K$ be an $\mcX_n^{\vee}$-precover of $K$. We see that $[gj]:L\to M$ is a pseudokernel of $[f]:M\to N$. In fact, let $\alpha:W\to M$ be a morphism with $W\in \mcX^{\vee}_n$
such that $[f\alpha]=0$. Thus, $f\alpha\in \mathcal{P}_1(\mcX_n^{\vee})$. Now, since $K_0\mathop{\to}\limits^{d_0}
X_0\mathop{\to}\limits^{d_1} N\mathop{\dashrightarrow}\limits^{\delta}$ is an $\mbE$-triangle
and $f\alpha\in \mathcal{P}_1(\mcX_n^{\vee})$, there exists a morphism $i:W\to X_0$ such that $d_1i=f\alpha,$ i.e.
\[
\xymatrix{
&  & W\ar[d]^{f\alpha}\ar@{-->}[dl]_{i}\\
K_0\ar[r]_{d_0} & X_0\ar[r]_{d_1} & N.
}
\]
Thus, 
$(f \, d_1)\circ \left(
\begin{array}{c}
\alpha\\
-i
\end{array}
\right)=f\alpha-d_1i=0$
and then there is $k:W\to K$ such that 
$\left(
\begin{array}{c}
\alpha\\
-i
\end{array}
\right)=\left(
\begin{array}{c}
g\\
h
\end{array}
\right)k,$ i.e.

\[
\xymatrix{
& W\ar[d]^{\tiny \left(
\begin{array}{c}
\alpha\\
-i
\end{array} \right)}\ar@{-->}[dl]_{k} & \\
K\ar[r]_{\tiny \left(
\begin{array}{c}
g\\
h
\end{array} \right)} & M\oplus X_0\ar[r]_{(f\, d_1)} & N.
}
\]
In particular, $\alpha=gk$. On the other hand, by using that $j:L\to K$ is an
$\mcX_n^{\vee}$-precover of $K,$ we also get a morphism $l:W\to L$ such that $k=jl$. Therefore, 
$\alpha=gk=(gj)l$ and then $[\alpha]$
factors through $[gj].$

(b) Notice that $\mathcal{R}^{n}_{\mcX}$ is a full additive subcategory of $\Mod\underline{\mcX_{n}^{\vee}}$ since $\mcX=\mathrm{free}(\mcX)$,  $\mcX_n^{\vee}=\mathrm{free}(\mcX_n^{\vee})$ and $\underline{\mcX_n^{\vee}}(-, Z_1)\oplus\underline{\mcX_n^{\vee}}(-, Z_2)=\underline{\mcX_n^{\vee}}(- , Z_1\oplus Z_2).$  In order to prove that $\mathcal{R}_\mcX^{n}$ is closed under extensions 
in $\Mod\underline{\mcX_n^{\vee}},$ the proof of the Horse shoe's lemma (see \cite[Thm. 1.1.4]{Glaz}) works in this case since $\underline{\mcX_n^{\vee}}(-, Z)\in\Proj(\Mod\underline{\mcX_n^{\vee}})$ $\forall\,Z\in\underline{\mcX_n^{\vee}}.$

(c) It follows as in (b).
\end{proof}

  We recall that a full additive subcategory $\mcB$ of an abelian category $\mcD$ has an exact structure $\varepsilon_\mcB$ \cite[Def. 2.1]{Bu} induced by the exact sequences in $\mcD$ whose terms belongs to $\mcB;$  and it is in this way that we consider $\mcB$ as an exact category. 
In particular, by Proposition \ref{pro: modXveen abelian}, we get that $\mathcal{R}^{n}_{\mcX}$ and $\mathcal{L}^{n}_{\mcX}$ are exact categories. In general, we have that $\mathrm{fp}\underline{\mcX_{n}^{\vee}}$ is also an exact category since it is closed under extensions and a full additive subcategory of 
 $\Mod\underline{\mcX_{n}^{\vee}}.$
 
 \begin{definition} Let $\mcA$ be an additive category, $\mcD$ an abelian category and $F:\mcA\to \mcD$ an additive functor. We denote by  $\varepsilon_F$ the class of all the pairs $(i,p)$ of morpisms $A'\xrightarrow{i} A\xrightarrow{p}A''$ in $\mcA$ such that 
$0\to F(A')\xrightarrow{F(i)} F(A)\xrightarrow{F(p)}F(A'')\to 0$ is an exact sequence in $\mcD.$
 \end{definition}

\begin{lemma}\label{ex-ind-F} Let $F:\mcA\to \mcB$ be an equivalence of additive categories such that $\mcB$ is closed under extensions and a full additive subcategory of an abelian category $\mcD.$ Then, the class 
$\varepsilon_F$  gives an exact structure on $\mcA$ and thus $(\mcA,\varepsilon_F)$ is an exact category. Moreover, the functor $F:\mcA\to \mcB$ is an equivalence of exact categories.
\end{lemma}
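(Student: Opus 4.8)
The plan is to exploit the fact that $\varepsilon_F$ is nothing but the preimage of the exact structure $\varepsilon_{\mcB}$ under the equivalence $F$, and then transport that structure along $F$. Recall from the discussion preceding the lemma that, since $\mcB$ is an extension-closed full additive subcategory of the abelian category $\mcD$, the pair $(\mcB,\varepsilon_{\mcB})$ is an exact category in the sense of \cite[Def. 2.1]{Bu}, whose conflations are precisely the short exact sequences of $\mcD$ all of whose terms lie in $\mcB$. Unwinding the definition of $\varepsilon_F$, a pair $A'\xrightarrow{i}A\xrightarrow{p}A''$ in $\mcA$ belongs to $\varepsilon_F$ if and only if $(F(i),F(p))$ is a conflation of $(\mcB,\varepsilon_{\mcB})$; that is, $\varepsilon_F=F^{-1}(\varepsilon_{\mcB})$. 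The whole argument then rests on the principle that an additive equivalence preserves and reflects every categorical notion appearing in the axioms for an exact structure.

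The next step is to record the formal properties of $F$ that drive the transport. Being an equivalence of additive categories, $F$ is additive and admits a quasi-inverse $G$ with $GF\cong\mathrm{id}_{\mcA}$ and $FG\cong\mathrm{id}_{\mcB}$; consequently both $F$ and $G$ preserve and reflect isomorphisms, the zero object, finite biproducts, kernels, cokernels, and pushouts/pullbacks. I would first use this to check the preliminary requirements. Each $(i,p)\in\varepsilon_F$ is a kernel-cokernel pair in $\mcA$: since $(F(i),F(p))$ is a kernel-cokernel pair in $\mcB$ and $F$ reflects kernels and cokernels, $i=\Ker(p)$ and $p=\Coker(i)$ in $\mcA$. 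Closure of $\varepsilon_F$ under isomorphisms follows because an isomorphism of candidate pairs in $\mcA$ is carried by $F$ to an isomorphism of conflations in $\mcB$, and $\varepsilon_{\mcB}$ is closed under isomorphism. The axioms $\mathsf{[E0]}$ and $\mathsf{[E0]^{op}}$ are immediate from $F(0)=0$ and $F(1_A)=1_{F(A)}$, which exhibit each identity of $\mcA$ as an $\varepsilon_F$-admissible monic and epic.

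It remains to verify $\mathsf{[E1]}$, $\mathsf{[E1]^{op}}$, $\mathsf{[E2]}$, $\mathsf{[E2]^{op}}$ of \cite[Def. 2.1]{Bu}, and here I would argue uniformly. For the closure of admissible monics under composition, if $i,j$ are $\varepsilon_F$-admissible monics then $F(i),F(j)$ are $\varepsilon_{\mcB}$-admissible monics, so $F(ji)=F(j)F(i)$ is admissible in $\mcB$ by $\mathsf{[E1]}$; its cokernel conflation, a priori living in $\mcB$, is lifted back to $\mcA$ using essential surjectivity (to realize the cokernel object) and fullness (to realize the cokernel map) of $F$, whence $ji$ is $\varepsilon_F$-admissible by the already-established closure under isomorphism. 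For $\mathsf{[E2]}$, given an $\varepsilon_F$-admissible monic $i$ and any $f$ in $\mcA$, the pushout of $F(i)$ along $F(f)$ exists in $\mcB$ and its base-change is admissible by $\mathsf{[E2]}$; applying $G$ and the isomorphism $GF\cong\mathrm{id}_{\mcA}$ produces the pushout of $i$ along $f$ in $\mcA$, whose base-change is again $\varepsilon_F$-admissible. The dual axioms are handled symmetrically. I expect this lifting bookkeeping in $\mathsf{[E2]}$ and $\mathsf{[E2]^{op}}$ to be the main obstacle, since it is exactly where one needs $F$ to reflect the universal constructions (pushouts and pullbacks) and not merely to be fully faithful; this is what forces the use of the full strength of the equivalence via its quasi-inverse $G$. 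Finally, $F$ is exact by the very definition of $\varepsilon_F$ and reflects conflations, and $G$ is exact because $FG\cong\mathrm{id}_{\mcB}$ together with closure of $\varepsilon_{\mcB}$ under isomorphism forces $(G(u),G(v))\in\varepsilon_F$ whenever $(u,v)\in\varepsilon_{\mcB}$; hence $F\colon(\mcA,\varepsilon_F)\to(\mcB,\varepsilon_{\mcB})$ is an equivalence of exact categories.
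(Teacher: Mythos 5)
Your proposal is correct and takes essentially the same route as the paper: the paper's proof consists of the single remark that it is straightforward to verify the axioms of \cite[Def. 2.1]{Bu} for $\varepsilon_F$, and your argument carries out precisely that verification, organized as transporting the exact structure $\varepsilon_{\mcB}$ (induced on the extension-closed subcategory $\mcB\subseteq\mcD$) back along the equivalence $F$ and its quasi-inverse. The identification $\varepsilon_F=F^{-1}(\varepsilon_{\mcB})$, the reflection of kernel--cokernel pairs and pushouts/pullbacks by the equivalence, and the lifting of conflations via essential surjectivity and fullness are exactly the "straightforward" steps the paper leaves to the reader, so no gap remains.
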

\begin{proof} It is straightforward to show that $\varepsilon_F$ satisfies the axioms in \cite[Def. 2.1]{Bu}.
\end{proof}

\begin{theorem}\label{thm: equivalence dense}
For a small extriangulated category $\mcC,$ $n\geq 0,$  
 an $(n+2)$-rigid subcategory $\mcX=\free(\mcX)$ of $\mcC$ such that  
$\mcX_{n}^{\vee}$ has enough $\mbE$-projective morphisms from $\mcX,$ and the fully faithful functor $\mathbb{F}:\mcX_{n+1}^{\vee}/[\mcX]\to \mathrm{fp}\underline{\mcX_n^{\vee}}$ (see, Theorem \ref{thm: equivalence full and kernel}), the following statements hold true.
\begin{itemize}
\item[(a)] $\Ima(\mathbb{F})\subseteq \mathcal{R}_{\mcX}^{n},$ $(\mcX_{n+1}^{\vee}/[\mcX],\varepsilon_{\mathbb{F}})$ is an exact category  and $\mathbb{F}:\mcX_{n+1}^{\vee}/[\mcX]\to\mathcal{R}_{\mcX}^{n}$ is an equivalence of exact categories. Moreover, for $\mcX=\smd(\mcX),$ we have:
 \begin{itemize}
 \item[(a1)] $\mcX_{n+1}^{\vee}/[\mcX]$ is weakly idempotent complete if  $\mcX_{n+1}^{\vee}$ has enough deflations from $\mcX;$
 \item[(a2)] $\mcX_{n+1}^{\vee}/[\mcX]$ is Krull-Schmidt if so is $\mcC.$ 
 \end{itemize}
 
\item[(b)] For  $\mcX_n^{\vee}=\mcX$ a precovering class in $\mcX_{n+1}^{\vee},$ the following statements hold true.
 \begin{itemize}
 \item[(b1)] The exact category $(\mcX_{n+1}^{\vee}/[\mcX],\varepsilon_{\mathbb{F}})$ becomes an abelian one and the functor $\mathbb{F}:\mcX_{n+1}^{\vee}/[\mcX]\to \mathrm{fp}\underline{\mcX}$ is an equivalence of  abelian categories. 
 \item[(b2)] $\Proj(\mcX_{n+1}^{\vee}/[\mcX])=\smd(\Omega(\mcX)/[\mcX])$ if  $\mcC$ has enough $\mbE$-projectives and 
 $\mcP_{\mbE}(\mcC)\subseteq\mcX.$ Moreover $\Proj(\mcX_{n+1}^{\vee}/[\mcX])=\Omega(\mcX)/[\mcX]$ if in addition $\mcC$ is Krull-Schmidt and $\mcX=\smd(\mcX).$
 \end{itemize}
\end{itemize}
\end{theorem}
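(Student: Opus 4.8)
The plan is to prove (a) in three moves --- identify the image of $\mathbb{F}$, establish essential surjectivity onto $\mathcal{R}_{\mcX}^{n}$, and transport the exact structure --- and then to deduce (a1), (a2), (b1), (b2) by feeding this equivalence into the results already established. First I would check $\Ima(\mathbb{F}) \subseteq \mathcal{R}_{\mcX}^{n}$: for $M \in \mcX_{n+1}^{\vee} = \mathrm{CoCone}(\mcX, \mcX_n^{\vee})$ choose an $\mbE$-triangle $M \to X_0 \xrightarrow{d_1} X_1 \dashrightarrow$ with $X_0 \in \mcX$, $X_1 \in \mcX_n^{\vee}$, and apply Lemma~\ref{lem: ImF}, which yields precisely the presentation $\underline{\mcX_n^{\vee}}(-,X_0) \to \underline{\mcX_n^{\vee}}(-,X_1) \to \mathbb{F}(M) \to 0$ demanded by Definition~\ref{Def:RL}.

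For essential surjectivity, start from $G \in \mathcal{R}_{\mcX}^{n}$ with presentation induced (by Yoneda) by some $d_1 : X_0 \to X_1$, $X_0 \in \mcX$, $X_1 \in \mcX_n^{\vee}$. The difficulty is that $d_1$ need not be a deflation. To repair this I would use that $\mcX_n^{\vee}$ has enough $\mbE$-projective morphisms from $\mcX$: pick a deflation $b : X \to X_1$ with $X \in \mcX$ and $b \in \mcP_{1,\mbE}(\mcC)$, and form via \cite[Cor. 3.16]{Nakaoka1} an $\mbE$-triangle $C' \to X_0 \oplus X \xrightarrow{(d_1,\,b)} X_1 \dashrightarrow$, so that $C' \in \mcX_{n+1}^{\vee}$ by Lemma~\ref{lem: Xvee cerrada por ext}(2). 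Running Lemma~\ref{lem: ImF} on this triangle and noting that $\underline{b} = 0$ in $\underline{\mcX_n^{\vee}}$ (as $b \in \mcP_1(\mcX_n^{\vee})$), the $X$-summand of the presentation collapses and $\mathbb{F}(C')$ is the cokernel of the map induced by $\underline{d_1}$, i.e. $\mathbb{F}(C') \simeq G$. With Theorem~\ref{thm: equivalence full and kernel}(a) this makes $\mathbb{F} : \mcX_{n+1}^{\vee}/[\mcX] \to \mathcal{R}_{\mcX}^{n}$ an equivalence; this trick of trading $d_1$ for a genuine deflation is the technical heart of the argument. The exact structure is then free: by Proposition~\ref{pro: modXveen abelian}(b), $\mathcal{R}_{\mcX}^{n}$ is an extension-closed full additive subcategory of the abelian category $\Mod\underline{\mcX_n^{\vee}}$, so Lemma~\ref{ex-ind-F} upgrades $\mathbb{F}$ to an exact equivalence onto $\mathcal{R}_{\mcX}^{n}$. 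For (a1)--(a2) I would first note that $\mcX = \smd(\mcX)$ gives $\mcX_{n+1}^{\vee} = \smd(\mcX_{n+1}^{\vee})$ (Lemma~\ref{lem: Xvee cerrada por ext}(3)), hence $\mcX_{n+1}^{\vee} = \add(\mcX_{n+1}^{\vee})$; then (a1) is Proposition~\ref{pro:wicac}(2) for $\mcX \subseteq \mcX_{n+1}^{\vee}$, and (a2) is Proposition~\ref{pro:KS}(b), using that a full summand-closed subcategory of a Krull--Schmidt category (here $\mcX_{n+1}^{\vee} \subseteq \mcC$) is again Krull--Schmidt.

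For (b) the hypothesis $\mcX_n^{\vee} = \mcX$ collapses $\mathcal{R}_{\mcX}^{n}$ to $\fp\underline{\mcX}$ and forces $\mbE(\mcX_n^{\vee}, \mcX_n^{\vee}) = \mbE(\mcX,\mcX) = 0$. Since $\mcX$ trivially has enough deflations from $\mcX$ and is precovering in $\mcX_{n+1}^{\vee}$, Proposition~\ref{pro: modXveen abelian}(a) makes $\fp\underline{\mcX}$ a full abelian subcategory of $\Mod\underline{\mcX}$, so the extension-induced $\varepsilon_{\mathbb{F}}$ is its abelian structure; combined with Theorem~\ref{theo: equiv con fp} this gives (b1).

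Finally, for (b2), Proposition~\ref{pro:enough proj}(b) gives $\Omega(\mcX)/[\mcX] \subseteq \Proj(\mcX_{n+1}^{\vee}/[\mcX])$, so $\smd(\Omega(\mcX)/[\mcX]) \subseteq \Proj$; conversely, for projective $[C]$, Proposition~\ref{pro:enough proj}(a) produces an epimorphism $[g] : [Y] \to [C]$ with $Y \in \Omega(\mcX)$, which splits in the abelian quotient, so $[C] \in \smd(\Omega(\mcX)/[\mcX])$ --- yielding $\Proj = \smd(\Omega(\mcX)/[\mcX])$. For the ``moreover'' I would compute the image of $\Omega(\mcX)/[\mcX]$ under $\mathbb{F}$: for $Y \in \Omega(\mcX)$ with $\mbE$-triangle $Y \to P \to X_1$, $P \in \mcP_{\mbE}(\mcC)$, $X_1 \in \mcX$, Lemma~\ref{lem: ImF} together with $\underline{\mcX}(-,P) = 0$ ($P$ being $\mbE$-projective) shows $\mathbb{F}(Y) \simeq \underline{\mcX}(-,X_1)$ is representable, and conversely every representable arises this way. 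Because $\mcC$ is Krull--Schmidt and $\mcX = \smd(\mcX)$, the stable category $\underline{\mcX} = \mcX/[\mcP_{\mbE}(\mcC)]$ is Krull--Schmidt (Proposition~\ref{pro:KS}(b)), hence idempotent complete, so the representable functors are closed under direct summands in $\fp\underline{\mcX}$ and therefore exhaust $\Proj(\fp\underline{\mcX})$. Transporting back through $\mathbb{F}$ gives $\Proj(\mcX_{n+1}^{\vee}/[\mcX]) = \Omega(\mcX)/[\mcX]$. I expect this last step --- identifying $\mathbb{F}(\Omega(\mcX)/[\mcX])$ with the representables and invoking idempotent completeness of $\underline{\mcX}$ --- to be the main obstacle, since it is the one point where the Krull--Schmidt hypothesis is genuinely needed.
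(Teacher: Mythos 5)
Your treatment of (a), (a1), (a2), (b1) and of the first equality in (b2) is correct and coincides, step for step, with the paper's own proof: the same repair of a presentation of $G\in\mathcal{R}_{\mcX}^{n}$ by an $\mbE$-projective deflation from $\mcX$ (so that the induced map of representables becomes $(\alpha,0)$ and its cokernel is unchanged), the same appeal to Lemma~\ref{ex-ind-F} and Proposition~\ref{pro: modXveen abelian} for $\varepsilon_{\mathbb{F}}$, and the same use of Lemma~\ref{lem: Xvee cerrada por ext}(3) together with Propositions~\ref{pro:wicac}, \ref{pro:KS}, \ref{pro:enough proj} and Theorem~\ref{theo: equiv con fp}.

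You genuinely diverge only in the ``moreover'' of (b2). The paper works inside $\mcC$: for an indecomposable projective $P\notin\mcX$ of $\mcD:=\mcX_{n+1}^{\vee}/[\mcX]$, it lifts the split mono $P\to M$ with $M\in\Omega(\mcX)$ to a decomposition $M\oplus X=K\oplus P$ in $\mcC$ via Proposition~\ref{pro:wicac}(1), uses that $\mcC$ is Krull--Schmidt and $\mcX=\smd(\mcX)$ to conclude $P\in\smd(\Omega(\mcX))$, and then quotes the dual of \cite[Lem. 5.9]{LNheartsoftwin} for $\smd(\Omega(\mcX))=\Omega(\mcX)$. You instead transport the problem through $\mathbb{F}$: you identify $\mathbb{F}(\Omega(\mcX))$ with the representables (correct, since $\underline{\mcX}(-,P)=0$ for $P\in\mcP_{\mbE}(\mcC)$ by Remark~\ref{proy-morf}(2), and enough $\mbE$-projectives produce all representables this way), and you show representables exhaust $\Proj(\fp\underline{\mcX})$ because $\underline{\mcX}$ is Krull--Schmidt (Proposition~\ref{pro:KS}(b) applied to $\mcP_{\mbE}(\mcC)\subseteq\mcX$), hence idempotent complete, so summands of representables are representable. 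This is sound, and it buys independence from the external summand-closure result of \cite{LNheartsoftwin}; what it gives up is precision at the object level: your argument identifies a projective $P$ with an object of $\Omega(\mcX)$ only up to isomorphism in $\mcD$, whereas the paper pins indecomposable projectives down as actual objects of $\Omega(\mcX)$ in $\mcC$. Note, however, that $\Proj(\mcD)$ is closed under isomorphism in $\mcD$ while $\Omega(\mcX)/[\mcX]$ is not (an object $Y\oplus X$ with $Y\in\Omega(\mcX)$ and $X\in\mcX\setminus\Omega(\mcX)$ is projective in $\mcD$ yet need not lie in $\Omega(\mcX)$), so the stated equality must in any case be read up to isomorphism in $\mcD$; under that reading your proof establishes exactly the claim, and to recover the paper's sharper object-level version you would still need $\smd(\Omega(\mcX))=\Omega(\mcX)$.
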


\begin{proof} (a) By Lemma \ref{lem: ImF},  we have that $\Ima(\mathbb{F})\subseteq \mathcal{R}_{\mcX}^{n}.$ We show now that the above functor is dense. For $G\in \mathcal{R}^{n}_{\mcX},$ 
there exists an exact sequence in $\mathrm{fp} \underline{\mcX_n^{\vee}}$
$$\underline{\mcX_n^{\vee}}(-,X_0)\mathop{\longrightarrow}\limits^{\alpha}\underline{\mcX_n^{\vee}}(-,X_1)\to G\to 0,$$
with $X_0\in \mcX$ and $X_1\in \mcX^{\vee}_n$. By Yoneda's Lemma, there exists a morphism 
$f: X_0\to X_1$ such that $\alpha=\underline{\mcX_n^{\vee}}(-,\underline{f})$. Since 
$\mcX_{n}^{\vee}$ has enough $\mbE$-projective morphisms from $\mcX$, there is
an $\mbE$-projective morphism $b:X'_0\to X_1$ with $X'_0\in \mcX.$ Thus, from \cite[Cor. 3.16]{Nakaoka1}, we get an $\mathbb{E}$-triangle
$K\mathop{\longrightarrow}\limits^{c} X_0\oplus X'_0\mathop{\longrightarrow}\limits^{(f,\, b)} X_1\mathop{\dashrightarrow}\limits^{\delta}.$
Notice that $X_0\oplus X'_0\in \mcX$ since $\mcX=\mathrm{free}(\mcX)$ and thus $K\in \mcX^{\vee}_{n+1}$. 
Now, from Lemma~\ref{lem: ImF} there is
an exact sequence
$$\underline{\mcX_n^{\vee}}(-,X_0)\oplus \underline{\mcX_n^{\vee}}(-,X'_0)\mathop{\longrightarrow}\limits^{(\alpha , 0)} 
\underline{\mcX_n^{\vee}}(-,X_1)\mathop{\to}\limits^{l}
\mathbb{F}(K)\to 0.$$
Therefore, it follows that $G\simeq \mathbb{F}(K)$ since $l$ is a cokernel
of $\alpha.$ Then, we get the first part of (a), by applying Lemma \ref{ex-ind-F} and Proposition \ref{pro: modXveen abelian} (b)  to the equivalence 
$\mathbb{F}:\mcX_{n+1}^{\vee}/[\mcX]\to\mathcal{R}_{\mcX}^{n}$  of additive categories. 
\

Let $\mcX=\smd(\mcX).$ Then, by Proposition \ref{pro:wicac} (2) and Lemma \ref{lem: Xvee cerrada por ext} (3), we get (a1). Finally, if  $\mcC$ is Krull-Schmidt, we get that so is $\mcX_{n+1}^{\vee},$  and thus, from Proposition \ref{pro:KS} (b) we obtain (a2).
\

(b) Notice that (b1)  follows from Theorem \ref{theo: equiv con fp}, Lemma \ref{lem: E(Xvee,Xvee)=0}  and Proposition \ref{pro: modXveen abelian}. Let us show (b2). Indeed, assume that  $\mcC$ has enough $\mbE$-projectives and 
 $\mcP_{\mbE}(\mcC)\subseteq\mcX.$ The first part of (b2) follows from  (b1) and Proposition \ref{pro:enough proj}. Assume now that $\mcC$ is 
Krull-Schmidt and $\mcX=\smd(\mcX).$ Let $\mcD:=\mcX_{n+1}^{\vee}/[\mcX]$ and  $P\in \Proj(\mcD).$ Since $\mcD$ is Krull-Schmidt (see (a2)), we can assume that $P\in\ind(\mcC)-\mcX$ (see Proposition \ref{pro:KS} (b)). On the other hand, we know there is an split-mono $P\to M$ in $\mcD,$ where $M\in \Omega(\mcX).$ Thus, by 
Proposition \ref{pro:wicac} (1), we get the decomposition $P\oplus K=M\oplus X$ in $\mcC,$ where $X\in \mcX$ and $K\in\mcC.$ Therefore, $P$ is a direct summand of  $M\in \Omega(\mcX).$ Hence, it is enough to show that $\smd(\Omega(\mcX))=\Omega(\mcX).$ However, this follows as in the proof of the dual of \cite[Lem. 5.9]{LNheartsoftwin} since $\mbE(\mcX,\mcX)=0.$ 
\end{proof} 

Notice that, we can not assert that $\overline{\mcX_n^{\wedge}}\mathrm{fp}$ is abelian, see \cite[Rmk. 3.6]{DLexact}. Thus, we can not expect the similar as Theorem \ref{thm: equivalence dense} (b) for the functor $\mathbb{K}:\mcX_{n+1}^{\wedge}/[\mcX]\to \overline{\mcX_n^{\wedge}}\mathrm{fp}.$ However, we can get the following result.

\begin{theorem}\label{thm2: equivalence dense}
For a small extriangulated category $\mcC,$ $n\geq 0,$  
 an $(n+2)$-rigid subcategory $\mcX=\free(\mcX)$ of $\mcC$ such that  
$\mcX_{n}^{\wedge}$ has enough $\mbE$-injective morphisms to $\mcX,$ and the fully faithful functor $\mathbb{K}:\mcX_{n+1}^{\wedge}/[\mcX]\to \overline{\mcX_n^{\wedge}}\mathrm{fp}$ (see, Theorem \ref{thm: equivalence full and kernel2}), the following statements hold true.
\begin{itemize}
\item[(a)] $\Ima(\mathbb{K})\subseteq \mathcal{L}_{\mcX}^{n},$ $(\mcX_{n+1}^{\wedge}/[\mcX],\varepsilon_{\mathbb{K}})$ is an exact category and $\mathbb{K}:\mcX_{n+1}^{\wedge}/[\mcX]\to\mathcal{L}_{\mcX}^{n}$ is a duality of exact categories. Moreover, for $\mcX=\smd(\mcX),$ we have:
 \begin{itemize}
 \item[(a1)] $\mcX_{n+1}^{\wedge}/[\mcX]$ is weakly idempotent complete if $\mcX_{n+1}^{\wedge}$ has enough inflations to
 $\mcX;$
  \item[(a2)] $\mcX_{n+1}^{\wedge}/[\mcX]$ is Krull-Schmidt if so is $\mcC.$
 \end{itemize}
\item[(b)] For $\mcX_{n}^{\wedge}=\mcX,$ the following statements hold true.
 \begin{itemize}
 \item[(b1)] $\mathbb{K}:\mcX_{n+1}^{\wedge}/[\mcX]\to \overline{\mcX}\mathrm{fp}$ is a duality of exact 
categories.
 \item[(b2)] $\Inj(\mcX_{n+1}^{\wedge}/[\mcX])=\smd(\Sigma(\mcX)/[\mcX])$ if  $\mcC$ has enough $\mbE$-injectives and 
 $\mcI_{\mbE}(\mcC)\subseteq\mcX.$ Moreover $\Inj(\mcX_{n+1}^{\wedge}/[\mcX])=\Sigma(\mcX)/[\mcX]$ if in addition $\mcC$ is Krull-Schmidt and $\mcX=\smd(\mcX).$
 \end{itemize}
\end{itemize}
\end{theorem}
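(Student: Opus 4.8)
The plan is to dualize, essentially verbatim, the proof of Theorem~\ref{thm: equivalence dense}, the one genuine structural difference being that $\mathbb{K}$ is \emph{contravariant}, so that the role of ``equivalence'' is everywhere played by ``duality'' and the induced exact sequences reverse direction.

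First I would establish (a). The inclusion $\Ima(\mathbb{K})\subseteq\mathcal{L}_{\mcX}^{n}$ is immediate from Lemma~\ref{lem: ImK}. For density, given $G\in\mathcal{L}_{\mcX}^{n}$ I would take a presentation $\overline{\mcX_n^{\wedge}}(X_0,-)\xrightarrow{\alpha}\overline{\mcX_n^{\wedge}}(X_1,-)\to G\to 0$ with $X_0\in\mcX$ and $X_1\in\mcX_n^{\wedge}$, write $\alpha=\overline{\mcX_n^{\wedge}}(\overline{f},-)$ for some $f:X_1\to X_0$ by Yoneda's Lemma, and use the hypothesis that $\mcX_n^{\wedge}$ has enough $\mbE$-injective morphisms to $\mcX$ to produce an $\mbE$-injective inflation $b:X_1\to X'_0$ with $X'_0\in\mcX$. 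The dual of \cite[Cor. 3.16]{Nakaoka1}, applied to the morphism $X_1\to X_0\oplus X'_0$ with components $f,b$, yields an $\mbE$-triangle $X_1\to X_0\oplus X'_0\to K\dashrightarrow$; since $X_1,X_0\oplus X'_0\in\mcX_n^{\wedge}$, Lemma~\ref{lem dual: Xvee cerrada por ext} gives $K\in\mcX_{n+1}^{\wedge}$, and running the computation of Lemma~\ref{lem: ImK}---noting that $b$ becomes zero in $\overline{\mcX_n^{\wedge}}$---exhibits $G$ as the cokernel of $\alpha$, i.e. $G\simeq\mathbb{K}(K)$. Hence $\mathbb{K}\colon\mcX_{n+1}^{\wedge}/[\mcX]\to\mathcal{L}_{\mcX}^{n}$ is a contravariant equivalence. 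As $\mathcal{L}_{\mcX}^{n}$ is closed under extensions in the abelian category $\overline{\mcX_n^{\wedge}}\Mod$ by Proposition~\ref{pro: modXveen abelian}(c), the dual of Lemma~\ref{ex-ind-F} (equivalently, Lemma~\ref{ex-ind-F} applied to $\mathbb{K}$ regarded as a covariant equivalence onto $(\mathcal{L}_{\mcX}^{n})^{\mathrm{op}}$) shows that $\varepsilon_{\mathbb{K}}$ is an exact structure and that $\mathbb{K}$ is a duality of exact categories. For (a1), when $\mcX=\smd(\mcX)$ I would invoke the dual of Proposition~\ref{pro:wicac}(2) (using enough inflations to $\mcX$) together with Lemma~\ref{lem dual: Xvee cerrada por ext}(3); for (a2), $\mcC$ Krull--Schmidt forces $\mcX_{n+1}^{\wedge}$ Krull--Schmidt, and Proposition~\ref{pro:KS}(b) applies.

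For (b), under the hypothesis $\mcX_n^{\wedge}=\mcX$ every object of $\mathcal{L}_{\mcX}^{n}$ is presented by representables on objects of $\mcX$, so $\mathcal{L}_{\mcX}^{n}=\overline{\mcX}\mathrm{fp}$; combining this identification with Theorem~\ref{theo: equiv con fp2} and Lemma~\ref{lem: E(Xw,Xw)=0}, part (a) yields (b1), namely that $\mathbb{K}\colon\mcX_{n+1}^{\wedge}/[\mcX]\to\overline{\mcX}\mathrm{fp}$ is a duality of exact categories. For (b2) I would assume that $\mcC$ has enough $\mbE$-injectives with $\mcI_{\mbE}(\mcC)\subseteq\mcX$ and deduce $\Inj(\mcX_{n+1}^{\wedge}/[\mcX])=\smd(\Sigma(\mcX)/[\mcX])$ from (b1) and Proposition~\ref{pro:enough inj}, dualizing the argument for Theorem~\ref{thm: equivalence dense}(b2); if moreover $\mcC$ is Krull--Schmidt and $\mcX=\smd(\mcX)$, the identity $\smd(\Sigma(\mcX))=\Sigma(\mcX)$ (obtained as in \cite[Lem. 5.9]{LNheartsoftwin}, using $\mbE(\mcX,\mcX)=0$) upgrades this to $\Inj(\mcX_{n+1}^{\wedge}/[\mcX])=\Sigma(\mcX)/[\mcX]$.

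The only real work beyond transcription is bookkeeping the contravariance of $\mathbb{K}$: because $\mathbb{K}$ reverses arrows, the conflations of $\varepsilon_{\mathbb{K}}$ are those $A'\to A\to A''$ whose image $0\to\mathbb{K}(A'')\to\mathbb{K}(A)\to\mathbb{K}(A')\to 0$ is exact, and $\mathbb{K}$ is a duality rather than an equivalence of exact categories. The subtle point I expect in (b1) is that, unlike its primal counterpart in Theorem~\ref{thm: equivalence dense}(b1), one cannot promote the conclusion to a duality of \emph{abelian} categories, since $\overline{\mcX_n^{\wedge}}\mathrm{fp}$ need not be abelian (see \cite[Rmk. 3.6]{DLexact}); this is exactly why the statement only claims a duality of exact categories.
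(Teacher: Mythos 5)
Your overall plan --- dualize the proof of Theorem \ref{thm: equivalence dense}, keeping track of the contravariance of $\mathbb{K}$ --- is exactly the paper's approach: the authors prove Theorem \ref{thm: equivalence dense} in full and, for the present theorem, only write out (b2), treating everything else as formally dual. Your part (a) (Yoneda, an $\mbE$-injective inflation $b:X_1\to X'_0$, the $\mbE$-triangle $X_1\to X_0\oplus X'_0\to K\dashrightarrow$, membership $K\in\mcX_{n+1}^{\wedge}$, and Lemma \ref{lem: ImK} together with $\overline{b}=0$ to exhibit $G\simeq\mathbb{K}(K)$), as well as (a1), (a2) and (b1), is correct and coincides with what the paper intends; your bookkeeping of contravariance (applying Lemma \ref{ex-ind-F} to $\mathbb{K}$ viewed as a covariant equivalence onto $(\mathcal{L}_{\mcX}^{n})^{\mathrm{op}}$) is a legitimate filling-in of a detail the paper leaves implicit.

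There is, however, a genuine gap in your (b2). You propose to deduce $\Inj(\mcX_{n+1}^{\wedge}/[\mcX])=\smd(\Sigma(\mcX)/[\mcX])$ from ``(b1) and Proposition \ref{pro:enough inj}, dualizing the argument for Theorem \ref{thm: equivalence dense}(b2)''. That argument does not dualize verbatim. In Theorem \ref{thm: equivalence dense}(b2), the inclusion $\Proj(\mcX_{n+1}^{\vee}/[\mcX])\subseteq\smd(\Omega(\mcX)/[\mcX])$ is obtained by splitting the epimorphism $[g]:Y\to P$ of Proposition \ref{pro:enough proj}(a) via projectivity and then using that the quotient is \emph{abelian} (its (b1)) to pass from ``split epi'' to ``direct summand''. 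In the present setting, after splitting the monomorphism $[g]:I\to Y$ of Proposition \ref{pro:enough inj}(a) via injectivity of $I$, you still need to pass from ``split mono'' to ``direct summand'', and abelianness is precisely what is unavailable --- as you yourself stress, (b1) here only gives a duality of exact categories with $\overline{\mcX}\mathrm{fp}$, which need not be abelian. What is actually needed is weak idempotent completeness of $\mcX_{n+1}^{\wedge}/[\mcX]$, and this is how the paper argues: since $\mcC$ has enough $\mbE$-injectives and $\mcI_{\mbE}(\mcC)\subseteq\mcX$, the category $\mcX_{n+1}^{\wedge}$ has enough inflations to $\mcX$, so (a1) applies; then the split mono yields a decomposition by Lemma \ref{wicac} (or by the dual of Proposition \ref{pro:wicac}(1)). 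Your route through (b1) can be repaired --- for instance by observing that $\overline{\mcX}\mathrm{fp}$ has cokernels (computed in $\overline{\mcX}\Mod$), hence is weakly idempotent complete, a self-dual property that transfers through the duality $\mathbb{K}$ --- but that observation appears neither in the paper nor in your proposal, so as written the key step is unjustified. The Krull--Schmidt upgrade at the end of (b2) is fine: that part of the original argument (Proposition \ref{pro:KS}(b), Proposition \ref{pro:wicac}(1), and $\smd(\Sigma(\mcX))=\Sigma(\mcX)$ via the argument of \cite[Lem. 5.9]{LNheartsoftwin}) uses no abelianness and dualizes verbatim.
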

\begin{proof} We only prove (b2). Indeed, assume that $\mcC$ has enough $\mbE$-injectives and 
 $\mcI_{\mbE}(\mcC)\subseteq\mcX.$ Then, by (a1) we get that $\mcX_{n+1}^{\wedge}/[\mcX]$ is weakly idempotent complete. Thus, from Proposition \ref{pro:enough inj}, we get that $\Inj(\mcX_{n+1}^{\wedge}/[\mcX])=\smd(\Sigma(\mcX)/[\mcX]).$ Then we can complete the proof of (b2) is a similiar way as we did in the proof of Theorem \ref{thm: equivalence dense} (b2).
\end{proof}

 As a consequence of the two preceding theorems, we get the following two corollaries which summarizes the results in this section when 
$\mcX$ is an $(n+2)$-cluster tilting subcategory of $\mcC.$ We recall that in this case we can not expect that $\mcX_{n}^{\wedge}=\mcX$ or $\mcX_{n}^{\vee}=\mcX,$ for $n\geq 1,$ see Proposition \ref{prop: Xv=X=Xw} and Lemmas \ref{lem: E(Xvee,Xvee)=0} and \ref{lem: E(Xw,Xw)=0}.

\begin{corollary}\label{teo: n-cluster tilting}
For a small extriangulated category $\mcC$ with enough $\mbE$-projectives and $\mbE$-injectives, $n\geq 0,$ an $(n+2)$-cluster tilting subcategory $\mcX$ of $\mcC$ and the fully faithful functors $\mathbb{F}:\mcX_{n+1}^{\vee}/[\mcX]\to \mathrm{fp}\underline{\mcX_n^{\vee}}$ 
and $\mathbb{K}:\mcX_{n+1}^{\wedge}/[\mcX]\to \overline{\mcX_n^{\wedge}}\mathrm{fp}$ (see, Theorems \ref{thm: equivalence full and kernel} and \ref{thm: equivalence full and kernel2}), we have that $\mcX_{n+1}^{\vee}=\mcC=\mcX_{n+1}^{\wedge}.$ Moreover, for the quotient category $\mcD:=\mcC/[\mcX],$ the following  statements hold true.
\begin{itemize}
\item[(a)] $\mcD_1:=(\mcD,\varepsilon_{\mathbb{F}})$  and $\mcD_2:=(\mcD,\varepsilon_{\mathbb{K}})$ are weakly idempotent complete exact categories. Moreover $\mcD$ is Krull-Schmidt if so is $\mcC.$ 
\item[(b)] $\mathbb{F}: \mcD_1\to \mathcal{R}_\mcX^{n}$ is an equivalence and 
$\mathbb{K}: \mcD_2\to \mathcal{L}_\mcX^{n}$ is a duality of exact categories. In particular, there is a duality 
$\mathcal{R}_\mcX^{n}\cong\mathcal{L}_\mcX^{n}$ of categories.
\end{itemize}
\end{corollary}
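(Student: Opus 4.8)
The plan is to read this corollary off the two main theorems of the section, Theorems~\ref{thm: equivalence dense} and~\ref{thm2: equivalence dense}, once the cluster-tilting hypothesis is seen to collapse the coresolution and resolution classes onto all of $\mcC$. First I would invoke Theorem~\ref{teo: caract n-cluster}: since $\mcX$ is $(n+2)$-cluster tilting, $(\mcX,\mcX)$ is an $(n+1)$-cotorsion pair, and the theorem delivers at once the equalities $\mcX_{n+1}^{\vee}=\mcC=\mcX_{n+1}^{\wedge}$. In particular the two quotients appearing in Theorems~\ref{thm: equivalence dense} and~\ref{thm2: equivalence dense} both coincide with $\mcD=\mcC/[\mcX]$, which is precisely what allows us to regard the single additive category $\mcD$ under the two exact structures $\varepsilon_{\mathbb{F}}$ and $\varepsilon_{\mathbb{K}}$.

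Next I would check that every hypothesis of those two theorems is satisfied. An $(n+2)$-cluster tilting subcategory has $\mcX=\add(\mcX)$ (so $\mcX=\free(\mcX)=\smd(\mcX)$) and is $(n+2)$-rigid, since $\mcX=\bigcap_{k=1}^{n+1}\mcX^{\perp_k}$ forces $\mathbb{E}^{\leq n+1}(\mcX,\mcX)=0$. The remaining hypotheses concern enough $\mbE$-projective and $\mbE$-injective morphisms, and here I would use the recorded inclusion $\mcP_{\mbE}(\mcC)\cup\mcI_{\mbE}(\mcC)\subseteq\mcX$. As $\mcC$ has enough $\mbE$-projectives, every $C\in\mcC$ admits an $\mbE$-triangle $K\to P\to C\dashrightarrow$ with $P\in\mcP_{\mbE}(\mcC)\subseteq\mcX$, and the deflation $P\to C$ lies in $\mcP_{1,\mbE}(\mcC)$ by Remark~\ref{proy-morf}(2). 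Thus $\mcC$ has enough $\mbE$-projective morphisms from $\mcX$ and enough deflations from $\mcX$, and the same holds a fortiori for the smaller classes $\mcX_n^{\vee}$ and $\mcX_{n+1}^{\vee}=\mcC$. Dually, enough $\mbE$-injectives together with $\mcI_{\mbE}(\mcC)\subseteq\mcX$ yields enough $\mbE$-injective morphisms to $\mcX$ and enough inflations to $\mcX$.

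With the hypotheses in place, part (a) follows by applying Theorem~\ref{thm: equivalence dense}(a),(a1),(a2) and Theorem~\ref{thm2: equivalence dense}(a),(a1),(a2): both conclude that the common underlying additive category $\mcD$ is weakly idempotent complete, and Krull-Schmidt whenever $\mcC$ is (the last point resting on Proposition~\ref{pro:KS}(b)); equipping $\mcD$ with $\varepsilon_{\mathbb{F}}$ and with $\varepsilon_{\mathbb{K}}$ then produces the exact categories $\mcD_1$ and $\mcD_2$. Part (b) is the equivalence/duality half of the same two theorems, namely that $\mathbb{F}\colon\mcD_1\to\mathcal{R}_{\mcX}^{n}$ is an equivalence and $\mathbb{K}\colon\mcD_2\to\mathcal{L}_{\mcX}^{n}$ a duality of exact categories. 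For the final assertion I would simply compose: choosing a quasi-inverse $\mathbb{F}^{-1}\colon\mathcal{R}_{\mcX}^{n}\to\mcD$ and then applying $\mathbb{K}$ gives a contravariant equivalence $\mathbb{K}\circ\mathbb{F}^{-1}\colon\mathcal{R}_{\mcX}^{n}\to\mathcal{L}_{\mcX}^{n}$, which is the claimed duality of categories.

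I do not anticipate any genuine obstacle, since all the substance is already carried by Theorems~\ref{thm: equivalence dense} and~\ref{thm2: equivalence dense}; the only place demanding care is the bookkeeping that the several ``enough morphisms'' and ``enough (de)flations'' conditions all reduce, via $\mcP_{\mbE}(\mcC)\cup\mcI_{\mbE}(\mcC)\subseteq\mcX$, to the standing assumption that $\mcC$ has enough $\mbE$-projectives and $\mbE$-injectives.
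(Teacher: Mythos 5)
Your proposal is correct and follows exactly the route the paper intends: the paper states this corollary without a separate proof, as an immediate consequence of Theorems~\ref{thm: equivalence dense} and~\ref{thm2: equivalence dense} combined with Theorem~\ref{teo: caract n-cluster}, which is precisely your argument. Your careful verification that the hypotheses (rigidity, $\mcX=\add(\mcX)=\free(\mcX)=\smd(\mcX)$, and the various ``enough'' conditions via $\mcP_{\mbE}(\mcC)\cup\mcI_{\mbE}(\mcC)\subseteq\mcX$ and Remark~\ref{proy-morf}) are satisfied is exactly the bookkeeping the paper leaves implicit.
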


\begin{corollary}\label{coro: n-cluster tilting} Let $\mcC$ be a small extriangulated category with enough $\mbE$-projectives and $\mbE$-injectives, and let $\mcX$ be a 2-cluster-tilting subcategory of $\mcC$. Then, 
\begin{itemize}
\item[(a)]  The quotient $\mcC/[\mcX]$ is an abelian category with enough projectives and injectives,  $\Proj(\mcC/[\mcX])=\smd(\Omega(\mcX)/[\mcX])$ and $\Inj(\mcC/[\mcX])=\smd(\Sigma(\mcX)/[\mcX]).$ 
\item[(b)] If $\mcC$ is Krull-Schmidt, then $\mcC/[\mcX]$ is Krull-Schmidt, $\Proj(\mcC/[\mcX])=\Omega(\mcX)/[\mcX]$ and 
$\Inj(\mcC/[\mcX])=\Sigma(\mcX)/[\mcX].$ 
\item[(c)] The functor $\mathbb{F}:\mcC/[\mcX]\to\mathrm{fp}\underline{\mcX}$ is an equivalence of abelian categories and $\mathbb{K}:\mcC/[\mcX]\to\overline{\mcX}\mathrm{fp}$ is a duality of exact categories.
\end{itemize}
\end{corollary}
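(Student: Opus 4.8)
The plan is to obtain this corollary by specializing Theorems \ref{thm: equivalence dense} and \ref{thm2: equivalence dense}, together with Propositions \ref{pro:enough proj} and \ref{pro:enough inj}, to the case $n=0$, applied to the $2$-cluster tilting subcategory $\mcX$. The decisive simplification in degree zero is that $\mcX_0^{\vee}=\iso(\mcX)=\mcX=\mcX_0^{\wedge}$, so the conditions ``$\mcX_n^{\vee}=\mcX$'' and ``$\mcX_n^{\wedge}=\mcX$'' demanded by the $(b)$-parts of those theorems hold automatically; and by Theorem \ref{teo: caract n-cluster} we have $\mcX_1^{\vee}=\mcC=\mcX_1^{\wedge}$, so that the quotients $\mcX_{n+1}^{\vee}/[\mcX]$ and $\mcX_{n+1}^{\wedge}/[\mcX]$ both become the single quotient $\mcC/[\mcX]$.

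First I would check the running hypotheses for $n=0$. Since $\mcX$ is $2$-cluster tilting, $\mcX=\add(\mcX)$, whence $\mcX=\free(\mcX)=\smd(\mcX)$. Because $\mcC$ has enough $\mbE$-projectives and $\mcP_{\mbE}(\mcC)\subseteq\mcX$, every $C\in\mcX=\mcX_0^{\vee}$ admits an $\mbE$-triangle $A\to P\to C\dashrightarrow$ with $P\in\mcP_{\mbE}(\mcC)\subseteq\mcX$, and the deflation $P\to C$ lies in $\mcP_{1,\mbE}(\mcC)$ by Remark \ref{proy-morf}(2); thus $\mcX_0^{\vee}$ has enough $\mbE$-projective morphisms from $\mcX$. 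Dually, enough $\mbE$-injectives with $\mcI_{\mbE}(\mcC)\subseteq\mcX$ give that $\mcX_0^{\wedge}$ has enough $\mbE$-injective morphisms to $\mcX$. Finally, (CT1) makes $\mcX$ strongly precovering and preenveloping, so $\mcX=\mcX_0^{\vee}$ is a precovering class in $\mcC=\mcX_1^{\vee}$ (and dually a preenveloping class in $\mcC=\mcX_1^{\wedge}$). This validates the hypotheses of both the $(a)$- and $(b)$-parts of Theorems \ref{thm: equivalence dense} and \ref{thm2: equivalence dense}.

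Next I would simply read off the conclusions. Theorem \ref{thm: equivalence dense}(b1) gives that $\mcC/[\mcX]$ is abelian and $\mathbb{F}:\mcC/[\mcX]\to\fp\underline{\mcX}$ is an equivalence of abelian categories, while Theorem \ref{thm2: equivalence dense}(b1) gives that $\mathbb{K}:\mcC/[\mcX]\to\overline{\mcX}\fp$ is a duality of exact categories; this proves (c). For (a), abelianness is in hand, the identities $\Proj(\mcC/[\mcX])=\smd(\Omega(\mcX)/[\mcX])$ and $\Inj(\mcC/[\mcX])=\smd(\Sigma(\mcX)/[\mcX])$ come from Theorems \ref{thm: equivalence dense}(b2) and \ref{thm2: equivalence dense}(b2), and the existence of \emph{enough} projectives and injectives follows from Propositions \ref{pro:enough proj}(a)--(b) and \ref{pro:enough inj}(a)--(b) (for $n=0$ one has $\Omega(\mcX_0^{\vee})=\Omega(\mcX)$, whose image lies in $\Proj(\mcC/[\mcX])$, and every object admits an epimorphism from such an object). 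For (b), assuming $\mcC$ Krull-Schmidt, Theorem \ref{thm: equivalence dense}(a2) shows $\mcC/[\mcX]$ is Krull-Schmidt, and the second clauses of Theorems \ref{thm: equivalence dense}(b2) and \ref{thm2: equivalence dense}(b2) sharpen the previous descriptions to $\Proj(\mcC/[\mcX])=\Omega(\mcX)/[\mcX]$ and $\Inj(\mcC/[\mcX])=\Sigma(\mcX)/[\mcX]$.

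Since every assertion is an instantiation of an already-proven result, there is no new computation to carry out; the only point deserving care, and the one I would double-check, is the purely formal bookkeeping that the degree-zero identifications $\mcX_0^{\vee}=\mcX=\mcX_0^{\wedge}$ and $\mcX_1^{\vee}=\mcC=\mcX_1^{\wedge}$ genuinely turn each hypothesis of the cited theorems into a property guaranteed by the cluster tilting axioms and the enough-$\mbE$-(co)projectivity assumptions, so that no condition such as ``$\mcX_n^{\vee}=\mcX$'' is invoked outside the range $n=0$ where it actually holds.
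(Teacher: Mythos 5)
Your proposal is correct and is essentially the paper's own derivation: the corollary is stated there as a direct consequence of Theorems \ref{thm: equivalence dense} and \ref{thm2: equivalence dense} (with Propositions \ref{pro:enough proj} and \ref{pro:enough inj} supplying the enough-projectives/injectives claims), specialized to $n=0$ exactly as you do. Your careful bookkeeping that $\mcX_0^{\vee}=\mcX=\mcX_0^{\wedge}$, that $\mcX_1^{\vee}=\mcC=\mcX_1^{\wedge}$ by Theorem \ref{teo: caract n-cluster}, and that the cluster tilting axioms together with $\mcP_{\mbE}(\mcC)\cup\mcI_{\mbE}(\mcC)\subseteq\mcX=\add(\mcX)$ validate all hypotheses of the cited results is precisely what the paper leaves implicit.
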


\noindent We finish giving examples in $(n+2)$-cluster tilting subcategories.

\begin{example}\label{ex1}
Several facts of this example come from \cite[Ex. 5.16]{LNheartsoftwin}. Let $\Lambda$ be the self-injective Nakayama
algebra given by the following quiver 
\[
\xymatrix@=5mm{
& \circ\ar[ld]_{x} & \circ\ar[l]_{x} & \circ\ar[l]_{x} & \circ\ar[l]_{x} &\\
\circ\ar[dr]_{x} & & & & & \circ\ar[ul]_{x}\\
& \circ\ar[r]_{x} & \circ\ar[r]_{x} & \circ\ar[r]_{x} & \circ\ar[ur]_{x} &
}
\]
with relation $x^{4}=0$. Thus, the AR-quiver of the
stable category of $\modu\Lambda$, denoted by
$\underline{\modu}\Lambda$, is the following

\begin{tiny}
\[
\xymatrix@C=0.3mm{
\times\ar@{.}[rr]\ar[dr] &  & M_1^{3}\ar@{.}[rr]\ar[dr] &  & \heartsuit\ar@{.}[rr]\ar[dr] &  & \heartsuit\ar@{.}[rr]\ar[dr] &  & \heartsuit\ar@{.}[rr]\ar[dr] &  & \heartsuit\ar@{.}[rr]\ar[dr] & & \heartsuit\ar@{.}[rr]\ar[dr] & & M_1^{4}\ar@{.}[rr]\ar[dr] & & \times\ar@{.}[rr]\ar[dr] & & \times\ar@{.}[rr]\ar[dr] & & \times\\
\ar@{.}[r] & M_1^{2}\ar@{.}[rr]\ar[dr]\ar[ur]  &  & \heartsuit\ar@{.}[rr]\ar[dr]\ar[ur] &  & \heartsuit\ar@{.}[rr]\ar[dr]\ar[ur]  &  & \heartsuit\ar@{.}[rr]\ar[dr]\ar[ur]  & & \heartsuit \ar@{.}[rr]\ar[dr]\ar[ur] &  & \heartsuit\ar@{.}[rr]\ar[dr]\ar[ur] & & \heartsuit\ar@{.}[rr]\ar[dr]\ar[ur] &  & M_1^{5}\ar@{.}[rr]\ar[dr]\ar[ur] &  &\times\ar@{.}[rr]\ar[dr]\ar[ur] & &\times\ar@{.}[r]\ar[dr]\ar[ur] & \\
M_1^{1}\ar@{.}[rr]\ar[ur] &  & \heartsuit\ar@{.}[rr]\ar[ur] &  & \heartsuit\ar@{.}[rr]\ar[ur] &  & \heartsuit\ar@{.}[rr]\ar[ur] &  &\heartsuit\ar@{.}[rr]\ar[ur] &  &\heartsuit\ar@{.}[rr]\ar[ur] & & \heartsuit\ar@{.}[rr]\ar[ur] & & \heartsuit\ar@{.}[rr]\ar[ur] & & M_1^{6}\ar@{.}[rr]\ar[ur] & & \times\ar@{.}[rr]\ar[ur] & & M_1^{1}
}
\]
\end{tiny}
where the first and last column are identified. Notice that $\underline{\modu}\Lambda$ is Krull-Schmidt since so is $\modu\,\Lambda$  (see Proposition \ref{pro:KS} (b)). Let $\mathcal{C}$ be the subcategory of the triangulated category
$\underline{\modu}\Lambda$ in which the 
indecomposable objects are marked by capital
letters and by the symbol $\heartsuit$. We know the following related to 
$\mcC$:
\begin{enumerate}
\item[$\bullet$] Since $\mcC$ is closed under extensions in $\underline{\modu}\Lambda$, $\mcC$ has an extriangulated structure. Moreover, $\mcC$ is neither exact nor triangulated category.

\item[$\bullet$]  $\mathcal{P}_{\mathbb{E}}(\mcC)=\add(\oplus_{i=1}^{3}M_1^{i})$  and $\mathcal{I}_{\mathbb{E}}(\mcC)=\add(\oplus_{i=4}^{6}M_1^{i}).$ Moreover, $\mcC$ has enough $\mbE$-projectives and  $\mbE$-injectives. 

\item[$\bullet$] $\mcX=\add(\oplus_{i=1}^{6}M_1^{i})$ is a $4$-cluster tilting subcategory of
$\mcC$. Thus, $\mcC=\mcX^{\vee}_{3}=\mcX^{\wedge}_3$ by Theorem~\ref{teo: caract n-cluster}.
\end{enumerate}
Then, from Theorems \ref{thm: equivalence dense} and \ref{thm2: equivalence dense}, we obtain, respectively, an equivalence and a duality of Krull-Schmidt exact categories 
\begin{center}
$\mcX^{\vee}_{j+1}/[\mcX]\cong \mathcal{R}_{\mcX}^{j}$ and 
$\mcX^{\wedge}_{j+1}/[\mcX]\cong \mathcal{L}_{\mcX}^{j}$, for every
$0\leq j\leq 2$. 
\end{center}
Moreover, by Corollary \ref{teo: n-cluster tilting} we have that the quotient $\mcC/[\mcX]$ has two exact structures: one from the duality 
 $\mcC/[\mcX]\cong\mathcal{L}_\mcX^{2}$ and the other one  from the equivalence $\mcC/[\mcX]\cong \mathcal{R}^{2}_\mcX.$
\end{example}

\begin{example}\label{ex2}
Let $\Lambda$ be a finite dimensional algebra of global dimension at most $(n+2)$ and let 
$\mathbb{S}$ be the Serre functor of $D^{b}(\modu\Lambda),$ where $D^{b}(\modu\Lambda)$ denotes the
bounded derived category of $\modu\Lambda.$ Notice that $D^{b}(\modu\Lambda)$ is a  Krull-Schmidt category.
\

Let $\Lambda$ be of  $(n+2)$-representation type (that is, $\modu\Lambda$ has an
$(n+2)$-cluster tilting object). It is  known, from \cite[Thm. 1.23]{Iyamaclusterhigher}, that the subcategory
$$\mcX:=\add\{\mathbb{S}^{k}\Lambda[-(n+2)k] : k\in \mathbb{Z}\}$$
is $(n+2)$-cluster tilting in $D^{b}(\modu\Lambda)$. Therefore, from Theorems \ref{thm: equivalence dense} and \ref{thm2: equivalence dense},  
 we obtain, respectively, an equivalence and a duality of Krull-Schmidt  exact categories 
\begin{center} 
 $\mcX^{\vee}_{j+1}/[\mcX]\cong \mathcal{R}_{\mcX}^{j}$ and 
$\mcX^{\wedge}_{j+1}/[\mcX]\cong \mathcal{L}_{\mcX}^{j}$,
for every
$0\leq j\leq n$. 
\end{center}
Moreover, by Corollary \ref{teo: n-cluster tilting} we have that the quotient $D^{b}(\modu\Lambda)/[\mcX]$ has two exact structures: one from the duality  $ D^{b}(\modu\Lambda)/[\mcX]\cong\mathcal{L}_\mcX^{n}$ and the other one from equivalence $ D^{b}(\modu\Lambda)/[\mcX]\cong \mathcal{R}^{n}_{\mcX}.$ 
\end{example}

\section{\textbf{Relation with the category of conflations}}\label{sec: cat conflations}

In this section we relate  the exact structures gotten in the previous sections with some results from \cite{XUquotients} about abelian quotients in the category of conflations of exact categories.

 Let $(\mathcal{M}, \varepsilon)$ be an exact category. We recall \cite[Sect. 1]{XUquotients} that a class $\mcD\subseteq\mathcal{M}$ is a  {\bf pseudo-cluster tilting} subcategory in $(\mathcal{M}, \varepsilon)$ if $\mcD=\add(\mcD)$ and for each $M\in\mathcal{M}$ there exist conflations $M\xrightarrow{\alpha} D_0\to D_1$ and $D'_1\to D'_0\xrightarrow{\beta} M,$ where $D_0,D'_0,D_1,D'_1\in\mcD,$ $\alpha$ is a $\mcD$-preenvelope and $\beta$ is a $\mcD$-precover.
 
 The {\bf category of conflations} of $\mathcal{M}$, denoted by $\varepsilon(\mathcal{M})$, is the category whose objects are all
the conflations of $\mathcal{M}$. Any conflation $X_1\rightarrowtail X_2\twoheadrightarrow X_3$ will be 
considered as a co-chain complex $X^\bullet$ concentrated in degrees $-1, 0, 1$ and
$f^\bullet: X^\bullet\to Y^\bullet$ will be a morphism in $\varepsilon(\mathcal{M})$ if there is a commutative diagram in $\mathcal{M}$ of the
form
$$
\xymatrix{
X^\bullet:\ar[d]_{f^\bullet} & X_1\,\ar@{>->}[r]^{x_1}\ar[d]^{f_1} & X_2\ar@{->>}[r]^{x_2}\ar[d]^{f_2} & X_3\ar[d]^{f_3}\\
Y^\bullet: & Y_1\,\ar@{>->}[r]_{y_1} & Y_2\ar@{->>}[r]_{y_2} & Y_3. \\
}
$$ 
It is known, see \cite[Exerc. 3.9]{Bu}, that the category of conflations is additive. Moreover, it has an exact 
structure computed degree-wise which we denote by $(\varepsilon(\mathcal{M}), \varepsilon).$ Following \cite{XUquotients}, we denote by 
 $\mathcal{S(M)}$  the class consisting of all the splitting conflations in $\mathcal{M}.$ 

\begin{theorem}\label{thm:pseudo1}
For a small extriangulated category $\mcC$ with enough $\mbE$-projectives, $n\geq 0,$ an
 $(n+2)$-rigid subcategory $\mcX=\add(\mcX)$ of $\mcC$ satisfying that $\mathcal{P}_\mbE(\mcC)\subseteq
\mcX,$ and  the category of conflations $(\varepsilon_1(\mathcal{M}_1), \varepsilon_1)$  of the weakly idempotent complete  exact category  
$(\mathcal{M}_1, \varepsilon_1):=(\mcX^{\vee}_{n+1}/[\mcX], \varepsilon_{\mathbb{F}})$ (see Theorem \ref{thm: equivalence dense} (a)),  
the following statements hold true.
\begin{itemize}
\item[(a)] $\mathcal{S}(\mathcal{M}_1)$ is a pseudo-cluster tilting subcategory in $(\varepsilon_1(\mathcal{M}_1), \varepsilon_1) .$ In particular,   $\varepsilon_1(\mathcal{M}_1)=\mathcal{S}(\mathcal{M}_1)^{\wedge}_{1}=\mathcal{S}(\mathcal{M}_1)^{\vee}_{1}.$
\item[(b)]  The quotient $\varepsilon_1(\mathcal{M}_1)/[\mathcal{S}(\mathcal{M}_1)]$ is an abelian category.
\item[(c)]  $\mathcal{S}(\mathcal{M}_1)$ can be described in terms of the exact equivalence 
$\mathbb{F}:\mathcal{M}_1\to \mathcal{R}^{n}_{\mcX}$ as follows: For a conflation $\eta: K\mathop{\rightarrowtail}\limits^{[a]} M
\mathop{\twoheadrightarrow}\limits^{[b]} N$ in $\mathcal{M}_1,$ we have that 
$\eta\in \mathcal{S}(\mathcal{M}_1)$ if, and only if, 
there exist a split $\mbE$-triangle $K'\mathop{\longrightarrow}\limits^{a'} M\oplus X\oplus P\mathop{\longrightarrow}\limits^{(b\, g\, p)} N\mathop{\dashrightarrow}\limits^{\delta}$ in $\mcC$ with 
$X\in \mcX$ and $P\in \mathcal{P}_{\mbE}(\mcC)$, and a commutative diagram in $\mathcal{R}^{n}_{\mcX}$ 
$$
\xymatrix{
\mathbb{F}(K')\,\ar@{>->}[r]^{\mathbb{F}[a']}\ar[d]_{\wr} & \mathbb{F}(M)\ar@{->>}[r]^{
\mathbb{F}[b]}\ar@{=}[d] & \mathbb{F}(N)\ar@{=}[d] \\
\mathbb{F}(K)\,\ar@{>->}[r]_{\mathbb{F}[a]} & \mathbb{F}(M)\ar@{->>}[r]_{\mathbb{F}[b]} & \mathbb{F}(N),
}
$$
where the vertical arrow is an isomorphism in $\mathcal{R}^{n}_{\mcX}.$
\end{itemize}
\end{theorem}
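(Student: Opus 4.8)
The plan is to obtain (a) and (b) as direct applications of \cite{XUquotients} once it is checked that $(\mathcal{M}_1,\varepsilon_1)$ is a weakly idempotent complete exact category, and to establish (c) by hand through the equivalence $\mathbb{F}$. First I would verify the standing hypotheses: since $\mcX=\add(\mcX)$ we have $\mcX=\free(\mcX)=\smd(\mcX)$, and because $\mcC$ has enough $\mbE$-projectives with $\mcP_{\mbE}(\mcC)\subseteq\mcX$, every object of $\mcX_n^{\vee}$ (resp. $\mcX_{n+1}^{\vee}$) receives a deflation from an object of $\mcP_{\mbE}(\mcC)\subseteq\mcX$, which is moreover an $\mbE$-projective morphism; hence $\mcX_n^{\vee}$ has enough $\mbE$-projective morphisms from $\mcX$ and $\mcX_{n+1}^{\vee}$ has enough deflations from $\mcX$. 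By Theorem~\ref{thm: equivalence dense}(a),(a1) this makes $(\mathcal{M}_1,\varepsilon_1)$ a weakly idempotent complete exact category equivalent, via $\mathbb{F}$, to the exact category $\mathcal{R}_{\mcX}^{n}$. Granting this, the results of \cite{XUquotients} for the conflation category of a weakly idempotent complete exact category give at once that $\mathcal{S}(\mathcal{M}_1)$ is an $\add$-closed pseudo-cluster tilting subcategory of $(\varepsilon_1(\mathcal{M}_1),\varepsilon_1)$ and that $\varepsilon_1(\mathcal{M}_1)/[\mathcal{S}(\mathcal{M}_1)]$ is abelian, which is (a) and (b). The ``in particular'' is read off the definition: the two approximating conflations attached to each $X^\bullet$ exhibit it with $\mathcal{S}(\mathcal{M}_1)$-coresolution and $\mathcal{S}(\mathcal{M}_1)$-resolution dimension at most $1$, so $\varepsilon_1(\mathcal{M}_1)=\mathcal{S}(\mathcal{M}_1)^{\vee}_{1}=\mathcal{S}(\mathcal{M}_1)^{\wedge}_{1}$.

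For (c) the guiding principle is that, $\mathbb{F}$ being an equivalence of exact categories, a conflation $\eta$ in $\mathcal{M}_1$ lies in $\mathcal{S}(\mathcal{M}_1)$ precisely when $\mathbb{F}(\eta)$ is a split short exact sequence in $\mathcal{R}_{\mcX}^{n}$. I would isolate two facts. First, for $X\in\mcX$ and $P\in\mcP_{\mbE}(\mcC)\subseteq\mcX$ one has $[X]=[P]=0$ in $\mathcal{M}_1$ and $\mathbb{F}(X)=\mathbb{F}(P)=0$, using $\mbE(\mcX_n^{\vee},\mcX)=0$ from Corollary~\ref{cor: n+1 rigid->1-cot}(1). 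Second, since $\mathbb{F}(C)=\underline{\mbE}(-,C)$, the functor $\mathbb{F}$ sends a split $\mbE$-triangle $A\to A\oplus C\to C\dashrightarrow$ to the split sequence $\mathbb{F}(A)\to\mathbb{F}(A)\oplus\mathbb{F}(C)\to\mathbb{F}(C)$ in $\mathcal{R}_{\mcX}^{n}$. With these in hand, the direction ($\Leftarrow$) is immediate: from the given split $\mbE$-triangle the top row of the displayed diagram is a split sequence, and the vertical isomorphism together with the two identities transport the splitting to the bottom row $\mathbb{F}(\eta)$; hence $\mathbb{F}(\eta)$ splits and $\eta\in\mathcal{S}(\mathcal{M}_1)$.

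For the converse ($\Rightarrow$), I would start from a splitting of $\eta:K\xrightarrow{[a]}M\xrightarrow{[b]}N$, choosing a section $[t]:N\to M$ with $[b][t]=[1_N]$ and lifting $t$ to $\mcC$; then $bt-1_N$ factors as $N\xrightarrow{u}X\xrightarrow{g}N$ with $X\in\mcX$. Taking a deflation $p:P\to N$ with $P\in\mcP_{\mbE}(\mcC)\subseteq\mcX$ (enough $\mbE$-projectives) and applying \cite[Cor. 3.16]{Nakaoka1} to the deflation $(b\,g\,p):M\oplus X\oplus P\to N$ produces an $\mbE$-triangle $K'\xrightarrow{a'}M\oplus X\oplus P\xrightarrow{(b\,g\,p)}N\dashrightarrow$. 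The morphism $\left(\begin{smallmatrix}t\\-u\\0\end{smallmatrix}\right):N\to M\oplus X\oplus P$ is a section of $(b\,g\,p)$, because $(b\,g\,p)\left(\begin{smallmatrix}t\\-u\\0\end{smallmatrix}\right)=bt-gu=1_N$, so this $\mbE$-triangle splits; as $M\oplus X\oplus P\in\mcX_{n+1}^{\vee}$ and $\mcX_{n+1}^{\vee}=\smd(\mcX_{n+1}^{\vee})$ by Lemma~\ref{lem: Xvee cerrada por ext}(3), its summand $K'$ lies in $\mcX_{n+1}^{\vee}$. Applying $\mathbb{F}$ and the two isolated facts yields a split short exact sequence $\mathbb{F}(K')\xrightarrow{\mathbb{F}[a']}\mathbb{F}(M)\xrightarrow{\mathbb{F}[b]}\mathbb{F}(N)$ whose epimorphism is exactly that of $\mathbb{F}(\eta)$; since $\mathbb{F}[a']$ and $\mathbb{F}[a]$ are then both kernels of $\mathbb{F}[b]$ in $\mathcal{R}_{\mcX}^{n}$, their universal property supplies the isomorphism $\mathbb{F}(K')\xrightarrow{\sim}\mathbb{F}(K)$ making the diagram commute.

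I expect the only delicate point to be the bookkeeping in (c): one must keep the two correction terms in distinct roles --- $P$ turns $b$ into a deflation through enough $\mbE$-projectives, while $X$ absorbs the defect $bt-1_N$ of the chosen section so that the $\mbE$-triangle genuinely splits --- and one must check at each step that the translation between $\mbE$-triangles in $\mcC$ and conflations in $\mathcal{M}_1$ is compatible with $\mathbb{F}$, which is exactly what the vanishing $\mathbb{F}(X)=\mathbb{F}(P)=0$ and the behaviour of $\mathbb{F}$ on split triangles secure.
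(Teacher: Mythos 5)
Your proposal is correct and follows essentially the same route as the paper's proof: (a) and (b) are obtained exactly as in the paper by combining Theorem~\ref{thm: equivalence dense}(a) with \cite[Prop. 4.1]{XUquotients} and \cite[Thm. 4.4]{XUquotients}, and your argument for (c) is the same splitting construction --- factoring the defect $bt-1_N$ of a lifted section through $X\in\mcX$, adjoining a projective summand $P$ so that $(b\,g\,p)$ is a deflation via \cite[Cor. 3.16]{Nakaoka1}, concluding that the resulting $\mbE$-triangle splits, and then transporting splittings through $\mathbb{F}$ and using the universal property of kernels to produce the vertical isomorphism. Your explicit verifications that $\mathbb{F}(X)=\mathbb{F}(P)=0$ and that $K'\in\mcX_{n+1}^{\vee}$ (by $\smd$-closure, Lemma~\ref{lem: Xvee cerrada por ext}(3)) are minor elaborations of points the paper leaves implicit, not a different approach.
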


\begin{proof}  By Theorem \ref{thm: equivalence dense} (a), we know that $(\mcX^{\vee}_{n+1}/[\mcX], \varepsilon_{\mathbb{F}})$ is a weakly idempotent complete exact category and $\mathbb{F}:\mathcal{M}_1\to \mathcal{R}^{n}_{\mcX}$ is an equivalence of exact categories. Then, by \cite[Prop. 4.1]{XUquotients}, we get that $\mathcal{S}(\mathcal{M}_1)$ is
a pseudo-cluster tilting subcategory in $(\varepsilon_1(\mathcal{M}_1), \varepsilon_1),$ and furthermore, by \cite[Thm. 4.4]{XUquotients} we conclude that the quotient $\varepsilon_1(\mathcal{M}_1)/[\mathcal{S}(\mathcal{M}_1)]$ is an abelian category; proving (a) and (b).
\

Let us show (c). Consider a conflation $\eta: K\mathop{\rightarrowtail}\limits^{[a]} M
\mathop{\twoheadrightarrow}\limits^{[b]} N$ in $\mathcal{M}_1.$ 

($\Rightarrow$) Suppose that $\eta: K\mathop{\rightarrowtail}\limits^{[a]} M
\mathop{\twoheadrightarrow}\limits^{[b]} N \in \mathcal{S}_1\mathcal{(M)}$. Since 
$\eta$ is a split conflation, there is $[c]:N\to M$ in $\mcX_{n+1}^{\vee}/[\mcX]$ such that
$[bc]=[1_{N}]$. Thus, we have a commutative diagram
$$
\xymatrix{
N\ar[rr]^{bc-1_N}\ar[dr]_{f} & & N\\
& X\ar[ur]_{g} &
}
$$
with $X\in \mcX$. Now, consider the morphism $\begin{pmatrix} b & g\end{pmatrix}: M\oplus X\to N$ in $\mcC$. Since
$\mcC$ has enough $\mbE$-projectives, from the dual of \cite[Cor. 3.16]{Nakaoka1}, we get an
$\mbE$-triangle $K'\mathop{\longrightarrow}\limits^{a'} M\oplus X\oplus P\xrightarrow{\begin{pmatrix} b & g &p\end{pmatrix}}
N\mathop{\dashrightarrow}\limits^{\delta}$ with $P\in \mathcal{P}_{\mbE}(\mcC)$. Notice that 
$$\begin{pmatrix} b & g &p\end{pmatrix}\begin{pmatrix} c\\ -f\\ 0  \end{pmatrix}=bc-gf=1_N.$$
Thus, $\delta$ is a split $\mbE$-triangle from \cite[Cor. 3.5]{Nakaoka1}. Therefore 
$$0\to \mathbb{F}(K')\mathop{\longrightarrow}\limits^{\mathbb{F}[a']}
\mathbb{F}(M)\mathop{\longrightarrow}\limits^{\mathbb{F}[b]} \mathbb{F}(N)\to 0$$
is a split exact sequence in $\mathcal{R}^{n}_\mcX$ and thus   
$\mathbb{F}(K')\mathop{\rightarrowtail}\limits^{\mathbb{F}[a']}
\mathbb{F}(M)\mathop{\twoheadrightarrow}\limits^{\mathbb{F}[b]} \mathbb{F}(N)$ is a conflation in 
$\mathcal{R}_{\mcX}^{n}$. On the other hand, since $K\mathop{\rightarrowtail}\limits^{[a]} M\mathop{\twoheadrightarrow}
\limits^{[b]} N$ is a conflation in $\mcX^{\vee}_{n+1}/[\mcX]$ we have, by definition of $\varepsilon_{\mathbb{F}}$, that $\mathbb{F}(K)\mathop{\rightarrowtail}\limits^{\mathbb{F}[a]}
\mathbb{F}(M)\mathop{\twoheadrightarrow}\limits^{\mathbb{F}[b]} \mathbb{F}(N)$ is a
conflation in $\mathcal{R}_{\mcX}^{n}$. Then, by using the universal property of 
kernel, we can form a commutative diagram in
$\mathcal{R}^{n}_{\mcX}$ 
$$
\xymatrix{
\mathbb{F}(K')\,\ar@{>->}[r]^{\mathbb{F}[a']}\ar[d]_{\wr} & \mathbb{F}(M)\ar@{->>}[r]^{
\mathbb{F}[b]}\ar@{=}[d] & \mathbb{F}(N)\ar@{=}[d] \\
\mathbb{F}(K)\,\ar@{>->}[r]_{\mathbb{F}[a]} & \mathbb{F}(M)\ar@{->>}[r]_{\mathbb{F}[b]} & \mathbb{F}(N),
}
$$
where the vertical arrow is an isomoprhism in $\mathcal{R}_{\mcX}^{n}.$
\

($\Leftarrow$)  Suppose there is a  split $\mbE$-triangle $K'\mathop{\longrightarrow}\limits^{a'} M\oplus X\oplus P\mathop{\longrightarrow}\limits^{(b\, g\, p)} N\mathop{\dashrightarrow}\limits^{\delta}$ in $\mcC$ with 
$X\in \mcX$ and $P\in \mathcal{P}_{\mbE}(\mcC)$, and a commutative diagram as in (c). Since $\delta$ is a split $\mbE$-triangle, we get that the sequence
$0\to \mathbb{F}(K')\mathop{\longrightarrow}\limits^{\mathbb{F}[a']}\mathbb{F}(M)\mathop{\longrightarrow}\limits^{\mathbb{F}[b]} \mathbb{F}(N)\to 0$
is split exact in $\mathcal{R}^{n}_{\mcX}$. Now, by using the commutative diagram in (b),
we also have that $0\to \mathbb{F}(K)\mathop{\longrightarrow}\limits^{\mathbb{F}[a]}\mathbb{F}(M)\mathop{\longrightarrow}\limits^{\mathbb{F}[b]} \mathbb{F}(N)\to 0$ is a split exact sequence in 
$\mathcal{R}_{\mcX}^{n}$. Therefore, $K\mathop{\rightarrowtail}\limits^{[a]}M\mathop{\twoheadrightarrow}\limits^{[b]} N\in \mathcal{S}(\mathcal{M}_1).$
\end{proof}

\noindent Dually, we get the dual version of the previous theorem for the quotient 
$\mcX_{n+1}^{\wedge}/[\mcX]$ with exact structure $\varepsilon_{\mathbb{K}}$.

\begin{theorem}\label{thm:pseudo2}
For a small extriangulated category $\mcC$ with enough $\mbE$-injectives, $n\geq 0,$ an
 $(n+2)$-rigid subcategory $\mcX=\add(\mcX)$ of $\mcC$ satisfying that $\mathcal{I}_\mbE(\mcC)\subseteq
\mcX,$ and  the category of conflations $(\varepsilon_2(\mathcal{M}_2), \varepsilon_2)$  of the weakly idempotent complete exact category  
$(\mathcal{M}_2, \varepsilon_2):=(\mcX^{\wedge}_{n+1}/[\mcX], \varepsilon_{\mathbb{K}})$ (see Theorem \ref{thm2: equivalence dense} (a)),  
the following statements hold true.
\begin{itemize}
\item[(a)] $\mathcal{S}(\mathcal{M}_2)$ is a pseudo-cluster tilting subcategory in $(\varepsilon_2(\mathcal{M}_2), \varepsilon_2) .$ In particular,   $\varepsilon_2(\mathcal{M}_2)=\mathcal{S}(\mathcal{M}_2)^{\wedge}_{1}=\mathcal{S}(\mathcal{M}_2)^{\vee}_{1}.$
\item[(b)]  The quotient $\varepsilon_2(\mathcal{M}_2)/[\mathcal{S}(\mathcal{M}_2)]$ is an abelian category.
\item[(c)]  $\mathcal{S}(\mathcal{M}_2)$ can be described in terms of the exact duality 
$\mathbb{K}:\mathcal{M}_2\to \mathcal{L}^{n}_{\mcX}$ as follows: For a conflation $\eta: K\mathop{\rightarrowtail}\limits^{[a]} M
\mathop{\twoheadrightarrow}\limits^{[b]} N$ in $\mathcal{M}_2,$ we have that 
$\eta\in \mathcal{S}(\mathcal{M}_2)$ if, and only if, 
there exist a split $\mbE$-triangle $K\mathop{\longrightarrow}\limits^{
\tiny{\left(
\begin{array}{c}
a\\
f\\
i
\end{array}
\right)}
} M\oplus X\oplus I\mathop{\longrightarrow}\limits^{b'} N'\mathop{\dashrightarrow}\limits^{\delta}$ in $\mcC$ with 
$X\in \mcX$ and $I\in \mathcal{I}_{\mbE}(\mcC)$, and a commutative diagram in $\mathcal{L}^{n}_{\mcX}$ 
$$
\xymatrix{
\mathbb{K}(N')\,\ar@{>->}[r]^{\mathbb{K}[b']}\ar[d]^{\wr} & \mathbb{K}(M)\ar@{->>}[r]^{
\mathbb{K}[a]}\ar@{=}[d] & \mathbb{K}(K)\ar@{=}[d]\\
\mathbb{K}(N)\,\ar@{>->}[r]_{\mathbb{K}[b]} & \mathbb{K}(M)\ar@{->>}[r]_{\mathbb{K}[a]} & \mathbb{K}(K),
}
$$
where the vertical arrow is an isomorphism in $\mathcal{R}^{n}_{\mcX}.$
\end{itemize}
\end{theorem}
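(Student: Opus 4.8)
The plan is to derive items (a) and (b) directly from \cite{XUquotients}, exactly as in Theorem~\ref{thm:pseudo1}, and to establish (c) by dualizing the explicit argument given there, now using the duality $\mathbb{K}$, enough $\mbE$-injectives and the hypothesis $\mcI_{\mbE}(\mcC)\subseteq\mcX$ in place of enough $\mbE$-projectives and $\mcP_{\mbE}(\mcC)\subseteq\mcX$. For (a) and (b), I would first recall from Theorem~\ref{thm2: equivalence dense}~(a) that, under the present hypotheses, $(\mathcal{M}_2,\varepsilon_2)=(\mcX^{\wedge}_{n+1}/[\mcX],\varepsilon_{\mathbb{K}})$ is a weakly idempotent complete exact category (the required ``enough inflations to $\mcX$'' being supplied by enough $\mbE$-injectives together with $\mcI_{\mbE}(\mcC)\subseteq\mcX$). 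Since being pseudo-cluster tilting and yielding an abelian quotient by the splitting conflations are intrinsic properties of a weakly idempotent complete exact category, I would then apply \cite[Prop. 4.1]{XUquotients} to get that $\mathcal{S}(\mathcal{M}_2)$ is pseudo-cluster tilting in $(\varepsilon_2(\mathcal{M}_2),\varepsilon_2)$, with the ensuing equalities $\varepsilon_2(\mathcal{M}_2)=\mathcal{S}(\mathcal{M}_2)^{\wedge}_1=\mathcal{S}(\mathcal{M}_2)^{\vee}_1$, and \cite[Thm. 4.4]{XUquotients} to conclude that $\varepsilon_2(\mathcal{M}_2)/[\mathcal{S}(\mathcal{M}_2)]$ is abelian. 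That $\mathbb{K}$ is a duality rather than an equivalence is irrelevant here, since (a) and (b) concern only the exact structure of $\mathcal{M}_2$.

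For (c), I would argue the two implications as in Theorem~\ref{thm:pseudo1}~(c), reversing variance throughout. Forward: given a split conflation $\eta\colon K\xrightarrow{[a]}M\xrightarrow{[b]}N$ in $\mathcal{M}_2$, a retraction $[c]\colon M\to K$ with $[ca]=[1_K]$ gives a factorization $ca-1_K=gf$ with $f\colon K\to X$, $g\colon X\to K$ and $X\in\mcX$; choosing an $\mbE$-injective inflation $i\colon K\to I$ with $I\in\mcI_{\mbE}(\mcC)$ and assembling $a,f,i$, I would invoke \cite[Cor. 3.16]{Nakaoka1} to obtain an $\mbE$-triangle $K\xrightarrow{(a,\,f,\,i)^{t}}M\oplus X\oplus I\xrightarrow{\ b'\ }N'\dashrightarrow\delta$, whose splitting is witnessed by the retraction $(c,\,-g,\,0)$ of the inflation via \cite[Cor. 3.5]{Nakaoka1}. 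Applying the duality $\mathbb{K}$, which vanishes on $\mcX$ (so $\mathbb{K}(M\oplus X\oplus I)=\mathbb{K}(M)$, because $I\in\mcI_{\mbE}(\mcC)\subseteq\mcX$), produces a split short exact sequence $\mathbb{K}(N')\rightarrowtail\mathbb{K}(M)\twoheadrightarrow\mathbb{K}(K)$ in $\mathcal{L}^{n}_{\mcX}$; comparing it with the conflation $\mathbb{K}(N)\rightarrowtail\mathbb{K}(M)\twoheadrightarrow\mathbb{K}(K)$ coming from $\mathbb{K}(\eta)$, both $\mathbb{K}(N')$ and $\mathbb{K}(N)$ are kernels of the same deflation $\mathbb{K}[a]$, so the universal property of kernels yields the asserted diagram with an isomorphism $\mathbb{K}(N')\cong\mathbb{K}(N)$. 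Converse: if such a split $\delta$ and such a diagram exist, then $\mathbb{K}(N')\rightarrowtail\mathbb{K}(M)\twoheadrightarrow\mathbb{K}(K)$ splits, the diagram transports this splitting onto $\mathbb{K}(\eta)$, and since $\mathbb{K}$ is a duality of exact categories (hence reflects splitness) we conclude $\eta\in\mathcal{S}(\mathcal{M}_2)$.

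The main obstacle I expect is purely one of bookkeeping the variance: because $\mathbb{K}$ is contravariant, each inflation, deflation and kernel/cokernel instance from the proof of Theorem~\ref{thm:pseudo1} must be reoriented, and one must verify that $\mathbb{K}$ sends the (split) $\mbE$-triangle in $\mcC$ to a (split) conflation in $\mathcal{L}^{n}_{\mcX}$ with precisely the orientation $\mathbb{K}(N')\rightarrowtail\mathbb{K}(M)\twoheadrightarrow\mathbb{K}(K)$ displayed in (c). Once this orientation is fixed coherently, the remaining steps are the same routine diagram chases as in the non-dual case.
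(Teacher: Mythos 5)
Your proposal is correct and follows exactly the route the paper intends: the paper states Theorem~\ref{thm:pseudo2} without proof as the dual of Theorem~\ref{thm:pseudo1}, and your argument is precisely that dualization, with (a) and (b) obtained from \cite[Prop. 4.1, Thm. 4.4]{XUquotients} applied to the weakly idempotent complete exact category $(\mathcal{M}_2,\varepsilon_{\mathbb{K}})$, and (c) obtained by reorienting the construction of the split $\mbE$-triangle (retraction $[c]$, factorization $ca-1_K=gf$ through $\mcX$, inflation into an $\mbE$-injective, splitting via \cite[Cor. 3.5]{Nakaoka1}) and using that the contravariant fully faithful $\mathbb{K}$ kills $\mcX$ and reflects splitness. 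Your explicit bookkeeping of the variance matches what the paper leaves implicit, so there is nothing to correct.
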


\begin{corollary}\label{cor: 2 cat abel}
For a small extriangulated category $\mcC$ with enough $\mbE$-projectives and $\mbE$-injectives, $n\geq 0,$ an
 $(n+2)$-cluster tilting subcategory $\mcX$ of $\mcC,$ the quotient category $\mcD:=\mcC/[\mcX],$  the weakly idempotent complete exact categories 
 $(\mcD_1,\varepsilon_1):=(\mcD,\varepsilon_{\mathbb{F}})$ and $(\mcD_2,\varepsilon_2):=(\mcD,\varepsilon_{\mathbb{K}})$ (see Theorem \ref{teo: n-cluster tilting} (a)), and the category of conflations $(\varepsilon_i(\mcD_i),\varepsilon_i)$ for $i=1,2,$
 the following statements hold true. 
\begin{enumerate}[(a)]
\item $\mathcal{S}(\mcD_1)=\mathcal{S}(\mcD_2).$ 

\item $\varepsilon_1(\mcD_1)/[\mathcal{S}(\mcD_1)]$ and $\varepsilon_2(\mcD_2)/[\mathcal{S}(\mcD_2)]$ are abelian categories.
\end{enumerate}
\end{corollary}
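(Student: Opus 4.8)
The plan is to obtain this corollary as a direct specialization of Theorems~\ref{thm:pseudo1} and~\ref{thm:pseudo2}, combined with the structural facts available in the $(n+2)$-cluster tilting setting. First I would record that, since $\mcX$ is $(n+2)$-cluster tilting, the containment $\mcP_{\mbE}(\mcC)\cup\mcI_{\mbE}(\mcC)\subseteq\mcX=\add(\mcX)$ holds (recalled just before Theorem~\ref{teo: caract n-cluster}), so that the hypothesis $\mcP_{\mbE}(\mcC)\subseteq\mcX$ of Theorem~\ref{thm:pseudo1} and the hypothesis $\mcI_{\mbE}(\mcC)\subseteq\mcX$ of Theorem~\ref{thm:pseudo2} are both satisfied. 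By Theorem~\ref{teo: caract n-cluster} we moreover have $\mcX_{n+1}^{\vee}=\mcC=\mcX_{n+1}^{\wedge}$, whence $\mathcal{M}_1=\mcX_{n+1}^{\vee}/[\mcX]$ is exactly $(\mcD,\varepsilon_{\mathbb{F}})=\mcD_1$ and $\mathcal{M}_2=\mcX_{n+1}^{\wedge}/[\mcX]$ is exactly $(\mcD,\varepsilon_{\mathbb{K}})=\mcD_2$. With these identifications, part (b) is immediate: Theorem~\ref{thm:pseudo1}(b) gives that $\varepsilon_1(\mcD_1)/[\mathcal{S}(\mcD_1)]$ is abelian, and Theorem~\ref{thm:pseudo2}(b) gives that $\varepsilon_2(\mcD_2)/[\mathcal{S}(\mcD_2)]$ is abelian.

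For part (a) I would argue that the class of splitting conflations is intrinsic to the underlying additive category and is insensitive to the choice of exact structure. Precisely, in any exact category a splitting conflation is, up to isomorphism, a direct-sum sequence $A\to A\oplus C\to C$ with its canonical inclusion and projection; such split sequences are conflations for \emph{every} exact structure, and a conflation splits exactly when its admissible epic admits a section, a condition expressed entirely inside the additive category. Since $\mcD_1$ and $\mcD_2$ have one and the same underlying additive category $\mcD$, both $\mathcal{S}(\mcD_1)$ and $\mathcal{S}(\mcD_2)$ coincide with the class of all split sequences in $\mcD$. The morphisms between two such sequences in the respective conflation categories are the degree-wise commutative ladders in $\mcD$, which likewise do not involve $\varepsilon_{\mathbb{F}}$ or $\varepsilon_{\mathbb{K}}$; hence the two subcategories agree as full subcategories, giving $\mathcal{S}(\mcD_1)=\mathcal{S}(\mcD_2)$.

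No fresh computation is required, since everything reduces to the two pseudo-cluster tilting theorems. The only point that I expect to deserve genuine care is the identification in (a): although the exact structures $\varepsilon_{\mathbb{F}}$ and $\varepsilon_{\mathbb{K}}$ on $\mcD$ are in general honestly different, so that the conflation categories $\varepsilon_1(\mcD_1)$ and $\varepsilon_2(\mcD_2)$ have different object classes, their distinguished subcategories of \emph{splitting} conflations nonetheless coincide. I would phrase this carefully so as not to conflate ``conflation in $\varepsilon_i$'' with ``splitting conflation,'' making explicit that it is only the split sequences that are being compared, and that these are common to both exact structures. This conceptual observation, rather than any technical difficulty, is the crux of part (a).
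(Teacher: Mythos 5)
Your proposal is correct and takes essentially the same route as the paper: for (b) you specialize Theorems~\ref{thm:pseudo1} and~\ref{thm:pseudo2} through the equalities $\mcX_{n+1}^{\vee}=\mcC=\mcX_{n+1}^{\wedge}$ of Theorem~\ref{teo: caract n-cluster} (checking the hypotheses $\mcP_{\mbE}(\mcC)\subseteq\mcX$ and $\mcI_{\mbE}(\mcC)\subseteq\mcX$, which the paper leaves implicit), and for (a) you argue that splitting conflations are precisely the direct sum sequences of the underlying additive category $\mcD$, hence independent of whether the exact structure is $\varepsilon_{\mathbb{F}}$ or $\varepsilon_{\mathbb{K}}$. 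The only difference is presentational: the paper compresses your argument for (a) into a citation of \cite[Lem.~2.7]{Bu}, whereas you spell it out.
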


\begin{proof} The item (a) follows from \cite[Lem. 2.7]{Bu}. For the item (b), we use that $\mcC=\mcX^{\wedge}_{n+1}=\mcX_{n+1}^{\vee}$  (see Theorem~\ref{teo: caract n-cluster}) and then we apply Theorems \ref{thm:pseudo1} and \ref{thm:pseudo2}.
\end{proof}




\section*{\textbf{Funding}}

The first author thanks Programa de Becas Posdoctorales DGAPA-UNAM.


\bibliographystyle{plain}
\bibliography{bibliohmp17}
\end{document}